\theoremstyle{plain}
\newtheorem{theorem}[equation]{Theorem}
\newtheorem{lemma}[equation]{Lemma}
\newtheorem{corollary}[equation]{Corollary}
\newtheorem{proposition}[equation]{Proposition}
\theoremstyle{definition}
\newtheorem{definition}[equation]{Definition}
\theoremstyle{remark}
\newtheorem{remark}[equation]{Remark}
\numberwithin{equation}{section}
\newcommand{\RR}{{\mathbb{R}}}
\newcommand{\eps}{\varepsilon}
\newcommand{\ep}{\epsilon_1}
\newcommand{\dint}{\int\!\!\!\int}
\newcommand{\dist}{\operatorname{dist}}
\newcommand{\td}{\Delta_{\star}}
\newcommand{\tdelta}{\delta_{\star}}
\newcommand{\tom}{\omega_{\star}}
\newcommand{\re}{\mathbb{R}}
\newcommand{\ree}{\mathbb{R}^{n+1}}
\newcommand{\dd}{\mathbb{D}}
\newcommand{\C}{\mathcal{C}}
\newcommand{\F}{\mathcal{F}}
\newcommand{\E}{\mathcal{E}}
\newcommand{\M}{\mathcal{M}}
\newcommand{\W}{\mathcal{W}}
\newcommand{\T}{\mathcal{T}}
\newcommand{\I}{\mathcal{I}}
\newcommand{\R}{\mathcal{R}}
\newcommand{\G}{\widetilde{G}}
\newcommand{\Fr}{\mathcal{F}(\rho)}
\newcommand{\mut}{\mathfrak{m}}
\newcommand{\pom}{\partial\Omega}
\newcommand{\pomfqo}{\partial\Omega_{\F,Q_0}}
\newcommand{\hm}{\omega}
\renewcommand{\P}{\mathcal{P}}
\renewcommand{\emptyset}{\mbox{\textup{\O}}}
\DeclareMathOperator{\diam}{diam}
\DeclareMathOperator{\interior}{int}
\def\div{\mathop{\operatorname{div}}\nolimits}
\begin{document}
\allowdisplaybreaks

\title[Uniform rectifiability and harmonic measure I]{Uniform rectifiability and harmonic measure I: Uniform rectifiability implies Poisson kernels in $L^p$
\\[.5cm]
Uniforme rectifiabilit\'e et mesure harmonique I: l'uniforme
 rectifiabilit\'e entra\^{\i}ne le noyau de Poisson dans $L^p$.
}

\author{Steve Hofmann}

\address{Steve Hofmann
\\
Department of Mathematics
\\
University of Missouri
\\
Columbia, MO 65211, USA} \email{hofmanns@missouri.edu}

\author{Jos\'e Mar{\'\i}a Martell}

\address{Jos\'e Mar{\'\i}a Martell
\\
Instituto de Ciencias Matem\'aticas CSIC-UAM-UC3M-UCM
\\
Consejo Superior de Investigaciones Cient{\'\i}ficas
\\
C/ Nicol\'as Cabrera, 13-15
\\
E-28049 Madrid, Spain} \email{chema.martell@icmat.es}

\thanks{The first author was supported by NSF grant DMS-0801079.
The second author was supported by MINECO Grant MTM2010-16518 and ICMAT Severo Ochoa project SEV-2011-0087.
\\
\indent
This work has been possible thanks to the support and hospitality of the \textit{University of Missouri-Columbia} (USA),  the \textit{Consejo Superior de Investigaciones Cient{\'\i}ficas} (Spain), the \textit{Universidad Aut\'onoma de Madrid} (Spain), and the \textit{Australian National University} (Canberra, Australia). Both authors express their gratitude to these institutions. Both authors are also grateful to P. Auscher and M. Perrin for their help in the French translations.}

\date{September 25, 2012. \textit{Revised}: \today}
\subjclass[2010]{31B05, 35J08, 35J25, 42B99, 42B25, 42B37}

\keywords{Harmonic measure, Poisson kernel, uniform rectifiability,
Carleson measures, $A_\infty$ Muckenhoupt weights.
\\
\textit{Mots cl\'es.} Mesure harmonique, noyau de Poisson, uniforme rectifiabilit\'e, mesures de Carleson, poids de Muckenhoupt $A_\infty$.
}

\begin{abstract}
We present a higher dimensional, scale-invariant version of a classical theorem of F. and M. Riesz
\cite{Rfm}.
More precisely,
we establish scale invariant absolute continuity of harmonic measure with respect to surface measure,
along with higher integrability of the Poisson kernel, for a domain $\Omega\subset \re^{n+1},\, n\geq 2$, with a uniformly rectifiable boundary, which satisfies the Harnack Chain condition plus
an interior (but not exterior) corkscrew condition.
In a companion paper to this one \cite{HMU},
we also establish a converse, in which we deduce uniform rectifiability of the boundary, assuming  scale invariant $L^q$ bounds, with $q>1$, on the Poisson kernel.
\end{abstract}

\newcommand{\status}[1]{\vskip-1.2cm\noindent\parbox{\textwidth}{%
\hfill \texttt{#1}}\vskip.2cm\null}

\status{Ann.~Sci.~\'{E}cole~Norm.~Sup.~47 (2014), no.~3, 577-654.}

\maketitle

\begin{quote}\footnotesize
\textsc{R\'esum\'e.~}
On pr\'esente une version invariante par \'echelles et en  dimension sup\'e\-rieure \`a  3 d'un th\'eor\`{e}me classique de F. et M. Riesz \cite{Rfm}. Plus pr\'ecis\'ement, on \'etablit  l'absolue continuit\'e de la mesure harmonique par rapport \`{a} la mesure de surface, ainsi qu'un gain d'int\'egrabilit\'e pour le noyau de Poisson, pour un domaine $\Omega\subset \re^{n+1},\, n\geq 2$, \`a  bord uniform\'ement rectifiable, v\'erifiant une condition de cha\^{i}ne de Harnack et une condition de type ``points d'ancrage'' ou ``corkscrew'' int\'erieure (mais pas  ext\'erieure). L'article associ\'e \cite{HMU} \'etablit une r\'eciproque, c'est-\`{a}-dire l'uniforme rectifiabilit\'e du bord en supposant des estim\'ees invariantes par \'echelle $L^q$ pour $q>1$ sur le noyau de Poisson.

\end{quote}

\newpage

\tableofcontents

\section{Introduction}

In \cite{Rfm}, F. and M. Riesz showed that for a simply connected domain in the complex plane
with a rectifiable boundary, harmonic measure is absolutely continuous with respect to arclength measure.  A quantitative version of this theorem was
obtained by Lavrentiev \cite{Lav}.
More generally, if only a portion of the boundary is rectifiable, Bishop and Jones
\cite{BiJo} have shown that harmonic measure
is absolutely continuous with respect to arclength on that portion.  They also present a counter-example
to show that the result of \cite{Rfm}
may fail in the absence of some topological hypothesis
(e.g., simple connectedness).

In this paper we extend the results of \cite{Rfm} and \cite{Lav} to higher dimensions,
without imposing extra assumptions on either the exterior domain or the boundary,
as has been done previously.
Our extension (Theorem \ref{theor-main-I} below) is  ``scale-invariant'', i.e.,
assuming scale-invariant analogues
of the hypotheses  of \cite{Rfm}, we show that harmonic measure
satisfies a scale-invariant version of absolute continuity,
namely the weak-$A_\infty$ condition (cf. Definition \ref{def1.ainfty} below).
More precisely, let $\Omega \subset \mathbb{R}^{n+1},\,n\geq 2,$
be a connected, open
set.  We establish the
weak-$A_\infty$ property of harmonic measure,
assuming that $\partial \Omega$ is uniformly rectifiable
(cf.   \eqref{eq1.UR} below), and that $\Omega$ satisfies interior (but not necessarily exterior)
Corkscrew and Harnack Chain conditions
(cf. Definitions
\ref{def1.cork} and \ref{def1.hc} below).  Uniform rectifiability is
the scale-invariant version of rectifiability, while
the Corkscrew and Harnack Chain conditions are scale invariant analogues
of the topological properties of
openness and path connectedness, respectively.
We emphasize that in contrast to previous work in this area
in dimensions $n+1\geq 3$, we impose no restriction
on the geometry of the {\it exterior} domain $\Omega_{ext}:= \ree\setminus \overline{\Omega}$, nor
any extra condition on the geometry of the boundary, beyond uniform rectifiability.  In particular, we do
not require that any component of $\Omega_{ext}$ satisfy a Corkscrew
condition (as in \cite{JK},
\cite{S}, \cite{Ba}) or even an $n$-disk condition as in \cite{DJe}; nor do we assume that
$\partial \Omega$ contains ``Big Pieces'' of the boundaries of Lipschitz sub-domains
of $\Omega$, as in \cite{BL}.  The absence of such assumptions is the main advance in the present paper.

In addition, in a companion paper to this one \cite{HMU}, written jointly with
I. Uriarte-Tuero, we establish a converse, Theorem \ref{th2.2}, in which
we deduce uniform rectifiability of the boundary, given a
certain scale invariant local $L^q$ estimate, with $q>1$, for the Poisson kernel
 (cf. \eqref{eq1.lq}).   The method of proof in \cite{HMU} may be of
 independent interest, as it entails a novel
use of ``$Tb$'' theory to obtain a free boundary result.

Taken together, the main results of the present paper and of
\cite{HMU}, namely Theorems \ref{theor-main-I} and \ref{th2.2} below, may be summarized as follows
(the terminology and notation used in the statement will be clarified or cross-referenced immediately afterwards):

\begin{theorem}\label{th1.25} Let $\Omega \subset \ree,\,n\geq2,$
be a connected open set which satisfies interior
Corkscrew and Harnack Chain conditions, and whose boundary
$\partial\Omega$ is $n$-dimensional Ahlfors-David regular.   Then the following are equivalent:
\begin{enumerate}
\item $\partial\Omega$ is uniformly rectifiable.
\smallskip
\item For every surface ball $\Delta=\Delta(x,r) \subset \partial \Omega$, with
radius  $r\lesssim \diam \partial\Omega$,  the harmonic measure
$\omega^{X_\Delta} \in$
weak-$A_\infty(\Delta)$.
\smallskip
\item $\omega<< \sigma$, and there is a
$q>1$ such that
the Poisson kernel $k^{X_\Delta}$
satisfies the scale invariant $L^q$ bound
\eqref{eq1.lq}, for every  $\Delta=\Delta(x,r) \subset \partial \Omega$, with
radius  $r\lesssim \diam \partial\Omega$.
\end{enumerate}
\end{theorem}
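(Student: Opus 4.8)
The three statements are shown to be equivalent by closing the cycle $(1)\Rightarrow(2)\Rightarrow(3)\Rightarrow(1)$. The implication $(1)\Rightarrow(2)$ is precisely Theorem~\ref{theor-main-I}, the main result of the present paper; $(3)\Rightarrow(1)$ is Theorem~\ref{th2.2}, proved in the companion paper \cite{HMU}, where it is obtained as a free-boundary result via a novel use of ``$Tb$'' theory; and $(2)\Rightarrow(3)$ is the soft, purely real-variable step. For the latter, recall that weak-$A_\infty$ of $\omega^{X_\Delta}$ with respect to $\sigma$ on $\Delta$ is, by definition, a weak reverse H\"older condition, which in particular forces $\omega^{X_\Delta}\ll\sigma$ on $\Delta$; one then runs the classical Coifman--Fefferman/Gehring self-improvement of reverse H\"older inequalities, localized to Carleson boxes so as to accommodate the non-doubling (``weak'') setting, to produce an exponent $q>1$ and the scale-invariant bound \eqref{eq1.lq} for the Poisson kernel $k^{X_\Delta}=d\omega^{X_\Delta}/d\sigma$. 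The only point requiring care is to track the dependence of $q$ and of the implicit constants on the ADR, Corkscrew and Harnack Chain constants.

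The heart of the matter is $(1)\Rightarrow(2)$. The plan is to reduce the weak-$A_\infty$ conclusion to a Carleson measure estimate
\[
\sup_{\Delta=\Delta(x,r)}\ \frac{1}{\sigma(\Delta)}\dint_{B(x,r)\cap\Omega}|\nabla u(X)|^2\,\dist(X,\pom)\,dX\ \lesssim\ \|u\|_{L^\infty(\Omega)}^2,
\]
valid for all bounded harmonic functions $u$ on $\Omega$. Granting this, one combines it with the interior Corkscrew and Harnack Chain conditions --- via an $\eps$-approximability argument --- to deduce $\omega^{X_\Delta}\in$ weak-$A_\infty(\Delta)$; it is at this step that the absence of an exterior Corkscrew condition forces the \emph{weak}, rather than the full, $A_\infty$ conclusion, since harmonic measure need not be doubling. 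To prove the Carleson estimate itself I would invoke the bilateral corona decomposition of David--Semmes for the uniformly rectifiable set $\pom$: away from a Carleson-sparse (``bad'') family of dyadic cubes, $\pom$ is, at the relevant scales, well approximated by Lipschitz graphs. Over each corona (``good'') region one builds, inside $\Omega$, a subdomain $\Omega_{\F,Q_0}$ whose boundary contains a large portion of $\pom$ and which is chord-arc (hence NTA with ADR boundary), so that classical estimates --- $A_\infty$ of harmonic measure and square-function/Carleson bounds for bounded harmonic functions --- are available on $\Omega_{\F,Q_0}$; one then transfers these to the corresponding Whitney regions of $\Omega$ (on which distances to $\pom$ and to $\pomfqo$ are comparable), and sums over the decomposition, the bad cubes contributing an acceptable amount precisely because of their Carleson packing property.

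The main obstacle is exactly this transference-and-summation step, to be carried out under \emph{one-sided} hypotheses only. Without an exterior Corkscrew condition one forfeits the doubling of $\omega$, the full strength of the comparison principle, and the two-sided symmetry that in the NTA setting makes such corona patching comparatively clean; one must therefore control the boundary behavior of harmonic functions on $\Omega$ purely from the inside, and arrange the sawtooth subdomains $\Omega_{\F,Q_0}$ so that their interplay with $\Omega$ is governed only by the interior Corkscrew and Harnack Chain geometry. I expect the construction of these chord-arc subdomains adapted to the corona decomposition, together with the verification that they have uniformly controlled constants, to be the most delicate part of the argument; by contrast, the real-variable passage from the Carleson measure estimate to weak-$A_\infty$, while technical, follows a by now standard template.
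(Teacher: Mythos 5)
Your proof of Theorem \ref{th1.25} follows exactly the paper's own structure: $(1)\Rightarrow(2)$ is Theorem \ref{theor-main-I}, $(3)\Rightarrow(1)$ is Theorem \ref{th2.2} from \cite{HMU}, and $(2)\Rightarrow(3)$ is the standard real-variable equivalence of weak-$A_\infty$ with absolute continuity plus a weak reverse H\"older condition (your phrase about localizing Gehring ``to Carleson boxes'' is a slight misnomer, since the self-improvement happens on concentric surface balls in $\pom$, not in Carleson boxes inside $\Omega$; but the substance is right, and the paper simply cites \cite{CF}, \cite{GR}, \cite{Sa} for this step as discussed after Definition \ref{def1.ainfty}).

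Your outlined proof of $(1)\Rightarrow(2)$ (Theorem \ref{theor-main-I}) is, however, a genuinely different route from the paper's. You propose a square-function/Carleson estimate $\dint_{B\cap\Omega}|\nabla u|^2\,\dist(X,\pom)\,dX\lesssim\|u\|_\infty^2\,\sigma(\Delta)$ for all bounded harmonic $u$, and then pass to weak-$A_\infty$ via $\varepsilon$-approximability and the David--Semmes \emph{bilateral} corona decomposition into Lipschitz-graph approximating regions. The paper never establishes a Carleson estimate for general bounded harmonic functions and never invokes $\varepsilon$-approximability; it works instead with the single Carleson measure $|\nabla^2\mathcal{S}1(X)|^2\,\dist(X,\pom)\,dX$ attached to the layer potential characterization \eqref{eq1.sf} of UR, proves in Lemma \ref{lemma4.main} that smallness of this measure in a sawtooth forces \emph{exterior} corkscrews so that \cite{DJe} applies directly to that sawtooth, transfers the resulting $A_\infty$ information back to $\Omega$ through the dyadic projection version of the DJK sawtooth lemma (Lemma \ref{lemma:DJK-dyadic-proj}), and closes the argument with the extrapolation-of-Carleson-measures induction (Lemma \ref{lemma:extrapol}) run against a discrete stopping-time corona (Lemma \ref{lemma:Corona}), not against the David--Semmes bilateral corona. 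Your route is essentially the later ``$\varepsilon$-approximability'' approach, which is valid but postdates this paper; the paper's approach is more economical in that it only needs to control the second derivatives of the one explicit harmonic object $\mathcal{S}1$, which the UR hypothesis controls by fiat. Two cautions about your sketch: (i) by Hrycak's example, recalled in the Introduction, a UR set need not contain big pieces of Lipschitz graphs, so the bilateral corona yields only graph \emph{approximations}, and extracting chord-arc subdomains whose boundaries carry a big piece of $\pom$ is a nontrivial construction; (ii) the paper's handling of the non-doubling issue --- working in approximating domains $\Omega_N$ that possess a qualitative exterior corkscrew, passing to the limit in Subsection \ref{sremove}, retaining only the lower bound \eqref{eq7.17}, and then invoking the real-variable self-improvement of \cite{BL} --- is precisely where the conclusion gets pinned down as \emph{weak}-$A_\infty$ rather than $A_\infty$; an analogous mechanism would have to be built into your $\varepsilon$-approximability framework, and your proposal does not yet say how.
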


\begin{remark}  By the counter-example of \cite{BiJo}, one would not
expect to obtain the implication $(1)\implies (2)$, without some sort of connectivity assumption;
for us, the interior Harnack Chain condition
plays this role.
\end{remark}

Given a domain $\Omega\subset \ree$, a
 ``surface ball'' is a set
 $\Delta= \Delta(x,r):=B(x,r)\cap\partial\Omega$, where $x\in\partial\Omega$, and $B(x,r)$ denotes
the standard $(n+1)$-dimensional Euclidean ball of radius $r$ centered at $x$.
For such a surface ball $\Delta$, we let
$\omega^{X_\Delta}$ denote harmonic measure for $\Omega$,
with pole at the ``Corkscrew point'' $X_\Delta$ (see Definition \ref{def1.cork}).
The Corkscrew and Harnack Chain conditions, as well as the
notions of Ahlfors-David regularity (ADR),
uniform rectifiability  (UR) and weak-$A_\infty$, are described in
Definitions
\ref{def1.cork}, \ref{def1.hc},  \ref{def1.ADR}, \ref{def1.UR}, and \ref{def1.ainfty} below.

The present paper treats the direction (1) implies (2).
That (2) implies (3) is well known (see the discussion following Definition
\ref{def1.ainfty}).   The main result in \cite{HMU} is that (3) implies (1).
We mention also that we obtain in the present paper an extension of
(1) implies (2), in which our hypotheses are assumed to hold only in an
``interior big pieces'' sense (cf. Definition \ref{def1.bp} and Theorem
\ref{theor-main-I:BP} below).

To place Theorem \ref{th1.25} in context, we review previous related work
in dimension $n+1\geq 3$.
We recall that in \cite{JK}, the authors introduce
the notion of  a  ``non-tangentially accessible'' (NTA) domain:
$\Omega$ is said to be NTA if
it satisfies the Corkscrew
and Harnack Chain conditions
(``interior Corkscrew and Harnack Chain conditions''), and also if
the exterior domain, $\Omega_{ext}:= \ree\setminus \overline{\Omega}$ (which need not be connected),
satisfies the Corkscrew condition
(``exterior Corkscrew condition'').
The latter was relaxed
in \cite{DJe} to allow a sort of  ``weak exterior Corkscrew''
condition in which the analogue of the exterior Corkscrew point
is the center merely of an $n$-dimensional
disk in $\Omega_{ext}$, rather than of a full Euclidean ball.
A key observation made in \cite{DJe} was that the weak exterior Corkscrew condition is still
enough to obtain local H\"older continuity at the boundary of
harmonic functions which vanish on a surface ball.   In \cite{DJe}, the authors prove that,
 in the presence of Ahlfors-David regularity of the boundary,
the NTA condition of \cite{JK} or even its relaxed version with ``weak exterior Corkscrews'',
implies that $\Omega$ satisfies an
``interior big pieces'' of Lipschitz sub-domains condition
(cf. Definition \ref{def1.bp} below).
By a simple maximum principle argument (plus the deep result of \cite{Dah}),
one then almost immediately obtains a certain lower bound for harmonic measure, to wit, that
there are constants $\eta\in (0,1)$ and $c_0>0$ such that for
each surface ball $\Delta\subset\partial\Omega$, and any Borel subset $A\subset\Delta$,
we have \begin{equation}\label{eq1.2***}
\omega^{X_\Delta}(A)\geq c_0\,,\qquad {\rm whenever}\,\,\,\sigma(A)\geq \eta \,\sigma(\Delta).
\end{equation}
In turn, still given NTA, or at least the relaxed version of \cite{DJe},
the latter bound self-improves to an $A_\infty$ estimate for harmonic measure,
via the comparison principle.
The same $A_\infty$ conclusion was also obtained
by a different argument in \cite{S},
under the full NTA condition of \cite{JK}.
In \cite{BL}, the authors impose an interior Corkscrew condition, but
in lieu of the Harnack Chain and exterior (or weak exterior) Corkscrew conditions,
the authors assume instead the consequence of these conditions
deduced in \cite{DJe},
namely, that $\Omega$ satisfies the aforementioned
condition concerning
``interior big pieces'' of Lipschitz sub-domains.    The bound
\eqref{eq1.2***} (suitably interpreted)
then holds
almost immediately (again by the maximum principle),
but the self-improvement argument, in the absence of
the Harnack Chain and exterior (or weak exterior) Corkscrew conditions, is now
more problematic
(indeed, the usual proofs of the comparison principle rely on Harnack's inequality
and local H\"older continuity at the boundary),
and the authors conclude in \cite{BL} only that $\omega$ is weak-$A_\infty$.  On the other hand,
they give an example to show that this conclusion is best possible (that is, they construct
a domain which satisfies the ``interior big pieces'' condition, but whose harmonic measure
fails to be doubling).  We mention also in this context the recent paper
\cite{Ba}, in which the geometric conclusion of \cite{DJe}, namely the existence of
``interior big pieces'' of Lipschitz sub-domains, is shown to hold assuming the full
NTA condition (with two-sided Corkscrews), but in which only the lower
(but not the upper) bound
is required in the Ahlfors-David condition (cf.
\eqref{eq1.ADR}).

In the present paper, we improve the results of \cite{BL} and of \cite{DJe} by removing
the ``big pieces of Lipschitz sub-domains'' hypothesis,  as well as all assumptions regarding
the exterior domain.  That is, in Theorem \ref{theor-main-I},
we assume only that $\Omega$ satisfies {\it interior} Corkscrew and Harnack Chain conditions,
and that its boundary is uniformly rectifiable.  More generally, in
Theorem \ref{theor-main-I:BP}, we suppose only
that these hypotheses hold in an appropriate ``interior big pieces'' sense (in particular,  our results include
those of \cite{BL} as
a special case, since their Lipschitz sub-domains clearly satisfy our hypotheses).
The difficulty now, and the heart of the proof, is
to establish \eqref{eq1.2***};
with the latter in hand, the self-improvement to weak $A_\infty$
proceeds as in \cite{BL}.
We mention that by an unpublished example of Hrycak,
UR does not, in general,  imply big pieces of Lipschitz graphs\footnote{
On the other hand, Azzam and Schul \cite{AS}
have recently shown that every UR set contains ``big pieces of big pieces of Lipschitz graphs''
(see \cite[pp. 15-16]{DS2} or \cite{AS} for a precise formulation).   This is a beautiful result, but seems inapplicable to the estimates for harmonic measure considered here:
to enable essential use of the maximum principle,
one would need ``{\bf interior} big pieces (cf. Definition \ref{def1.bp} below) of {\bf interior}
big pieces of Lipschitz subdomains'' (say, in the presence of the 1-sided NTA condition), and it is not clear that the methods of \cite{AS} would yield such a result .  We do expect that the methods of the present paper could be pushed to do so, and we plan to present these arguments, with applications to more general elliptic-harmonic measures, in a forthcoming paper.} (that the opposite
implication does hold for ADR sets is easy, and well known).
Moreover,  in \cite{HMU} we obtain a converse
which shows that the UR hypothesis is optimal. In this connection, we mention also the following observation, which was brought to our attention by M. Badger and T. Toro.  Let $F\subset \re^2$ denote the ``4 corners Cantor set'' of J. Garnett (see, e.g., \cite[p. 4]{DS2}), and let
$F^*:= F \times \re \subset \re^3$ be the ``cylinder'' above $F$.  Then $\Omega:= \re^3 \setminus F^*$
satisfies the (interior) Corkscrew and Harnack Chain conditions,
and has a $2$-dimensional ADR boundary,
but the boundary is not UR, and
therefore its harmonic measure is not weak-$A_\infty$.

We conclude this historical survey
by providing some additional context for our work here and in \cite{HMU},
namely, that our results may
be viewed as a ``large constant'' analogue of the work of
Kenig and Toro \cite{KT1,KT2,KT3}.   The latter,
taken collectively, say that in the presence of a Reifenberg flatness
condition
and Ahlfors-David regularity,  one has that
$\log k \in VMO$ iff $\nu \in VMO$, where $k$ is the Poisson kernel with pole at some fixed point, and
$\nu$ is the unit normal to the boundary.  Moreover, given the same background hypotheses,
the condition that $\nu \in VMO$ is equivalent to
a uniform rectifiability (UR) condition with vanishing trace,
thus $\log k \in VMO \iff vanishing \,\, UR.$  On the other hand, our
large constant version ``almost'' says  ``$\,\log k \in BMO\iff UR\,$'',
given interior Corkscrews and Harnack Chains.
Indeed,  it is well known that the $A_\infty$ condition
(i.e.,  weak-$A_\infty$ plus the doubling property) implies that $\log k \in BMO$, while
if $\log k \in BMO$ with small norm, then $k\in A_\infty$.

In order to state our results precisely, we shall first need to
discuss some preliminary matters.

\subsection{Notation and Definitions}

\begin{list}{$\bullet$}{\leftmargin=0.4cm  \itemsep=0.2cm}

\item We use the letters $c,C$ to denote harmless positive constants, not necessarily
the same at each occurrence, which depend only on dimension and the
constants appearing in the hypotheses of the theorems (which we refer to as the
``allowable parameters'').  We shall also
sometimes write $a\lesssim b$ and $a \approx b$ to mean, respectively,
that $a \leq C b$ and $0< c \leq a/b\leq C$, where the constants $c$ and $C$ are as above, unless
explicitly noted to the contrary.  At times, we shall designate by $M$ a particular constant whose value will remain unchanged throughout the proof of a given lemma or proposition, but
which may have a different value during the proof of a different lemma or proposition.

\item Given a domain $\Omega \subset \ree$, we shall
use lower case letters $x,y,z$, etc., to denote points on $\partial \Omega$, and capital letters
$X,Y,Z$, etc., to denote generic points in $\ree$ (especially those in $\ree\setminus \partial\Omega$).

\item The open $(n+1)$-dimensional Euclidean ball of radius $r$ will be denoted
$B(x,r)$ when the center $x$ lies on $\partial \Omega$, or $B(X,r)$ when the center
$X \in \ree\setminus \partial\Omega$.  A ``surface ball'' is denoted
$\Delta(x,r):= B(x,r) \cap\partial\Omega.$

\item Given a Euclidean ball $B$ or surface ball $\Delta$, its radius will be denoted
$r_B$ or $r_\Delta$, respectively.

\item Given a Euclidean or surface ball $B= B(X,r)$ or $\Delta = \Delta(x,r)$, its concentric
dilate by a factor of $\kappa >0$ will be denoted
by $\kappa B := B(X,\kappa r)$ or $\kappa \Delta := \Delta(x,\kappa r).$

\item For $X \in \ree$, we set $\delta(X):= \dist(X,\partial\Omega)$.

\item We let $H^n$ denote $n$-dimensional Hausdorff measure, and let
$\sigma := H^n\big|_{\partial\Omega}$ denote the ``surface measure'' on $\partial \Omega$.

\item For a Borel set $A\subset \ree$, we let $1_A$ denote the usual
indicator function of $A$, i.e. $1_A(x) = 1$ if $x\in A$, and $1_A(x)= 0$ if $x\notin A$.

\item For a Borel set $A\subset \ree$,  we let $\interior(A)$ denote the interior of $A$.
If $A\subset \partial\Omega$, then $\interior(A)$ will denote the relative interior, i.e., the largest relatively open set in $\partial\Omega$ contained in $A$.  Thus, for $A\subset \partial\Omega$,
the boundary is then well defined by $\partial A := \overline{A} \setminus {\rm int}(A)$.

\item For a Borel set $A$, we denote by $\mathcal{C}(A)$ the space of continuous functions on
$A$, by $\mathcal{C}_c(A)$ the subspace of $\mathcal{C}(A)$
with compact support in $A$, and by $\mathcal{C}_b(A)$ the
space of bounded continuous functions on $A$.  If $A$ is unbounded, we denote by
$\mathcal{C}_0(A)$ the space of continuous functions on $A$ converging to $0$ at infinity.

\item For a Borel subset $A\subset\partial\Omega$, we
set $\fint_A f d\sigma := \sigma(A)^{-1} \int_A f d\sigma$.

\item We shall use the letter $I$ (and sometimes $J$)
to denote a closed $(n+1)$-dimensional Euclidean cube with sides
parallel to the co-ordinate axes, and we let $\ell(I)$ denote the side length of $I$.
We use $Q$ to denote a dyadic ``cube''
on $\partial \Omega$.  The
latter exist, given that $\partial \Omega$ is ADR  (cf. \cite{DS1}, \cite{Ch}), and enjoy certain properties
which we enumerate in Lemma \ref{lemmaCh} below.

\end{list}

\begin{definition} ({\bf Corkscrew condition}).  \label{def1.cork}
Following
\cite{JK}, we say that a domain $\Omega\subset \ree$
satisfies the ``Corkscrew condition'' if for some uniform constant $c>0$ and
for every surface ball $\Delta:=\Delta(x,r),$ with $x\in \partial\Omega$ and
$0<r<\diam(\partial\Omega)$, there is a ball
$B(X_\Delta,cr)\subset B(x,r)\cap\Omega$.  The point $X_\Delta\subset \Omega$ is called
a ``Corkscrew point'' relative to $\Delta.$  We note that  we may allow
$r<C\diam(\pom)$ for any fixed $C$, simply by adjusting the constant $c$.
\end{definition}

\begin{remark}\label{remark1.2}
We note that, on the other hand, every $X\in\Omega$, with $\delta(X)<\diam(\pom)$,
may be viewed as a Corkscrew point,
relative to some surface ball $\Delta\subset\pom$.
Indeed, set $r=K \delta(X)$,  with $K>1$, fix
$x\in\pom$ such that $|X-x|=\delta(X)$, and let
$\Delta:=\Delta(x,r)$.
\end{remark}

\begin{definition}({\bf Harnack Chain condition}).  \label{def1.hc} Again following \cite{JK}, we say
that $\Omega$ satisfies the Harnack Chain condition if there is a uniform constant $C$ such that
for every $\rho >0,\, \Lambda\geq 1$, and every pair of points
$X,X' \in \Omega$ with $\delta(X),\,\delta(X') \geq\rho$ and $|X-X'|<\Lambda\,\rho$, there is a chain of
open balls
$B_1,\dots,B_N \subset \Omega$, $N\leq C(\Lambda)$,
with $X\in B_1,\, X'\in B_N,$ $B_k\cap B_{k+1}\neq \emptyset$
and $C^{-1}\diam (B_k) \leq \dist (B_k,\partial\Omega)\leq C\diam (B_k).$  The chain of balls is called
a ``Harnack Chain''.
\end{definition}

We remark that the Corkscrew condition is a quantitative, scale invariant
version of the fact that $\Omega$ is open, and the Harnack Chain condition is a
scale invariant version of path connectedness.


\begin{definition}\label{def1.ADR}
({\bf Ahlfors-David regular}). We say that a closed set $E \subset \ree$ is $n$-dimensional ADR (or simply ADR) (``Ahlfors-David regular'') if
there is some uniform constant $C$ such that
\begin{equation} \label{eq1.ADR}
\frac1C\, r^n \leq H^n(E\cap B(x,r)) \leq C\, r^n,\,\,\,\forall r\in(0,R_0),x \in E,\end{equation}
where $R_0$ is the diameter of $E$ (which may be infinite).   When $E=\partial \Omega$,
the boundary of a domain $\Omega$, we shall sometimes for convenience simply
say that ``$\Omega$ has the ADR property'' to mean that $\partial \Omega$ is ADR.
\end{definition}

\begin{definition}\label{def1.UR}
({\bf Uniform Rectifiability}). Following
David and Semmes \cite{DS1,DS2}, we say
that a closed set $E\subset \ree$ is $n$-dimensional UR (or simply UR) (``Uniformly Rectifiable''), if
it satisfies the ADR condition \eqref{eq1.ADR}, and if for some uniform constant $C$ and for every
Euclidean ball $B:=B(x_0,r), \, r\leq \diam(E),$ centered at any point $x_0 \in E$, we have the Carleson measure estimate
\begin{equation}\label{eq1.sf}\dint_{B}
|\nabla^2 \mathcal{S}1(X)|^2 \,\dist(X,E) \,dX \leq C r^n,
\end{equation}
where $\mathcal{S}f$ is the single layer potential of $f$, i.e.,
\begin{equation}\label{eq1.layer}
\mathcal{S}f(X) :=c_n\, \int_{E} |X-y|^{1-n} f(y) \,dH^n(y).
\end{equation}
Here, the normalizing constant $c_n$ is chosen so that
$\mathcal{E}(X) := c_n |X|^{1-n}$ is the usual fundamental solution for the Laplacian
in $\ree.$  When $E=\partial \Omega$,
the boundary of a domain $\Omega$, we shall sometimes for convenience simply
say that ``$\Omega$ has the UR property'' to mean that $\partial \Omega$ is UR.
\end{definition}

We note that there are numerous characterizations of uniform rectifiability
given in \cite{DS1,DS2};  the one stated above will be most useful for our purposes, and
appears in \cite[Chapter 3,  Part III]{DS2}.
We remark
that the UR sets
are precisely those for which all ``sufficiently nice'' singular integrals
are bounded on $L^2$ (see \cite{DS1}).

We recall that ``Uniform Rectifiability'' is the scale invariant analogue of
rectifiability;
in particular, using an idea of P. Jones
\cite{J}, one may derive, for UR sets, a quantitative version of the fact that rectifiability may be characterized
in terms of existence a.e. of approximate tangent planes.
For $x \in E$, $t >0$, we set
\begin{equation} \label{eq1.beta}
\beta _2(x,t)\equiv \inf_P\left(\frac{1}{t^{n}}
\int_{B(x,t) \cap E}\left(\frac{\mbox{dist} (y,P)}{t }\right)^2
dH^n (y)\right)^{1/2},\end{equation}
where the infimum runs over all $n$-planes $P.$
Then  a closed, ADR set $E$ is  UR if and only if the following Carleson measure estimate holds on
$E\times \mathbb{R}_+$:
\begin{equation} \label{eq1.UR}
\sup_{x_0 \in E,\, r \, >\, 0} r^{-n} \int_0^r \int_{B(x_0,t) \cap E}
\beta_2(x,t)^2 dH^n (x)
\frac{dt}{t} \, < \, \infty.
\end{equation}
Again see \cite{DS1} for details.

\begin{definition}\label{def1.bp} {\bf (``Interior Big Pieces'')}.
Given a domain $\Omega\subset \ree$, with ADR boundary, and a collection $\mathcal{S}$ of domains in $\ree$, we say that
$\Omega$ has ``interior big pieces of $\mathcal{S}\,$'' (denoted $\Omega\in IBP(\mathcal{S})$)
if there are constants $\alpha>0,\,K>1$ such that for
every $X\in\Omega$, with $\delta(X)<\diam(\pom)$, there is a point $x\in \pom$, with
$|x-X|=\delta(X)$,
and a domain $\Omega' \in \mathcal{S}$
for which, with $r:=K\delta(X)$, we have
\begin{enumerate}
\item $\Omega'\subset\Omega$.
\smallskip
\item $H^n(\pom'\cap \Delta(x,r)) \geq \alpha H^n(\Delta(x,r)) \approx \alpha r^n.$
\smallskip
\item $X$ is  a Corkscrew point for $\Omega'$, relative to $\Delta_\star(y,2r):=B(y,2r)\cap\pom'$, for some
$y\in\pom'\cap\Delta$ (we note that $X$ is also a Corkscrew point for $\Omega$,
relative to $\Delta$, by construction;  cf. Remark \ref{remark1.2}).
\end{enumerate}

\end{definition}

\begin{lemma}\label{lemmaCh}({\bf Existence and properties of the ``dyadic grid''})
\cite{DS1,DS2}, \cite{Ch}.
Suppose that $E\subset \ree$ satisfies
the ADR condition \eqref{eq1.ADR}.  Then there exist
constants $ a_0>0,\, \eta>0$ and $C_1<\infty$, depending only on dimension and the
ADR constants, such that for each $k \in \mathbb{Z},$
there is a collection of Borel sets (``cubes'')
$$
\mathbb{D}_k:=\{Q_{j}^k\subset E: j\in \mathfrak{I}_k\},$$ where
$\mathfrak{I}_k$ denotes some (possibly finite) index set depending on $k$, satisfying

\begin{list}{$(\theenumi)$}{\usecounter{enumi}\leftmargin=.8cm
\labelwidth=.8cm\itemsep=0.2cm\topsep=.1cm
\renewcommand{\theenumi}{\roman{enumi}}}

\item $E=\cup_{j}Q_{j}^k\,\,$ for each
$k\in{\mathbb Z}$.

\item If $m\geq k$ then either $Q_{i}^{m}\subset Q_{j}^{k}$ or
$Q_{i}^{m}\cap Q_{j}^{k}=\emptyset$.

\item For each $(j,k)$ and each $m<k$, there is a unique
$m$ such that $Q_{j}^k\subset Q_{i}^m$.

\item Diameter $\left(Q_{j}^k\right)\leq C_12^{-k}$.

\item Each $Q_{j}^k$ contains some ``surface ball'' $\Delta \big(x^k_{j},a_02^{-k}\big):=
B\big(x^k_{j},a_02^{-k}\big)\cap E$.

\item $H^n\left(\left\{x\in Q^k_j:{\rm dist}(x,E\setminus Q^k_j)\leq \tau \,2^{-k}\right\}\right)\leq
C_1\,\tau^\eta\,H^n\left(Q^k_j\right),$ for all $k,j$ and for all $\tau\in (0,a_0)$.
\end{list}
\end{lemma}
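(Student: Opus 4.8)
This lemma is the dyadic--cube construction of David and Christ on an Ahlfors--David regular set, so the plan is to reproduce the net--and--tree argument and then verify (i)--(vi), the last of which is the only serious point. \emph{Nets.} For each $k\in\ZZ$, a greedy (Zorn's lemma) selection produces a maximal $2^{-k}$-separated subset $\mathcal{X}_k=\{x^k_j\}_{j\in\mathfrak{I}_k}$ of $E$; one arranges $\mathcal{X}_k\subset\mathcal{X}_{k+1}$, which will matter below. Maximality forces $\bigcup_j B(x^k_j,2^{-k})\supseteq E$, while $2^{-k}$-separation makes $\{B(x^k_j,2^{-k}/2)\}_j$ pairwise disjoint; together with the upper bound in \eqref{eq1.ADR} this gives a dimensional bound on the number of scale-$k$ net points in any ball of radius $\approx 2^{-k}$, and the lower bound in \eqref{eq1.ADR} shows $\bigcup_k\mathcal{X}_k$ is dense in $E$.

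\emph{Tree, pre-cubes, and the cubes.} Fix a well-ordering of $\bigcup_k\mathcal{X}_k$. For $x\in\mathcal{X}_{k+1}\setminus\mathcal{X}_k$ let the parent $\pi(x)\in\mathcal{X}_k$ be a nearest point of $\mathcal{X}_k$ to $x$, ties broken by the order (so $|x-\pi(x)|<2^{-k}$ by maximality), and set $\pi(x):=x$ when $x\in\mathcal{X}_k$; this last convention, which uses $\mathcal{X}_k\subset\mathcal{X}_{k+1}$, makes the coarser net points ``rigid'' in the tree. Iterating $\pi$ gives, for $x\in\mathcal{X}_m$ with $m\ge k$, an ancestor $\pi_k(x)\in\mathcal{X}_k$ with $|x-\pi_k(x)|<2^{-k+1}$ by geometric summation, and these ancestor maps are consistent across generations. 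Put $\widetilde Q^k_j:=\{x\in\bigcup_{m\ge k}\mathcal{X}_m:\pi_k(x)=x^k_j\}$; these pre-cubes are nested (each generation-$m$ pre-cube with $m\ge k$ lies in exactly one generation-$k$ pre-cube), have diameter $<2^{-k+2}$, and have closures covering $E$ by density. The cube $Q^k_j$ is obtained by closing up $\widetilde Q^k_j$ and, using the tree order, assigning each point of $E$ that lies in several such closures to a single one, the assignment arranged so that a point deep inside one pre-cube stays with it. This yields (i), (ii), (iii) exactly and (iv) from the diameter bound; for (v) one checks — using that parents are nearest points, and possibly coarsening to every $m$-th generation to improve the contraction ratio — that the ancestor chain of any net point in $B(x^k_j,a_0 2^{-k})$ is pinned to $x^k_j$, whence $\Delta(x^k_j,a_0 2^{-k})\subset Q^k_j$ for a small dimensional $a_0$.

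\emph{The small-boundary estimate (vi): the crux.} Here the ADR hypothesis must be used quantitatively. The mechanism is a geometric decay of the mass of the boundary shell $S_t:=\{x\in Q^k_j:\dist(x,E\setminus Q^k_j)\le t\}$ as $t$ shrinks: I would show that $\sigma(S_{c t})\le(1-c')\,\sigma(S_t)$ for fixed dimensional $c,c'\in(0,1)$, and then iterate $\approx\log(1/\tau)$ times from $t\approx 2^{-k}$ down to $t\approx\tau 2^{-k}$ to obtain $\sigma(S_{\tau 2^{-k}})\le C\tau^\eta\,\sigma(Q^k_j)$ with $\eta>0$. The per-step bound is where (v) enters: a point of $S_t$ lies in a generation-$\ell$ cube $Q'$ with $2^{-\ell}\approx t$ and $Q'\subset Q^k_j$, and the interior-ball property (v) together with both inequalities in \eqref{eq1.ADR} shows that $Q'$ contains a sub-piece of mass $\ge c''\sigma(Q')$ consisting of points at distance $\ge c\,t$ from $E\setminus Q^k_j$, hence outside $S_{c t}$; summing over the relevant generation-$\ell$ cubes (which cover $S_t$ and have measure $\approx 2^{-\ell n}$) gives the claimed fractional loss. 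I expect this iteration — and, upstream of it, securing the interior-ball property (v) with a workable constant — to be the main obstacle; everything else is routine once the net-and-tree scaffolding is in place.
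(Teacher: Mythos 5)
The paper does not actually prove Lemma \ref{lemmaCh}: it simply cites Christ \cite{Ch} and David--Semmes \cite{DS1,DS2}, and the only ``proof content'' offered in the paper is the remark that Christ's construction (which produces cubes at scales $\delta^k$ for some $\delta\in(0,1)$) can be upgraded to $\delta=1/2$ in the ADR setting. So there is no paper proof to compare against; you are being asked to supply a sketch of a cited result, and your proposal is a reasonable reconstruction of the standard Christ/David construction along the expected lines (nested nets, a tree of parents, pre-cubes from ancestor sets, closure-and-tiebreak, interior ball from the pinned ancestor chain, and a self-improving shell estimate for the small-boundary property). The outline for (vi) in particular is sound: cover $S_t$ by generation cubes $Q'$ of scale comparable to $t$; each such $Q'$ lies in $Q^k_j$ by nesting and in $S_{Ct}$ by the diameter bound; its interior ball of relative measure $\gtrsim 1$ avoids $S_{t/C}$; summing gives $\sigma(S_{t/C})\le(1+c'')^{-1}\sigma(S_{Ct})$, and iterating yields the power decay.

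The one point you should not soft-pedal is the interior ball property (v), where your parenthetical ``possibly coarsening to every $m$-th generation'' is in fact not optional but essential. With consecutive generations at base $2$, a level-$m$ net point $y$ with $|y-x^k_j|<a_02^{-k}$ has its level-$(k{+}1)$ ancestor within $\sum_{\ell\ge k+1}2^{-\ell}=2^{-k}$ of $y$; adding $a_02^{-k}$, the ancestor lies within roughly $2^{-k}(1+a_0)$ of $x^k_j$, which does \emph{not} beat the $2^{-k}$ separation of the level-$k$ net, so nearest-parent cannot pin the ancestor to $x^k_j$ no matter how small $a_0$ is. The fix is exactly what Christ does: run the construction with a separation ratio $\delta$ small enough that $\delta/(1-\delta)+a_0<1/2$ (e.g.\ by using only every $N$-th dyadic generation, $\delta=2^{-N}$), so that the total drift of the ancestor walk is strictly below half the net separation; then obtain the full dyadic family by ``padding,'' letting each cube persist unchanged through the intermediate scales $jN<m<(j+1)N$. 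This costs only a multiplicative $2^N$ in $C_1$ and harmlessly shrinks $a_0$ by at most $2^N$, so (i)--(vi) still hold at every dyadic scale. If you present your argument, you should make this step explicit rather than leaving it conditional, and you should verify that the nesting $\mathcal{X}_k\subset\mathcal{X}_{k+1}$ is what lets the convention $\pi(x)=x$ for coarse net points combine with the small-$\delta$ drift bound to close the induction.
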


A few remarks are in order concerning this lemma.

\begin{list}{$\bullet$}{\leftmargin=0.4cm  \itemsep=0.2cm}

\item In the setting of a general space of homogeneous type, this lemma has been proved by Christ
\cite{Ch}.  In that setting, the
dyadic parameter $1/2$ should be replaced by some constant $\delta \in (0,1)$.
It is a routine matter to verify that one may take $\delta = 1/2$ in the presence of the Ahlfors-David
property (\ref{eq1.ADR}) (in this more restrictive context, the result already appears in \cite{DS1,DS2}).

\item  For our purposes, we may ignore those
$k\in \mathbb{Z}$ such that $2^{-k} \gtrsim {\rm diam}(E)$, in the case that the latter is finite.

\item  We shall denote by  $\mathbb{D}=\mathbb{D}(E)$ the collection of all relevant
$Q^k_j$, i.e., $$\mathbb{D} := \cup_{k} \mathbb{D}_k,$$
where, if $\diam (E)$ is finite, the union runs
over those $k$ such that $2^{-k} \lesssim  {\rm diam}(E)$.

\item Properties $(iv)$ and $(v)$ imply that for each cube $Q\in\mathbb{D}_k$,
there is a point $x_Q\in E$, a Euclidean ball $B(x_Q,r)$ and a surface ball
$\Delta(x_Q,r):= B(x_Q,r)\cap E$ such that
$r\approx 2^{-k} \approx {\rm diam}(Q)$
and \begin{equation}\label{cube-ball}
\Delta(x_Q,r)\subset Q \subset \Delta(x_Q,Cr),\end{equation}
for some uniform constant $C$.
We shall denote this ball and surface ball by
\begin{equation}\label{cube-ball2}
B_Q:= B(x_Q,r) \,,\qquad\Delta_Q:= \Delta(x_Q,r),\end{equation}
and we shall refer to the point $x_Q$ as the ``center'' of $Q$.

\item Let us now specialize to the case that  $E=\pom$, with $\Omega$ satisfying the Corkscrew condition.  Given $Q\in \mathbb{D}(\partial\Omega)$,
we
shall sometimes refer to a ``Corkscrew point relative to $Q$'', which we denote by
$X_Q$, and which we define to be the corkscrew point $X_\Delta$ relative to the ball
$\Delta:=\Delta_Q$ (cf. \eqref{cube-ball}, \eqref{cube-ball2} and Definition \ref{def1.cork}).  We note that
\begin{equation}\label{eq1.cork}
\delta(X_Q) \approx \dist(X_Q,Q) \approx \diam(Q).
\end{equation}
\end{list}

\begin{list}{$\bullet$}{\leftmargin=0.4cm  \itemsep=0.2cm}

\item For a dyadic cube $Q\in \mathbb{D}_k$, we shall
set $\ell(Q) = 2^{-k}$, and we shall refer to this quantity as the ``length''
of $Q$.  Evidently, $\ell(Q)\approx \diam(Q).$

\item For a dyadic cube $Q \in \mathbb{D}$, we let $k(Q)$ denote the ``dyadic generation''
to which $Q$ belongs, i.e., we set  $k = k(Q)$ if
$Q\in \mathbb{D}_k$; thus, $\ell(Q) =2^{-k(Q)}$.

\end{list}

\begin{definition} ({\bf $A_\infty$, $A_\infty^{\rm dyadic}$ and weak-$A_\infty$}).  \label{def1.ainfty}
Given a surface ball
$\Delta= B\cap\pom$, a Borel measure $\hm$ defined on $\partial \Omega$
is said to belong to the class $A_\infty(\Delta)$ if there are positive constants $C$ and $\theta$
such that for every $\Delta'=B'\cap\pom$ with $B'\subseteq B$,
and every Borel set $F\subset\Delta'$, we have
\begin{equation}\label{eq1.ainfty}
\hm (F)\leq C \left(\frac{\sigma(F)}{\sigma(\Delta')}\right)^\theta\,\hm (\Delta').
\end{equation}
If we replace the surface balls $\Delta$
and $\Delta'$  by a dyadic cube $Q$
and its dyadic subcubes $Q'$, with $F\subset Q'$,
then we say that $\hm\in A_\infty^{\rm dyadic}(Q)$:
\begin{equation}\label{eq1.ainftydyadic}
\hm (F)\leq C \left(\frac{\sigma(F)}{\sigma(Q')}\right)^\theta\,\hm (Q').
\end{equation}
Similarly, $\hm \in$ weak-$A_\infty(\Delta)$, with $\Delta =B\cap\pom$, if for every
$\Delta'=B'\cap\pom$ with $2B'\subseteq B$,
we have
\begin{equation}\label{eq1.wainfty}
\hm (F) \leq C \left(\frac{\sigma(F)}{\sigma(\Delta')}\right)^\theta\,\hm (2 \Delta')
\end{equation}
\end{definition}
As is well known  \cite{CF}, \cite{GR}, \cite{Sa},
the $A_\infty$ (resp. weak-$A_\infty$) condition is equivalent to
the property that the measure
$\hm$ is absolutely continuous with respect to $\sigma$, and that its density
satisfies a reverse H\"older (resp. weak reverse H\"older) condition.
In this paper, we are interested in the case that $\hm = \omega^{X}$,
the harmonic measure with
pole at $X$.  In that setting, we let
$k^X:= d\omega^X/d\sigma$ denote
the Poisson kernel, so that \eqref{eq1.ainfty} is equivalent to the reverse H\"older estimate
\begin{equation}\label{eq1.RH}
\left(\fint_{\Delta'} \left(k^{X}\right)^q d\sigma\right)^{1/q} \leq C\fint_{\Delta'}k^{X}\,d\sigma
\,,
\end{equation}
for some $q>1$
and for some uniform constant $C$.
In particular, when $\Delta'=\Delta$, and $X= X_\Delta$, a Corkscrew point relative to $\Delta$,
the latter estimate reduces to
\begin{equation}\label{eq1.lq}
\int_{\Delta} \left(k^{X_\Delta}\right)^q d\sigma \leq C\, \sigma(\Delta)^{1-q}.
\end{equation}
Similarly, \eqref{eq1.wainfty} is equivalent to
\begin{equation}\label{eq1.WRH}
\left(\fint_{\Delta'} \left(k^{X}\right)^q d\sigma\right)^{1/q} \leq
C\fint_{2\Delta'}k^{X}\,d\sigma
\,.
\end{equation}
Assuming that the latter bound holds with $\Delta'=\Delta$, and with $X=X_\Delta$,
then one again obtains \eqref{eq1.lq}.

\subsection{Statement of the Main Results}

Our main results are as follows.  We shall use the terminology that a connected open set
$\Omega\subset \ree$ is a {\bf 1-sided} NTA domain if it satisfies {\bf interior} (but not necessarily
exterior) Corkscrew and Harnack Chain conditions\footnote{We recall that such
domains are sometimes denoted
``uniform" domains in the literature, but we prefer the terminology ``1-sided NTA", both because it is more descriptive of the actual properties enjoyed by such domains, and to avoid confusion with
the completely different notion of ``uniform rectifiability".}.

\begin{theorem}\label{theor-main-I}  Let $\Omega \subset \ree,\,n\geq2,$
be a 1-sided NTA domain whose boundary
$\partial\Omega$ is $n$-dimensional UR.  Then
for each surface ball $\Delta$, the harmonic measure $\omega^{X_\Delta}$
belongs to weak-$A_\infty(\Delta),$ with uniform weak-$A_\infty$ constants depending only on
dimension and on the constants in the ADR, UR, Corkscrew and Harnack Chain conditions.
\end{theorem}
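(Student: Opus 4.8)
The plan is to exploit the known self-improvement machinery and reduce matters to the single estimate \eqref{eq1.2***}: once we know that there exist $\eta\in(0,1)$ and $c_0>0$ so that $\omega^{X_\Delta}(A)\ge c_0$ whenever $A\subset\Delta$ is Borel with $\sigma(A)\ge\eta\,\sigma(\Delta)$, the passage to the weak-$A_\infty$ conclusion proceeds exactly as in \cite{BL}, using only the interior Corkscrew and Harnack Chain hypotheses (no exterior conditions, no doubling). Thus the heart of the matter is to produce \eqref{eq1.2***} from uniform rectifiability alone. Since by Hrycak's example UR does \emph{not} give big pieces of Lipschitz \emph{graphs}, we cannot simply invoke the maximum principle against a Lipschitz subdomain as in \cite{DJe,BL}; the substitute must be softer. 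The strategy I would pursue is to prove a Carleson measure estimate for bounded harmonic functions in $\Omega$ — namely, that for every bounded harmonic $u$ with $\|u\|_\infty\le 1$, the measure
\begin{equation}\label{eq:cme-plan}
\mathfrak{m}(X) := |\nabla u(X)|^2\,\delta(X)\,dX
\end{equation}
is a Carleson measure on $\Omega$, i.e. $\mathfrak{m}\big(B(x,r)\cap\Omega\big)\lesssim r^n$ uniformly — and then deduce \eqref{eq1.2***} from this CME by an argument via $\varepsilon$-approximability and a stopping-time/comparison scheme.

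Concretely, the key steps in order: (1) Reduce the weak-$A_\infty$ statement to \eqref{eq1.2***}, quoting \cite{BL} for the self-improvement (this uses only the 1-sided NTA hypotheses plus the comparison principle valid in that setting). (2) Establish the Carleson measure estimate for $|\nabla u|^2\delta(X)\,dX$ for bounded harmonic $u$: this is where UR enters, through the single-layer-potential characterization \eqref{eq1.sf}–\eqref{eq1.layer} (equivalently the $\beta_2$-numbers \eqref{eq1.UR}). I would write $u$, locally, against the boundary data via the layer potentials — or more robustly, compare $u$ to a globally defined harmonic function built from $\mathcal{S}$ — and bound the square function of the error, using that on UR sets the relevant singular integrals and square functions built from $\mathcal{S}$ are $L^2$-bounded and satisfy Carleson bounds. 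The dyadic grid of Lemma \ref{lemmaCh} organizes a good-$\lambda$ / stopping-time decomposition adapted to where $\delta(X)$ is comparable to $\ell(Q)$ (Whitney regions over cubes). (3) From the CME, derive $\varepsilon$-approximability of bounded harmonic functions: every such $u$ can be approximated in an $L^1$-Carleson sense by functions $\varphi$ whose gradients satisfy $|\nabla\varphi|\,dX$ is a Carleson measure, with $\|u-\varphi\|_\infty<\varepsilon$. (4) Use $\varepsilon$-approximability together with a stopping-time argument and the comparison principle (available here since we have interior Corkscrews and Harnack Chains, hence boundary Hölder continuity and the CFMS-type estimates of \cite{JK}-type in the 1-sided setting) to show that $\omega^{X_\Delta}$ cannot concentrate: if $\omega^{X_\Delta}(\Delta\setminus A)$ were too small for some $A$ with $\sigma(A)\ge\eta\sigma(\Delta)$, one manufactures a bounded harmonic function (roughly the harmonic extension of $1_{\Delta\setminus A}$, or a solution vanishing on a large piece) violating the CME/approximability bound at scale $r_\Delta$. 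Tuning $\eta$ close enough to $1$ forces $\omega^{X_\Delta}(A)\ge c_0$, which is \eqref{eq1.2***}.

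The main obstacle, and the step that requires genuinely new ideas beyond \cite{BL,DJe}, is step (2): proving the Carleson measure estimate for $|\nabla u|^2\delta\,dX$ on a 1-sided NTA domain with UR boundary, with \emph{no} control on the exterior domain. On Lipschitz or chord-arc domains such CMEs are classical (Dahlberg, Jerison–Kenig), but there the two-sided structure is used; here one has only the interior side. The plan is to overcome this by transferring the geometric information encoded in \eqref{eq1.sf}/\eqref{eq1.UR} directly onto the harmonic function via layer potential representations, controlling the passage from ``boundary flatness in the $\beta_2$ sense'' to ``smallness of the oscillation of $u$ in Whitney cubes'' through a telescoping argument over the dyadic grid, and absorbing error terms using the $L^2$-boundedness of truncated singular integrals on UR sets. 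A secondary subtlety is ensuring all the boundary-regularity tools (boundary Hölder continuity, CFMS-type comparison estimates, Bourgain's lower bound for harmonic measure) are available in the 1-sided NTA category; these hold because the interior Corkscrew condition already yields Bourgain's estimate and the Harnack Chain condition yields the comparison principle, but this must be checked carefully since the literature usually states them under full NTA. I expect the $\varepsilon$-approximability step (3) to then be relatively routine given (2), following the classical Garnett-type argument adapted to the dyadic grid, and step (4) to be a packing/stopping-time argument of moderate difficulty.
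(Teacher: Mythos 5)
Your reduction of the theorem to estimate \eqref{eq1.2***} (i.e., \eqref{*}, \eqref{eq7.17}) via the real-variable self-improvement of \cite{BL} is exactly the paper's strategy, and it is correct that this reduction uses only the ADR/1-sided NTA hypotheses. Where you diverge genuinely from the paper is in how you propose to \emph{prove} \eqref{eq1.2***}. The paper's route is: a Corona stopping time on the discretized Carleson measure $\alpha_Q = \iint_{U_Q^{fat}} |\nabla^2\mathcal{S}1|^2\,\delta\,dX$ (Lemma \ref{lemma:Corona}), an extrapolation-of-Carleson-measures induction (Lemma \ref{lemma:extrapol}), the Poincar\'e-based criterion showing that sawtooths on which this measure is small admit \emph{exterior} corkscrews (Lemma \ref{lemma4.main}), David--Jerison applied to those sawtooths (Corollary \ref{cor4.18}), and a dyadic DJK-type projection lemma (Lemma \ref{lemma:DJK-dyadic-proj}) to transfer $A_\infty$ from the sawtooth back to $\omega$; all of this is run in approximating domains $\Omega_N$ possessing a qualitative exterior corkscrew, with \eqref{eq1.2***} recovered for $\Omega$ in the limit. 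Your route instead goes through a Carleson measure estimate for $|\nabla u|^2\,\delta\,dX$ for \emph{all} bounded harmonic $u$, then $\varepsilon$-approximability, then a stopping-time comparison to extract \eqref{eq1.2***}. That route is conceptually sound --- it is essentially the one made rigorous in later work (Hofmann--Martell--Mayboroda and sequels, Garnett--Mourgoglou--Tolsa) --- and it has the aesthetic advantage that the CME and approximability steps need no connectivity at all, only UR. The trade-off is that proving the CME on a set that is merely UR is a theorem of the same order of difficulty as the present one; your sketch (``write $u$ against layer potentials and telescope over the dyadic grid'') does not by itself yield it, because layer potentials are not invertible here and the genuine work is a Corona-type construction not unlike the one the paper runs.

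One concrete error to flag: in step (4) you assert that interior Corkscrew plus Harnack Chain give ``boundary H\"older continuity and the CFMS-type estimates.'' That is false --- boundary H\"older continuity of solutions vanishing on a surface ball is exactly the estimate that fails in the absence of an exterior corkscrew (or weak exterior disk) condition, and avoiding it is one of the main technical points of this paper. The CFMS comparison estimates \eqref{eq2.CFMS1}--\eqref{eq2.CFMS2} and the boundary Harnack principle of Lemma \ref{lemma2.comparison} are indeed available in the 1-sided NTA setting, but the paper obtains them by an absorption argument that requires a priori finiteness of $\Upsilon_{B_0}$ (hence the qualitative exterior corkscrew in the approximating domains $\Omega_N$, removed at the very end by a limiting argument); alternatively one may cite Aikawa \cite{Ai1,Ai2}. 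As written, your parenthetical would lead the argument into a circle. With that corrected (e.g., by invoking Aikawa, or by running your stopping-time step inside approximating domains as the paper does), your plan is viable but amounts to a different, and historically later, proof.
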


We emphasize again that we impose no hypothesis
(as in \cite{JK}, \cite{S}, \cite{DJe}) on the geometry of the exterior domain,
nor do we assume as in \cite{BL} that
the boundary has ``Big Pieces''  of boundaries of Lipschitz subdomains
of $\Omega$.

We shall also obtain a certain ``self-improvement'' of Theorem \ref{theor-main-I}, in which the hypotheses
are assumed to hold only in an appropriate ``big pieces'' sense.

\begin{theorem}\label{theor-main-I:BP}  Let $\Omega \subset \ree,\,n\geq2,$
be a connected open set whose boundary
$\partial\Omega$ is $n$-dimensional ADR.  Suppose further that
$\Omega \in IBP(\mathcal{S})$ (cf. Definition \ref{def1.bp}), where
$\mathcal{S}$ is a collection of 1-sided NTA domains with UR boundaries,
with uniform control of all of the relevant Corkscrew, Harnack Chain, ADR and UR constants.
Then for each surface ball $\Delta =\Delta (x,r)$, and for every $X\in\Omega \setminus
B(x,r)$,  the harmonic measure $\hm^X$ belongs to
weak-$A_\infty(\Delta),$ with uniform weak-$A_\infty$ constants that depend only on
dimension, on the constants in the ADR and interior big pieces conditions,
and on the relevant constants for the subdomains.
\end{theorem}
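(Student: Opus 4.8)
The plan is to reduce to the scale-invariant lower bound \eqref{eq1.2***}, exactly as in the proof of Theorem~\ref{theor-main-I}. That is, I will show that there are constants $\eta\in(0,1)$ and $c_0>0$, depending only on the allowable parameters, such that for every surface ball $\Delta=\Delta(x,r)$ and every Borel set $A\subset\Delta$ with $\sigma(A)\ge\eta\,\sigma(\Delta)$ we have $\hm^{X_\Delta}(A)\ge c_0$. Granting \eqref{eq1.2***}, the self-improvement argument of \cite{BL} upgrades it to the weak-$A_\infty$ property of $\hm^{X_\Delta}$ on every surface ball, with uniform constants, and the passage from the corkscrew pole $X_\Delta$ to a general pole $X\in\Omega\setminus B(x,r)$ is then handled routinely, via Bourgain's non-degeneracy estimate for harmonic measure and the maximum principle (cf.\ \cite{JK}). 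So the whole point is to produce \eqref{eq1.2***} for $\Omega$ out of the interior big pieces hypothesis.

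To this end, fix $\Delta=\Delta(x,r)$ and $A\subset\Delta$ with $\sigma(A)\ge(1-\eps_0)\sigma(\Delta)$, where $\eps_0>0$ is a small number, depending only on the allowable parameters, to be chosen at the end. Invoke $\Omega\in IBP(\mathcal{S})$ at the scale of $\Delta$: applying Definition~\ref{def1.bp} to an interior Corkscrew point of $\Omega$ lying above a point near $x$ and at depth $\approx r/K$, and using that $K$ may be taken as large as we please, we obtain a subdomain $\Omega'\in\mathcal{S}$ with $\Omega'\subset\Omega$, a point $y\in\pom'$, a surface ball $\Delta_\star:=B(y,2\rho)\cap\pom'$ with $\rho\approx r$ and $\sigma'(\Delta_\star)\approx r^n$, and a point $X^\flat\in\Omega'$ which is a Corkscrew point for $\Omega'$ relative to $\Delta_\star$ (and, up to a harmless adjustment of constants, a Corkscrew point for $\Omega$ relative to $\Delta$), such that the ``interior big piece'' satisfies $\sigma\big(\pom'\cap\Delta\big)\gtrsim\alpha\,r^n\approx\alpha\,\sigma(\Delta)$; taking $K$ large absorbs the annular error (of $\sigma$-measure $\lesssim r^n/K$) coming from the fact that the ball attached to the IBP point need not be exactly $\Delta$.

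Next, put $A':=A\cap\pom'\cap\Delta_\star$, a Borel subset of $\pom'\cap\pom$. Since $A$ omits at most $\eps_0\,\sigma(\Delta)$ of $\Delta$ while $\pom'\cap\Delta$ has $\sigma$-measure $\gtrsim\alpha\,r^n$, choosing $\eps_0$ small relative to $\alpha$ gives $\sigma'(A')=\sigma(A')\gtrsim\alpha\,r^n\approx\alpha\,\sigma'(\Delta_\star)$, i.e.\ $\sigma'(A')\ge\eta'\,\sigma'(\Delta_\star)$ for some $\eta'\in(0,1)$ fixed in terms of the allowable parameters. Because $\Omega'\subset\Omega$, $A'\subset\pom'\cap\pom$, and $X^\flat\in\Omega'$, the maximum principle gives
\[
\hm^{X^\flat}_\Omega(A)\ \ge\ \hm^{X^\flat}_\Omega(A')\ \ge\ \hm^{X^\flat}_{\Omega'}(A')\,.
\]
Finally, $\Omega'$ is a 1-sided NTA domain with UR boundary and uniformly controlled constants, so the lower bound \eqref{eq1.2***} holds for $\Omega'$; applying it to the surface ball $\Delta_\star$, the set $A'$, and the Corkscrew point $X^\flat$ yields $\hm^{X^\flat}_{\Omega'}(A')\ge c_0>0$, whence $\hm^{X_\Delta}_\Omega(A)\gtrsim c_0$. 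This is \eqref{eq1.2***} for $\Omega$, with $\eta=1-\eps_0$.

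The crux is the last step. The interior big piece of $\pom'$ inside $\Delta$ has relative measure only $\gtrsim\alpha$, with $\alpha$ possibly small, so the threshold $\eta'$ above, while bounded away from $1$, need not be close to $1$; hence it is \emph{not} enough to know that $\hm_{\Omega'}$ is weak-$A_\infty$, since a weak reverse-H\"older weight can vanish on a set of positive surface measure. What is required is the full force of the lower bound \eqref{eq1.2***} for the subdomains, valid for every threshold in $(0,1)$ with a constant allowed to depend on the threshold --- equivalently, that for the domains of $\mathcal{S}$ surface measure is absolutely continuous with respect to harmonic measure, uniformly and scale-invariantly. This is exactly the hard part of Theorem~\ref{theor-main-I}, which is here used as a black box. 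A second, softer difficulty is that $\Omega$ need not satisfy a Harnack Chain condition, so the pole of $\hm_\Omega$ cannot be moved freely; the matching of scales and poles between $\Omega$ and $\Omega'$ in the IBP step, and the final reduction to an arbitrary pole, must be carried out via the maximum principle and Bourgain's estimate rather than by Harnack chaining. This limitation is precisely why one obtains only weak-$A_\infty$ and not $A_\infty$, in accordance with the counter-example of \cite{BL}.
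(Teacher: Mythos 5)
Your overall strategy coincides with the paper's: apply the IBP hypothesis at the pole to produce a subdomain $\Omega'\in\mathcal{S}$; invoke for $\Omega'$ not merely the weak-$A_\infty$ conclusion of Theorem~\ref{theor-main-I} but the stronger intermediate estimate \eqref{eq7.12}; transfer to $\Omega$ by the maximum principle; and self-improve via \cite{BL}. You have correctly isolated the crux: since $\sigma(\pom'\cap\Delta)$ may have relative measure only $\approx\alpha$ with $\alpha$ small, the weak-$A_\infty$ property of $\hm_{\Omega'}$ is useless here (a weak reverse H\"older weight can vanish on a set of positive measure), and one needs the full strength of \eqref{eq7.12} at every threshold, i.e.\ a scale-invariant quantitative form of $\sigma\ll\hm$ for the domains of $\mathcal{S}$. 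That observation is exactly right and matches the paper's use of \eqref{eq7.12} inside \eqref{eq8.max_prin}.

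However, there are genuine gaps. First, your version of \eqref{eq1.2***} is posed with a pole $X_\Delta$ attached to an arbitrary surface ball $\Delta$, but $\Omega$ is \emph{not} assumed to satisfy a Corkscrew condition, so such an $X_\Delta$ need not exist; the paper therefore formulates the lower bound per pole, as in \eqref{eq7.19}, with $\Delta_X:=\Delta(x,K\delta(X))$ determined by $X$. Second, the ``annular error'' is not $O(r^n/K)$ and cannot be absorbed by taking $K$ large: for an ADR set, $\sigma\big(\Delta(x,r(1+\tau))\setminus\Delta(x,r)\big)$ can be comparable to $r^n$ for every $\tau>0$ (e.g.\ if $\pom$ contains a large cap of the sphere $\partial B(x,r)$), so the re-centering from $\Delta_X$ to $\Delta$ is uncontrolled; the paper avoids this by never re-centering. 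Third, the passage from $X_\Delta$ to a general pole $X\in\Omega\setminus B(x,r)$ is \emph{not} routine --- with no Harnack Chain there is no comparison principle between $\hm^{X}$ and $\hm^{X_\Delta}$, and indeed the paper makes no such transfer. Instead it applies \cite[Lemma~3.1]{BL} to the measure $\hm^{X}$ directly for each fixed $X\in\Omega\setminus B(x,r)$, by establishing \eqref{eq7.15*} via the annular decomposition of \cite[Lemma~2.2]{BL}: the lower bound \eqref{eq7.19} is invoked at a family of auxiliary poles $Y\in S_k$ at depth $\approx \eps s/K$, together with a case analysis (on whether such $Y$ exist on $S_k$, by continuity of $\delta$, or whether the radial path of \eqref{eq7.24} is needed) that replaces Harnack chaining, and finally a maximum-principle argument in $\Omega\setminus\tfrac74 B'$. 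Fourth, the maximum-principle inequality $\hm^{X}_{\Omega'}(A')\le\hm^{X}_{\Omega}(A')$ is only formal in this generality: neither domain is assumed Wiener regular, and the two solutions are Perron solutions for different domains which need not extend continuously to the respective closures. The paper devotes the final page of Section~7.3 to a rigorous justification of \eqref{eq8.max_prin}, using outer regularity and the approximating domains $\Omega'_N$; your proposal treats it as immediate, which it is not.
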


\noindent{\it Remark}.  We note that in Theorem \ref{theor-main-I:BP}, we have obtained
that $\hm^X$ belongs to weak-$A_\infty(\Delta(x,r))$, for all $X \in \Omega\setminus B(x,r)$.
In the presence of the Harnack Chain Condition, as in Theorem \ref{theor-main-I},
one may obtain the same conclusion for $X=X_\Delta$, the Corkscrew point relative to $\Delta$.
On the other hand, in Theorem \ref{theor-main-I}, we of course also obtain
that $\hm^X$ belongs to weak-$A_\infty(\Delta(x,r))$, for all $X \in \Omega\setminus B(x,r)$.

In a companion paper to this one \cite{HMU}, we shall establish the  converse
to Theorem \ref{theor-main-I}:

\begin{theorem}\label{th2.2} Let $\Omega \subset \ree,\,n\geq2,$
be a 1-sided NTA domain, whose boundary
is $n$-dimensional ADR.   Suppose also that  harmonic measure
$\omega$ is absolutely continuous with respect to surface measure
and that there is a $q>1$ such that
for every surface ball $\Delta=\Delta(x,r)$ with radius  $r\lesssim \diam \partial\Omega$,
the Poisson kernel satisfies the scale invariant  estimate
\eqref{eq1.lq}.
Then $\partial \Omega$ is UR.
\end{theorem}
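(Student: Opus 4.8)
The plan is to prove $\pom$ is uniformly rectifiable by verifying the Carleson measure estimate \eqref{eq1.sf} for the single layer potential, and to obtain \eqref{eq1.sf} from a \emph{local $Tb$ theorem for square functions}, with the testing functions built out of harmonic measure via the hypothesis \eqref{eq1.lq}. After the routine passage from Euclidean balls to the dyadic cubes $Q\in\mathbb{D}(\pom)$ of Lemma \ref{lemmaCh} and a Whitney decomposition of $\Omega$, the estimate \eqref{eq1.sf} is equivalent to an $L^2$ square function bound (globally, and in its local form over each $Q$) for the operators $\theta_t f(x):=t\,\nabla^2\mathcal{S}f(X)$, where $X\in\Omega$ is a Whitney point at height $\approx t$ above $x$; these $\theta_t$ have kernels $|K_t(x,y)|\lesssim t\,(t+|x-y|)^{-n-1}$ and the Calder\'on--Zygmund/quasi-orthogonality structure needed for a local $Tb$ theorem. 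Hence it suffices to exhibit, for each $Q$, a function $b_Q$ with $\supp b_Q\subset\Delta_Q$ (up to a harmless fixed dilation), $\fint_{\Delta_Q}|b_Q|^q\,d\sigma\lesssim 1$, $\big|\fint_{\Delta_Q}b_Q\,d\sigma\big|\gtrsim 1$, and the testing bound
\[
\iint_{R_Q}\big|\theta_t b_Q(x)\big|^2\,\frac{d\sigma(x)\,dt}{t}\ \lesssim\ \sigma(Q),
\]
$R_Q$ the Carleson box over $Q$. Since \eqref{eq1.lq} only furnishes an $L^q$ bound for some $q>1$ — which cannot in general be self-improved to the exponent $2$ — one must use the version of the local $Tb$ theorem for square functions in which the testing functions lie merely in $L^q$; verifying its hypotheses for the $\theta_t$ above is technical but standard.

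The testing functions are dictated by harmonic measure: take $b_Q:=\sigma(\Delta_Q)\,k^{X_Q}\mathbf 1_{\Delta_Q}$, with $X_Q$ the Corkscrew point relative to $Q$ (cf.\ \eqref{eq1.cork}). Then $\fint_{\Delta_Q}|b_Q|^q\,d\sigma\lesssim 1$ is precisely \eqref{eq1.lq}, and $\fint_{\Delta_Q}b_Q\,d\sigma=\omega^{X_Q}(\Delta_Q)\gtrsim 1$ by Bourgain's lower bound for harmonic measure (valid here since $\pom$ is ADR and $\Omega$ has the interior Corkscrew property). The whole problem is thus the testing bound for $b_Q$, and the link to the PDE is the identity
\[
\mathcal{S}b_Q(X)\ =\ \sigma(\Delta_Q)\big(\E(X-X_Q)-G(X_Q,X)\big)\ -\ \sigma(\Delta_Q)\!\!\int_{\pom\setminus\Delta_Q}\!\!\E(X-y)\,d\omega^{X_Q}(y),\qquad X\in\Omega,
\]
which follows from $\int_{\pom}\E(X-y)\,d\omega^{X_Q}(y)=\E(X-X_Q)-G(X_Q,X)$, the defining relation for the Green function $G$. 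On $\{X\in R_Q:\delta(X)\le\eta\,\ell(Q)\}$, which carries the Carleson mass, $X$ lies at distance $\gtrsim\ell(Q)$ from $X_Q$ and from $\pom\setminus\Delta_Q$, so $|\nabla^2\E(X-X_Q)|\lesssim\ell(Q)^{-n-1}$ and (summing over dyadic annuli, using $\omega^{X_Q}(\pom)\le1$) likewise $\big|\nabla^2\!\int_{\pom\setminus\Delta_Q}\E(X-y)\,d\omega^{X_Q}(y)\big|\lesssim\ell(Q)^{-n-1}$; multiplied by $t\,\sigma(\Delta_Q)\approx t\,\ell(Q)^n$ and integrated $\tfrac{d\sigma\,dt}{t}$ over $R_Q$, each term contributes $O(\sigma(Q))$. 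The region $\{\eta\,\ell(Q)<\delta(X)\lesssim\ell(Q)\}$ is a bounded number of Whitney layers, controlled by interior estimates. So everything reduces to the Green-function Carleson estimate
\[
\iint_{R_Q}\big|\nabla^2 G(X_Q,X)\big|^2\,\delta(X)\,dX\ \lesssim\ \sigma(\Delta_Q)^{-1}\ \approx\ \ell(Q)^{-n}.
\]

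To prove this last estimate I would first develop, in the $1$-sided NTA setting, the required potential theory: the one-sided boundary Harnack principle and the Caffarelli--Fabes--Mortola--Salsa-type comparison $G(X_Q,X)\approx\delta(X)\,\omega^{X_Q}(\Delta_X)/\sigma(\Delta_X)$ on Whitney regions (where $\Delta_X$ is a surface ball of radius $\approx\delta(X)$ near $X$), together with the interior gradient bound $|\nabla G(X_Q,X)|\lesssim G(X_Q,X)/\delta(X)$. These combine to give $|\nabla G(X_Q,X)|\lesssim\fint_{\Delta_X}k^{X_Q}\,d\sigma$, so the non-tangential maximal function of $\nabla G(X_Q,\cdot)$ over $\Delta_Q$ is $\lesssim\M k^{X_Q}$, whose $L^q(\Delta_Q)$-norm is controlled by \eqref{eq1.lq}; one also has the crude Dirichlet-energy bound $\iint_{R_Q}|\nabla G(X_Q,X)|^2\,dX\lesssim\ell(Q)^{-n}$ from $|\nabla G|^2=\tfrac12\Delta(G^2)$ and integration by parts. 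The passage from these to the displayed second-gradient estimate — which asserts that, at the natural scale, $\nabla G(X_Q,\cdot)$ has not merely the right size but the right oscillation — I expect to be the heart of the argument, carried out via an $S\lesssim N$-type comparison of conical square function and non-tangential maximal function for the harmonic vector field $\nabla G(X_Q,\cdot)$ (after subtracting suitable constants on Whitney regions), organized by a stopping-time/corona decomposition of $\Delta_Q$ adapted to the reverse-H\"older property of $k^{X_Q}$, so as to turn $L^q$-control of the maximal function and $L^2$-control of the energy into $L^2$-control of the square function. This is precisely where the principal obstacle lies: with $q$ possibly only slightly above $1$ the naive route ``$S\lesssim N$ and $N\in L^2$'' is unavailable, and the a priori unknown geometry of $\pom$ blocks any direct control of the oscillation of $\nabla G$; extracting this estimate robustly, and threading it through the $L^q$-testing local $Tb$ theorem, is the ``novel use of $Tb$ theory'' mentioned in the introduction. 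Granting \eqref{eq1.sf}, $\pom$ is UR by Definition \ref{def1.UR}.
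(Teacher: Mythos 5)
The framework you propose is the one the present paper advertises for the companion result \cite{HMU} (``a novel use of $Tb$ theory''), and the parts you actually carry out are correct: passing from \eqref{eq1.sf} to an $L^2$ square function estimate for $\theta_t f(x)=t\nabla^2\mathcal{S}f(X)$ at Whitney points, invoking a local $Tb$ theorem for square functions with $L^q$ testing, and taking $b_Q=\sigma(\Delta_Q)\,k^{X_Q}\mathbf{1}_{\Delta_Q}$. The size and non-degeneracy conditions for $b_Q$ are immediate from \eqref{eq1.lq} and Lemma \ref{Bourgainhm}, and the identity
\[
\mathcal{S}b_Q(X)=\sigma(\Delta_Q)\bigl(\E(X-X_Q)-G(X_Q,X)\bigr)-\sigma(\Delta_Q)\int_{\pom\setminus\Delta_Q}\E(X-y)\,d\omega^{X_Q}(y)
\]
correctly reduces the testing bound, after discarding the smooth terms, to the Carleson estimate
\[
\iint_{T_Q}\bigl|\nabla^2 G(X_Q,X)\bigr|^2\,\delta(X)\,dX\lesssim \ell(Q)^{-n}.
\]

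The genuine gap is that you stop here, and this is the crux, not a routine verification. As you yourself observe, the available inputs are $|\nabla G(X_Q,X)|\lesssim \M k^{X_Q}(x)\in L^q(\Delta_Q)$ (via CFMS and interior estimates) together with a Dirichlet-energy bound, and neither of these feeds into an $L^2$ Carleson estimate for $\nabla^2 G$ when $q$ is close to $1$. Plugging the crude size estimate $|\nabla^2 G|\lesssim |\nabla G|/\delta$ into the integral produces a logarithmically divergent sum over scales: the estimate requires genuine decay of the \emph{oscillation} of $\nabla G(X_Q,\cdot)$ across Whitney regions, which is precisely what one cannot assume a priori for an ADR boundary of unknown geometry (indeed, ``$\nabla G$ has bounded oscillation'' is essentially the conclusion, not a hypothesis). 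Your closing paragraph names the right ingredients (an $S\lesssim N$ comparison relative to a corona decomposition tuned to the reverse-H\"older property of $k^{X_Q}$, and a local $Tb$ theorem allowing $L^q$ testing functions), but neither the precise $Tb$ statement you would invoke (the standard local $Tb$ theorem requires $L^2$ control of the testing functions; the $L^q$ versions impose extra hypotheses that must be checked against $\theta_t$ here) nor the mechanism converting the $L^q$ hypothesis into the $L^2$ Carleson bound for $\nabla^2 G$ is actually produced. Since the present paper does not contain the proof of Theorem \ref{th2.2} but defers it entirely to \cite{HMU}, the most I can say with certainty is that your blueprint is consistent with what the paper claims the companion does, while the key estimate — the Green function Carleson bound and the $L^q$-testing $Tb$ machinery around it — remains a genuine open step in your write-up. (A small slip in passing: the crude energy bound should read $\iint_{T_Q}|\nabla G(X_Q,X)|^2\,dX\lesssim \ell(Q)^{1-n}$, not $\ell(Q)^{-n}$, though this does not affect the structure of the argument.)
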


We also mention that in \cite{HMU} we obtain a ``big pieces'' version of the previous result in the following sense. Let $E\subset\ree$ be a closed set and assume that $E$ is $n$-dimensional ADR. Assume that there exists $q>1$ such that $E$ has ``big  pieces of boundaries of $\mathcal{S}$''
(i.e., for every surface ball $B(x,r)\cap E$ there is $\Omega'\in\mathcal{S}$ whose boundary
has  an ``ample'' contact with $E\cap B(x,r)$), where $\mathcal{S}$ is a collection of domains $\Omega'$ each of them satisfying the hypotheses of Theorem \ref{th2.2} (with $q$ fixed) and with uniform control on the relevant constants. Then $E$ is UR. See \cite{HMU} for the precise statement.

\bigskip

\noindent{\bf Acknowledgements}.  The first named author wishes to thank
John Lewis for  helpful comments concerning the paper \cite{BL}.
He also thanks Misha Safonov and Tatiana Toro for bringing to our attention the work of Aikawa \cite{Ai1}, \cite{Ai2}.

\section{Outline of the Strategy of the Proof}

Let us sketch the strategy of the proofs of Theorems \ref{theor-main-I}
and \ref{theor-main-I:BP}.  We shall do most of our analysis
in certain approximating domains which enjoy additional qualitative properties.  Given these qualitative properties, we shall prove some {\it a priori} estimates for the Green function $G$ and for harmonic measure $\hm$,
beginning with Lemma \ref{lemma2.cfms} in Section \ref{section-fund-est}, whose proofs rely on being able to ``hide'' certain small quantities, which must therefore be known in advance to be finite.
An interesting feature of these {\it a priori} estimates is that they permit us to deduce
the doubling property for $\hm$, as well as
a comparison principle for $G$,
in the absence of an exterior disk or Corkscrew condition (the exterior conditions
enable one to prove boundary H\"older continuity of solutions vanishing on a surface ball).  We obtain these properties
for $G$ and $\hm$ without establishing boundary H\"older continuity.
We note that, by the work of Aikawa \cite{Ai1}, \cite{Ai2}, some of the preliminary estimates that we prove in Section \ref{section-fund-est}, in particular, the ``Carleson estimate" Lemma \ref{lemma2.carleson}, and the Comparison Principle (aka ``Boundary Harnack Principle") Lemma \ref{lemma2.comparison},
are known, but we include our own relatively short proofs here for the sake of self-containment.

We also establish several geometric preliminaries as follows.
In Section \ref{spoincare}, we use the
Harnack Chain property  to prove a Poincar\'e inequality
(Lemma \ref{poincare}),
which we use in turn, in Section \ref{s5*}, to obtain a criterion for the existence of
exterior Corkscrew points in the complement of certain ``sawtooth'' regions
(Lemma \ref{lemma4.main}).  This criterion stipulates that
the Carleson measure (cf. \eqref{eq1.sf})
\begin{equation}\label{eq2.1*}|\nabla^2\mathcal{S}1(X)|^2\dist(X,\partial\Omega) dX
\end{equation}
be {\bf sufficiently small} in the relevant sawtooth region.
We then present in Section \ref{s6*} a variant of the ``sawtooth lemma'' of \cite{DJK}
(Lemma \ref{lemma:DJK-dyadic-proj}), which
roughly speaking allows
for a comparison, in the sense of $A_\infty$, between the respective
harmonic measures, $\hm$ and $\hm_{\Omega_{\F}}$,
for the original domain and for the sawtooth domain
(more precisely, our version of the sawtooth lemma
allows us to transfer the dyadic $A_\infty$ property of $\hm_{\Omega_{\F}}$
to  $\P_\F\omega$, where $\P_\F$ is a sort of ``conditional expectation'' projection operator,
with respect to some collection $\F$ of non-overlapping dyadic cubes
from which the sawtooth was constructed).  The arguments of Section \ref{s6*} are an extension, to the present context, of our previous work in the Euclidean setting \cite{HM}.

With these preliminary matters in hand, we proceed to the heart of our proof,
which will exploit the technique of ``extrapolation (i.e., bootstrapping) of Carleson measures'',
as it appears in our previous work \cite{HM} (see also \cite{HM-ANU}), but originating
in \cite{CG} and \cite{LM}.   We now describe the application of this technique in our setting.
By a Corona type stopping time construction delineated in Section \ref{scorona},
plus an induction scheme (formalized in Lemma \ref{lemma:extrapol}),
we reduce matters to verifying that $\P_\F\hm$
(that is, the projection of harmonic measure mentioned above)
enjoys the dyadic $A_\infty$ property, in sawtooth domains $\Omega_\F$
in which the Carleson measure \eqref{eq2.1*} has {\bf sufficiently small} Carleson norm.
In turn, we establish this property for $\P_\F\hm$, by using the preliminary facts noted above:
by the smallness of \eqref{eq2.1*} in the sawtooth, we deduce
that the complement of the sawtooth
enjoys an exterior Corkscrew condition. Thus, we may apply the results of \cite{DJe} to the sawtooth,
to obtain that $\hm_{\Omega_{\F}}$, the harmonic measure for the sawtooth domain,
belongs to $A_\infty$ with respect to surface measure
on the boundary of the sawtooth. Then, invoking our version of the sawtooth lemma, we find that
$\P_\F\hm$ belongs to dyadic $A_\infty$, as desired.   The ``extrapolation'' technology
(i.e., Lemma \ref{lemma:extrapol}) now allows us to conclude that $\hm$ belongs to $A_\infty$
with respect to surface measure, in a local, but scale invariant way.  However, at this point,
we have only reached this conclusion in our approximating domains $\Omega_N$,
albeit with $A_\infty$ constants independent of $N$.  Here $\{\Omega_N\}$ is
a nested increasing sequence of sub-domains of $\Omega$, each of which enjoys the qualitative properties mentioned above, such that $\Omega_N\nearrow\Omega$.  It is not clear
whether the $A_\infty$ property of harmonic measure, or even the doubling property, are transmitted
in the limit to harmonic measure on $\Omega$.  However, a maximum principle argument
(in the case of Theorem \ref{theor-main-I:BP}, there are two separate maximum principle arguments)
allows us to transfer, at least, the property that there are uniform constants $c_0,
\eta\in (0,1)$ such that for any Borel subset  $A\subset \Delta$,
\begin{equation}\label{*}\tag{$*$}
\sigma(A)>\eta\,\sigma(\Delta)\,\,\,\implies\,\, \,\omega^{X_\Delta}(A) \geq c_0\,.
\end{equation}
The fact that  \eqref{*} holds, in the absence of assumptions on the exterior domain $\Omega_{ext}$
or on $\partial\Omega$ (beyond UR), is really the main result of this paper.
Given \eqref{*}, we obtain the conclusion of Theorems \ref{theor-main-I} and \ref{theor-main-I:BP}
by invoking the arguments of \cite{BL}.

\section{Some fundamental estimates}\label{section-fund-est}
In this section we recall or establish certain fundamental estimates for
harmonic measure and the Green function.  In the sequel,
$\Omega \subset \ree,\, n\geq 2,$ will be a connected, open set, $\omega^X$ will denote
harmonic measure for $\Omega$, with pole at $X$, and $G(X,Y)$
will be the Green function.
At least in the case that $\Omega$ is bounded, we may, as usual,
define $\omega^X$ via the maximum principle and the Riesz representation theorem,
after first using the method of
Perron (see, e.g., \cite[pp. 24--25]{GT}) to construct a harmonic function ``associated'' to arbitrary continuous boundary
data.\footnote{Since we have made no assumption as regards
Wiener's regularity criterion, our harmonic function is a generalized solution, which
may not be continuous up to the boundary.}  For unbounded $\Omega$, we may still
define harmonic measure via a standard approximation scheme as follows.
Given $R>0$, set $\Omega_R := \Omega \cap B(x_0,2R)$, where
$x_0$ is a fixed point on $\partial \Omega$.  Define a smooth
cut-off function $\eta\in \C^\infty_0([-2,2])$, with $0\leq \eta \leq 1$, $\eta\equiv 1$ on $[-1,1]$,
and $\eta$ monotone decreasing on $(1,2)$ and monotone increasing on $(-2,-1).$
Suppose now that $0\leq f \in \mathcal{C}_b(\partial\Omega)$ and set
\begin{equation}\label{eq2.fr}f_R(x):= f(x)\, \eta\left(\frac{|x-x_0|}{R}\right).\end{equation}
Extending $f_R$ to be zero
outside of its support defines a continuous function on
$\partial \Omega_R$, so we may construct the corresponding
Perron solution $u_R$ in $\Omega_R$.  By the maximum principle,
$$
u_R\leq u_{R'} \mbox{ in }\Omega_R, \,\, {\rm if} \,\, R'>R,\qquad
{\rm and} \qquad\sup_{\Omega_R} u_R \leq
\sup_{\partial\Omega_R} f_R\leq \sup_{\partial\Omega} f.$$
Consequently, by Harnack's convergence theorem (\cite[p. 22]{GT}),
there is a harmonic function $u$
in $\Omega$ such that
\begin{equation}\label{eq2.limit}\lim_{R\to\infty}u_R = u\,,\end{equation}
with the convergence being uniform on compacta in $\Omega.$
Moreover, $u$ satisfies the maximum principle
$$\sup_\Omega u \leq \sup_{\partial\Omega} f.$$
Thus, we may again define harmonic measure $\omega^X$ for $X\in\Omega$ via the
Riesz representation theorem.  We note for future reference that $\omega^X$
is a non-negative, finite Borel measure which satisfies the outer regularity property
\begin{equation}\label{eq2.outer}
\omega^X(A) := \inf_{A\subseteq O} \omega^X(O),
\end{equation}
for every Borel set $A\subset \partial\Omega$, where the infimum runs over all
(relatively) open $O \subset \partial \Omega$ containing $A$.

The Green function may now be constructed
by setting
\begin{equation}\label{eq2.greendef}
G(X,Y):= \mathcal{E}\,(X-Y) - \int_{\partial\Omega}\mathcal{E}\,(X-z)\,d\omega^Y(z),
\end{equation}
where $\mathcal{E}\,(X):= c_n |X|^{1-n}$ is the usual fundamental solution for the Laplacian
in $\ree$.  We choose the normalization that makes $\mathcal{E}$ positive.
Given this normalization,
we shall also have that $G\geq 0$ (cf. Lemma \ref{lemma2.green} below.)

Before  proceeding further,
let us note one more fact for future reference.  Assuming that $\Omega$ is unbounded,
and using the notation above,
let $\omega^X_R$ and $G_R(X,Y)$ denote, respectively, harmonic measure and Green's function for
the approximating domain $\Omega_R$.  We then have
\begin{equation}\label{eq2.converge}
\lim_{R\to\infty} G_R(X,Y) = G(X,Y),
\end{equation}
with the convergence being uniform on compacta in $\Omega$,
in the $Y$ variable with $X \in \Omega$ fixed.
Indeed,  fixing $X$,  choosing $R$ so large that $R>>|X-x_0|$, and setting
$f:= \mathcal{E}\,(X-\cdot)$, with $f_R$ defined as in \eqref{eq2.fr}, we have that
$$ \int_{\partial\Omega_R} f\, d\omega^Y_R = \int_{\partial\Omega_R} f_R \,d\omega^Y_R
\,+\, O(R^{1-n})  :=u_R(Y) + O(R^{1-n}).$$
We then obtain \eqref{eq2.converge} immediately from \eqref{eq2.limit}
and the definition of the Green function \eqref{eq2.greendef}.

\begin{lemma}[Bourgain \cite{B}]\label{Bourgainhm}  Suppose that
$\partial \Omega$ is $n$-dimensional ADR.  Then there are uniform constants $c\in(0,1)$
and $C\in (1,\infty)$,
such that for every $x \in \partial\Omega$, and every $r\in (0,\diam(\partial\Omega))$,
if $Y \in \Omega \cap B(x,cr),$ then
\begin{equation}\label{eq2.Bourgain1}
\omega^{Y} (\Delta(x,r)) \geq 1/C>0 \;.
\end{equation}
In particular, if $\Omega$ satisfies the Corkscrew and Harnack Chain conditions,
then for every surface ball $\Delta$, we have
\begin{equation}\label{eq2.Bourgain2}
\omega^{X_\Delta} (\Delta) \geq 1/C>0 \;.
\end{equation}
\end{lemma}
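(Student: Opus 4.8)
The plan is to prove the main estimate \eqref{eq2.Bourgain1} by testing harmonic measure against a suitable subsolution built from the fundamental solution, exploiting the upper Ahlfors--David regularity bound; the consequence \eqref{eq2.Bourgain2} then follows by applying \eqref{eq2.Bourgain1} at the Corkscrew point together with the Harnack Chain condition. First I would fix $x\in\partial\Omega$ and $r\in(0,\diam(\partial\Omega))$ and consider the function
\[
v(Y):= c_n\, r^{1-n}\int_{\Delta(x,2r)} |Y-z|^{1-n}\, d\sigma(z),
\]
i.e., (a multiple of) the single layer potential of $r^{1-n}\,1_{\Delta(x,2r)}$. This $v$ is harmonic in $\Omega$, nonnegative, and by the upper ADR bound one checks $v(Y)\le C_0$ for all $Y$ (decomposing the domain of integration into dyadic annuli $\{2^{j}r\le |Y-z|<2^{j+1}r\}$ for $j\ge 0$ and using $\sigma(\Delta(x,2^{j+1}r))\lesssim (2^{j+1}r)^n$, which gives a geometrically convergent sum; for $Y$ near $x$ one uses a similar annular decomposition at scales below $r$). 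On the other hand, by the lower ADR bound, for $Y\in B(x,cr)$ with $c$ small one has $|Y-z|\le |Y-x|+|x-z| \le cr+2r \le 3r$ for $z\in \Delta(x,2r)\setminus\Delta(x,2cr)$... more simply, for $z\in\Delta(x,2r)$ and $Y\in B(x,cr)$ we get $|Y-z|\lesssim r$, hence $v(Y)\gtrsim r^{1-n}\, r^{1-n}\,\sigma(\Delta(x,2r))\gtrsim 1$; so $v(Y)\ge c_1>0$ on $B(x,cr)\cap\Omega$ after suitably choosing the constants.

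Next I would compare $v$ with harmonic measure via the maximum principle in the bounded domain $\Omega\cap B(x,2r)$ (or in the approximating domains $\Omega_R\cap B(x,2r)$, passing to the limit, to cover the unbounded case and the lack of Wiener regularity). On $\partial(\Omega\cap B(x,2r))$ we split into the part lying on $\partial\Omega$, which is contained in $\Delta(x,2r)$ where we only use $v\ge 0$ and $\omega^Y(\Delta(x,2r))\le 1$; and the part lying on $\Omega\cap\partial B(x,2r)$, where $|Y-z|\ge$ a positive distance... here the cleanest route is to instead compare against $\Delta(x,r)$: run the above with radius $r$ in place of $2r$ for the potential but keep the surface ball $\Delta(x,r)$, arranging that on $\Omega\cap\partial B(x,r)$ one has $v(Y)\le C_0$ while $\omega^Y(\Delta(x,r))$ could be small, so instead we want the reverse comparison $\omega^Y(\Delta(x,r)) \ge c\, v(Y)$. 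Concretely: the function $w(Y):= C_0\,\omega^Y(\Delta(x,r)) - v(Y) \cdot(\text{const})$ should be shown to be a nonnegative (super)solution by checking boundary values — on $\Omega\cap\partial B(x,r)$, $v$ is bounded by a constant we absorb, and $\omega^Y(\Delta(x,r))$ handled by Bourgain's own reflection/barrier; near $\partial\Omega\cap \overline{B(x,r)}$, $v$ is bounded while $\omega^{\cdot}(\Delta(x,r))\to 1$ on $\Delta(x,r)$. The upshot after optimizing constants is $\omega^Y(\Delta(x,r))\ge 1/C$ for $Y\in B(x,cr)\cap\Omega$.

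For \eqref{eq2.Bourgain2}: given a surface ball $\Delta=\Delta(x,r)$ with $r<\diam(\partial\Omega)$, let $X_\Delta$ be the Corkscrew point, so $B(X_\Delta,cr)\subset B(x,r)\cap\Omega$ and in particular $X_\Delta\in B(x,r)\cap\Omega$ with $\delta(X_\Delta)\gtrsim r$. Applying \eqref{eq2.Bourgain1} with radius $r/c$ (permissible after adjusting the Corkscrew constant as in Definition~\ref{def1.cork}), or, more directly, connecting $X_\Delta$ to a point in $B(x,c'r)\cap\Omega$ by a Harnack Chain of bounded length (the two points lie at distance $\lesssim r$ from each other and both at distance $\gtrsim r$ from $\partial\Omega$), Harnack's inequality gives $\omega^{X_\Delta}(\Delta)\approx \omega^{Y}(\Delta)\ge 1/C$. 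The main obstacle is the careful bookkeeping in the maximum-principle comparison — ensuring the boundary-value inequalities hold on $\Omega\cap\partial B(x,r)$ where harmonic measure need not be small, and doing this rigorously through the approximating domains $\Omega_R$ given that we have assumed no boundary regularity; the ADR estimates on the single layer potential are routine once the annular decomposition is set up.
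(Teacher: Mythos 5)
The paper itself gives no proof of this lemma; it simply cites \cite[Lemma 1]{B}, and Bourgain's argument is indeed the layer-potential-barrier / maximum-principle comparison you are reaching for. So your overall strategy is the right one. But as written the argument has a genuine gap at precisely the step you yourself flag: the boundary-value comparison on the spherical part $\partial B(x,r)\cap\Omega$. If you support the density on $\Delta(x,2r)$ (or $\Delta(x,r)$), then on $\partial B(x,r)\cap\Omega$ the potential $v$ is of the \emph{same} order as its values on $B(x,cr)\cap\Omega$, and there is no a priori lower bound on $\omega^Y(\Delta(x,r))$ on the sphere, so the inequality $C_0\,\omega^Y(\Delta(x,r))\ge (\mathrm{const})\,v(Y)$ that you need on $\partial(\Omega\cap B(x,r))$ simply fails to hold on the spherical piece. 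The phrase ``$\omega^Y(\Delta(x,r))$ handled by Bourgain's own reflection/barrier'' is circular: that is exactly the estimate being proved. There is also a normalization slip: the density should be $r^{-1}\mathbf{1}_{\Delta(x,\alpha r)}$ rather than $r^{1-n}\mathbf{1}_{\Delta(x,\alpha r)}$ if one wants $v$ of order one (with $r^{1-n}$ the bound $C_0$ depends on $r$ unless $n=2$), since $\int_{\Delta(x,\alpha r)}|Y-z|^{1-n}\,d\sigma(z)\approx r$ by the dyadic annular decomposition.

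The missing idea, which is the crux of Bourgain's lemma, is a \emph{contrast in scales} together with a \emph{constant shift}. Take the potential
\[
v(Y):=\int_{\Delta(x,\eps r)}|Y-z|^{1-n}\,d\sigma(z)
\]
supported on a much smaller ball $\Delta(x,\eps r)$. Then (i) $v\le A\,r$ everywhere by upper ADR; (ii) $v\ge c_0\,2^{1-n}\eps\,r$ on $B(x,\eps r)\cap\Omega$ by lower ADR; and crucially (iii) on $\partial B(x,r)\cap\Omega$ one has $|Y-z|\ge(1-\eps)r$, so $v\le C_0\,\eps^n\,r/(1-\eps)^{n-1}$. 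Set $\beta:=\sup_{\partial B(x,r)\cap\Omega}v\lesssim\eps^n r$ and apply the maximum principle (through the bounded approximating domains $\Omega_R$, as you indicate) in $\Omega\cap B(x,r)$ to
\[
h(Y):=A\,r\cdot\omega^Y(\Delta(x,r))\;-\;\bigl(v(Y)-\beta\bigr).
\]
On $\partial\Omega\cap\overline{B(x,r)}$ one has $\omega^Y\to 1$ (at Wiener-regular points, or in the approximating domains) and $v\le Ar$, so $h\ge 0$; on $\partial B(x,r)\cap\Omega$ one has $v\le\beta$ and $\omega^Y\ge 0$, so $h\ge 0$. Hence $h\ge 0$ in $\Omega\cap B(x,r)$, and for $Y\in B(x,\eps r)\cap\Omega$ this gives
\[
\omega^Y(\Delta(x,r))\;\ge\;\frac{v(Y)-\beta}{A\,r}\;\ge\;\frac{c_0\,2^{1-n}\eps - C\,\eps^n}{A}\;\ge\;\frac{1}{C'}>0,
\]
once $\eps$ is fixed small enough that $C\,\eps^n\le\tfrac12 c_0\,2^{1-n}\eps$, which is possible since $n\ge 2$. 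Without the smaller support $\Delta(x,\eps r)$, the shift by $\beta$, and the use of $\eps^n=o(\eps)$, the barrier fails on the sphere. Your deduction of \eqref{eq2.Bourgain2} from \eqref{eq2.Bourgain1} via Corkscrew and a bounded Harnack Chain is fine.
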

We refer the reader to \cite[Lemma 1]{B} for the proof.

We next introduce some notation. We say that a domain $\Omega$ satisfies the \textbf{qualitative exterior
Corkscrew} condition if there exists $N\gg 1$ such that $\Omega$ has exterior corkscrew points at all scales smaller than $2^{-N}$. That is, there exists a constant $c_N$ such that for every surface ball $\Delta=\Delta(x,r)$, with $x\in \pom$  and $r\le 2^{-N}$, there is a ball $B(X_\Delta^{ext},c_N\,r)\subset B(x,r)\cap\Omega_{ext}$.

Given a ball $B_0$ centered on $\pom$, and $X\in\Omega\setminus B_0$, we also introduce the quantity
\begin{equation}\label{defi-upsilon}
\Upsilon_{B_0}(X):=\sup_{B:2B\subseteq B_0} \frac{r_\Delta^{1-n} \omega^X(\Delta)}{G(X_\Delta,X)}.
\end{equation}
where the sup runs over all the balls $B$ centered at $\pom$ with $2B\subseteq B_0$ and where as usual $\Delta=B\cap\pom$. We also set $\|\Upsilon_{B_0}\|=\sup_{X\in\Omega\setminus B_0}\Upsilon_{B_0}(X)$.  The quantity $\Upsilon_{B_0}$ will enter in the proof of Lemma \ref{lemma2.cfms} below.

\begin{remark}\label{remark:qualitative}
Let us observe that if $\Omega$ satisfies the qualitative exterior Corkscrew condition,  then every point in $\pom$ is regular in the sense of Wiener. Moreover, for 1-sided NTA domains, the qualitative
exterior Corkscrew points allow local H\"older continuity
at the boundary (albeit with bounds which may depend badly on $N$),
so that the program of \cite{JK} may be followed to prove that $\Upsilon_{B_0}(X)$ is {\it a priori} finite (possibly depending on $N, X$ and $B_0$).
Eventually, we shall apply Lemmas \ref{lemma2.green} and \ref{lemma2.cfms} below (and several related
lemmas and corollaries) to certain approximating domains $\Omega_N$ which will inherit the stated quantitative hypotheses from the original domain $\Omega$,
but which also satisfy the qualitative exterior corkscrew conditions for scales $\lesssim 2^{-N}$.
We emphasize that all of the {\it quantitative} bounds that we shall establish
will depend only upon dimension and on the
parameters in the 1-sided NTA and UR (including ADR) conditions,
and thus these bounds will hold uniformly for the entire family of approximating domains.
\end{remark}

\begin{lemma} \label{lemma2.green} There are positive, finite constants $C$, depending only on dimension,
and $c(n,\theta)$, depending on dimension and $\theta \in (0,1),$
such that the Green function satisfies
\begin{eqnarray}\label{eq2.green}
&G(X,Y) \leq C\,|X-Y|^{1-n}\\[4pt]\label{eq2.green2}
& c(n,\theta)\,|X-Y|^{1-n}\leq G(X,Y)\,,\quad {\rm if } \,\,\,|X-Y|\leq \theta\, \delta(X)\,, \,\, \theta \in (0,1)\,.
\end{eqnarray}
Moreover, if every point on $\pom$  is regular in the sense of Wiener, then
\begin{equation}
\label{eq2.green3}
G(X,Y)\geq 0\,,\qquad \forall X,Y\in\Omega\,,\, X\neq Y;
\end{equation}
\begin{equation}\label{eq2.green4}
G(X,Y)=G(Y,X)\,,\qquad \forall X,Y\in\Omega\,,\, X\neq Y;
\end{equation}
and
\begin{equation}\label{eq2.14}
\int_{\partial\Omega} \Phi\,d\omega^X = -\iint_\Omega
\nabla_Y G(Y,X) \cdot\nabla\Phi(Y)\, dY,
\end{equation}
for every $X\in\Omega$ and $\Phi \in C_0^\infty(\ree)$ with
$\Phi (X)=0$.
\end{lemma}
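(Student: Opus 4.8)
The plan is to establish the five assertions in the order \eqref{eq2.green}, \eqref{eq2.green3}, \eqref{eq2.green4}, \eqref{eq2.14}, \eqref{eq2.green2}, since the sign and symmetry properties are needed before one can profitably argue about the lower bound. For the upper bound \eqref{eq2.green}, I would argue directly from the definition \eqref{eq2.greendef}: since $\mathcal{E}(X-\cdot)$ is subharmonic away from $X$ and the integral term $\int_{\partial\Omega}\mathcal{E}(X-z)\,d\omega^Y(z)$ is, as a function of $Y$, the (generalized/Perron) harmonic extension of the boundary data $\mathcal{E}(X-\cdot)$, one sees that $G(X,\cdot)$ is the difference of the fundamental solution and its own harmonic majorant, hence is bounded above. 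To get the quantitative bound $C|X-Y|^{1-n}$, I would split into the regime $|X-Y|\gtrsim\delta(X)$, where $\mathcal{E}(X-Y)\lesssim \delta(X)^{1-n}\lesssim |X-Y|^{1-n}$ trivially and the representation gives control, and the regime $|X-Y|\ll\delta(X)$, where one compares $G(X,\cdot)$ with the Green function of a ball $B(X,c\delta(X))\subset\Omega$ via the maximum principle (the Green function of $\Omega$ dominates that of the subdomain, and the ball's Green function is explicit). In the unbounded case one takes the limit \eqref{eq2.converge} and uses that the bound is uniform in $R$. I expect this step to be essentially routine.

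For nonnegativity \eqref{eq2.green3}, the key point is Wiener regularity: when every boundary point is regular, the Perron solution with continuous data $\mathcal{E}(X-\cdot)$ is genuinely continuous up to $\partial\Omega$ and agrees with $\mathcal{E}(X-\cdot)$ there, so by the maximum principle the harmonic extension is $\le \mathcal{E}(X-\cdot)$ on $\partial\Omega$ and one concludes $G(X,Y)=\mathcal{E}(X-Y)-(\text{harmonic ext.})(Y)\ge 0$; more carefully, $G(X,\cdot)$ is superharmonic in $\Omega\setminus\{X\}$, tends to $0$ at every (regular) boundary point, and blows up to $+\infty$ at $X$, so the minimum principle for superharmonic functions forces $G(X,\cdot)\ge 0$ (and in fact $>0$ by the strong minimum principle / Harnack). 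For symmetry \eqref{eq2.green4}, the standard argument is to fix $X\neq Y$, apply Green's identity on $\Omega\setminus(B(X,\eps)\cup B(Y,\eps))$ to the pair $G(\cdot,X)$, $G(\cdot,Y)$ — both harmonic there, both with zero boundary values on $\partial\Omega$ in the appropriate limiting sense — and let $\eps\to 0$, the boundary terms on the small spheres producing $G(Y,X)$ and $G(X,Y)$ respectively; in the unbounded case one first works in $\Omega_R$, where symmetry of $G_R$ is classical, and passes to the limit via \eqref{eq2.converge}. The identity \eqref{eq2.14} is then another application of Green's identity: for $\Phi\in C_0^\infty(\ree)$ with $\Phi(X)=0$, integrate $\nabla_Y G(Y,X)\cdot\nabla\Phi(Y)$ over $\Omega$, move one derivative onto $G$ using that $-\Delta_Y G(Y,X)=\delta_X$ in the distributional sense (the Dirac mass contributes $\Phi(X)=0$), and the remaining boundary contribution is exactly $\int_{\partial\Omega}\Phi\,d\omega^X$ by \eqref{eq2.greendef} and the definition of harmonic measure; again, to make the boundary manipulation rigorous one argues in $\Omega_R$ and takes limits, which is legitimate because $\Phi$ has compact support.

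The genuinely delicate assertion is the lower bound \eqref{eq2.green2}, $G(X,Y)\gtrsim_{n,\theta}|X-Y|^{1-n}$ when $|X-Y|\le\theta\,\delta(X)$. Here the strategy is a maximum-principle comparison inside the ball $B:=B(X,\delta(X))\subset\Omega$: on this ball $G(\cdot,X)$ is a nonnegative (by \eqref{eq2.green3}) superharmonic function with the same singularity $\mathcal{E}(\cdot-X)$ at $X$ as the ball's own Green function $G_B(\cdot,X)$, so $h:=G(\cdot,X)-G_B(\cdot,X)$ extends harmonically across $X$ and is $\ge 0$ on $\partial B$ (since $G_B$ vanishes there and $G\ge 0$), hence $h\ge 0$ throughout $B$, giving $G(Y,X)\ge G_B(Y,X)$; for a ball the latter is explicitly comparable to $|X-Y|^{1-n}$ once $|X-Y|\le\theta\delta(X)$ with $\theta<1$ bounded away from $1$ (this is where the dependence $c(n,\theta)$ enters and degenerates as $\theta\to1$). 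The main obstacle, and the reason \eqref{eq2.green2} must be stated \emph{after} \eqref{eq2.green3}, is that this comparison requires $G\ge 0$, which in turn required Wiener regularity — so strictly speaking \eqref{eq2.green2} as used later is only available on the qualitatively nice approximating domains $\Omega_N$; however, since \eqref{eq2.green} and the explicit lower bound on $G_B$ are purely quantitative and Wiener regularity holds on every $\Omega_N$ (Remark 3.6), the constant $c(n,\theta)$ is uniform in $N$, which is all that is needed.
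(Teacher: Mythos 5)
The first real issue is the lower bound \eqref{eq2.green2}. You treat it as the ``genuinely delicate assertion,'' route it through a comparison with the Green function of a ball, and then claim it requires $G\geq 0$ and hence Wiener regularity. That is backwards: the paper proves \eqref{eq2.green2} in two lines directly from the representation \eqref{eq2.greendef}, with no hypothesis on the boundary at all. If $|X-Y|\leq\theta\,\delta(X)$ then for every $z\in\partial\Omega$ one has $|X-z|\geq\delta(X)\geq |X-Y|/\theta$, so since $\omega^Y$ has total mass at most $1$,
\begin{equation*}
\int_{\partial\Omega}|X-z|^{1-n}\,d\omega^Y(z)\ \leq\ \delta(X)^{1-n}\ \leq\ \theta^{\,n-1}|X-Y|^{1-n},
\end{equation*}
giving $G(X,Y)\geq c_n(1-\theta^{\,n-1})|X-Y|^{1-n}$. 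No positivity of $G$ is needed, and indeed the paper \emph{uses} \eqref{eq2.green2} to prove \eqref{eq2.green3} (positivity near $X$ on the sphere $\partial B(X,\delta(X)/2)$ plus the maximum principle in $\Omega\setminus\overline{B(X,\delta(X)/2)}$). Your proposed logical order would therefore lose information (restricting \eqref{eq2.green2} to the Wiener-regular case is strictly weaker than what the lemma asserts) and conflicts with the structure of the statement, where \eqref{eq2.green2} is explicitly placed \emph{before} the Wiener-regularity hypothesis. Incidentally, the upper bound \eqref{eq2.green} is also a one-liner ($G=\mathcal{E}-v$ with $v\geq 0$), so the regime-splitting there, while not wrong, is unnecessary.

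The second and more serious issue is where the real delicacy lies: symmetry \eqref{eq2.green4} and the Riesz identity \eqref{eq2.14}. You propose the classical argument — Green's identity on $\Omega\setminus(B(X,\eps)\cup B(Y,\eps))$, and ``moving one derivative onto $G$'' with the boundary term identified as $\int_{\partial\Omega}\Phi\,d\omega^X$. On a domain whose boundary is only assumed ADR, neither step is routine: integration by parts up to $\partial\Omega$ and the identification of the boundary contribution with harmonic measure are precisely what must be proved, and ``the boundary term vanishes in the limit'' is not available for free. The paper sidesteps this by invoking the Gr\"uter--Widman framework: Wiener regularity is used to show that the Perron-theoretic $v(X,\cdot)$ agrees with the variational solution of class $W^{1,2}(\Omega)$, so that the paper's $G$ satisfies \eqref{eq3.green}--\eqref{eq3.green2}, coincides with the symmetric GW Green function, and lies in $W^{1,1}_0(\Omega)$; \eqref{eq2.14} then follows by an honest density argument in $W^{1,2}_0$ plus a cutoff near the pole. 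To pass to unbounded $\Omega$ the paper also needs uniform weak-$L^{(n+1)/n}$ bounds for $\nabla G_R$ and a boundary-strip estimate to control the ``border'' term — ingredients that your plan does not anticipate and that would be needed to make ``argue in $\Omega_R$ and take limits'' legitimate for \eqref{eq2.14}.

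Your treatment of \eqref{eq2.green3} (minimum principle for the nonnegative superharmonic $G(X,\cdot)$, continuity up to the boundary from Wiener regularity, reduction to bounded $\Omega$ via \eqref{eq2.converge}) matches the paper in substance.
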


\begin{proof}  Some of these facts are standard, but we include the
simple proof here.    Recall
that we have chosen the normalization $\mathcal{E}\,(X):= c_n |X|^{1-n}$ with $c_n >0$.
Inequality  \eqref{eq2.green} is then trivial, by definition \eqref{eq2.greendef},
since $\int_{\partial\Omega}  \mathcal{E}\,(X-z)\,d\omega^Y(z)\geq0.$
We now consider \eqref{eq2.green2}.
Suppose that $0<\theta<1,$ and
that $|X-Y|\leq\theta\, \delta(X)$.
Then,
$$\int_{\partial\Omega}
|X-z|^{1-n}\,d\omega^Y(z)\,\leq\, \delta(X)^{1-n}\,\leq \,\theta^{\,n-1}\,|X-Y|^{1-n}.$$
Thus, $G(X,Y) \geq c_n (1-\theta^{\,n-1})\,|X-Y|^{1-n},$ as desired.

We now assume that every boundary point is regular in the sense of Wiener.  Let us prove
\eqref{eq2.green3}.
Suppose first that $\Omega$ is bounded.
Fix $X\in \Omega$, and observe that by \eqref{eq2.green2},
it is enough to consider the case that $Y\in \Omega':=\Omega\setminus \overline{B(X,\delta(X)/2)}$.
Moreover, by \eqref{eq2.green2}, we have in particular that $G(X,\cdot)>0$ on
$\partial B(X,\delta(X)/2)$.  On the other hand, since every boundary point is regular,
we have by definition \eqref{eq2.greendef}
that  $G(X,\cdot)\equiv 0$ on $\pom$.
Applying the maximum principle in $\Omega'$, we then obtain \eqref{eq2.green3},
at least when $\Omega$ is bounded.
If $\Omega$ is unbounded, we may invoke \eqref{eq2.converge}.

Next, we establish the symmetry condition \eqref{eq2.green4},
again assuming that
every boundary point is regular in the sense of Wiener.
By  \eqref{eq2.converge},
it is enough to treat the case that $\Omega$ is bounded.  Specializing to the case of the Laplacian,
the Green function constructed in \cite{GW}, which we denote temporarily by $\G(X,Y)$,
is symmetric (see \cite[Theorem 1.3]{GW}).  Therefore, it is enough to verify that our Green function is the same as the one constructed in \cite{GW}.  To this end, we first recall that by \cite[Theorem 1.1]{GW}
$\G$ is unique among all those real valued, non-negative functions defined on
$\Omega\times\Omega\setminus \{(X,Y)\in \Omega\times\Omega: X=Y\}$, such that
for each $X\in\Omega$ and $r>0$,
\begin{eqnarray}\label{eq3.green}
&\G(X,\cdot)\in W^{1,2}\big(\Omega\setminus B(X,r)\big)\cap W^{1,1}_0(\Omega)
\\[4pt]\label{eq3.green2}
& \iint_\Omega \nabla_Y \G(X,Y)\cdot\nabla \phi(Y) \,dY = \phi(X)\,,\qquad \forall \phi\in C^\infty_0(\Omega).
\end{eqnarray}
It is clear that \eqref{eq3.green2} holds for our Green function $G(X,Y)$, by Definition
\ref{eq2.greendef}.   Thus, we need only show that $G$ satisfies \eqref{eq3.green}.
As in \cite[p. 5]{Ke}, for $X\in \Omega$ fixed, we may construct $v(X,\cdot)$, the variational solution
to the Dirichlet problem with data $\E(X-\cdot)$.  In particular, $v(X,\cdot)\in W^{1,2}(\Omega)$.
Since $\E(X-\cdot)$ is Lipschitz on $\pom$,
and since every point on $\pom$ is Wiener regular, it follows as in \cite[p. 5]{Ke} that
$v(X,\cdot) \in C(\overline{\Omega})$, and therefore
\begin{equation}\label{eq3.vdef}
v(X,Y):=\int_{\partial\Omega}\mathcal{E}\,(X-z)\,d\omega^Y(z)
\end{equation}
(see, e.g. \cite{GT}, p. 25).  Thus, $G(X,Y) =  \E(X-Y) - v(X,Y)$ (cf.
\eqref{eq2.greendef}), and
since $v\in W^{1,2}(\Omega)$, we obtain \eqref{eq3.green}.

Finally we verify \eqref{eq2.14}.
We begin by reducing matters to
the case that  $\Omega$ is bounded.  Indeed, for the left hand side of \eqref{eq2.14},
we may pass immediately from the bounded to the unbounded case by
splitting $\Phi$ into positive and negative parts, and using
\eqref{eq2.limit}.  To pass to the limit on the right hand side is more delicate,
and we proceed as follows.  As above, given an unbounded domain $\Omega$,
let $G_R$ denote the Green function for
the domain $\Omega_R := \Omega \cap B(x_0,2R)$, for some fixed $x_0\in\pom$.
We claim that
\begin{equation}\label{eq3.converge+}
\lim_{R\to\infty}\iint_{\Omega_R} \nabla_Y G_R(Y,X) \cdot {\bf h}(Y) \, dY
=\iint_\Omega \nabla_Y G(Y,X) \cdot {\bf h}(Y) \, dY\,,
\end{equation}
for all Lipschitz vector-valued ${\bf h}$ with compact support in $\ree$.
Given the claim, and assuming that \eqref{eq2.14} holds for bounded $\Omega$,
we may then pass to the unbounded case by setting ${\bf h}=\nabla\Phi$.

Thus, to reduce the proof of \eqref{eq2.14} to the case that $\Omega$ is bounded,
it remains to prove \eqref{eq3.converge+}.
To this end, we first recall our previous observation that for bounded domains
with Wiener regular boundaries, our Green function is the same as that constructed
in \cite{GW}.  Thus, there is a purely dimensional constant $C_n$ such that
for every $R<\infty$, and $X\in\Omega_R$, $\nabla G_R(\cdot,X)$ enjoys the weak-$L^{(n+1)/n}$
estimate
$$
\left|\big\{Y\in\Omega_R: |\nabla_Y\,G_R(Y,X)|>\lambda\big\}\right | \leq C_n\,\lambda^{-(n+1)/n}\,.
$$
Consequently, if $A\subset\Omega_R$, we have that
\begin{equation}\label{eq3.21+}
\iint_A|\nabla_Y\,G_R(Y,X)|^p\,dY \leq C(n,p,|A|)\,,\qquad \forall\, p<(n+1)/n\,,
\end{equation}
as may be deduced from the weak-type inequality by arguing as in the proof of Kolmogorov's
lemma.   We emphasize that the constant in the last inequality depends only upon $n,p$ and $|A|$,
but not on $R$.  Let us now fix a ball $B_0:= B(X_0,R_0) \subset \ree$, and
consider a Lipschitz function ${\bf h}$ supported in $B_0$.
We note that
\begin{multline}\label{eq3.22+}
\iint_\Omega \nabla_Y G_R(Y,X) \cdot {\bf h}(Y) \, dY
=\iint_\Omega G_R(Y,X) \,\div{\bf h}(Y) \, dY\\[4pt]
\to
\iint_\Omega G(Y,X) \,\div{\bf h}(Y) \, dY\,,
\end{multline}
as $R\to\infty$,
where we have used first
that $G_R \in W^{1,1}_0(\Omega_R)$
(again, because $G_R$ coincides with
the \cite{GW} Green function), and then \eqref{eq2.converge}
(in $\Omega\cap B_0\cap\{\delta(Y)>\epsilon\}$), along with
\eqref{eq2.green} (to control small errors in the ``border strip"
$\Omega\cap B_0\cap\{\delta(Y)\leq\epsilon\}$).
Here we may suppose that $R_0\ll R$
so that $B_0 \cap\Omega \subset  \Omega_R$.  Let us now extend 
$\,G(\cdot,X)$ to be zero
in $\ree\setminus \overline{\Omega}$, and call this extension 
$\mathcal{G}$. 
Then from \eqref{eq3.21+} and \eqref{eq3.22+} it follows that
\begin{equation}\label{eq3.22++}
\left|\iint_{\ree} \mathcal{G}(Y,X) \,\div{\bf h}(Y) \, dY\right|\,\leq C(n,p,|B_0|)\,\|h\|_{p'}\,,\quad
1<p<(n+1)/n.
\end{equation}
Taking a supremum over all Lipschitz ${\bf h}$ supported in $B_0$, with $\|{\bf h}\|_{p'} = 1$,
we obtain that for $p\in(1,(n+1)/n)$,
\begin{equation}\label{eq3.22+++}
\nabla \mathcal{G}(\cdot,X) \in
L^p(B_0)\,,\quad  {\rm with}  \,\,\,
\|\nabla \mathcal{G}\|_{L^p(B_0\setminus\{X\})}\leq C(n,p,|B_0|)\,.
\end{equation}
Now let $\psi\in C^\infty(\RR)$, with $0\leq \psi\leq 1$,
$\psi(t)\equiv 0$ if $t\leq1$,
$\psi(t)\equiv 1$ if $t\geq2$.
We fix ${\bf h}$ as above, let $\epsilon>0$,
and set ${\bf h}_\epsilon(Y):= {\bf h}(Y)\psi(\delta(Y)/\epsilon)$.
Then, by  \eqref{eq2.converge},
\begin{multline}\label{eq3.23++}
\iint_\Omega \nabla_Y G_R(Y,X) \cdot {\bf h}_\epsilon(Y) \, dY
=\iint_\Omega G_R(Y,X) \,\div{\bf h}_\epsilon(Y) \, dY\\[4pt]
\to
\iint_\Omega G(Y,X) \,\div{\bf h}_\epsilon(Y) \, dY=
\iint_\Omega \nabla_Y G(Y,X) \cdot {\bf h}_\epsilon(Y) \, dY\, ,
\end{multline}
as $R\to\infty$.   Also, by \eqref{eq3.21+}, \eqref{eq3.22+++}
and H\"older's inequality, for  $1<p<(n+1)/n$, we have
\begin{multline}\label{eq3.24+}
\left|\iint_\Omega \nabla_YG_R(Y,X) \cdot ({\bf h}-{\bf h}_\epsilon)(Y) \, dY\right|
+\left|\iint_\Omega \nabla_YG(Y,X) \cdot ({\bf h}-{\bf h}_\epsilon)(Y) \, dY\right|\\[4pt]
\leq C(n,p,|B_0|) \,
\|{\bf h}\|_\infty\,\left|\big\{Y\in \Omega\cap
B_0: \delta(Y)<2\epsilon\big\}\right|^{1/p'}\to 0\,,
\end{multline}
as $\epsilon \to 0$, uniformly in $R$.   Then by  \eqref{eq3.23++}-\eqref{eq3.24+}, we have that
\begin{multline}\label{eq3.25+}
\lim_{R\to\infty}\iint_\Omega \nabla_Y G_R(Y,X) \cdot {\bf h}(Y) \, dY=\\[4pt]
\lim_{R\to\infty}\iint_\Omega \nabla_Y G_R(Y,X) \cdot {\bf h}_\epsilon(Y) \, dY\,+\,o(1)
\,=\,\iint_\Omega \nabla_Y G(Y,X) \cdot {\bf h}(Y) \, dY
+o(1),
\end{multline}
as $\epsilon \to 0$. Letting $\epsilon \to 0$, we obtain \eqref {eq3.converge+}.

We may now assume that  $\Omega$ is bounded,
and proceed to prove \eqref{eq2.14} in that case.
As above, Wiener regularity then guarantees
that a given Perron solution, with Lipschitz data, coincides with the corresponding
variational solution with the same data.  This is true for the function $v(X,Y)$ defined in
\eqref{eq3.vdef}, as well as for
$$u(X):= \int_{\pom} \Phi\, d\hm^X\,.$$
Thus, in particular, $u-\Phi \in W^{1,2}_0(\Omega)$,
and we claim that
\begin{equation}\label{eq3.w12}
\iint_\Omega
\nabla_Y G(Y,X) \cdot\big(\nabla u(Y)-\nabla\Phi(Y)\big)\, dY=u(X)-\Phi(X) = u(X)\,.
\end{equation}
If $u-\Phi$ were in $C^\infty_0(\Omega)$, the claim would follow immediately
from \eqref{eq3.green2}.   We will pass from $C^\infty_0(\Omega)$ to $W^{1,2}_0(\Omega)$
by a density argument, with a slight complication since the Green function
is not in $W^{1,2}$ near the pole.
To address this technical issue,  we multiply $(u-\Phi)$ by a smooth cut-off function supported in a small
neighborhood of the pole $X$. For the part near the pole we may invoke \eqref{eq3.green2}:
$u$ is harmonic, therefore smooth in $\Omega$ and $u$ times a smooth cut-off is $C^\infty_0(\Omega)$. For the part away from the pole we use \eqref{eq3.green} and \eqref{eq3.green2},
plus the routine density argument mentioned above.
We leave the details, which are standard,
to the reader.

At this point,  \eqref{eq2.14} follows immediately from  \eqref{eq3.w12} and the fact that
\begin{equation}\label{eq3.23+}
\iint_\Omega \nabla_Y G(Y,X) \,\nabla u(Y)\, dY = 0\,.
\end{equation}
In turn, we may verify the latter identity
as follows.
For $0<\epsilon\ll\delta(X)$,
set
$\phi_\epsilon (Y):= \phi\big((X-Y)/\epsilon\big)$, where $\phi\in C^\infty(\mathbb{R}^{n+1}),$
$\phi \equiv 1$ in $\mathbb{R}^{n+1}\setminus B(0,2)$,
$\phi \equiv 0$ in $B(0,1)$.  Then
\begin{multline*}\iint_\Omega
\nabla_Y G(Y,X) \cdot\nabla u(Y)\, dY \,= \,\iint_\Omega
\nabla_Y \Big(\mathcal{E}(Y-X)\,\phi_\epsilon(Y)-v(Y,X)\Big) \cdot\nabla u(Y)\, dY \\[4pt]+\,
\iint_\Omega
\nabla_Y \Big(\mathcal{E}(Y-X)\big(1-\phi_\epsilon(Y)\big)\Big) \cdot\nabla u(Y)\, dY=\, 0 + O(\epsilon)\,,
\end{multline*}
where in the vanishing term we have used the definition of weak solution, since
$\mathcal{E}(\cdot-X)\,\phi_\epsilon(\cdot)-v(\cdot,X) \in W^{1,2}_0(\Omega)$.
To obtain the $O(\epsilon)$ bound,
we have used standard estimates for the fundamental solution and its gradient, along with
the fact that $\nabla u$ is harmonic and therefore
locally bounded in $\Omega$.
Finally, we obtain \eqref{eq3.23+} by letting $\epsilon\to 0^+$.
\end{proof}

\begin{lemma} \label{lemma2.cfms}
Let $\Omega$ be a 1-sided NTA domain with $n$-dimensional ADR boundary, and suppose that every $x \in \partial \Omega$ is regular in the sense of Wiener. Fix
$B_0:=B(x_0,r_0)$  with $x_0\in\pom$, and $\Delta_0 := B_0\cap\partial\Omega$.
Let $B:=B(x,r)$,  $x\in \pom$, and
$\Delta:=B\cap\pom$, and suppose that $2B\subset B_0.$  Then for
$X\in \Omega\setminus B_0$ we have
\begin{equation}\label{eq2.CFMS1}
r^{n-1}G(X_\Delta,X) \leq C \omega^X(\Delta).
\end{equation}
If, in addition,  $\Omega$ satisfies the qualitative exterior corkscrew condition, then
\begin{equation}\label{eq2.CFMS2}
\omega^X(\Delta)\leq C r^{n-1}G(X_\Delta,X).
\end{equation}
The constants in \eqref{eq2.CFMS1} and \eqref{eq2.CFMS2} depend {\bf only} on
dimension and on the constants in the $ADR$ and 1-sided NTA
conditions.
\end{lemma}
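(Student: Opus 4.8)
The plan is to prove the two inequalities separately. Inequality \eqref{eq2.CFMS1} I would obtain by a direct maximum principle comparison, which needs no \emph{a priori} information. Inequality \eqref{eq2.CFMS2} is the substantive one --- without a \emph{quantitative} exterior Corkscrew condition we have no boundary H\"older continuity with good constants --- and I would deduce it from \eqref{eq2.CFMS1} by a self-improvement (``hiding'') argument that crucially uses that the qualitative exterior Corkscrew hypothesis makes $\|\Upsilon_{B_0}\|$ finite \emph{a priori} (Remark \ref{remark:qualitative}); this hiding step is the heart of the proof.

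For \eqref{eq2.CFMS1}, set $u(Y):=\omega^Y(\Delta)$ and $v(Y):=G(X_\Delta,Y)$. Since $\delta(X_\Delta)\approx r$, Harnack's inequality on $B(X_\Delta,\delta(X_\Delta))$ together with the lower bound $u(X_\Delta)=\omega^{X_\Delta}(\Delta)\gtrsim 1$ of Bourgain's Lemma \ref{Bourgainhm} (cf.\ \eqref{eq2.Bourgain2}) gives $u\gtrsim 1$ on $\overline{B(X_\Delta,\delta(X_\Delta)/2)}$, while \eqref{eq2.green} gives $v\lesssim r^{1-n}$ there; hence $r^{n-1}v\le C\,u$ on $\partial B(X_\Delta,\delta(X_\Delta)/2)$. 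I would then apply the maximum principle to $w:=r^{n-1}v-C\,u$ in $D:=\Omega\setminus\overline{B(X_\Delta,\delta(X_\Delta)/2)}$: here $w$ is harmonic and bounded above, and $\limsup_{Y\to\zeta}w(Y)\le 0$ along $\partial D$ --- on the inner sphere by the above, on $\partial\Omega$ because $v\to 0$ by Wiener regularity while $u\ge 0$, and (if $\Omega$ is unbounded) at infinity because $v\to 0$ while $u\le 1$. Thus $w\le 0$ in $D$; since $\overline{B(X_\Delta,\delta(X_\Delta)/2)}\subset 2B\subset B_0$ we have $X\in\Omega\setminus B_0\subset D$, which is \eqref{eq2.CFMS1}, with constants depending only on dimension, ADR and the 1-sided NTA parameters.

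For \eqref{eq2.CFMS2}, fix $X\in\Omega\setminus B_0$ and $\Delta=B\cap\partial\Omega$ with $2B\subset B_0$ (so $X\notin 2B$), and write $U:=\|\Upsilon_{B_0}\|<\infty$. Choose $\Phi\in C_0^\infty(\ree)$ transitioning within a narrow fixed annulus, $1_B\le\Phi\le 1_{(1+\epsilon_0)B}$ with $\epsilon_0=\epsilon_0(n)$ small and $|\nabla\Phi|\lesssim r^{-1}$, so that $\Phi(X)=0$ and every Whitney cube $I$ meeting $\supp\nabla\Phi$ has its triple $3I$ contained in $2B$. By \eqref{eq2.14} and $\Phi\ge 1_\Delta$ on $\partial\Omega$,
\[ \omega^X(\Delta)\ \le\ \int_{\partial\Omega}\Phi\,d\omega^X\ =\ -\iint_\Omega\nabla_Y G(Y,X)\cdot\nabla\Phi(Y)\,dY\ \lesssim\ \frac1r\iint_{((1+\epsilon_0)B\setminus B)\cap\Omega}|\nabla_Y G(Y,X)|\,dY. \]
Decomposing the last region into such Whitney cubes $I$, interior estimates and Harnack for the positive harmonic function $G(\cdot,X)$ (harmonic on $3I\not\ni X$) give $\iint_I|\nabla_Y G(Y,X)|\,dY\lesssim\ell(I)^n G(X_I,X)$, with $X_I$ the centre of $I$. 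I would split the cubes at a threshold $\ell(I)\approx cr$, $c$ small: the boundedly many ``large'' cubes contribute at most $C(c)\,r^{n-1}G(X_\Delta,X)$, since each $X_I$ joins to $X_\Delta$ through a Harnack chain of controlled length; for a ``small'' cube $I$, with $y_I\in\partial\Omega$ nearest $X_I$ and $\Delta_I:=\Delta(y_I,\ell(I))$, one has $X_I\approx X_{\Delta_I}$ and $2B(y_I,\ell(I))\subset B_0$, so the already-proven \eqref{eq2.CFMS1} yields $G(X_I,X)\lesssim\ell(I)^{1-n}\omega^X(\Delta_I)$, and summing over dyadic scales (using the bounded overlap of the $\Delta_I$ at each scale and $\sum_{2^{-k}<cr}2^{-k}\lesssim cr$) the ``small'' cubes contribute $\lesssim c\,\omega^X(\mathcal N)$, where $\mathcal N$ is a neighbourhood of $\partial\Omega\cap((1+\epsilon_0)B\setminus B)$; finally, covering $\mathcal N$ by $O(1)$ surface balls of radius $\approx r$ whose doubles lie in $2B\subset B_0$, the definition of $U$ and Harnack give $\omega^X(\mathcal N)\lesssim U\,r^{n-1}G(X_\Delta,X)$. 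Altogether $\omega^X(\Delta)\le C(c)\,r^{n-1}G(X_\Delta,X)+C_0\,c\,U\,r^{n-1}G(X_\Delta,X)$; taking the supremum over admissible $X$ and $\Delta$ gives $U\le C(c)+C_0c\,U$, so choosing $c$ with $C_0c\le\tfrac12$ and using $U<\infty$ yields $U\le 2C(c)$, which is \eqref{eq2.CFMS2} with the asserted dependence of constants.

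The step I expect to be the main obstacle is precisely this bootstrapping: the Whitney sum must be organized so that the contribution of the cubes nearest $\partial\Omega$ is bounded by a \emph{small} constant times $\|\Upsilon_{B_0}\|\,r^{n-1}G(X_\Delta,X)$ --- only then can the (\emph{a priori} finite) quantity $\|\Upsilon_{B_0}\|$ be absorbed into the left-hand side --- and this dictates the precise choices of the cutoff annulus, of the large/small threshold, and of the covering of the near-boundary region by balls small enough that their doubles stay inside $B_0$.
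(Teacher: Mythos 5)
Your argument is essentially the paper's argument: estimate \eqref{eq2.CFMS1} by a maximum-principle comparison between $\omega^Y(\Delta)$ and $G(X_\Delta,Y)$ on the sphere $\partial B(X_\Delta,\delta(X_\Delta)/2)$ using Bourgain's lemma and \eqref{eq2.green}, and estimate \eqref{eq2.CFMS2} by testing \eqref{eq2.14} against a bump $\Phi$ near $\Delta$, splitting the resulting gradient integral into a ``far from $\partial\Omega$'' part absorbed via a Harnack chain and a ``near $\partial\Omega$'' part controlled via \eqref{eq2.CFMS1}, picking up a small factor times $\Upsilon_{B_0}$ that is then absorbed. The technical packaging differs in two inessential ways: the paper splits the support of $\nabla\Phi$ at the level set $\{\delta(Y)=\eps r\}$ and covers the thin slab by boundedly-overlapping balls $B^k_j$, whereas you organise the same region by Whitney cubes and split at the threshold $\ell(I)\approx cr$; these are equivalent, and in both cases the key point is that the near-boundary contribution carries the small factor ($\eps$ or $c$) from summing $\sum_{2^{-k}\lesssim\eps r}2^{-k}$.

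One small imprecision worth fixing: you set $U:=\|\Upsilon_{B_0}\|=\sup_{X\in\Omega\setminus B_0}\Upsilon_{B_0}(X)$ and assert $U<\infty$ a priori, citing Remark \ref{remark:qualitative}. But that remark only asserts finiteness of $\Upsilon_{B_0}(X)$ for each fixed $X$ (``possibly depending on $N$, $X$ and $B_0$''), not that the supremum over $X$ is finite a priori. The absorption step should therefore be run for each fixed $X\in\Omega\setminus B_0$ with $\Upsilon_{B_0}(X)$ in place of $U$ --- the inequality you derive, $\Upsilon_{B_0}(X)\le C(c)+C_0\,c\,\Upsilon_{B_0}(X)$, has constants uniform in $X$, so the absorption yields $\Upsilon_{B_0}(X)\le 2C(c)$ for every $X$, and only then does one conclude $\|\Upsilon_{B_0}\|\le 2C(c)$. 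This is exactly what the paper does (fix $X$, write $\Upsilon_{B_0}=\Upsilon_{B_0}(X)$, and choose $B$ nearly attaining the supremum over admissible balls). The fix is one sentence; the rest of your proof is correct.
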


\noindent{\it Remark}.  Let us emphasize that in several results below we will assume that certain domains satisfy the hypotheses of Lemma \ref{lemma2.cfms}; by this we mean that the domains are 1-sided NTA with
$n$-dimensional ADR boundary, which moreover satisfy the qualitative exterior corkscrew condition (in particular then, every boundary point is regular in the sense of Wiener, cf. Remark \ref{remark:qualitative}).
Notice that in such a case \eqref{eq2.CFMS2} holds with $C$ depending {\bf only} on dimension and on the constants in the $ADR$ and 1-sided NTA conditions.  In particular,  $C$ does not depend on the parameter $N$ from the qualitative assumption. This will be crucial when applied to approximating domains.

\begin{proof}  The first estimate \eqref{eq2.CFMS1} may be obtained by a well known argument
(cf.  \cite{CFMS} or \cite[Lemma 1.3.3]{Ke}) using \eqref{eq2.Bourgain2} plus Harnack's inequality,
the upper bound for $G(X,Y)$ in \eqref{eq2.green}, and the maximum principle in
$\Omega\setminus B(X_\Delta, \delta(X_\Delta)/2)$
(the use of the maximum principle is justified even in the case that $\Omega$ is unbounded, by virtue of the decay of the Green function at infinity).   We omit the details.

The proof of the second estimate \eqref{eq2.CFMS2} will require a bit more work. In contrast to
the case of previous proofs of this estimate \cite{CFMS}, \cite{JK}, we do not use local H\"older continuity at the boundary for solutions vanishing on a surface ball
(since this depends on the parameter $N$ in our qualitative assumption).
Instead, we proceed as follows.   Fix $B_0$ centered on $\pom$, and $X\in \Omega\setminus B_0$, and write $\Upsilon_{B_0}=\Upsilon_{B_0}(X)$ (cf. \eqref{defi-upsilon}). As observed in Remark \ref{remark:qualitative},
$\Upsilon_{B_0}$ is {\it a priori} finite (possibly depending on $N$).  Thus,
it will suffice to show that $\Upsilon_{B_0} \leq C_\epsilon + C\epsilon \,\Upsilon_{B_0},$
for every small $\epsilon > 0$.  Choose now $B=B(x,r)$,
with $2B\subseteq B_0$,   such that
$$\frac12 \Upsilon_{B_0} \leq \frac{\omega^X(\Delta)}{r^{n-1}G(X_\Delta,X)},$$
where as usual $\Delta=B\cap \pom$.

Now set $B:= B(x,r)$ and $\widetilde{B}:= B(x,5r/4).$
Taking $\Phi \in C_0^\infty(\widetilde{B})$, with $0\leq\Phi\leq 1$,  $\Phi(Y)\equiv 1$
on $B(x,r)$, and $\|\nabla \Phi\|_\infty \lesssim 1/r,$  we deduce from \eqref{eq2.14} that
\begin{multline}\label{eq2.15}
\omega^X(\Delta) \lesssim \frac1r \iint_{\Omega\cap \widetilde{B}}
\left|\nabla_Y G(Y,X)\right| dY\\[5pt]
=\frac1r \iint_{\Omega\cap \widetilde{B}\cap\{\delta(Y)>\epsilon r\}}
\left|\nabla_Y G(Y,X)\right| dY
+\frac1r \iint_{\Omega\cap \widetilde{B}\cap\{\delta(Y)\leq\epsilon r\}} \left|\nabla_Y G(Y,X)\right| dY
\\[5pt] \lesssim \frac1r \iint_{\Omega\cap \widetilde{B}\cap\{\delta(Y)>\epsilon r\}}
\frac{G(Y,X)}{\delta(Y)}\,dY
+\frac1r \iint_{\Omega\cap \widetilde{B}\cap\{\delta(Y)\leq\epsilon r\}}
\frac{G(Y,X)}{\delta(Y)} \,dY\\[5pt]
=:I \,+\, II,
\end{multline}
where $\epsilon>0$ is at our disposal, and where in the next to last line
we have used standard interior estimates for harmonic functions.
By Harnack's inequality and the Harnack Chain condition, we have that
$$I\,\leq \,C_\epsilon r^{n-1} G(X_\Delta,X)$$
as desired.  To handle term $II$, choose $y\in \partial \Omega$
such that $|Y-y|=\delta(Y)$, and set $\Delta(Y):= \Delta(y,\delta(Y))$.  Then by
\eqref{eq2.CFMS1} and the Harnack Chain condition we have
\begin{multline*}
II\,\leq\, \frac1r \iint_{\Omega\cap  \widetilde{B}\cap\{\delta(Y)\leq\epsilon r\}}
\frac{\omega^X(\Delta(Y))}{(\delta(Y))^n} \,dY\\[5pt]
\lesssim \frac1r \sum_{k: 2^{-k}\lesssim \epsilon r}\iint_{\Omega\cap
\widetilde{B}\cap\{2^{-k-1}<\delta(Y)
\leq 2^{-k}\}}
\frac{\omega^X(\Delta(Y))}{(\delta(Y))^n} \,dY\\[5pt]
\lesssim \frac1r \sum_{k: 2^{-k}\lesssim \epsilon r}
\sum_j\iint_{B^k_j\cap\{2^{-k-1}<\delta(Y)
\leq 2^{-k}\}}
\frac{\omega^X(\Delta(Y))}{(\delta(Y))^n} \,dY,
\end{multline*}
where in the last step $B_j^k:= B(x_j^k, 2^{-k+1})$, and for each $k$ in the sum,
$\mathcal{B}_k:=\{B^k_j\}_j$ is a collection of balls whose doubles have
bounded overlaps, such that $x_j^k\in\pom$,
\begin{equation}\label{eq2.cover}
\left(\Omega\cap \widetilde{B}\cap\left\{2^{-k-1}<\delta(Y) \leq 2^{-k}\right\}\right)\,\subset\,
\bigcup_j B_j^k\,,\quad {\rm and}\quad \bigcup_j 2B_j^k\subset B(x,3r/2).\end{equation}
We leave it to the reader to verify that such a collection exists, by virtue of
the ADR property of $\partial\Omega$,
for all sufficiently small $\epsilon$.
We then have that
\begin{multline} \label{eq2.16} II\, \lesssim \,\frac1r \sum_{k: 2^{-k}\lesssim \epsilon r} 2^{-k}\,
\sum_j \omega^X\left(\Delta(x_j^k,2^{-k+2})\right)\leq C\epsilon\, \omega^X\left( \Delta(x,3r/2)\right)\\[5pt]
\,\leq\, C\epsilon \sum_{\Delta'} \omega^X(\Delta')
\,\leq \,C\epsilon \,\Upsilon_{B_0}\,r^{n-1}\sum_{\Delta'}G(X_{\Delta'},X) \,\leq\,
C\epsilon \,\Upsilon_{B_0}\,r^{n-1}G(X_\Delta,X),
\end{multline}
where in the second, third, fourth and fifth inequalities we have used, respectively, the bounded overlap property of the balls $2B_j^k$; the ADR property to cover $\Delta(x,3r/2)$ by a collection
$\{\Delta'\}$ of bounded cardinality, such that $r_{\Delta'} \approx r$, and
$\Delta'=B'\cap\pom$,
with  $B'$ centered on $\pom$ and $2B'\subset B_0$; the definition of $\Upsilon_{B_0}$; and the Harnack Chain condition.
We then obtain \eqref{eq2.CFMS2} by choosing $\epsilon$ sufficiently small.
\end{proof}

\noindent{\it Remark}.  Let us observe that from the previous proof it follows that we are not really using the full strength of the qualitative exterior corkscrew condition, but only
that every boundary point is regular in the sense of Wiener and that $\Upsilon(B_0)$ is \textit{a priori} finite. Although these relaxed qualitative hypotheses suffice for our purposes, the
qualitative exterior corkscrew condition is cleaner, easier to check in practice and holds for the approximating domains introduced below.

\begin{corollary}\label{cor2.double}
Suppose that $\Omega$ is a  1-sided NTA domain with $n$-dimensional ADR boundary and that it also satisfies the qualitative exterior
Corkscrew condition.  Let $B:=B(x,r)$,  $x\in \pom$,
$\Delta:= B\cap\partial\Omega$ and $X\in \Omega\setminus 4B.$
Then there is a uniform constant $C$ such that
$$\omega^X(2\Delta)\leq C\omega^X(\Delta).$$
\end{corollary}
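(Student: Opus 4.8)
The plan is to deduce the doubling inequality from the two-sided bounds of Lemma~\ref{lemma2.cfms}, which convert the harmonic measure of a surface ball (with far-away pole) into a normalized value of the Green function, thereby reducing matters to an interior comparison of the Green function at two Corkscrew points.

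First I would fix a Corkscrew point $X_\Delta$ relative to $\Delta=\Delta(x,r)$ and a Corkscrew point $X_{2\Delta}$ relative to $2\Delta=\Delta(x,2r)$, so that $\delta(X_\Delta)\approx\delta(X_{2\Delta})\approx r$, $X_\Delta\in B(x,r)$, $X_{2\Delta}\in B(x,2r)$, and hence $|X_\Delta-X_{2\Delta}|\lesssim r$. Since $\Omega$ is $1$-sided NTA with ADR boundary and satisfies the qualitative exterior Corkscrew condition, it meets the hypotheses under which both \eqref{eq2.CFMS1} and \eqref{eq2.CFMS2} hold. Taking $B_0:=4B=B(x,4r)$, so that $2(2B)=4B\subseteq B_0$ and $2B\subseteq B_0$, and using $X\in\Omega\setminus 4B=\Omega\setminus B_0$, I would apply \eqref{eq2.CFMS2} to the ball $2B$ and \eqref{eq2.CFMS1} to the ball $B$ to obtain
\[
\omega^X(2\Delta)\ \lesssim\ (2r)^{n-1}G(X_{2\Delta},X)\ \approx\ r^{n-1}G(X_{2\Delta},X),\qquad
r^{n-1}G(X_\Delta,X)\ \lesssim\ \omega^X(\Delta).
\]
Thus it remains only to show $G(X_{2\Delta},X)\lesssim G(X_\Delta,X)$.

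For this last step, $Y\mapsto G(Y,X)$ is nonnegative and harmonic on $\Omega\setminus\{X\}$ (using that every boundary point is Wiener regular, cf.\ Lemma~\ref{lemma2.green}), and since $\delta(X_\Delta)\approx\delta(X_{2\Delta})\approx r$ while $|X_\Delta-X_{2\Delta}|\lesssim r$ and $|X-X_\Delta|\gtrsim r$, the Harnack Chain condition furnishes a chain of balls of bounded cardinality joining $X_\Delta$ to $X_{2\Delta}$ inside $\Omega$; this chain may be taken to lie within a fixed dilate of $B$ and therefore (the dilate $4$ in the statement being chosen large enough, after tracking constants) to avoid the pole $X$, so Harnack's inequality along the chain gives $G(X_{2\Delta},X)\approx G(X_\Delta,X)$. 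Combined with the displayed inequalities this yields $\omega^X(2\Delta)\lesssim\omega^X(\Delta)$, as desired. The only point requiring genuine care is precisely this one: ensuring that a Harnack chain between the two Corkscrew points stays away from $X$, which is exactly why the pole is taken outside a sufficiently large dilate of $B$; everything else is a direct invocation of Lemma~\ref{lemma2.cfms} together with interior Harnack.
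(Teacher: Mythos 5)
Your proof is correct and takes essentially the same approach the paper indicates: apply \eqref{eq2.CFMS2} to $2\Delta$ and \eqref{eq2.CFMS1} to $\Delta$ (both with $B_0=4B$), then use the Harnack Chain condition and Harnack's inequality to compare $G(X_{2\Delta},X)$ with $G(X_\Delta,X)$. Your parenthetical concern about the Harnack chain staying away from the pole $X$ is the only point of any delicacy, and you correctly identify that the dilate in the statement absorbs it; one could also sidestep the issue entirely by using the single point $X_\Delta$ as a Corkscrew point for both $\Delta$ and $2\Delta$ (with Corkscrew constant $c/2$ for the larger ball), which removes the need for a chain altogether.
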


\begin{proof}The conclusion of the corollary follows immediately from the combination of
\eqref{eq2.CFMS1} and \eqref{eq2.CFMS2},
and Harnack's inequality.  We omit the details.
\end{proof}

Next, we establish a bound of  ``Carleson-type''  for the Green function.
The Carleson estimate is already known for arbitrary non-negative
harmonic functions vanishing on a surface ball
\cite{Ai1}, \cite{Ai2}; however, specializing to
the Green function, one may give a fairly simple direct proof,
based upon that of the previous lemma.

\begin{lemma}\label{lemma2.carleson}
Suppose that $\Omega$ is a  1-sided NTA domain with $n$-dimensional ADR boundary and that it also satisfies the qualitative exterior
Corkscrew condition.
Then there is a uniform constant $C$
such that for each $B:= B(x,r)$,  $x\in\pom$,
$\Delta = B \cap \partial\Omega$, and $X \in \Omega\setminus 2B,$ we have
\begin{equation}\label{eq2.carleson}
\sup_{Y\in B\cap\Omega}G(Y,X) \leq C \, G(X_\Delta,X).
\end{equation}
\end{lemma}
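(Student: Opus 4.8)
The plan is to reduce the bound $\sup_{Y\in B\cap\Omega} G(Y,X) \le C\, G(X_\Delta, X)$ to two regimes, according to the distance of the competitor point $Y$ to the boundary, and in each regime to exploit either interior estimates (Harnack's inequality along Harnack Chains) or the two-sided bound \eqref{eq2.CFMS1}--\eqref{eq2.CFMS2} from Lemma \ref{lemma2.cfms}. First I would fix $X\in\Omega\setminus 2B$, and note that it is enough to prove \eqref{eq2.carleson} with the supremum restricted to $Y\in B\cap\Omega$ with $\delta(Y)\le \epsilon_0 r$ for a small dimensional constant $\epsilon_0$: indeed, if $\delta(Y)\gtrsim r$, then $Y$ and $X_\Delta$ are both ``interior'' points at comparable distances from $\partial\Omega$, with $|Y - X_\Delta|\lesssim r$, so the Harnack Chain condition connects $Y$ to $X_\Delta$ by a chain of bounded length in $\Omega$ (here one uses that $X\notin 2B$ so that $G(\cdot,X)$ is a nonnegative harmonic function — by \eqref{eq2.green3} — on a neighborhood containing the chain), and Harnack's inequality gives $G(Y,X)\le C\, G(X_\Delta,X)$ directly.

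For the remaining case $\delta(Y)\le \epsilon_0 r$, the idea is: pick $y\in\partial\Omega$ with $|Y-y|=\delta(Y)$, so $y\in \tfrac32\Delta$ say, and let $\Delta(Y):=\Delta(y,\delta(Y))$ with associated corkscrew point $X_{\Delta(Y)}$. By \eqref{eq2.CFMS1} applied with the ball $B(y,\delta(Y))$ in place of $B$ (legitimate since, after shrinking $\epsilon_0$, we have $2B(y,\delta(Y))\subset B_0$ for a suitable $B_0$ with $X\notin B_0$, e.g.\ $B_0 = \tfrac{3}{2}B$, noting $X\in\Omega\setminus 2B \subset \Omega\setminus B_0$), one gets
\[
\delta(Y)^{n-1} G(X_{\Delta(Y)}, X) \le C\, \omega^X(\Delta(Y)).
\]
Since $Y$ lies within bounded Harnack-Chain distance of $X_{\Delta(Y)}$ (both at distance $\approx\delta(Y)$ from $\partial\Omega$ and at mutual distance $\lesssim \delta(Y)$), Harnack's inequality gives $G(Y,X)\le C\, G(X_{\Delta(Y)},X)$, hence
\[
G(Y,X) \le C\, \delta(Y)^{1-n}\,\omega^X(\Delta(Y)).
\]
Now $\Delta(Y)\subset C\Delta \subset \tfrac32 \Delta$ (for $\epsilon_0$ small), so by the doubling property of $\omega^X$ off the support — Corollary \ref{cor2.double}, applicable since $X\in\Omega\setminus 4B(y,\tfrac34 r)$ or a rescaled version thereof, using Harnack to move the pole if needed — we have $\omega^X(\Delta(Y)) \le \omega^X(\tfrac32\Delta)\le C\,\omega^X(\Delta)$. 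Finally, \eqref{eq2.CFMS2} applied to the ball $B$ itself (with $B_0$ chosen so that $2B\subset B_0$ and $X\in\Omega\setminus B_0$) yields $\omega^X(\Delta)\le C\, r^{n-1} G(X_\Delta, X)$. Chaining these, and using $\delta(Y)^{1-n}\le (\text{anything})$ — wait, this is the wrong direction; the point is that $\delta(Y)\le \epsilon_0 r$ makes $\delta(Y)^{1-n}\ge (\epsilon_0 r)^{1-n}$, so one must be more careful: instead I would \emph{not} pass through the single surface ball $\Delta(Y)$ crudely, but rather telescope.

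Here is the corrected key step, which is really the heart of the argument (and the main obstacle): rather than comparing $G(Y,X)$ to $\omega^X$ of the tiny ball $\Delta(Y)$, I would use the maximum principle. Let $M:= \sup_{Y\in B\cap\Omega} G(Y,X)$, which is finite by \eqref{eq2.green} since $X\notin 2B$ (so $\delta$ is bounded below on $B\cap\Omega$ relative to $|Y-X|$... actually $G(Y,X)\le C|Y-X|^{1-n}\le C r^{1-n}$, giving finiteness directly). Consider the function $u(Y):= \omega^Y(2\Delta)$ on $\Omega\setminus 2B$; it is harmonic there, vanishes on $\partial\Omega\setminus 2B$ by definition of harmonic measure (using Wiener regularity, guaranteed by the qualitative exterior corkscrew condition, so boundary values are attained), equals $1$ on $\partial\Omega\cap 2B$, and on $\Omega\cap\partial(2B)$ one has $u\ge 1/C$ by Bourgain's estimate \eqref{eq2.Bourgain1} (since points of $\Omega\cap\partial(2B)$ near $\partial\Omega$ are within $cr$ of a surface ball $\subset 2\Delta$, while points far from $\partial\Omega$ are handled by Harnack from such near points). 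Meanwhile $G(\cdot,X)$ is a nonnegative harmonic function on $\Omega\setminus 2B$ (by \eqref{eq2.green3}, as $X\notin 2B$), vanishing on $\partial\Omega\setminus 2B$, and on $\Omega\cap \partial(2B)$ it is bounded by $C r^{1-n} G(X_\Delta, X)$ — this last bound coming from applying the already-established ``interior'' case of the lemma (the $\delta(Y)\gtrsim r$ regime) to points of $\partial(2B)\cap\Omega$ at distance $\approx r$ from $\partial\Omega$, then Harnack, then \eqref{eq2.CFMS2} for the ball $2B$ to write $\omega^X(2\Delta)\lesssim r^{n-1}G(X_{2\Delta},X)\approx r^{n-1}G(X_\Delta,X)$. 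Then $G(Y,X)\le C r^{1-n} G(X_\Delta,X)\cdot C\, u(Y)$ on $\partial(\Omega\setminus 2B)$, so by the maximum principle (valid on the unbounded region $\Omega\setminus 2B$ because $G(\cdot,X)\to 0$ at $\infty$ and $u\ge 0$) the same holds throughout $\Omega\setminus 2B$; in particular on $\Omega\cap(B\setminus\text{small ball})$... but we want it on $B\cap\Omega$. For $Y\in B\cap\Omega$ we instead simply use $u(Y)\le 1$ to get $G(Y,X)\le C r^{1-n}G(X_\Delta,X)$ — no, that gives the wrong power.

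Let me restate cleanly what I expect the real argument to be, mirroring the proof of Lemma \ref{lemma2.cfms}: fix $Y_0\in B\cap\Omega$ realizing (nearly) the supremum $M$. If $\delta(Y_0)\ge \epsilon_0 r$, Harnack Chains plus Harnack's inequality plus \eqref{eq2.CFMS2} finish it as above. If $\delta(Y_0)< \epsilon_0 r$, pick $y_0\in\partial\Omega$ with $|Y_0-y_0| = \delta(Y_0)$ and apply \eqref{eq2.CFMS2} to the \emph{large} ball $B' = B(y_0, r)$: since $Y_0\in B(y_0,r)\cap\Omega = B'\cap\Omega$ we cannot directly, so instead apply \eqref{eq2.CFMS1} ``in reverse'' — that is, use that $G(X_{\Delta'},X)\approx \delta(X_{\Delta'})^{1-n}\omega^X(\Delta')$ for the surface ball $\Delta'$ of radius $r$ centered at $y_0$, combined with the fact (Lemma \ref{lemma2.cfms} again, or rather its proof) that $\sup_{B'\cap\Omega}G(\cdot,X)$ is controlled by $G(X_{\Delta'},X)$. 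In other words, the genuinely new content is \emph{exactly} this supremum bound, and I expect the proof to follow the template of the proof of \eqref{eq2.CFMS2}: set $\Theta := \sup_{B\cap\Omega} G(\cdot,X) \big/ \big(r^{1-n}\omega^X(2\Delta)\big)$ (finite a priori), use \eqref{eq2.14} with a cutoff adapted to $B$, split the resulting integral of $|\nabla_Y G(Y,X)|$ into $\{\delta(Y)>\epsilon r\}$ (controlled by interior estimates and Harnack Chains, giving $C_\epsilon r^{n-1}G(X_\Delta,X)$) and $\{\delta(Y)\le \epsilon r\}$ (controlled, via \eqref{eq2.CFMS1} on Whitney-type balls and bounded overlap, by $C\epsilon\,\Theta\, r^{n-1}\omega^X(2\Delta)$, hence by $C\epsilon\,\Theta\,r^{n-1}G(X_\Delta,X)$ after \eqref{eq2.CFMS2}), and absorb. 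The main obstacle is organizing this absorption — one must define $\Theta$ over all balls $B$ with $2B\subset B_0$ simultaneously (as with $\|\Upsilon_{B_0}\|$) so that the $\{\delta(Y)\le\epsilon r\}$ term genuinely reproduces $\Theta$ at a larger scale and can be absorbed; equivalently, run the estimate for $\sup_{\widetilde B\cap\Omega}G$ in terms of $\sup_{\frac32\widetilde B\cap\Omega}G$ and iterate. Once \eqref{eq2.CFMS1}, \eqref{eq2.CFMS2} and the finiteness of the relevant sup are in hand, this is the same bootstrapping mechanism already used for Lemma \ref{lemma2.cfms}, and the remaining details are routine.
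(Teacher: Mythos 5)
Your proposal circles the correct ingredients (Harnack chains for the interior part, the $I$/$II$ split from the proof of Lemma \ref{lemma2.cfms}, and the identity \eqref{eq2.14}), but it never lands on the key observation, and as written there is a genuine gap. Your first attempt produces the wrong power of $\delta(Y)$ (as you notice), your second maximum-principle attempt also does not close (again as you notice), and your third attempt — setting up a quantity $\Theta$ and trying to absorb a small multiple of it — is a bootstrap argument whose absorption step you explicitly flag as ``the main obstacle'' and leave unworked. That step is not routine: to absorb you would need $\Theta$ to be finite \emph{a priori} and you would need the near-boundary term to reproduce the \emph{same} supremum quantity $\Theta$ (not a version of it at a larger scale, which would start an uncontrolled iteration); you would in effect be re-proving something that has already been proved.

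The missing idea, which makes the proof short, is this: extend $u(Y):=G(Y,X)$ by zero into $\Omega_{ext}$. Because $X\notin 2B$, formula \eqref{eq2.14} applied to nonnegative $\Phi\in C^\infty_0(B(x,3r/2))$ (for which $\Phi(X)=0$) shows that the distributional Laplacian of the extended $u$ equals the nonnegative measure $\omega^X\big|_{\partial\Omega}$ in $B(x,3r/2)$, i.e.\ the extension is \emph{subharmonic} there. The sub-mean value inequality then converts the supremum into an average,
$\sup_{B}u\lesssim |B'|^{-1}\iint_{B'}u$ with $B'=B(x,5r/4)$,
and this average is exactly the kind of integral you already know how to estimate from the proof of Lemma \ref{lemma2.cfms}: split into $\{\delta(Y)>\epsilon r\}$ (handled by Harnack chains, giving $C_\epsilon\, u(X_\Delta)$) and $\{\delta(Y)\leq\epsilon r\}$. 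On the near-boundary set one writes $u(Y)\leq \epsilon r\cdot u(Y)/\delta(Y)$, so this piece is $\approx\epsilon\, r^{1-n}\,II$ with $II$ precisely as in \eqref{eq2.15}; and \eqref{eq2.16}, together with the fact that $\Upsilon_{B_0}(X)$ is \emph{already known} to be uniformly bounded (this is just \eqref{eq2.CFMS2}), gives $II\leq C\epsilon\,\Upsilon_{B_0}\,r^{n-1}G(X_\Delta,X)\lesssim\epsilon\, r^{n-1}G(X_\Delta,X)$, whence this piece is $\leq C\epsilon^2 G(X_\Delta,X)$. No new bootstrap or absorption is required: once Lemma \ref{lemma2.cfms} is in hand, the subharmonicity of the zero-extension does all the work of passing from a sup to a quantity you can estimate, and the constants are already under control.
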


\begin{proof}  Fix $B,\Delta$ and $X$ as in the statement of the lemma, and set
$u(Y):= G(Y,X).$  Extending $u$ to be zero in $\Omega_{ext}$, we obtain from
\eqref{eq2.14} that $u$ is subharmonic in $B(x,3r/2).$  Let $B':= B(x,5r/4).$
By the sub-mean value inequality,
we have that
\begin{multline*}
\sup_{Y\in B} u(Y) \lesssim \frac1{|B'|}\iint_{B'} u
= \frac1{|B'|}\iint_{B'\cap \Omega} u\\[5pt] =
\frac1{|B'|}\iint_{B'\cap \Omega \cap \{\delta(Y)>\epsilon r\}} u(Y)\, dY
\,\,+\,\, \frac1{|B'|}\iint_{B'\cap \Omega \cap \{\delta(Y)\leq\epsilon r\}} u(Y) \, dY\\[5pt]
=:I^* + II^*\,\leq\, C_\epsilon\, u(X_\Delta)\, +\,II^*,
\end{multline*}
where in the last step we have used the Harnack Chain condition to estimate term $I^*$, and we have fixed a small $\epsilon$ as in the proof of Lemma \ref{lemma2.cfms} so that
\eqref{eq2.cover} holds.
Moreover, by definition of $u$,
$$
II^*\, \lesssim \, \frac1{|B'|} \,\epsilon r\iint_{B'\cap \Omega \cap \{\delta(Y)\leq\epsilon r\}}
\frac{G(Y,X)}{\delta(Y)} \, dY \approx \epsilon\, r^{1-n}\, II,
$$
where $II$ is exactly the same term as in \eqref{eq2.15}. In turn, by \eqref{eq2.16} and the fact that
$\Upsilon_{B_0}(X)$  is uniformly bounded (the latter fact is simply a
restatement of \eqref{eq2.CFMS2}), we find that
$$
II^*\,\leq C\epsilon^2 G(X_\Delta,X).
$$
\end{proof}

Under the same hypotheses as in the previous two lemmata, we shall
obtain a comparison principle for
the Green function, again without the use of H\"older continuity at the boundary.
In order to state our comparison principle, we shall need to
introduce the notion of a Carleson region.
Given a ``dyadic cube''
$Q\in \dd(\partial\Omega)$, the {\bf discretized Carleson region} $\dd_Q$ is defined to be
\begin{equation}\label{eq2.discretecarl}
\dd_Q:= \left\{Q'\in \dd: Q'\subseteq Q\right\}.
\end{equation}
For future reference, we also introduce discretized sawtooth regions as follows.
Given a family $\mathcal{F}$ of disjoint cubes $\{Q_j\}\subset \mathbb{D}$, we define
the {\bf global discretized sawtooth} relative to $\F$ by
\begin{equation}\label{eq2.discretesawtooth1}
\dd_{\F}:=\dd\setminus \bigcup_{\F} \dd_{Q_j}\,,
\end{equation}
i.e., $\dd_{\F}$ is the collection of all $Q\in\dd$ that are not contained in any $Q_j\in\F$.
Given some fixed cube $Q$,
the {\bf local discretized sawtooth} relative to $\F$ by
\begin{equation}\label{eq2.discretesawtooth2}
\dd_{\F,Q}:=\dd_Q\setminus \bigcup_{\F} \dd_{Q_j}=\dd_\F\cap\dd_Q.
\end{equation}

We shall also require ``geometric'' Carleson regions and sawtooths.
Let us first recall that we write $k=k(Q)$ if $Q\in \mathbb{D}_k$
(cf. Lemma \ref{lemmaCh}), and in that case the ``length'' of $Q$ is denoted
by $\ell(Q)=2^{-k(Q)}$.
We also recall that there is a Corkscrew point $X_Q$, relative to each $Q\in\dd$
(in fact, there are many such, but we just pick one).
Given such a $Q$, we define an associated
``Whitney region'' as follows.  Let $\mathcal{W}=\W(\Omega)$ denote a collection
of (closed) dyadic Whitney cubes of $\Omega$, so that the cubes in $\mathcal{W}$
form a pairwise non-overlapping covering of $\Omega$, which satisfy
\begin{equation}\label{eq4.1}
4 \diam(I)\leq
\dist(4I,\partial\Omega)\leq \dist(I,\pom) \leq 40\diam(I)\,,\qquad \forall\, I\in \mathcal{W}\,\end{equation}
(just dyadically divide the standard Whitney cubes, as constructed in  \cite[Chapter VI]{St},
into cubes with side length 1/8 as large)
and also
$$(1/4)\diam(I_1)\leq\diam(I_2)\leq 4\diam(I_1)\,,$$
whenever $I_1$ and $I_2$ touch.
Let $\ell(I)$ denote the side length of $I$,
and write $k=k_I$ if $\ell(I) = 2^{-k}$.
We set
\smallskip
\begin{equation}\label{eq2.whitney1}
\mathcal{W}_Q := \left\{I\in \mathcal{W}: \,k(Q) -m_0\,\leq \,k_I\,\leq \,k(Q)+1\,,\,{\rm and}
\dist(I,Q)\leq C_0\, 2^{-k(Q)}\right\}\,,
\end{equation}
where we may (and do) choose
the constant $C_0$ and positive integer $m_0$,
depending only on the constants in the Corkscrew condition and in the
dyadic cube construction (cf. Lemma \ref{lemmaCh}), so that
$X_Q\in I$ for some $I\in\W_Q$,
and for each dyadic child $Q^{j}$ of $Q$,
the respective Corkscrew points $X_{Q^{j}}\in I^{j}$ for some $I^{j}\in\W_Q$.
In particular, the collection $\W_Q$ is non-empty for every $Q\in \dd$.  Moreover
as long as $C_0$ is chosen large
enough depending on the constant $c$ in the Corkscrew condition, then by the properties
of Whitney cubes, we may always find an $I\in \W_Q$ with the slightly more precise property
that $k(Q)-1\leq k_I\leq k(Q)$.
We may further suppose, by choosing $C_0$ large enough,  that
\begin{equation}\label{eq2.whitney2**}
\W_{Q_1}\cap \W_{Q_2}\neq\emptyset\,,\,\, {\rm whenever}\,\,1\leq
\frac{\ell(Q_2)}{\ell(Q_1)}\leq 2\,,\,\, {\rm and}\,\dist(Q_1,Q_2)\leq1000\ell(Q_2)\,.
\end{equation}
We omit the details.
In the sequel, we shall assume
always that $C_0$ has been so chosen, and further that $C_0\geq 1000\sqrt{n}$.

We shall need to augment
$\mathcal{W}_Q$ in order to exploit the Harnack Chain condition.
It will be convenient to introduce the following notation:  given a subset
$A\subset \Omega$, we write
$$ X\rightarrow_A Y$$
if the interior of $A$ contains all the balls in
a Harnack Chain (in $\Omega$), connecting $X$ to $Y$, and if, moreover, for any point $Z$ contained
in any ball in the Harnack Chain, we have
\begin{equation}
\label{eq2.toa}
\dist(Z,\pom) \approx \dist(Z,\Omega\setminus A)\,,
\end{equation}
with uniform control of the implicit constants.   We denote by $X(I)$ the center
of a cube $I\in \ree$, and we recall that $X_Q$ denotes a designated Corkscrew point relative
to $Q$, which we may, from this point on,
assume without loss of generality to be the center of some Whitney cube
$I$ such that $\ell(I)\approx\ell(Q)\approx \dist(I,Q)$.  More precisely, we note the following.
\begin{remark}\label{remark1.18}
Having fixed the collection $\W$  (the Whitney cubes of $\Omega$), by taking the
Corkscrew constant $c$ to be slightly smaller, if necessary, we may assume that
the Corkscrew point $X_Q$ is the center of some $I\in\W$, with
$I\subset B_Q\cap\Omega$ and
$\ell(I) \approx\ell(Q).$
\end{remark}

We now define the augmented collection $\W^*_Q$ as follows.  For each $I\in \W_Q$, we form a Harnack Chain, call it $H(I)$,
from the center $X(I)$ to the Corkscrew point $X_Q$.   We now denote by $\W(I)$
the collection of all Whitney cubes which meet at least one ball in the chain $H(I)$,
and we set
$$\W_Q^* := \bigcup_{I\in \W_Q} \W(I).$$
We also define, for $\lambda\in(0,1)$ to be chosen momentarily,
\begin{equation}\label{eq2.whitney3}
U_Q := \bigcup_{\mathcal{W}^*_Q} (1+\lambda)I=:
\bigcup_{I\in\,\mathcal{W}^*_Q} I^*\,.
\end{equation}
By construction, we then have that
\begin{equation}\label{eq2.whitney2*}
 \mathcal{W}_Q\subset \mathcal{W}^*_Q\subset\mathcal{W}\,\quad {\rm and}\quad
X_Q\in U_Q\,,\,\, X_{Q^j} \in U_Q\,,
\end{equation}
for each child $Q^j$ of $Q$.
It is also clear that there are uniform constants $k^*$ and $K_0$ such that
\begin{eqnarray}\label{eq2.whitney2}
& k(Q)-k^*\leq k_I \leq k(Q) + k^*\,, \quad \forall I\in \mathcal{W}^*_Q\\\nonumber
&X(I) \rightarrow_{U_Q} X_Q\,,\quad\forall I\in \mathcal{W}^*_Q\\ \nonumber
&\dist(I,Q)\leq K_0\,2^{-k(Q)}\,, \quad \forall I\in \mathcal{W}^*_Q\,,
\end{eqnarray}
where $k^*$, $K_0$ and the implicit constants in \eqref{eq2.toa} (which
pertain to the condition $X(I)\to_{U_Q} X_Q$),
depend only  on the ``allowable
parameters'' (since $m_0$ and $C_0$ also have such dependence)
and on $\lambda$.
Thus, by the addition of a few nearby
Whitney cubes of diameter also comparable to that of $Q$,
we can ``augment'' $ \mathcal{W}_Q$ so that the Harnack Chain condition
holds in $U_Q$.

We fix the parameter $\lambda$ so that
for any $I,J\in\W$,
\begin{equation}\label{eq2.29*}
\begin{split}
\dist(I^*,J^*)& \approx \dist(I,J) \\
\interior(I^*)\cap\interior(J^*)\neq \emptyset& \iff \partial I \cap\partial J \neq \emptyset
\end{split}
\end{equation}
(the fattening thus ensures overlap of $I^*$ and $J^*$ for
any pair $I,J \in\W$ whose boundaries touch, so that the
Harnack Chain property then holds locally, with constants depending upon $\lambda$, in $I^*\cup J^*$).  By choosing $\lambda$ sufficiently small,
we may also suppose that there is a $\tau\in(1/2,1)$ such that for distinct $I,J\in\W$,
\begin{equation}\label{eq2.30*}
\tau J\cap I^* = \emptyset\,.
\end{equation}
We remark that any sufficiently small choice of $\lambda$ (say $0<\lambda\le \lambda_0$) will do for our purposes.

Of course, there may be some flexibility in the choice of additional Whitney cubes
which we add to form the augmented collection $\mathcal{W}^*_Q$, but having made such a choice
for each $Q \in \mathbb{D}$, we fix it for all time.   We may then define  the {\bf Carleson box}
associated to $Q$ by
\begin{equation}\label{eq2.box}
T_Q:={\rm int}\left( \bigcup_{Q'\in \dd_Q} U_{Q'}\right).
\end{equation}
Similarly, we may define geometric sawtooth regions as follows.
As above, give a family $\mathcal{F}$ of disjoint cubes $\{Q_j\}\subset \mathbb{D}$,
we define the {\bf global sawtooth} relative to $\mathcal{F}$ by
\begin{equation}\label{eq2.sawtooth1}
\Omega_{\mathcal{F}}:= {\rm int } \left( \bigcup_{Q'\in\dd_\F} U_{Q'}\right)\,,
\end{equation}
and again given some fixed $Q\in \dd$,
the {\bf local sawtooth} relative to $\mathcal{F}$ by
\begin{equation}\label{eq2.sawtooth2}
\Omega_{\mathcal{F},Q}:=  {\rm int } \left( \bigcup_{Q'\in\dd_{\F,Q}} U_{Q'}\right)\,.
\end{equation}

For future reference, we present the following.
\begin{lemma}\label{lemma2.31}  Suppose that $\Omega$ is a 1-sided NTA domain
with an ADR boundary.
Given $Q\in\dd$, let $B_Q:=B(x_Q,r)$, $r\approx\ell(Q)$,
and $\Delta_Q:=B_Q\cap\partial\Omega \subset Q$,
be as in \eqref{cube-ball} and \eqref{cube-ball2}.
Then for each $Q$, there is a ball $B_Q':=B(x_Q,s)\subset B_Q$, with $s\approx\ell(Q)\approx r$,
such that
\begin{equation}\label{eq2.31a}
B_Q'\cap\Omega \subset T_Q.
\end{equation}
Moreover, for a somewhat smaller choice of $s\approx (K_0)^{-1}\ell(Q)$, we have for every
pairwise disjoint family $\F\subset\dd$, and for each $Q_0\in \dd$ containing $Q$, that
\begin{equation}\label{eq4.12a}
B'_Q\cap \Omega_{\F,Q_0} = B'_Q\cap \Omega_{\F,Q}.
\end{equation}
\end{lemma}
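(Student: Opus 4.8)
The plan is to deduce both displays from a single geometric observation: by \eqref{cube-ball}--\eqref{cube-ball2} the center $x_Q$ lies ``deep'' inside $Q$, i.e.\ there is $c_2>0$ depending only on the ADR constants with $\Delta(x_Q,c_2\ell(Q))\subset Q$; combining this with the nesting property (ii) of Lemma~\ref{lemmaCh}, \emph{any dyadic cube $Q'$ with $k(Q')\ge k(Q)$ that meets the ball $B(x_Q,c_2\ell(Q))$ must satisfy $Q'\subseteq Q$}. I would choose $s$ to be a sufficiently small fixed multiple of $\ell(Q)$ (not exceeding $r$) for \eqref{eq2.31a}, and the smaller value $s\approx (K_0)^{-1}\ell(Q)$ for \eqref{eq4.12a}; in both cases the implicit constant depends only on the allowable parameters.

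For \eqref{eq2.31a}: given $Y\in B'_Q\cap\Omega$, let $I\in\W$ be a Whitney cube containing $Y$. Since $x_Q\in\pom$ we have $\delta(Y)\le|Y-x_Q|<s$, so by \eqref{eq4.1} $\diam(I)\approx\delta(Y)<s$, which for $s$ small forces $k_I\ge k(Q)$. Choosing $z\in\pom$ with $|Y-z|=\delta(Y)$ gives $|z-x_Q|\le|z-Y|+|Y-x_Q|\lesssim s<c_2\ell(Q)$, so $z\in\Delta(x_Q,c_2\ell(Q))\subset Q$; letting $Q'\in\dd_{k_I}$ be the cube containing $z$, the observation yields $Q'\subseteq Q$, i.e.\ $Q'\in\dd_Q$. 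It then remains to check $I\in\W_{Q'}$: the generation restriction in \eqref{eq2.whitney1} is trivial since $k_I=k(Q')$, while $\dist(I,Q')\le|Y-z|=\delta(Y)\le 41\diam(I)\le C_0\,2^{-k(Q')}$ because $C_0\ge 1000\sqrt n$. Hence $Y\in I\subset\interior(I^*)\subset U_{Q'}\subset\bigcup_{Q''\in\dd_Q}U_{Q''}$ by \eqref{eq2.whitney3}, and since $\interior(I^*)$ is open, $Y\in\interior\bigl(\bigcup_{Q''\in\dd_Q}U_{Q''}\bigr)=T_Q$ (cf.\ \eqref{eq2.box}).

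For \eqref{eq4.12a}: fix a pairwise disjoint family $\F\subset\dd$ and $Q_0\in\dd$ with $Q\subseteq Q_0$, and take $s\approx (K_0)^{-1}\ell(Q)$. Since $Q\subseteq Q_0$ we have $\dd_{\F,Q}\subseteq\dd_{\F,Q_0}$ (cf.\ \eqref{eq2.discretesawtooth2}), hence $\Omega_{\F,Q}\subseteq\Omega_{\F,Q_0}$, which gives one inclusion. For the reverse I would first establish the set identity $B'_Q\cap\bigcup_{Q'\in\dd_{\F,Q_0}}U_{Q'}=B'_Q\cap\bigcup_{Q'\in\dd_{\F,Q}}U_{Q'}$ and then pass to interiors, using $\interior(B'_Q\cap A)=B'_Q\cap\interior(A)$ for the open ball $B'_Q$ (which, together with \eqref{eq2.sawtooth2}, yields \eqref{eq4.12a}). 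Only ``$\subseteq$'' needs an argument: suppose $Y\in B'_Q$ and $Y\in I^*$ for some $I\in\W^*_{Q'}$ with $Q'\in\dd_{\F,Q_0}$. By \eqref{eq2.whitney2} one has $\ell(I)\approx\ell(Q')$ and $\dist(I,Q')\le K_0\,\ell(Q')$, and by \eqref{eq4.1} (with $\lambda$ small) $\delta(Y)\approx\ell(I)$; since $\delta(Y)\le|Y-x_Q|<s$ this forces $\ell(Q')\lesssim\delta(Y)<s$, in particular $k(Q')\ge k(Q)$. Picking $w\in Q'$ with $|w-Y|\approx\dist(Y,Q')\lesssim K_0\,\ell(Q')$, one gets $|w-x_Q|\le|w-Y|+|Y-x_Q|\lesssim K_0\,\ell(Q')+s\lesssim K_0\,s<c_2\ell(Q)$ once $s$ is a small enough multiple of $(K_0)^{-1}\ell(Q)$; hence $w\in\Delta(x_Q,c_2\ell(Q))\subset Q$, so $Q'\cap Q\neq\emptyset$ and $Q'\subseteq Q$. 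Since also $Q'\in\dd_\F$, we obtain $Q'\in\dd_{\F,Q}$, so $Y\in U_{Q'}\subset\bigcup_{Q'\in\dd_{\F,Q}}U_{Q'}$, as required.

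The only genuinely delicate point is the constant bookkeeping in the second part: what makes $B'_Q$ ``blind'' to any $U_{Q'}$ with $Q'\not\subseteq Q$ is precisely that such a region protrudes from $x_Q$ by at most $\sim K_0\,\ell(Q')$ with $\ell(Q')\lesssim s$ (both from \eqref{eq2.whitney2}), i.e.\ by $\sim K_0\,s$, which has to be kept below $c_2\ell(Q)$ --- this is exactly what pins $s$ to order $(K_0)^{-1}\ell(Q)$. The interior-versus-closure manipulations for $T_Q$ and $\Omega_{\F,Q}$, and the fact that $k^*$ is comparable to $K_0$ up to allowable constants, are routine, so I do not anticipate any real obstacle beyond this.
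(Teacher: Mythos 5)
Your proof is correct and follows essentially the same route as the paper's: for both displays you reduce, via \eqref{eq4.1} and \eqref{eq2.whitney2}, to checking that the relevant Whitney box (or its associated cube $Q'$) lies near $x_Q$ at a scale small compared with $\ell(Q)$, after which dyadic nesting yields $Q'\subseteq Q$. Your handling of the passage to interiors in the second part via $\interior(B'_Q\cap A)=B'_Q\cap\interior(A)$ is a tidy alternative to the paper's remark about choosing $s$ slightly smaller.
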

\begin{proof} We prove \eqref{eq2.31a} first.
Let $Y\in \Omega$, with $|Y-x_Q|<cr=:s,$ where $c>0$ is to be determined.
Then $Y\in I\in\W$, with
$$\ell(I)\approx \delta(Y)\leq |Y-x_Q| <cr.$$
Fix $Q_I\in \dd$ such that $\ell(Q_I)=\ell(I),$ and $\dist(Q_I,I)\approx \ell(I).$
In particular, $I\in\W_{Q_I}\subset \W^*_{Q_I}$,
so that $I\subset \interior(I^*)\subset \interior(U_{Q_I})$.
By the triangle inequality, for all $x\in Q_I$, we have
$$|x-x_Q|\leq |x-Y|+|Y-x_Q|\leq C \ell(I)+cr\leq C cr<r,$$
if $c$ is chosen small enough.   Hence,
$Q_I\subset \Delta_Q \subset Q$,  so
$Y\in \cup_{Q'\in\dd_Q} \interior(U_{Q'})\subset T_Q.$

We now turn to the proof of \eqref{eq4.12a}.  Since $Q\subset Q_0$, the
``right to left'' containment is trivial, for any choice of $B_Q'$.
We therefore suppose that $Y\in B_Q'\cap
 \Omega_{\F,Q_0}$, where again $B_Q':=B(x_Q,s)$, and $s$ will be chosen momentarily.
 It is enough to show that $Y \in \Omega_{\F,Q}$, for some choice of $s \approx (K_0)^{-1}\ell(Q)$.
 Since $Y\in  \Omega_{\F,Q_0}$, by definition there is some $Q'\in \dd_{Q_0}\cap\dd_\F$,
 for which $Y\in I^*=(1+\lambda)I$, with $I\in \W^*_{Q'}$.  Then
 $$\ell(Q')\approx\ell(I)\approx \delta(Y)\leq
 |Y-x_Q|\leq s\,,$$
 where in the last step we have used that $Y\in B_Q'$.
 Moreover, for every $y'\in Q'$, we have
 $$
 |y'-Y|\lesssim\dist(Y,Q')\lesssim\dist(I,Q')\lesssim K_0 \,\ell(Q') \lesssim K_0\, s.
 $$
 Thus, by the triangle inequality, for every $y'\in Q'$, we have
 $$
 |y'-x_Q|\lesssim K_0\,s  < r \approx \ell(Q),
 $$
 by choice of $s = c (K_0)^{-1}\ell(Q)$ with $c$ sufficiently small.  Thus, $Q'\subset \Delta_Q\subset Q$,
 whence $Y\in \cup_{Q'\in\dd_{\F,Q}}U_{Q'}$, i.e., we have shown that
 $B_Q'\cap \Omega_{\F,Q_0}\subset \cup_{Q'\in\dd_{\F,Q}}U_{Q'}$, and therefore
 $\interior(B_Q')\cap \Omega_{\F,Q_0}\subset  \Omega_{\F,Q}$, by definition.  Choosing $s$ slightly
 smaller, which amounts to replacing $B_Q'$ by a slightly smaller ball, we obtain
 \eqref{eq4.12a}.
\end{proof}

We also define as follows the ``Carleson box'' $T_\Delta$
associated to a surface ball $\Delta:=\Delta(x_\Delta,r)$.
Let $k(\Delta)$ denote the unique $k \in \mathbb{Z}$ such that $2^{-k-1}<200\,r\leq 2^{-k}$,
and set
\begin{equation}\label{eq2.box3}
\mathbb{D}^\Delta:= \{Q \in \mathbb{D}_{k(\Delta)}: Q\cap2\Delta \neq \emptyset\}.
\end{equation}
We then define
\begin{equation}\label{eq2.box2}
T_\Delta :={\rm int}\left(\bigcup_{Q\in \mathbb{D}^\Delta} \overline{T_Q}\right).
\end{equation}

For future reference, we record the following analogue of Lemma
\ref{lemma2.31}.   Set $B_\Delta := B(x_\Delta, r),$ so that
$\Delta = B_\Delta\cap\partial\Omega$.   Then
\begin{equation}\label{eq2.ball-box}
\frac54 B_\Delta\cap\Omega\subset T_\Delta.
\end{equation}
Indeed, let $X\in \Omega$ with $|X-x_\Delta|<5r/4$.  Then $X\in I\in \mathcal{W}$ with
$\ell(I)\approx\delta(X)<5r/4$,
so that $\ell(I) \leq\ell(Q)$, for each $Q\in\dd^\Delta$.

Suppose first that $\delta(X)<3r/4.$  There is an $x_1\in \partial\Omega$ such that
$|X-x_1|=\delta(X)$, so that by the triangle inequality, $|x_1-x_\Delta|< 2r.$ Consequently,
there is a $Q\in \mathbb{D}^\Delta$ for which $x_1\in \overline{Q}$, whence
there is a $Q'\subset Q$, whose closure contains $x_1$,
such that $\ell(Q')=\ell(I),$ and $\dist(Q',I)\leq \delta(X)\leq 41\diam(I)\ll C_0\,\ell(Q')$
(cf \eqref{eq4.1}-\eqref{eq2.whitney1}).
Thus, $I\in \mathcal{W}_{Q'}$, so
$X \in \interior(I^*)\subset \interior(U_{Q'})\subset T_Q\subset T_\Delta$.

Now suppose that
$3r/4\leq\delta(X)<5r/4.$ Then $X\in I$ with $\ell(I)\approx r$, and $\dist(I,Q') \approx r$ for every
$Q'$ contained in any $Q\in \mathbb{D}^\Delta$, with $\ell(Q')\approx\ell(I)$.  In that case, we have that
$I\in \mathcal{W}_{Q'}$, for each such $Q'$, so that $X\in T_Q, \forall Q\in \mathbb{D}_\Delta$.

\begin{lemma}\label{lemma2.30}  Suppose that $\Omega$ is a  1-sided NTA domain
with an ADR boundary.
Then all of its  Carleson boxes
$T_Q$ and $T_\Delta$, and sawtooth regions
$ \Omega_{\mathcal{F}}$, and $\Omega_{\mathcal{F},Q}$ are also 1-sided NTA domains
with ADR boundaries.
If in addition $\partial \Omega$ is also
UR, then so is the boundary of each Carleson box $T_Q$ and $T_\Delta$.
 In all cases, the implicit constants are uniform, and
depend only on dimension and on the corresponding constants for $\Omega$.
\end{lemma}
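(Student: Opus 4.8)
The plan is to prove Lemma \ref{lemma2.30} by exhibiting explicitly, for each of the relevant domains, the Corkscrew points, Harnack Chains, ADR constants, and (when $\partial\Omega$ is UR) the Carleson measure estimate \eqref{eq1.sf}, in each case importing the required data from the corresponding structure on $\Omega$ via the Whitney-region decomposition. First I would set up a uniform "dyadic coherence" vocabulary: a Carleson box $T_Q$ (respectively a sawtooth $\Omega_\F$, $\Omega_{\F,Q}$) is, by \eqref{eq2.box}, \eqref{eq2.sawtooth1}, \eqref{eq2.sawtooth2}, a union of fattened Whitney cubes $I^*=(1+\lambda)I$ indexed by $\W^*_{Q'}$ over $Q'$ ranging in a "coherent" family of dyadic cubes (downward closed for $T_Q$; an arbitrary sawtooth complement for $\Omega_\F$). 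The key bookkeeping observation is that by \eqref{eq2.whitney2} every $I\in\W^*_{Q'}$ has $\ell(I)\approx\ell(Q')$ and $\dist(I,Q')\lesssim\ell(Q')$, so points of these domains lie at depth comparable to $\ell(Q')$ near $Q'$.

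Next I would verify the three properties in order. For \textbf{interior Corkscrews}: given a surface ball $\Delta(y,\rho)$ on $\partial T_Q$ (say), one locates the dyadic cube $Q'$ in the coherent family whose "length" is $\approx\rho$ and which is closest to $y$; its Corkscrew point $X_{Q'}$ (which by Remark \ref{remark1.18} is the center of a Whitney cube $I\subset B_{Q'}\cap\Omega$ with $\ell(I)\approx\ell(Q')$) lies in $I^*\subset U_{Q'}\subset T_Q$ and is at distance $\approx\rho$ from $\partial T_Q$ — here one uses that the fattening parameter $\lambda$ is small (cf. \eqref{eq2.30*}) so that $X_{Q'}$ keeps a definite distance from the boundary of the sawtooth. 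For \textbf{Harnack Chains}: given $X,X'$ in the domain with $\delta_{T_Q}(X),\delta_{T_Q}(X')\gtrsim\rho$ and $|X-X'|\lesssim\Lambda\rho$, one has $X\in I^*$, $X'\in J^*$ with $I\in\W^*_{Q_1}$, $J\in\W^*_{Q_2}$, $\ell(I)\approx\ell(J)\approx\rho$, and $\dist(Q_1,Q_2)\lesssim\Lambda\rho$; one then concatenates: a local chain from $X$ to $X_{Q_1}$ inside $U_{Q_1}$ (which exists by \eqref{eq2.whitney2}, i.e. $X(I)\to_{U_{Q_1}}X_{Q_1}$, together with \eqref{eq2.29*} to move $X$ to $X(I)$), then a Harnack Chain in $\Omega$ from $X_{Q_1}$ to $X_{Q_2}$ supplied by the 1-sided NTA property of $\Omega$, and one checks that this $\Omega$-chain actually stays inside $T_Q$ by passing through the Whitney cubes $U_{Q'}$ along a dyadic path joining $Q_1$ to $Q_2$ (this is where the downward-closedness of $\dd_Q$, respectively the sawtooth structure, is used, together with \eqref{eq2.whitney2**}), and finally a local chain from $X_{Q_2}$ to $X'$. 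For \textbf{ADR}: the lower bound follows since $\partial T_Q$ contains portions of $\partial\Omega$ (of size $\approx$ the scale under consideration, using \eqref{cube-ball} and Lemma \ref{lemma2.31}) plus, at small scales, faces of Whitney cubes; the upper bound follows by counting Whitney cubes meeting a ball of radius $\rho$ — by \eqref{eq2.whitney2} and bounded overlap there are boundedly many $\W^*_{Q'}$ at each scale, and the boundary faces they contribute sum to $\lesssim\rho^n$ by the ADR property of $\partial\Omega$ (a standard Whitney-faces count).

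For the \textbf{UR} statement for $T_Q$ and $T_\Delta$, I would exploit that UR is equivalent, via \eqref{eq1.UR}, to the Carleson bound $\sup_{x_0,r}r^{-n}\int_0^r\!\int_{B(x_0,t)\cap E}\beta_2(x,t)^2\,dH^n(x)\,\tfrac{dt}{t}<\infty$ for $E=\partial T_Q$. The boundary of a Carleson box decomposes into the part lying on $\partial\Omega$ and the part made up of Whitney-cube faces; on the first part the $\beta_2$ numbers are controlled by those of $\partial\Omega$ (since locally $\partial T_Q$ and $\partial\Omega$ coincide at comparable scales), and on the Whitney-face part one observes that each such face is a piece of a hyperplane, so its $\beta_2$ number at scales $\lesssim\ell(I)$ vanishes, while at scales $\gtrsim\ell(I)$ it is absorbed into a Carleson sum that telescopes against the ADR property. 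Summing the two contributions gives the Carleson estimate, hence UR. I would note explicitly (as the excerpt does) that one does \emph{not} expect this for general sawtooths $\Omega_\F$, only for Carleson boxes, because the coherent family defining a box is an entire "tree" $\dd_Q$ and the Whitney faces it produces are organized dyadically, whereas a sawtooth can have a wild collection of stopping cubes. The main obstacle — and the step deserving the most care — is the Harnack Chain verification inside a sawtooth: one must produce a chain that not only connects the two points in $\Omega$ but remains trapped inside the sawtooth with distance-to-boundary comparable to distance-to-$\partial\Omega$ along the way, which requires simultaneously using the Harnack Chain property of $\Omega$ and the geometric stability \eqref{eq2.whitney2**}, \eqref{eq2.29*} of the augmented Whitney collections under the dyadic structure; everything else is a bounded-overlap count. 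I would leave the (lengthy but routine) details to the reader, or defer them to an appendix, since this type of argument is by now standard in the Carleson-box/sawtooth literature.
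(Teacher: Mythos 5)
Your Corkscrew and Harnack Chain sketches follow essentially the same route as the paper's Appendix~\ref{ainherit}: locate a dyadic cube of the right scale, use its Corkscrew point, and connect via the augmented Whitney regions and \eqref{eq2.whitney2**}. Those parts are fine modulo routine casework (the paper distinguishes $x\in\pom$ from $x$ on a Whitney face, $\ell(I)\gtrsim r$ from $\ell(I)\ll r$, etc.).

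The ADR upper bound is \emph{not} ``a standard Whitney-faces count,'' and your sketch misses the point where the work is. A na\"ive count of Whitney cubes of scale $2^{-k}$ meeting $\td(x,r)$ gives $\approx (2^k r)^n$ cubes, each contributing a face of measure $2^{-kn}$, i.e.\ $\approx r^n$ \emph{per scale}, and summing over the $\log(2^k r)$ scales between $2^{-k}$ and $r$ produces $r^n\log$, which overshoots. What saves the paper's argument is the ``thin boundary'' property $(vi)$ of Lemma~\ref{lemmaCh}: the only small Whitney cubes whose faces can actually lie on $\partial T_Q$ are those whose associated dyadic cube $Q_I$ is within $\approx K_0\ell(I)$ of the boundary of some $Q'$ of scale $\approx r$ (Type~2 in the paper), and by $(vi)$ these number only $\lesssim (2^kr)^{n-\gamma\eta}$, so the sum in $k$ converges geometrically. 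Without invoking $(vi)$ (or something equivalent) you cannot close this estimate.

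The UR claim is where your proposal genuinely diverges from the paper, and where it has a real gap. You propose to verify the $\beta_2$ Carleson condition \eqref{eq1.UR} for $\partial T_Q$ by splitting into the $\pom$-part and the Whitney-face part. But $\beta_2(x,t)$ is computed over $B(x,t)\cap\partial T_Q$ in its \emph{entirety}, so near the interface between $\pom$ and the Whitney faces --- precisely the scales $t$ at which the ball sees both pieces --- the quantity is not ``controlled by the $\beta_2$ of $\pom$'' on the one hand, nor does it ``vanish at scales $\lesssim\ell(I)$'' on the other: near an edge or corner of $I^*$, or near the transition to $\pom$, the local geometry is that of two planes at right angles (or a plane meeting an ADR set at a jump of height $\approx\ell(I)$), and $\beta_2$ is of order one there. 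You would need to show that such configurations occur only on a Carleson-packed family of scales and locations, which is non-trivial and is not addressed in the sketch. The paper sidesteps exactly this difficulty by using the local $Tb$ characterization of UR (via \cite{GM}): for each surface ball $\td$ it constructs a single test function $b_{\td}$ supported either entirely in the $\pom$-part (where it shows $\tdelta = \delta$ on the support, so the global UR bound \eqref{eqA.6a} for $\pom$ applies directly) or entirely on one flat face of a Whitney cube (where the hyperplane bound applies); the verification \eqref{eqA.6}--\eqref{eqA.8} never requires comparing $\beta$-numbers of $\partial T_Q$ to those of $\pom$ across the interface. If you wish to keep the $\beta_2$ route you should either restrict the competitor plane cleverly near the interface and track the error in a Carleson sum, or switch to the paper's $Tb$ formulation, which localizes the problem and avoids the interface altogether.
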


We defer the proof until Appendix \ref{ainherit}.

We remark that it seems likely that
one could show that the sawtooth regions also inherit the UR property,
but in our case, the only sawtooths that we work with will
enjoy an even stronger property, so we shall not bother to explore this issue here.

\begin{lemma}\label{lemma:inher-quali}
Suppose that $\Omega$ is a 1-sided NTA domain with an ADR boundary and that $\Omega$ also satisfies the qualitative exterior
Corkscrew condition. Then all of its  Carleson boxes
$T_Q$ and $T_\Delta$, and sawtooth regions $ \Omega_{\mathcal{F}}$, and $\Omega_{\mathcal{F},Q}$ satisfy the qualitative exterior
Corkscrew condition. In all cases, the implicit constants are uniform, and
depend only on dimension and on the corresponding constants for $\Omega$.
\end{lemma}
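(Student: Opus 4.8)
The plan is to reduce the statement to two elementary facts: that each of the four regions is contained in $\Omega$ --- so that an exterior Corkscrew ball for $\Omega$ is \emph{ipso facto} an exterior Corkscrew ball for the smaller region --- and that the remaining, ``interior'', part of the boundary of such a region is assembled from faces of the fattened Whitney cubes $I^*=(1+\lambda)I$ of \eqref{eq2.whitney3}, whose mutual positions are controlled by the separation properties \eqref{eq2.29*}--\eqref{eq2.30*}. Thus, fix $\Omega^*\in\{T_Q,T_\Delta,\Omega_\F,\Omega_{\F,Q}\}$ and write $\Omega^*=\interior\big(\bigcup_{I\in\W'}I^*\big)$, where $\W'\subseteq\W(\Omega)$ is the subcollection of Whitney cubes entering the construction of $\Omega^*$ (a union of augmented collections $\W^*_{Q'}$ over the relevant $Q'$); this treats all four cases at once. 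By Lemma \ref{lemma2.30}, $\Omega^*$ is already a $1$-sided NTA domain with ADR boundary, with constants controlled by those of $\Omega$, so the only thing left is to produce \emph{exterior} Corkscrew points for $\Omega^*$ at all sufficiently small scales. Let $N$ and $c_N$ be the parameters in the qualitative exterior Corkscrew condition for $\Omega$.

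First I would handle the boundary points of $\Omega^*$ that lie on $\partial\Omega$, together with all ``coarse'' scales at the remaining boundary points. Since $\Omega^*\subseteq\Omega$ we have $\Omega_{ext}=\ree\setminus\overline\Omega\subseteq\ree\setminus\overline{\Omega^*}=\Omega^*_{ext}$. Hence, if $x^*\in\partial\Omega^*\cap\partial\Omega$ and $0<\rho\le2^{-N}$, then the exterior Corkscrew ball for $\Omega$ relative to the surface ball $\Delta(x^*,\rho)\subseteq\partial\Omega$ is contained in $B(x^*,\rho)\cap\Omega^*_{ext}$, and we are done for such $x^*$. The same idea disposes of any $x^*\in\partial\Omega^*$ with $d:=\delta(x^*)>0$ at scales $2d\le\rho\le2^{-N}$: picking $x_1\in\partial\Omega$ with $|x^*-x_1|=d<\rho/2$ and applying the qualitative exterior Corkscrew condition for $\Omega$ to $\Delta(x_1,\rho/2)$ produces a ball inside $B(x_1,\rho/2)\cap\Omega_{ext}\subseteq B(x^*,\rho)\cap\Omega^*_{ext}$. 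What is left --- and this is the heart of the matter --- is the regime of an interior boundary point at a fine scale: $x^*\in\partial\Omega^*$, $d=\delta(x^*)>0$, and $0<\rho\lesssim d$.

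In this regime I would argue from the Whitney decomposition directly, in two overlapping ranges. When $\rho\approx d$: because $x^*\in\partial\Omega^*$ and, by the overlap property in \eqref{eq2.29*}, $\Omega^*$ has no internal slits, there is a point $y\in B(x^*,\rho/4)\cap\Omega^*_{ext}$, which lies in a Whitney cube $J\in\W(\Omega)\setminus\W'$ with $\ell(J)\approx\delta(y)\approx d\approx\rho$; property \eqref{eq2.30*} forces $\tau J$ to be disjoint from $\bigcup_{I\in\W'}I^*$, and since $\tau J$ is a cube of side $\approx\rho$ at distance $\lesssim\rho$ from $x^*$ (and a bounded distance from $\partial\Omega$), $\interior(\tau J)$ contains an exterior Corkscrew ball of radius $\approx\rho$ inside a fixed dilate of $B(x^*,\rho)$ --- absorbed by replacing $\rho$ with $\rho/C$. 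When $\rho\ll d$: near $x^*$ the set $\Omega^*$ coincides with the union of the boundedly many fattened cubes $K^*$, $K\in\W'$, with $\dist(x^*,K^*)\lesssim\rho$, each of side $\approx d\gg\rho$; since $x^*\notin\Omega^*$, no single such $K^*$ contains $B(x^*,2\rho)$, so each meets $B(x^*,2\rho)$ in the trace of an axis-parallel coordinate half-space (or a wedge, or a higher-order corner, near an edge or vertex of $K^*$), and the union of these finitely many regions omits $x^*$ from its interior. A short combinatorial argument --- each of the $2^{n+1}$ open coordinate orthants at $x^*$ is either contained in one of these regions or disjoint from all of them, and they cannot all be contained, lest the union be a neighborhood of $x^*$ --- then exhibits an open coordinate orthant at $x^*$ avoiding $\overline{\Omega^*}$, hence an exterior Corkscrew ball of radius $\approx\rho$ inside $B(x^*,\rho)$. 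Assembling the three ranges gives exterior Corkscrew points for $\Omega^*$ relative to every surface ball of radius $\le2^{-N}$, with Corkscrew constant depending only on $n$, on $c_N$, on the $1$-sided NTA and ADR constants of $\Omega$, and on the fixed parameters $\lambda,\tau$; and since the argument uses nothing about $\Omega^*$ beyond $\Omega^*\subseteq\Omega$ and its description as the interior of a union of fattened Whitney cubes, it yields the claim for $T_Q$, $T_\Delta$, $\Omega_\F$ and $\Omega_{\F,Q}$ with uniform constants. The main obstacle, as usual, is the small-scale interior step: matching the two ranges of $\rho$ without a gap and verifying the coordinate-cone description of $\partial\Omega^*$ near edges and corners of the Whitney cubes --- this is the same bookkeeping that underlies the $1$-sided NTA and ADR conclusions of Lemma \ref{lemma2.30} in Appendix \ref{ainherit}, and I would organize the present argument to parallel it.
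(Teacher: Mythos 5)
Your proposal has essentially the same case structure as the paper's terse proof: centers on $\partial\Omega$, centers $x^*$ with $\delta(x^*)<\rho/2$, and interior centers with $\delta(x^*)\geq\rho/2$. The first two cases are handled exactly as in the paper (via $\Omega_{ext}\subset\ree\setminus\overline{\Omega^*}$ and the qualitative exterior Corkscrew for $\Omega$), and your mid-scale range $\rho\approx\delta(x^*)$ reproduces the paper's idea of locating the exterior Corkscrew ball inside $\tau J$ for a Whitney cube $J\ni x^*$ with $J\notin\W'$, using \eqref{eq2.30*}.

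The sub-case $\rho\ll\delta(x^*)$ has a gap. You assert that each $K^*\in\W'$ with $\dist(x^*,K^*)\lesssim\rho$ meets $B(x^*,2\rho)$ in an axis-aligned half-space, wedge, or corner, and that each open coordinate orthant at $x^*$ is then either contained in some $K^*$ or disjoint from all of them. This dichotomy requires $x^*\in\partial K^*$ for every such $K$, but $x^*\notin\interior(K^*)$ only forces $x^*\in\partial K^*$ \emph{or} $x^*\notin\overline{K^*}$. In the latter case $K^*$ can approach $x^*$ to within any positive distance $\leq 2\rho$ --- for instance if $K\in\W'$ is adjacent to the Whitney cube $J_0\ni x^*$ but $x^*$ projects just outside $K\cap\partial J_0$ --- and then $K^*\cap B(x^*,2\rho)$ is a \emph{shifted} corner piece whose bounding hyperplanes miss $x^*$. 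Such a set is neither contained in nor disjoint from any coordinate orthant at $x^*$, so the dichotomy fails and you cannot conclude that some orthant avoids $\overline{\Omega^*}$; since the fattened Whitney faces $\partial I^*$, $I\in\W$, are not quantized relative to $x^*$, this defeating configuration cannot be ruled out in general.

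The natural fix is the one the paper gestures at: work directly inside the single Whitney cube $J_0\ni x^*$, which is not in $\W'$ (else $x^*\in J_0\subset\interior(J_0^*)\subset\Omega^*$). The only $K^*$'s that can meet $B(x^*,\rho)\cap J_0$ with $K\in\W'$ are those adjacent to $J_0$, and each slab $K^*\cap J_0$ lives within a $\lambda\ell(J_0)$-neighborhood of the corresponding face of $J_0$. Combining this with the fact that $x^*\in\partial I^*$ fixes the depth of $x^*$ inside $J_0$ relative to the $I$-face, one can show that $B(x^*,r)\cap\bigl(J_0\setminus\bigcup_{K\in\W'}K^*\bigr)$ contains a ball of radius $\gtrsim r$, which is the "ample intersection" the paper claims. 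That bookkeeping needs to be carried out explicitly; as written, your orthant argument does not supply it.
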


The proof of this result is almost trivial. Consider for instance the domain $\Omega_{\mathcal{F},Q}$, and let $x\in \partial\Omega_{\mathcal{F},Q}$ and $r\le 2^{-N}$, with $N$ corresponding to the qualitative exterior Corkscrew condition assumed on $\Omega$. If either $x\in \pom$ or $x\in \Omega$ with $\delta(x)<r/2$, there exists $y\in \pom$ such that $B(y,r/2)\subset B(x,r)$. Then the exterior corkscrew point relative to $\Delta(y,r/2)$ is also a Corkscrew point relative to $B(x,r)\cap \Omega_{\mathcal{F},Q}$. The case $x\in \Omega$ with $\delta(x)\ge r/2$ is as follows. There exists a Whitney box $I$, with
$\ell(I)\approx \delta(X)$, such that $x\in \partial I^*$ and ${\rm int}(I^*)\subset \Omega_{\mathcal{F},Q}$. Note that $\partial I^*$ can be covered by Whitney boxes $J$ that meet $I$ by \eqref{eq2.29*}.  Since
$x$ is a boundary point of $\Omega_{\F,Q}$, there is a
$J\ni x$, with $J\notin \W_{Q'}^*$ for any $Q'\in \dd_{\F,Q}$.   Consequently,
$B(x,r)$ has an ``ample" intersection with $J\setminus\Omega_{\F,Q}$, wherein we may find the required Corkscrew point.
Further details are left to the reader.

We are now ready to state our comparison principle for the Green function.
The result is already known \cite{Ai1}, but we include the proof
here for the sake of self-containment.
Given a
surface ball $\Delta:=\Delta(x,r)$, let $B_\Delta := B(x,r)$, so that
$\Delta =B_\Delta\cap\partial\Omega$.
We fix
$\kappa_0$ large enough that
\begin{equation}\label{eq2.contain}
\overline{T_\Delta}\subset \kappa_0B_\Delta\cap\overline{\Omega}.\end{equation}

\begin{lemma}\label{lemma2.comparison}
Suppose that $\Omega$ is a  1-sided NTA domain with $n$-dimensional ADR boundary and that it also satisfies the qualitative exterior
Corkscrew condition.
Then there is a uniform constant $C$
such that for each surface ball $\Delta$, and
for every $X,Y \in \Omega \setminus 2\kappa_0B_\Delta,$ and  $Z\in  B_\Delta\cap\Omega$, we have
$$\frac1C\,\frac{G(Z,X)}{G(Z,Y)}\leq \frac{G(X_\Delta,X)}{G(X_\Delta,Y)}\leq C\,\frac{G(Z,X)}{G(Z,Y)}.$$
\end{lemma}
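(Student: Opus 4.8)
The plan is to run the classical boundary--Harnack argument of \cite{CFMS}, \cite{JK}, but feeding it only the \emph{interior} estimates already established (Lemmas~\ref{lemma2.cfms}--\ref{lemma2.carleson}, Corollary~\ref{cor2.double}, Lemma~\ref{Bourgainhm}), so that the H\"older continuity at the boundary --- available here only with constants depending on the qualitative parameter $N$ --- is never invoked in the quantitative bounds. Since the asserted two-sided estimate is symmetric in $X$ and $Y$, I would first reduce to proving the single inequality
\[
G(Z,X)\,G(X_\Delta,Y)\ \le\ C\,G(X_\Delta,X)\,G(Z,Y),\qquad Z\in B_\Delta\cap\Omega.
\]
If $\delta(Z)\ge c_0\,r_\Delta$ for a fixed small $c_0$, then $Z$ and $X_\Delta$ lie in $B_\Delta\cap\Omega$ at distance $\gtrsim r_\Delta$ from $\partial\Omega$, hence are joined by a Harnack chain of uniformly bounded length, and Harnack's inequality gives $G(Z,\cdot)\approx G(X_\Delta,\cdot)$; so one may assume $\delta(Z)<c_0\,r_\Delta$.

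Next I would set $A:=X_\Delta$, fix $\rho\approx\delta(A)\approx r_\Delta$ with $\overline{B(A,2\rho)}\subset\Omega\cap B_\Delta$, and work in $D:=\big(\Omega\cap 2B_\Delta\big)\setminus\overline{B(A,\rho)}$; then $Z\in D$ (as $\delta(Z)\ll r_\Delta$), and $G(\cdot,X)$, $G(\cdot,Y)$ are harmonic, nonnegative on $D$, and continuous on $\overline D$ (by the qualitative hypotheses). On $\partial D$ one has $G(\cdot,W)\equiv 0$ on $\partial\Omega\cap 2B_\Delta$; $G(\cdot,W)\lesssim G(A,W)$ on $\overline\Omega\cap\partial(2B_\Delta)$, by the Carleson estimate (Lemma~\ref{lemma2.carleson}, applied on a slightly larger ball --- legitimate since $X,Y\notin 2\kappa_0 B_\Delta$) together with Harnack; and $G(\cdot,W)\approx G(A,W)$ on $\partial B(A,\rho)$, by Harnack. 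The maximum principle in $D$, used from above for $G(\cdot,X)$ and from below (keeping only the inner sphere) for $G(\cdot,Y)$, then yields
\[
G(Z,X)\ \lesssim\ G(A,X)\Big(\omega^Z_D\big(\overline\Omega\cap\partial(2B_\Delta)\big)+\omega^Z_D\big(\partial B(A,\rho)\big)\Big),
\]
\[
G(Z,Y)\ \gtrsim\ G(A,Y)\,\omega^Z_D\big(\partial B(A,\rho)\big),
\]
with $\omega^\cdot_D$ the harmonic measure of $D$. Since $A=X_\Delta$, the reduced inequality follows once one shows that, for $Z\in B_\Delta\cap\Omega$,
\[
\omega^Z_D\big(\overline\Omega\cap\partial(2B_\Delta)\big)\ \lesssim\ \omega^Z_D\big(\partial B(A,\rho)\big).
\]

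This last inequality is the heart of the matter, and where I expect the real work. Because $\delta(Z)$ may be arbitrarily small while the $D$-harmonic measure of a \emph{fixed} portion of $\partial D$ need not be small, there is no one-scale argument available: Brownian motion from $Z$ reaches either sphere only after ``climbing'' through the $\approx\log(r_\Delta/\delta(Z))$ dyadic scales separating $\delta(Z)$ from $r_\Delta$, and one must show that the probability of doing so and exiting through the outer sphere is dominated --- with a constant \emph{independent of the number of scales}, hence of $\delta(Z)/r_\Delta$ --- by the probability of exiting through the inner sphere. The mechanism is the interior geometry: using the Corkscrew and, crucially, the Harnack Chain conditions one builds a ``carrot'' of Corkscrew points $X_j$, $\delta(X_j)\approx 2^{-j}$, joining $Z$ to $A$, and scale by scale estimates the relevant hitting probabilities via Lemma~\ref{Bourgainhm} (lower bounds on harmonic measure), Lemma~\ref{lemma2.cfms} (passing between Green-function values and harmonic measure of the balls $\Delta(x_j,2^{-j})$), Lemma~\ref{lemma2.carleson} (upper bounds) and Corollary~\ref{cor2.double} (doubling), finally summing a geometric series of errors. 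Carrying out this scale induction with uniform constants, using only these interior tools in place of boundary regularity of solutions, is the main obstacle.
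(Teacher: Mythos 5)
Your reduction is sound, and your overall template---apply the maximum principle in an annular subdomain, with the Carleson estimate (Lemma \ref{lemma2.carleson}) handling the outer boundary and Harnack handling the inner one---is essentially the same template the paper uses. You also correctly locate where the real difficulty lies: the harmonic-measure comparison between the two ``good'' boundary pieces of the subdomain. But that is exactly the step you do not carry out, and your choice of subdomain makes it genuinely harder than it needs to be.

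You work in $D=(\Omega\cap 2B_\Delta)\setminus\overline{B(A,\rho)}$, which is just a slice of $\Omega$ cut by two Euclidean spheres. That set has none of the quantitative structure the paper has painstakingly built; in particular there is no reason $D$ should be a 1-sided NTA domain with ADR boundary, so none of Lemma \ref{Bourgainhm}, Corollary \ref{cor2.double}, or Lemma \ref{lemma2.carleson} apply \emph{to} $D$, and the doubling of $\omega_D$ (the tool that would instantly give your target estimate) is unavailable. You are therefore forced into the multi-scale ``carrot'' argument, which is precisely the kind of scale-by-scale boundary analysis that, absent quantitative boundary H\"older continuity, is delicate and which you leave as a sketch. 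As written, this is a gap, not a proof.

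The paper avoids the difficulty by making a smarter choice of subdomain: it works in the Carleson box $T_\Delta$ rather than in $\Omega\cap 2B_\Delta$. By Lemma \ref{lemma2.30} and Lemma \ref{lemma:inher-quali} (and Remark \ref{remark:tent-inherits}), $T_\Delta$ is \emph{itself} a 1-sided NTA domain with ADR boundary satisfying the qualitative exterior Corkscrew condition, so Corollary \ref{cor2.double} applies to $\omega^Z_{T_\Delta}$ with uniform constants. The role of your inner sphere $\partial B(A,\rho)$ is played by $S_1:=\partial T_\Delta\cap\{\delta>r/2\}$, and the key comparison
\[
\omega^Z_{T_\Delta}(\partial T_\Delta\cap\Omega)\ \lesssim\ \omega^Z_{T_\Delta}(S_1),\qquad Z\in B_\Delta\cap\Omega,
\]
now follows in one step from the doubling of $\omega_{T_\Delta}$ together with the containment $\tfrac54 B_\Delta\cap\Omega\subset T_\Delta$ (so that $\partial T_\Delta\cap\Omega$ stays a definite distance from $B_\Delta$). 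No scale induction is needed: the multi-scale difficulty you foresee is absorbed once and for all into the inherited doubling property of the sawtooth/Carleson-box harmonic measure. If you want to salvage your route, the cleanest fix is to replace $D$ by $T_\Delta\setminus\overline{B(A,\rho)}$ (or simply adopt $T_\Delta$ itself and use $S_1$ as the ``inner'' piece), so that the subdomain carries the doubling structure you need.
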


\begin{remark}\label{remark:tent-inherits}
By Lemma \ref{lemma2.30} and Lemma \ref{lemma:inher-quali}, every Carleson box $T_\Delta\subset\Omega$ is a 1-sided NTA domain with $n$-dimensional ADR boundary and also satisfies the qualitative exterior
Corkscrew condition.
\end{remark}

\begin{proof} We follow  \cite[Lemma 1.3.7]{Ke}.  Given a surface ball $\Delta$, fix
$X,Y \in \Omega \setminus 2\kappa_0B_\Delta$, and let
$\omega^Z_{\Delta}$ denote harmonic measure for the sub-domain $T_\Delta$.  Set
$$S_1:= \partial T_\Delta \cap \{Z\in \Omega:\delta(Z)>r/2\},$$
where $r$ is the radius of $B_\Delta$. By Remark \ref{remark:tent-inherits},
Corollary \ref{cor2.double} applies in $T_\Delta$, whence by \eqref{eq2.ball-box},
\begin{equation}\label{eq2.30}
\omega^Z_{\Delta}(\partial T_\Delta \cap\Omega)\,\leq\, C \omega^Z_{\Delta}(S_1),\qquad \forall Z\in
B_\Delta\cap\Omega.
\end{equation}
Now set $B^*:= \kappa_0B_\Delta$ and $\Delta^*:= B^*\cap\partial\Omega$.
By Lemma \ref{lemma2.carleson} and the Harnack Chain condition we have
$$G(Z,X)\leq C\,G(X_{\Delta^*},X)\,\lesssim G(X_\Delta,X),\qquad \forall Z\in
B^*\cap\Omega.$$
By the maximum principle and \eqref{eq2.contain}, we then have
\begin{equation}\label{eq2.31}
G(Z,X)\leq C\,G(X_\Delta,X)\,\omega^Z_{\Delta}(\partial T_\Delta \cap\Omega),\qquad \forall Z\in
T_\Delta.\end{equation}
On the other hand, by the Harnack Chain condition,
$$G(Z,Y) \geq C^{-1} G(X_\Delta,Y),\qquad \forall Z \in S_1,$$
and therefore by the maximum principle we have
\begin{equation}\label{eq2.32}
G(Z,Y) \geq C^{-1} G(X_\Delta,Y)\,\omega^Z_{\Delta}(S_1),\qquad \forall Z \in T_\Delta.\end{equation}
Combining \eqref{eq2.30}, \eqref{eq2.31} and \eqref{eq2.32}, we obtain
$$\frac{G(Z,X)}{G(X_\Delta,X)} \lesssim \frac{G(Z,Y)}{G(X_\Delta,Y)}\,,\qquad \forall Z\in
B_\Delta\cap\Omega.$$
The opposite inequality follows by interchanging the roles of $X$ and $Y$.
\end{proof}

\begin{corollary}\label{cor2.poles} Given the same hypotheses as in Lemma \ref{lemma2.comparison},
there is a uniform constant $C$ such that for every pair of surface balls
$\Delta:=B\cap\partial\Omega$,
and $\Delta':= B'\cap\partial\Omega$,  with $B'\subseteq B,$ and
for every $X\in \Omega\setminus 2\kappa_0B,$
where $\kappa_0$ is the constant in \eqref{eq2.contain}, we have
$$\frac1C\, \omega^{X_\Delta}(\Delta') \,\leq\, \frac{\omega^X(\Delta')}{\omega^X(\Delta)}\,
\leq \,C\, \omega^{X_\Delta}(\Delta').$$
\end{corollary}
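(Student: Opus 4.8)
The plan is to reduce the three harmonic measures appearing in the statement to values of the Green function by means of the CFMS--type bounds of Lemma~\ref{lemma2.cfms}, then to use the comparison principle Lemma~\ref{lemma2.comparison} to trade the ``free'' exterior pole $X$ for a fixed, conveniently placed reference pole, and finally to pass from that reference pole to the Corkscrew point $X_\Delta$ by Harnack's inequality together with Bourgain's estimate \eqref{eq2.Bourgain2}. Write $r:=r_\Delta$ and $r':=r_{\Delta'}$; we may assume $r\lesssim\diam\pom$, since otherwise $\Omega\setminus 2\kappa_0 B=\emptyset$ and there is nothing to prove. Throughout I use that the qualitative exterior Corkscrew hypothesis forces every boundary point to be Wiener regular (Remark~\ref{remark:qualitative}), so that both halves \eqref{eq2.CFMS1} and \eqref{eq2.CFMS2} of Lemma~\ref{lemma2.cfms} are at our disposal.

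First I would fix the reference pole. Let $\Delta^\sharp:=\Delta(x,c_\sharp r)$, where $c_\sharp$ is a large constant depending only on $\kappa_0$ and the Corkscrew constant, and let $W:=X_{\Delta^\sharp}$ be an associated Corkscrew point (which exists by Definition~\ref{def1.cork}, adjusting the Corkscrew constant if necessary, since $c_\sharp r\lesssim\diam\pom$). Since $\delta(W)\approx c_\sharp r$ and $|W-x|\ge\delta(W)$, a large enough choice of $c_\sharp$ guarantees $W\in\Omega\setminus 2\kappa_0 B$, while still $\delta(W)\approx r$ and $|W-X_\Delta|\lesssim r$.

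Next I would apply Lemma~\ref{lemma2.cfms} with base ball $B_0:=2\kappa_0 B$ to each of the surface balls $\Delta$ and $\Delta'$ and at each of the poles $X$ and $W$: since $B'\subseteq B$ and $\kappa_0\ge 1$ we have $2B,\,2B'\subseteq B_0$, and by construction $X,W\in\Omega\setminus B_0$, so
\[
\omega^P(\Delta)\approx r^{n-1}G(X_\Delta,P),\qquad \omega^P(\Delta')\approx (r')^{n-1}G(X_{\Delta'},P),\qquad P\in\{X,W\}.
\]
Because $X_{\Delta'}\in B'\cap\Omega\subseteq B_\Delta\cap\Omega$, the comparison principle Lemma~\ref{lemma2.comparison}, applied to the surface ball $\Delta$ with $Z:=X_{\Delta'}$ and the two exterior points $X,W\in\Omega\setminus 2\kappa_0 B_\Delta$, gives $G(X_{\Delta'},X)/G(X_\Delta,X)\approx G(X_{\Delta'},W)/G(X_\Delta,W)$. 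Combining this with the previous display yields $\omega^X(\Delta')/\omega^X(\Delta)\approx\omega^W(\Delta')/\omega^W(\Delta)$. Finally, the functions $\omega^{\,\cdot}(\Delta)$ and $\omega^{\,\cdot}(\Delta')$ are non-negative and harmonic on all of $\Omega$, and $\delta(W)\approx\delta(X_\Delta)\approx r$ with $|W-X_\Delta|\lesssim r$; hence the Harnack Chain condition and Harnack's inequality give $\omega^W(\Delta')\approx\omega^{X_\Delta}(\Delta')$ and $\omega^W(\Delta)\approx\omega^{X_\Delta}(\Delta)$. Since $\omega^{X_\Delta}(\Delta)\approx 1$ by \eqref{eq2.Bourgain2} (and trivially $\omega^{X_\Delta}(\Delta)\le 1$), we conclude $\omega^X(\Delta')/\omega^X(\Delta)\approx\omega^{X_\Delta}(\Delta')$, which is exactly the claimed estimate.

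The conceptual point that needs care is the use of the comparison principle: taking the Corkscrew point $X_\Delta$ itself as one of the two poles in Lemma~\ref{lemma2.comparison} is vacuous, so instead one uses that lemma to ``freeze'' the free pole $X$ to the fixed reference pole $W$, and the delicate bookkeeping is to choose $W$ so that it lies outside $2\kappa_0 B_\Delta$ (so that Lemmas~\ref{lemma2.cfms} and \ref{lemma2.comparison} apply to it as a pole) while simultaneously being joined to $X_\Delta$ by a Harnack chain of bounded length. The remaining verifications — the inclusions of dilated balls, Wiener regularity, and the elementary Harnack-chain comparisons — are routine given the hypotheses and the lemmas already established.
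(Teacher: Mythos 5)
Your argument is correct and follows essentially the same route as the paper's proof of Corollary \ref{cor2.poles}: reduce the harmonic measures to Green function values via Lemma \ref{lemma2.cfms}, invoke Lemma \ref{lemma2.comparison} with $Z=X_{\Delta'}$ to swap the exterior pole $X$ for a fixed far-away Corkscrew point (your $W$ is the paper's $X_{\Delta^{**}}$), and finish by Harnack. The only cosmetic difference is at the end, where the paper directly uses $r^{n-1}G(X_\Delta,X_{\Delta^{**}})\approx 1$ from Lemma \ref{lemma2.green}, whereas you re-express the Green function ratio at $W$ as $\omega^W(\Delta')/\omega^W(\Delta)$ and then invoke Harnack and \eqref{eq2.Bourgain2}; both amount to the same normalization.
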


\begin{proof} We follow \cite[Corollary 1.3.8]{Ke}.  Fix $\Delta',\, \Delta,\,B',\, B$ and $X$
as in the statement of the present corollary.    Set $B^{**}=\kappa_1B$
and $\Delta^{**}:=B^{**}\cap\partial\Omega$, where we may choose
$\kappa_1$ large enough, depending only on $\kappa_0$ and on the
constants in the Corkscrew condition, such that $X_{\Delta^{**}}\in \Omega \setminus
2\kappa_0 B$.   Let $r'$ and $r$ denote the respective radii of $B'$ and $B$.
 By Lemma \ref{lemma2.cfms}, we have
\begin{eqnarray*}\label{eq2.34}&\omega^X(\Delta') \approx (r')^{n-1} G(X_{\Delta'},X),
\\[4pt]\nonumber
&\omega^X(\Delta) \approx r^{n-1} G(X_{\Delta},X),
\\[4pt]\nonumber
&\omega^{X_\Delta}(\Delta')\approx \omega^{X_{\Delta^{**}}}(\Delta')\approx
(r')^{n-1} G(X_{\Delta'},X_{\Delta^{**}}),
\end{eqnarray*}
where in the third line we have also used  the
Harnack Chain condition.
Moreover, by Lemma \ref{lemma2.green} and the Harnack Chain condition,
we have
$$r^{n-1}G(X_{\Delta},X_{\Delta^{**}}) \approx 1.$$
Note that $X_{\Delta'}\subset B'\subset B.$
Thus,
by Lemma \ref{lemma2.comparison}, we have
$$\frac{G(X_{\Delta'},X)}{G(X_{\Delta'},X_{\Delta^{**}})}\approx
\frac{G(X_{\Delta},X)}{G(X_{\Delta},X_{\Delta^{**}})}\,,$$
and the conclusion of the corollary follows.
\end{proof}

\section{Harnack Chains imply a Poincar\'e inequality}\label{spoincare}

In this section we prove that a certain Poincar\'e inequality holds in any domain
$\Omega$ satisfying the ADR, Corkscrew and
Harnack Chain properties.  We therefore
impose those three hypotheses throughout this section.  It will be convenient to set
some additional notation.   As above, we let $\mathcal{W}$ denote the collection of Whitney cubes
of $\Omega$, and we recall that these have been constructed so that for $I\in \mathcal{W}$, we have
$\dist(4I,\partial\Omega) \approx \ell(I)$ (cf. \eqref{eq4.1}).
Given a pairwise disjoint family $\F\in \dd$,
and a constant $\rho>0$,  we derive from $\F$ another family
$\Fr\subset\dd$, as follows.
We augment $\F$ by adjoining to it all those $Q\in\dd$ of side length $\ell(Q)\leq\rho$, and we
denote this augmented collection by $\mathfrak{C}(\F,\rho)$.
We then let $\Fr$ denote the collection of the maximal cubes of $\mathfrak{C}(\F,\rho)$.
Thus, the corresponding discrete sawtooth $\mathbb{D}_{\Fr}$
consists precisely of those $Q\in\dd_{\F}$ such that $\ell(Q)>\rho$.

Having constructed the family $\Fr$, and given $Q\in\dd$ with $\ell(Q)>\rho$,
we may then define local discrete and geometric sawtooth regions
$\dd_{\Fr,Q}$ and $\Omega_{\Fr,Q}$ with respect to this family as in
\eqref{eq2.discretesawtooth2}-\eqref{eq2.whitney3} and \eqref{eq2.sawtooth2}.

We shall also find it useful to consider certain ``fattened'' versions of the sawtooth regions,
as follows.  Bearing in mind \eqref{eq4.1}, we set
\begin{eqnarray}\label{eq2.whitney3fat}
&U^{fat}_Q := \bigcup_{\mathcal{W}^*_Q} 4I,\\[4pt]\label{eq2.boxfat}
&T^{fat}_Q:=  {\rm int } \left( \bigcup_{Q'\in\dd_{Q}} U^{fat}_{Q'}\right),
\\[4pt]\label{eq2.sawtooth2fat}
&\Omega^{fat}_{\mathcal{F},Q}:=  {\rm int } \left( \bigcup_{Q'\in\dd_{\F,Q}} U^{fat}_{Q'}\right)
\end{eqnarray}
(compare to \eqref{eq2.whitney3}, \eqref{eq2.box} and \eqref{eq2.sawtooth2}).
We note that, by construction,
\begin{eqnarray}\label{eq3.2}
\delta(X)\gtrsim \rho\,, &\quad{\rm if}\,\, X\in  \Omega^{fat}_{\Fr,Q}\\[4pt]\label{eq3.3}
\delta(X)\lesssim \rho\,,&\quad{\rm if}\,\ X\in \Omega_{\F,Q}\setminus \Omega_{\Fr,Q}\,.
\end{eqnarray}

Given a pairwise disjoint family $\F\in \dd$, and a cube $Q\in \dd_{\F}$, we define
\begin{equation}\label{eq3.4}
\W_{\F}:=\bigcup_{Q'\in\dd_{\F}}\W^*_{Q'}\,,\qquad
\W_{\F,Q}:=\bigcup_{Q'\in\dd_{\F,Q}}\W^*_{Q'}\,,\end{equation}
so that in particular, we may write
\begin{equation}\label{eq3.saw}
\Omega_{\mathcal{F},Q}={\rm int }\,\big(\bigcup_{I\in\,\W_{\F,Q}} I^*\big)\,,
\qquad  \Omega^{fat}_{\mathcal{F},Q}
={\rm int }\,\big(\bigcup_{I\in\,\W_{\F,Q}} 4I\big)\,,\end{equation}
where we recall that $I^*:=(1+\lambda)I$.

Suppose now that $Q\in \dd_\F$,  let $r\approx\ell(Q)$ and fix a small $\epsilon>0.$
Then for $I,J\in \W_{\F(\epsilon r),Q}$, we have $\ell(I)\approx \ell(J)$
and $\dist(I,J)\lesssim \ell(I)$ (where the implicit constants depend upon $\epsilon$). By
Lemma \ref{lemma2.30} there is a chain $\{I_1,I_2,\dots,I_N\}\subset\W_{\F(\epsilon r),Q}$,
of bounded cardinality $N$ depending only on dimension,
the Harnack Chain constants, and $\epsilon$, such that
$I_1=J,\, I_N=I$, $\ell(I_j)\approx\ell(I)$ for each $j$
(again the implicit constants depend upon $\epsilon$),  and for which
$\cup_{j=1}^NI^*_j$ contains a Harnack Chain
which connects the centers of $I$ and $J$.  Moreover, for $\lambda$ chosen small enough,
the chain may be constructed so that
for each $j,\, 1\leq j \leq N-1$, either $I^*_j \subset 4I_{j+1}$, or
$I^*_{j+1}\subset 4I_j$.  In the sequel,  we shall refer to such a
chain $\{I_1,I_2,\dots,I_N\}$ as a ``Harnack Chain of Whitney cubes connecting $J$ to $I$'' (we beg the
reader's indulgence for this mild abuse of terminology:  it is of course really the dilates
$\{I_j^*\}$ which form a Harnack Chain).

\begin{lemma}\label{poincare}  Suppose that $\Omega$ is a 1-sided
NTA domain with ADR boundary.
Fix $Q_0\in \dd$, and a pairwise disjoint family $\F\subset \dd_{Q_0},$ and let
$Q\in \dd_{\F,Q_0}.$ If $r\approx \ell(Q)$,
then for every $p,\, 1\leq p<\infty$, and for every small $\epsilon >0$,
there is a constant $C_{\epsilon,p}$ such that
$$ \iint_{\Omega_{\F(\epsilon r),Q}}|f-c_{Q,\epsilon}|^p \leq C_{\epsilon,p}\,
r^p\iint_{\Omega^{fat}_{\F(\epsilon r),Q}}|\nabla f|^p,$$
where $c_{Q,\epsilon}:= |\Omega_{\F(\epsilon r),Q}|^{-1}\iint_{\Omega_{\F(\epsilon r),Q}}f.$
\end{lemma}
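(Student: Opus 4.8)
The statement is a Poincaré inequality on the sawtooth domain $\Omega_{\F(\epsilon r),Q}$, and the natural route is the classical "chaining" argument, adapted to the Whitney decomposition of the sawtooth. The key structural fact is that $\Omega_{\F(\epsilon r),Q}$ is a finite (bounded overlap) union of fattened Whitney cubes $\{4I : I\in\W_{\F(\epsilon r),Q}\}$, all of comparable size $\ell(I)\approx\epsilon\,\ell(Q)\approx\epsilon r$, and that (by Lemma \ref{lemma2.30}, which tells us the sawtooth is $1$-sided NTA, together with the discussion immediately preceding the statement) any two such cubes $I,J$ can be joined by a Harnack Chain of Whitney cubes $\{I_1,\dots,I_N\}$ of bounded length $N=N(n,\epsilon,\text{HC})$, with each consecutive pair satisfying $I_j^*\subset 4I_{j+1}$ or $I_{j+1}^*\subset 4I_j$. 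This chaining structure is exactly what a Poincaré inequality needs.

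First I would reduce to the case of a fixed reference cube: pick $I_0\in\W_{\F(\epsilon r),Q}$ (say one containing the corkscrew point $X_Q$), let $c_0 := |I_0^*|^{-1}\iint_{I_0^*} f$, and note that it suffices to prove $\iint_{\Omega_{\F(\epsilon r),Q}}|f-c_0|^p \lesssim_{\epsilon,p} r^p \iint_{\Omega^{fat}_{\F(\epsilon r),Q}}|\nabla f|^p$, since replacing $c_0$ by the true average $c_{Q,\epsilon}$ only decreases the left side (the average minimizes $\|f-c\|_{L^p}$ up to a factor $2^p$, or one simply uses $\|f-c_{Q,\epsilon}\|_p\le \|f-c_0\|_p + |c_0-c_{Q,\epsilon}|\,|\Omega_{\F(\epsilon r),Q}|^{1/p}$ and bounds $|c_0 - c_{Q,\epsilon}|$ by $|\Omega_{\F(\epsilon r),Q}|^{-1/p'}\|f-c_0\|_p$ via Hölder). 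Then, writing $\Omega_{\F(\epsilon r),Q}\subset \bigcup_{I\in\W_{\F(\epsilon r),Q}} I^*$, it is enough to estimate $\iint_{I^*}|f-c_0|^p$ for each individual $I^*$ and sum; here the bounded overlap of the $4I$'s keeps the final sum under control.

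The core estimate is: for each $I\in\W_{\F(\epsilon r),Q}$, with $c_I := |I^*|^{-1}\iint_{I^*} f$, one has the local Poincaré inequality $\iint_{I^*}|f-c_I|^p \lesssim \ell(I)^p \iint_{I^*}|\nabla f|^p \lesssim r^p \iint_{4I}|\nabla f|^p$ (standard Poincaré on the convex set $I^*$, scaled), and then a telescoping estimate along the Harnack Chain $\{I=I_1,\dots,I_N=I_0\}$: $|c_I - c_0| \le \sum_{j=1}^{N-1}|c_{I_j}-c_{I_{j+1}}|$, and each consecutive difference is controlled because $I_j^*$ and $I_{j+1}^*$ have comparable size and one is contained in $4$ times the other, so $|c_{I_j}-c_{I_{j+1}}| \lesssim |4I_{j+1}|^{-1}\iint_{4I_{j+1}}|f - c_{I_{j+1}}|$ (or the symmetric statement) $\lesssim \ell(I_j)\,\big(|4I_{j+1}|^{-1}\iint_{4I_{j+1}}|\nabla f|^p\big)^{1/p}$ by Hölder and local Poincaré on the larger cube. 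Summing the (at most $N$) terms and using $N\lesssim_\epsilon 1$, one gets $|c_I - c_0|^p \lesssim_{\epsilon,p} r^p \sum_{j}|4I_j|^{-1}\iint_{4I_j}|\nabla f|^p$. Finally $\iint_{I^*}|f-c_0|^p \lesssim \iint_{I^*}|f-c_I|^p + |I^*|\,|c_I-c_0|^p$, and summing over $I\in\W_{\F(\epsilon r),Q}$: the first term gives $r^p\iint_{\Omega^{fat}}|\nabla f|^p$ directly (bounded overlap of $4I$), and for the second term one sums $|I^*|\cdot r^p\sum_j |4I_j|^{-1}\iint_{4I_j}|\nabla f|^p$; since $|I^*|\approx|4I_j|$ for every cube in the chain and each fixed Whitney cube $K$ appears as some $4I_j$ in the chains of only boundedly many $I$ (a consequence of bounded chain length and the local finiteness of Whitney cubes of comparable size within a ball of radius $\approx r$), this double sum also collapses to $\lesssim_{\epsilon,p} r^p \iint_{\Omega^{fat}_{\F(\epsilon r),Q}}|\nabla f|^p$.

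The main obstacle — and the only place real care is needed — is the bookkeeping in that last double sum: one must verify that the map $I\mapsto\{$cubes in the Harnack Chain from $I$ to $I_0\}$ has bounded overlap, i.e. no Whitney cube $K$ lies in too many chains. This follows because all relevant cubes sit inside a ball $B(x_Q, C r)$, all have side length $\approx\epsilon r$, so there are only $\lesssim_\epsilon 1$ of them in total; hence even the trivial bound (each chain has $\le N\lesssim_\epsilon 1$ cubes, there are $\lesssim_\epsilon 1$ chains) suffices and absorbs harmlessly into $C_{\epsilon,p}$. I would also note in passing that a slicker alternative is available: since $\Omega_{\F(\epsilon r),Q}$ is a $1$-sided NTA domain with ADR boundary and diameter $\approx r$ (Lemma \ref{lemma2.30}), it is in particular a John domain, and $(1,p)$-Poincaré inequalities hold on John domains with constant proportional to (diameter)$^p$ — but since the constant's dependence on $\epsilon$ is allowed to be arbitrary, the explicit chaining argument above is cleaner and entirely self-contained.
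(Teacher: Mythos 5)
Your proposal is correct and follows essentially the same chaining argument as the paper: Harnack chains of Whitney cubes of comparable (\(\approx \epsilon r\)) side length, local Poincar\'e on each fattened cube, telescoping along the chain, and the fact that the cardinality of all relevant Whitney cubes and chain lengths is bounded by a constant depending only on \(\epsilon\) and the allowable parameters. The only cosmetic difference is that you fix a reference cube \(I_0\) and chain every cube to it, whereas the paper directly chains the cube containing \(X\) to the cube containing \(Y\); both routes reduce to the same local Poincar\'e and telescoping estimates (and, as you note, the \(\epsilon\)-dependent constant absorbs all the bookkeeping).
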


\begin{proof}Let $X\in \Omega_{\F(\epsilon r),Q},$ so that in particular, $X\in I^*_X$ where
$I_X\in \W_{\F(\epsilon r),Q}$, and observe that
\begin{multline*}\left|f(X)\,-\,\frac1{|\Omega_{\F(\epsilon r),Q}|}\iint_{\Omega_{\F(\epsilon r),Q}}f\right|
\,=\,\left|\frac1{|\Omega_{\F(\epsilon r),Q}|}\iint_{\Omega_{\F(\epsilon r),Q}}\Big(f(X)-f(Y)\Big)\,dY\right|
\\[4pt]
\leq\,\,\frac1{|\Omega_{\F(\epsilon r),Q}|}\sum_{I\in\ \W_{\F(\epsilon r),Q}}
\iint_{I^*}\left|f(X)-f_{I^*_1} +f_{I^*_1} -\dots +f_{I^*_N} -f(Y)\right|\,dY\,,\end{multline*}
where $I_1,\dots,I_N$ is a Harnack Chain of Whitney cubes connecting $I_X$ to $I$
and $f_{I^*_j}:= |I^*_j|^{-1}\dint_{I^*_j} f$.
Of course, $N$ depends upon $\epsilon$.  It also depends
upon $I$ in the sum,  but in a uniformly bounded manner for $\epsilon$ fixed.
Consequently, it is enough to consider
$$\iint_{J^*}\left(\frac1{|\Omega_{\F(\epsilon r),Q}|}
\iint_{I^*}\left|f(X)-f_{I^*_1} +f_{I^*_1} -\dots+f_{I^*_N} -f(Y)\right|\,dY\right)^p dX\,,$$
where $J,I\in \W_{\F(\epsilon r),Q}$ and are connected by the chain of cubes
$I_1,I_2,\dots,I_N$.    The desired bound may now be obtained from the
standard Poincar\'e inequality as follows.  First,
\begin{multline*}\iint_{J^*}\left(\frac1{|\Omega_{\F(\epsilon r),Q}|}
\iint_{I^*}\left|f(X)-f_{I^*_1}\right|dY\right)^p dX\\[4pt]
\leq\, \iint_{J^*}\left|f(X)-f_{I^*_1}\right|^p dX\,\leq\,
C_{p} \,\ell(J)^p\iint_{J^*}|\nabla f|^p,
\end{multline*}
since $I_1=J.$  Similarly, the contribution of $f_{I^*_N} -f(Y)$ is bounded by
$$\iint_{I^*}\left|f_{I^*_N} -f(Y)\right|^p\,dY\,\leq\,
C_{p} \,\ell(I)^p\iint_{I^*}|\nabla f|^p,$$
since $I_N=I$.  Finally, to handle the contribution of any term $f_{I^*_j} - f_{I^*_{j+1}}$, we observe that
$$\left|f_{I^*_j} - f_{I^*_{j+1}}\right|\leq \left|f_{I^*_j} - f_{I^{**}_{j}}\right|
+\left|f_{I^{**}_j} - f_{I^*_{j+1}}\right|,$$
where $I_j^{**}:= 4I_j$ or $4I_{j+1}$, whichever has the larger diameter.  Then for example,
$$\left|f_{I^*_j} - f_{I^{**}_{j}}\right|\leq \frac1{|I^*_j|}\iint_{I^*_j}|f-f_{I^{**}_{j}}|\lesssim \ell(I_j)\,
\frac1{|I_j|}\iint_{I^{**}_j}|\nabla f|,$$
and similarly for the term $\left|f_{I^{**}_j} - f_{I^*_{j+1}}\right|$ since, as noted above,
$I_j^{**}$ contains both $I^*_j$ and $I^*_{j+1}$.
The Poincar\'e inequality now follows, since each $I \in \W_{\F(\epsilon r),Q}$,
and thus every $I_j$ in any of the chains, has side length proportional to $r$, depending on $\epsilon.$
\end{proof}

\section{A criterion for exterior Corkscrew points}\label{s5*}
We present a criterion for the existence of Corkscrew points in the
domain {\it exterior} to a sawtooth region.   We begin with a series of lemmas in which we establish
some local estimates for the single layer potential operator $\mathcal{S}$ defined in \eqref{eq1.layer},
and also prove some geometric properties of sawtooth regions and of domains with ADR boundaries.

\begin{lemma}\label{lemma4.1}
Suppose that $E\subset \ree$ is $n$-dimensional ADR, and let $\kappa>1$.  If $1\leq q<(n+1)/n$,
there is a constant $C_{q,\kappa}$ depending only on $n,q,\kappa$ and the ADR constants such that
for every $x\in E$, $B:=B(x,r)$ and $\kappa\Delta:=\kappa B\cap E$, we have
\begin{equation}\label{eq4.2}\iint_B |\nabla \mathcal{S} 1_{\kappa\Delta}(X)|^q\, dX \,\leq\, C_{q,\kappa}
\, r^{n+1}.
\end{equation}
\end{lemma}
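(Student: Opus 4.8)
The plan is to dominate $|\nabla\mathcal{S}1_{\kappa\Delta}|$ pointwise by a Riesz-type potential of $1_{\kappa\Delta}$, and then control its $L^q(B)$ norm by Minkowski's integral inequality. Since an $n$-dimensional ADR set has locally finite $H^n$ measure, and hence vanishing $(n+1)$-dimensional Lebesgue measure, it is enough to estimate the integrand for $X\in B\setminus E$. There, differentiation under the integral sign in \eqref{eq1.layer} is legitimate (for $X$ in a compact subset of $\ree\setminus E$ the kernel and its gradient are bounded, and $1_{\kappa\Delta}$ is integrable against $dH^n$), and gives
\[
\nabla\mathcal{S}1_{\kappa\Delta}(X)=c_n(1-n)\int_{\kappa\Delta}\frac{X-y}{|X-y|^{n+1}}\,dH^n(y)\,,
\]
so that $|\nabla\mathcal{S}1_{\kappa\Delta}(X)|\le C\int_{\kappa\Delta}|X-y|^{-n}\,dH^n(y)$ for a dimensional constant $C$.

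Next I would apply Minkowski's integral inequality to move the $L^q(B,dX)$ norm inside the integral over $\kappa\Delta$:
\[
\left(\iint_B|\nabla\mathcal{S}1_{\kappa\Delta}(X)|^q\,dX\right)^{1/q}\,\le\, C\int_{\kappa\Delta}\left(\iint_B\frac{dX}{|X-y|^{nq}}\right)^{1/q}dH^n(y)\,.
\]
For the inner integral, note that if $y\in\kappa\Delta=\kappa B\cap E$ then $|x-y|<\kappa r$, so $B=B(x,r)\subset B(y,2\kappa r)$ (here we use $\kappa>1$), and passing to polar coordinates about $y$,
\[
\iint_B\frac{dX}{|X-y|^{nq}}\,\le\,\int_{B(y,2\kappa r)}\frac{dX}{|X-y|^{nq}}\,=\,C\int_0^{2\kappa r}t^{\,n-nq}\,dt\,.
\]
This is the one place where the hypothesis $q<(n+1)/n$ is used, and the only place: it is precisely the inequality $n-nq>-1$ that makes the last radial integral converge at the origin, with value $C(n,q)(\kappa r)^{\,n+1-nq}$. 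Hence $\big(\iint_B|X-y|^{-nq}\,dX\big)^{1/q}\le C(n,q,\kappa)\,r^{(n+1)/q-n}$, uniformly in $y\in\kappa\Delta$.

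It remains to integrate over $\kappa\Delta$, for which we need only the \emph{upper} Ahlfors--David bound of \eqref{eq1.ADR}: $H^n(\kappa\Delta)\le H^n\big(E\cap B(x,\kappa r)\big)\le C(\kappa r)^n$. Combining the last two estimates,
\[
\left(\iint_B|\nabla\mathcal{S}1_{\kappa\Delta}|^q\,dX\right)^{1/q}\,\le\, C(n,q,\kappa)\,r^n\cdot r^{(n+1)/q-n}\,=\,C_{q,\kappa}\,r^{(n+1)/q}\,,
\]
and raising to the $q$-th power yields \eqref{eq4.2}. I do not expect any genuine obstacle here; the steps that deserve a careful (if routine) line or two are the justification of differentiating under the integral for $X\notin E$, the observation that $E$ is Lebesgue-null so that the pointwise bound on $B\setminus E$ suffices, and the bookkeeping of the $\kappa$-dependence of the constants. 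It is worth remarking that the lower ADR bound never enters, and that for this method the range $q<(n+1)/n$ is exactly the natural one, being what renders $\int_0^{2\kappa r}t^{\,n-nq}\,dt$ finite.
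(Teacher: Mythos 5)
Your proof is correct and follows essentially the same route as the paper: dominate $|\nabla\mathcal{S}1_{\kappa\Delta}|$ pointwise by $\int_{\kappa\Delta}|X-y|^{-n}\,dH^n(y)$, reduce to a radial integral over $B$ that converges precisely because $nq<n+1$, and invoke the upper ADR bound $H^n(\kappa\Delta)\lesssim(\kappa r)^n$. The only (cosmetic) difference is that you handle the $L^q(B)$ norm of the inner integral via Minkowski's integral inequality, whereas the paper uses Jensen/H\"older followed by Fubini; the two are interchangeable here and yield the identical estimate.
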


\begin{proof}The left hand side of \eqref{eq4.2} is crudely dominated by a constant times
\begin{multline*}\dint_B\left|\int_{\kappa\Delta}\frac1{|X-y|^n}\,dH^n(y)\right|^q\,dX
\leq H^n(\kappa\Delta)^{q-1}\,\dint_B\left(\int_{\kappa\Delta}
\frac1{|X-y|^{nq}}\,dH^n(y)\right)\,dX\\[4pt]
\lesssim r^{n(q-1)}
\int_{\kappa\Delta}\left(\dint_{|X-y|<(\kappa + 1)r}\frac1{|X-y|^{nq}}\,dX\right)\,dH^n(y)\\[4pt]
\approx\, r^{n(q-1)} \,r^n \,r^{n+1-nq}\,=\,r^{n+1},\end{multline*}
where of course the implicit constants depend on $\kappa$ and $q$.
\end{proof}

\begin{lemma}\label{lemma4.3} Suppose that $E\subset \ree$ is $n$-dimensional ADR.
For $\rho>0$, define the ``boundary strip'' $\Sigma_\rho:= \{X\in \ree\setminus E: \dist(X,E)<\rho\}$.  Then there is a uniform constant $C$
such that for every ball $B:=B(x,r)$ centered on $E$, and
for $\rho\leq r,$ we have
\begin{equation}\label{eq5.4}
\left|\Sigma_\rho\cap B\right|\leq C\rho r^n.\end{equation}
\end{lemma}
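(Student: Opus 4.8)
The plan is to cover the strip $\Sigma_\rho\cap B$ by a controlled number of balls of radius comparable to $\rho$, each of which meets $E$, and then use the upper ADR bound to estimate the measure of each such ball. More precisely, first I would choose a maximal collection of points $\{z_j\}\subset E\cap B(x,2r)$ that are $\rho$-separated, i.e.\ $|z_i-z_j|\geq \rho$ for $i\neq j$. By maximality, the balls $B(z_j,\rho)$ cover the $\rho$-neighborhood of $E\cap B(x,2r)$ in $\ree$, and in particular they cover $\Sigma_\rho\cap B$, since any $X\in\Sigma_\rho\cap B$ has some $y\in E$ with $|X-y|<\rho\leq r$, whence $y\in E\cap B(x,2r)$ and $X\in B(z_j,2\rho)$ for the nearest center $z_j$ (so it is cleaner to say the doubled balls $\{B(z_j,2\rho)\}$ cover $\Sigma_\rho\cap B$).

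**Next I would bound the number of centers.** The balls $\{B(z_j,\rho/2)\}$ are pairwise disjoint and all contained in $B(x,3r)$; by the lower ADR bound applied on $E$ at each $z_j$ (using $\rho/2\leq r\leq \diam(E)$, after the harmless reduction to $\rho<r$ and, if $E$ is bounded, $r\lesssim\diam E$—or one simply covers $B$ by boundedly many balls of radius $\diam(E)$), we get $H^n(E\cap B(z_j,\rho/2))\gtrsim \rho^n$. These sets are disjoint subsets of $E\cap B(x,3r)$, whose total $H^n$-measure is $\lesssim r^n$ by the upper ADR bound. Hence the number $M$ of centers satisfies $M\,\rho^n\lesssim r^n$, i.e.\ $M\lesssim (r/\rho)^n$.

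**Finally, summing the Lebesgue measures**, and using that each $B(z_j,2\rho)$ is a Euclidean ball of radius $2\rho$ in $\ree$ so $|B(z_j,2\rho)|\approx \rho^{n+1}$, we obtain
\begin{equation*}
|\Sigma_\rho\cap B|\,\leq\,\sum_{j=1}^M |B(z_j,2\rho)|\,\lesssim\, M\,\rho^{n+1}\,\lesssim\,\left(\frac{r}{\rho}\right)^n\rho^{n+1}\,=\,\rho\, r^n,
\end{equation*}
which is exactly \eqref{eq5.4}.

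**The only mild subtlety** — and really the only place one must be slightly careful — is making sure the ADR estimates are applied at admissible scales: the lower bound $H^n(E\cap B(z,s))\gtrsim s^n$ in \eqref{eq1.ADR} requires $s\in(0,R_0)$ with $R_0=\diam(E)$, so one should first dispose of the trivial case $r\gtrsim\diam(E)$ (when $E$ is bounded) by covering $B$ with a bounded number of balls of radius $\approx\diam(E)$, and then for $r\lesssim\diam(E)$ the scales $\rho/2$ and $3r$ are all $\lesssim\diam(E)$, so \eqref{eq1.ADR} applies (absorbing the implied constants as usual). No genuine obstacle arises; this is a routine covering argument.
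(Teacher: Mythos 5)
Your proof is correct, but it takes a genuinely different route from the paper's. The paper works with the Whitney decomposition of $\ree\setminus E$: it sums over dyadic generations $k$ with $2^{-k}\lesssim\rho$, associates to each Whitney cube $I\in\mathcal{W}_k$ a boundary cube $Q_I\in\mathbb{D}(E)$ of comparable size (with bounded overlap within a fixed generation), and converts the Lebesgue-measure sum $\sum_I|I\cap B|$ into $\sum_k 2^{-k}\sum_I H^n(Q_I)\lesssim \rho r^n$ via ADR. Your argument instead operates at the single scale $\rho$: you pick a maximal $\rho$-separated net $\{z_j\}$ in $E\cap B(x,2r)$, observe that the balls $B(z_j,2\rho)$ cover $\Sigma_\rho\cap B$, and then bound the cardinality $M$ of the net by the standard packing argument that combines the lower ADR bound on the disjoint balls $B(z_j,\rho/2)$ with the upper ADR bound on $E\cap B(x,3r)$, yielding $M\lesssim (r/\rho)^n$ and hence $|\Sigma_\rho\cap B|\lesssim M\rho^{n+1}\lesssim\rho r^n$. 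Both proofs use the two-sided ADR estimate in an essential way, but yours is more elementary and self-contained (no appeal to Whitney cubes or to the dyadic grid on $E$), whereas the paper's version sits naturally alongside the Whitney/dyadic machinery that is used heavily elsewhere in the same section. Your remark about admissible ADR scales (disposing of $r\gtrsim\diam E$ first) is the right thing to say and closes the only potential loophole.
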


\begin{proof} Let
$\mathcal{W}_E$ denote the collection of cubes in the Whitney decomposition
of $\ree\setminus E$, and for each
$k\in \mathbb{Z}$, set $\mathcal{W}_k:=\{I\in \mathcal{W}_E: \ell(I) = 2^{-k}\}$.
For each $I\in \mathcal{W}_E$, choose $Q_I\in \mathbb{D}(E)$ such that $\ell(Q_I)=\ell(I),$
and $\dist(I,Q_I)= \dist(I,E) \approx\ell(I).$  By ADR, for each $I$ there are at most a bounded
number of $Q\in \mathbb{D}(E)$ having these properties, and we just pick one.  We note that
if $I\cap B$ is non-empty, and if $\ell(I)\lesssim \rho\leq r$, then $Q_I\subset \kappa_1 B$ for some uniform constant $\kappa_1$.  Moreover, the collection
$\{Q_I\}_{I\in \mathcal{W}_k}$ has bounded overlaps for each fixed $k$.
We then have
\begin{multline}\label{eq4.5}
\left|\Sigma_\rho\cap B\right|\,\leq
\,\sum_{k:2^{-k}\lesssim \rho}\,\sum_{I\in\mathcal{W}_k}|I\cap B|\\[4pt]\qquad
\approx\,\sum_{k:2^{-k}\lesssim \rho}\,\sum_{I\in\mathcal{W}_k}|I\cap B|\,\, \ell(I)^{-n}\, H^n(Q_I)
\qquad {\rm (by \,ADR)}\\
\lesssim \,\sum_{k:2^{-k}\lesssim \rho}2^{-k}\sum_{I\in\mathcal{W}_k:\,Q_I\subset \kappa_1 B}\,
H^n(Q_I) \,\lesssim \,\,\rho r^n\,,
\end{multline}
where in the last step we have used the bounded overlap property of the $Q_I$'s.
\end{proof}

\begin{corollary}\label{corsioball} Let $0<\gamma<1/(n+1)$, and
suppose that $E\subset \ree$ is $n$-dimensional ADR.
Then there is a uniform
constant $C_{\gamma,\kappa}$
such that for every ball $B:=B(x,r)$ centered on $E$, $\kappa\Delta :=\kappa B\cap E$,
and every $\rho$ with $0<\rho\leq r$, we have
$$ \iint_{\Sigma_\rho\cap B} |\nabla\mathcal{S}1_{\kappa\Delta}(X)|\, dX
\leq C_{\gamma,\kappa} \,
\rho^\gamma r^{n+1-\gamma},$$
\end{corollary}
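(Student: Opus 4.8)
The plan is to interpolate between the two preceding estimates via H\"older's inequality. First I would choose the exponent $q:=1/(1-\gamma)$. Since $0<\gamma<1/(n+1)$, an immediate computation shows $1\le q<(n+1)/n$, so Lemma \ref{lemma4.1} applies with this value of $q$; moreover the conjugate exponent $q'$ satisfies $1/q'=\gamma$. Note also that, $q$ being a function of $\gamma$ alone, the constant $C_{q,\kappa}$ from Lemma \ref{lemma4.1} may be absorbed into a constant $C_{\gamma,\kappa}$.

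Next I would apply H\"older's inequality on the set $\Sigma_\rho\cap B$ with exponents $q$ and $q'$:
\begin{equation*}
\iint_{\Sigma_\rho\cap B} |\nabla\mathcal{S}1_{\kappa\Delta}(X)|\, dX
\,\leq\, \left(\iint_{\Sigma_\rho\cap B} |\nabla\mathcal{S}1_{\kappa\Delta}(X)|^q\, dX\right)^{1/q}
\left|\Sigma_\rho\cap B\right|^{1/q'}.
\end{equation*}
The first factor is controlled by Lemma \ref{lemma4.1}, since $\Sigma_\rho\cap B\subset B$, giving $\iint_B|\nabla\mathcal{S}1_{\kappa\Delta}|^q\le C_{q,\kappa}\,r^{n+1}$; the second factor is controlled by Lemma \ref{lemma4.3}, which is applicable because $\rho\le r$, giving $|\Sigma_\rho\cap B|\le C\rho r^n$. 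Combining these bounds and using $1/q'=\gamma$, the right-hand side is at most
\begin{equation*}
\big(C_{q,\kappa}\,r^{n+1}\big)^{1/q}\,\big(C\,\rho\, r^n\big)^{\gamma}
\,=\, C_{\gamma,\kappa}\,\rho^{\gamma}\,r^{(n+1)(1-\gamma)+n\gamma}
\,=\, C_{\gamma,\kappa}\,\rho^{\gamma}\,r^{n+1-\gamma},
\end{equation*}
which is exactly the claimed estimate.

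There is no substantive obstacle here beyond bookkeeping of exponents: the one point requiring care is the verification that the hypothesis $\gamma<1/(n+1)$ is precisely what guarantees that the interpolation exponent $q=1/(1-\gamma)$ lies in the range $[1,(n+1)/n)$ in which Lemma \ref{lemma4.1} is valid, and that the powers of $r$ coming from the two inputs, namely $(n+1)(1-\gamma)$ and $n\gamma$, add up to $n+1-\gamma$.
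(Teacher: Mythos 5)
Your argument is correct and is exactly the approach the paper intends: the authors state only that the corollary ``follows immediately from H\"older's inequality and the previous two lemmata'' and omit the details, which you have supplied. The choice $q=1/(1-\gamma)$ is the natural one, and your verification that $\gamma<1/(n+1)$ puts $q$ in the admissible range $[1,(n+1)/n)$ for Lemma \ref{lemma4.1}, together with the exponent bookkeeping, is precisely the omitted calculation.
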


\begin{proof} The corollary follows immediately from H\"older's inequality
and the previous two lemmata.  We omit the routine details.
\end{proof}

\begin{lemma}\label{lemma4.7} Suppose that $\partial\Omega$ is ADR, and let $B:=B(x,r)$,
$\Delta:=B\cap\partial\Omega$, with
$r\leq \diam (\partial \Omega)$, and $x\in \partial \Omega$.  If
\begin{equation}\label{eq4.8}|B\cap (\ree\setminus\overline{\Omega})|\geq a r^{n+1},
\end{equation}
for some $a>0$, then there is a point
$X_\Delta^- \in B \setminus
\overline{\Omega}$, and a constant $c_1$ depending only on
$a$,$n$ and the ADR constants such that
$$B(X_\Delta^-,c_1r)\subset \ree\setminus\overline{\Omega}.$$
\end{lemma}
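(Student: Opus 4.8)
The plan is to use the boundary-strip volume estimate of Lemma \ref{lemma4.3} to extract from $B\cap(\ree\setminus\overline\Omega)$ a point lying at a definite distance from $\partial\Omega$, and then to observe that a Euclidean ball of comparable radius about that point cannot meet $\partial\Omega$ and hence is forced to lie entirely in the exterior domain.

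First I would apply Lemma \ref{lemma4.3} with $E=\partial\Omega$: there is a constant $C_0$, depending only on $n$ and the ADR constants of $\partial\Omega$, such that $|\Sigma_\rho\cap B|\le C_0\,\rho\,r^n$ for every $\rho\le r$, where $\Sigma_\rho=\{X\in\ree\setminus\partial\Omega:\dist(X,\partial\Omega)<\rho\}$. Now set $c_1:=\min\{1,\,a/(2C_0)\}$ and $\rho:=c_1 r\le r$; in either case one checks directly that $C_0\,\rho\,r^n\le\tfrac a2\,r^{n+1}$ (when $c_1=1$ this uses $a\ge 2C_0$). Every point of $B\setminus\overline\Omega$ lies either in $\Sigma_\rho\cap B$ or at distance at least $\rho$ from $\partial\Omega$, so by the hypothesis \eqref{eq4.8},
\[
\big|\{X\in B\setminus\overline\Omega:\dist(X,\partial\Omega)\ge\rho\}\big|\;\ge\;a\,r^{n+1}-C_0\,\rho\,r^n\;\ge\;\tfrac a2\,r^{n+1}\;>\;0 .
\]
In particular this set is nonempty, and we choose $X_\Delta^-$ to be any of its points; then $X_\Delta^-\in B\setminus\overline\Omega$ and $\dist(X_\Delta^-,\partial\Omega)\ge\rho=c_1 r$.

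Finally, since $B(X_\Delta^-,c_1 r)$ is disjoint from $\partial\Omega$, it is contained in the union of the disjoint open sets $\Omega$ and $\ree\setminus\overline\Omega$; being connected and containing $X_\Delta^-\in\ree\setminus\overline\Omega$, it must lie entirely in $\ree\setminus\overline\Omega$, which is the desired conclusion, with $c_1=\min\{1,a/(2C_0)\}$ depending only on $a$, $n$ and the ADR constants. This argument is routine; the only points requiring a little care are the bookkeeping in the choice of $\rho$ so that it works irrespective of the relative sizes of $a$ and $C_0$, and the elementary observation that $\ree\setminus\partial\Omega=\Omega\cup(\ree\setminus\overline\Omega)$ is a disjoint union of open sets, so that a connected subset avoiding $\partial\Omega$ and meeting the exterior stays in the exterior. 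I do not expect any real obstacle here.
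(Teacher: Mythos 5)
Your proof is correct and follows essentially the same route as the paper: apply Lemma \ref{lemma4.3} to discard the thin boundary strip, deduce that $B\setminus\overline\Omega$ retains a set of positive measure at distance $\gtrsim r$ from $\partial\Omega$, and pick any point there as $X_\Delta^-$. The only cosmetic difference is that the paper avoids your $\min\{1,\cdot\}$ bookkeeping by normalizing $a\le 1$, $C\ge 1$ at the outset (and in fact the branch $a\ge 2C_0$ of your $\min$ is vacuous, since $B\setminus\overline\Omega\subset\Sigma_r\cap B$ forces $a\le C_0$); your closing connectivity remark is a minor elaboration of a step the paper leaves implicit.
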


\begin{proof} For notational convenience, we set
$$B^-:= B\cap (\ree\setminus\overline{\Omega}).$$
We apply Lemma \ref{lemma4.3}, with $E=\partial\Omega$, and
$\rho = ar/(2C)$ (notice that without loss of generality we may assume that $a\le 1$, $C\ge 1$), and \eqref{eq4.8} to deduce that
$$\left|B^-\,\setminus \Sigma_{ar/(2C)}\right|
\geq \frac12 a r^{n+1}.$$
In particular $B^-\,\setminus \Sigma_{ar/(2C)}$ is non-empty.  Moreover, by definition
of $\Sigma_\rho$, we have that $\dist(X,\partial\Omega) \geq ar/(2C)$, for every
$X\in B^-\,\setminus \Sigma_{ar/(2C)}$.  Therefore, any such $X$ may be taken as the point
$X_\Delta^-$, with $c_1:= a/(4C).$
\end{proof}

\noindent{\it Remark}.  Given a domain $\Omega$, we shall henceforth refer to a Corkscrew point for
the domain $\ree\setminus\overline{\Omega}$, such as the point
$X_\Delta^-$ in the lemma, as an  ``exterior Corkscrew point''.

\begin{lemma}\label{lemma4.9}
Suppose that $\Omega$ is a 1-sided NTA domain with ADR boundary.   Let $\F\subset \dd$ be a pairwise disjoint family.
Then for every $Q\subseteq Q_j \in \F$, there is a ball
$B'\subset\ree\setminus\Omega_{\F}$,  centered at $\pom$,
with radius $r'\approx\ell(Q)/K_0$, and $\Delta':=B'\cap\partial\Omega\subset Q$.
\end{lemma}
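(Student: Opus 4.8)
The plan is to take $B'$ to be a small ball centered at the ``center'' of $Q$. Specifically, let $x_Q\in\pom$ be the point provided by Lemma \ref{lemmaCh} for which $\Delta(x_Q,r)\subset Q$ with $r\approx\ell(Q)$ (cf.\ \eqref{cube-ball}), and set $B':=B(x_Q,r')$ with $r':=c\,\ell(Q)/K_0$, where $K_0$ is the constant from \eqref{eq2.whitney2} and $c$ is a small constant depending only on the allowable parameters, to be fixed at the end. Since $\Omega_\F\subset\bigcup_{Q'\in\dd_\F}U_{Q'}$ by \eqref{eq2.sawtooth1}, and $r'\le r$ once $c$ is small, everything in the statement follows once we know that $B'$ meets no Whitney region $U_{Q'}$ with $Q'\in\dd_\F$: indeed then $B'\subset\ree\setminus\Omega_\F$, $\Delta'=B'\cap\pom\subset\Delta(x_Q,r)\subset Q$, $r'\approx\ell(Q)/K_0$, and $B'$ is centered on $\pom$, as required.

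To prove this disjointness I would argue by contradiction: suppose $Y\in B'\cap U_{Q'}$ for some $Q'\in\dd_\F$. Recall $U_{Q'}=\bigcup_{I\in\W^*_{Q'}}I^*$, and that for $I\in\W^*_{Q'}$ one has $\ell(I)\approx\ell(Q')$ and $\dist(I,Q')\le K_0\,\ell(Q')$ by \eqref{eq2.whitney2}, while $\delta(Z)\approx\ell(I)$ for $Z\in I^*$ by \eqref{eq4.1}. Hence every point of $U_{Q'}$ is at distance $\approx\ell(Q')$ from $\pom$ and at distance $\lesssim K_0\,\ell(Q')$ from $Q'$, with constants depending only on the allowable parameters. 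Since $x_Q\in\pom$, this gives
$$\ell(Q')\,\lesssim\,\delta(Y)\,\le\,|Y-x_Q|\,<\,r'\,=\,c\,\ell(Q)/K_0\,,$$
so that $K_0\,\ell(Q')\lesssim c\,\ell(Q)$; in particular $\ell(Q')<\ell(Q)$, i.e.\ $k(Q')>k(Q)$, provided $c$ is small enough.

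Next I would pick $y'\in Q'$ with $|Y-y'|\lesssim\dist(Y,Q')\lesssim K_0\,\ell(Q')\lesssim c\,\ell(Q)$, so that by the triangle inequality $|y'-x_Q|\le|y'-Y|+|Y-x_Q|\lesssim c\,\ell(Q)$. Choosing $c$ sufficiently small (depending only on the allowable parameters and on the implicit constant in $\Delta(x_Q,r)\subset Q$), we get $|y'-x_Q|<r$, hence $y'\in B(x_Q,r)\cap\pom=\Delta(x_Q,r)\subset Q$, so $Q\cap Q'\neq\emptyset$. Combined with $k(Q')>k(Q)$ and property (ii) of Lemma \ref{lemmaCh}, this forces $Q'\subseteq Q\subseteq Q_j$ for some $Q_j\in\F$, which contradicts $Q'\in\dd_\F$ (cf.\ \eqref{eq2.discretesawtooth1}). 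This contradiction completes the argument.

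The proof is short; the only point that needs genuine care is the bookkeeping of constants, namely that one can afford the radius $r'\approx\ell(Q)/K_0$ rather than something smaller. This works because the same $K_0$ enters with opposite effect: a cube $Q'\in\dd_\F$ whose Whitney region reaches $B'$ is forced, by the lower bound $\delta\gtrsim\ell(Q')$ on $U_{Q'}$, to satisfy $\ell(Q')\lesssim r'=c\,\ell(Q)/K_0$, and then the overhang $\dist(Y,Q')\lesssim K_0\,\ell(Q')$ of the sawtooth beyond $Q'$ is $\lesssim c\,\ell(Q)$, hence harmless. I expect this constant-chasing --- together with invoking \eqref{eq4.1} and \eqref{eq2.whitney2} in precisely the right form --- to be the only mildly delicate part, with no serious obstacle.
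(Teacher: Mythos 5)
Your proof is correct and follows essentially the same approach as the paper: take $B'$ centered at the center $x_Q$ of $Q$ with radius $\approx \ell(Q)/K_0$ (the paper writes $(MK_0)^{-1}r$), argue by contradiction that any Whitney region $U_{Q'}$ with $Q'\in\dd_\F$ meeting $B'$ forces $\ell(Q')\lesssim \ell(Q)/K_0$ and hence $Q'$ close to $x_Q$, and conclude $Q'\subseteq Q\subseteq Q_j$, contradicting $Q'\in\dd_\F$. The only cosmetic difference is that you place a single point $y'\in Q'$ inside $\Delta(x_Q,r)$ and then invoke the dyadic nesting property (Lemma \ref{lemmaCh}(ii)) together with $\ell(Q')<\ell(Q)$ to conclude $Q'\subseteq Q$, whereas the paper bounds $|y-x_Q|$ uniformly over all $y\in Q_I$ to get $Q_I\subset\Delta_Q$ directly; both are valid.
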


\begin{proof} Recall that there exist $B_Q:=B(x_Q,r)$ and $\Delta_Q :=B_Q\cap\partial \Omega
\subset Q$, as defined in  \eqref{cube-ball} and \eqref{cube-ball2}, where $r \approx \ell(Q)$.
We now set $$B' = B\left(x_Q,(MK_0)^{-1}r\right)\,,$$
where $M$ is a sufficiently large number to be chosen momentarily.
We need only verify that $B'\cap\Omega_\F=\emptyset.$  Suppose not.  Then by definition of
$\Omega_\F$, there is a Whitney cube
$I\in \W_\F$ (cf. \eqref{eq3.4}) such that $I^*$ meets $B'$.  Since $I^*$ meets $B'$, there is a point
$Y_I\in I^*$ such that
$$\ell(I)\approx \dist(I^*,\partial\Omega)\leq |Y_I-x_Q|\leq r/(MK_0)\approx \ell(Q)/(MK_0).$$
On the other hand, since $I\in \W_\F$,
there is a $Q_I\in \dd_\F$ (hence $Q_I$ is not contained in $Q_j$)
with $\ell(I)\approx\ell(Q_I)$, and
$\dist(Q_I,Y_I)\approx\dist(Q_I,I)\lesssim K_0\,\ell(I)\lesssim \ell(Q)/M.$
Then by the triangle inequality,
$$|y-x_Q|\lesssim \ell(Q)/M\,,\qquad \forall y\in Q_I.$$
Thus, if $M$ is chosen large enough,
$Q_I\subset \Delta_Q\subset Q\subset Q_j$,  a contradiction.
\end{proof}

We now come to the main lemma of this section.

\begin{lemma}\label{lemma4.main} Suppose that $\Omega$ is a 1-sided NTA
domain with ADR boundary. Fix $Q_0\in \dd$, and a pairwise disjoint family $\F\subset \dd_{Q_0}$,
and let $\Omega_{\F,Q_0}$ be the corresponding sawtooth domain.  Suppose also that
for some $\eta>0$, we have
\begin{equation}\label{eq4.main1}
\sup_{Q\in\dd_{Q_0}} \frac1{\sigma(Q)}\dint_{\Omega^{fat}_{\F,Q}}|\nabla^2\mathcal{S} 1(X)|^2 \delta(X)\,
dX\,\leq \,\eta.
\end{equation}
If $\eta\le \eta_0$ with $\eta_0$ small enough, depending only on $n,K_0$, and the Corkscrew,
Harnack Chain and ADR constants for $\Omega$, then for every $B:= B(x,r)$ and $\td:=B\cap
\partial\Omega_{\F,Q_0}$, with
$x\in \partial\Omega_{\F,Q_0}$ and $r\leq \diam(Q_0)$, there is  an exterior Corkscrew point $X_{\td}\in B\cap (\ree\setminus
\Omega_{\F,Q_0})$.   Moreover, the exterior Corkscrew constants depend only
upon $\eta_0$, $K_0$, and the
other parameters stated above.
\end{lemma}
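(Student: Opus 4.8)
The plan is to reduce, via the volume criterion of Lemma \ref{lemma4.7} (applied to the domain $\Omega_{\F,Q_0}$, whose boundary is ADR by Lemma \ref{lemma2.30}), to the estimate
\[
\bigl|B(x,r/4)\setminus\overline{\Omega_{\F,Q_0}}\bigr|\;\geq\; a\,r^{n+1},
\]
valid for every $\td=B(x,r)\cap\partial\Omega_{\F,Q_0}$ with $r\leq\diam Q_0$, for a constant $a$ depending only on the allowable parameters and $\eta_0$; Lemma \ref{lemma4.7} then supplies an exterior Corkscrew ball of radius $\approx r$ inside $B(x,r)$. First I would dispose of the ``cheap'' configurations without using \eqref{eq4.main1}. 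If $\dist(x,\pom)\gtrsim r$, then $x$ lies on the ``interior'' portion of $\partial\Omega_{\F,Q_0}$ and a neighbouring Whitney cube not belonging to the sawtooth supplies the required ball, exactly as in the proof of Lemma \ref{lemma:inher-quali}. If some $Q_j\in\F$ with $\ell(Q_j)\gtrsim r$, or a piece of $\pom\setminus Q_0$, lies within $\lesssim r$ of $x$, then Lemma \ref{lemma4.9} (for the family $\F$, using $\Omega_{\F,Q_0}\subset\Omega_\F$) gives a ball of radius $\approx r/K_0$ inside $\ree\setminus\Omega_\F\subset\ree\setminus\Omega_{\F,Q_0}$. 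The remaining case is $x\in\pom$ lying deep in the un-removed part of $Q_0$ at scale $r$, i.e.\ $\dist\bigl(x,(\pom\setminus Q_0)\cup\bigcup_{\F}Q_j\bigr)\gtrsim r$; here the Whitney structure of the sawtooth (cf.\ the proof of Lemma \ref{lemma2.31}) forces $B(x,c_4 r)\cap\Omega\subset\Omega_{\F,Q_0}$, so $B(x,c_4 r)\setminus\overline{\Omega_{\F,Q_0}}\subset\ree\setminus\overline\Omega$, and everything reduces to producing an \emph{exterior Corkscrew point for $\Omega$ itself} at $x$ at scale $\approx r$.

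For that, I would first exploit the hypothesis to show that the single layer potential is nearly affine near $x$. Fix a small auxiliary parameter $\epsilon>0$, depending only on the allowable parameters, and pick $Q\in\dd_{\F,Q_0}$ with $x\in\overline Q$ and $\ell(Q)\approx r$; then $\dd_{\F(\epsilon r),Q}=\{Q'\in\dd_{\F,Q}:\ell(Q')>\epsilon r\}$, and on $\Omega^{fat}_{\F(\epsilon r),Q}\subset\Omega^{fat}_{\F,Q}$ one has $\delta(X)\approx_\epsilon r$ by \eqref{eq3.2} and the fact that the relevant Whitney cubes have side $\lesssim r$. Applying Lemma \ref{poincare} with $p=2$ to each component of the harmonic vector field $\nabla\mathcal{S}1$, then using $|\Omega_{\F(\epsilon r),Q}|\approx r^{n+1}$ and \eqref{eq4.main1}, I obtain
\[
\fint_{\Omega_{\F(\epsilon r),Q}}\bigl|\nabla\mathcal{S}1-\vec a_Q\bigr|^2\;\leq\;
C_\epsilon\,r^{1-n}\iint_{\Omega^{fat}_{\F(\epsilon r),Q}}|\nabla^2\mathcal{S}1|^2\;\leq\;
\frac{C_\epsilon}{r^{n}}\iint_{\Omega^{fat}_{\F,Q}}|\nabla^2\mathcal{S}1|^2\,\delta\;\leq\;C_\epsilon\,\eta,
\]
where $\vec a_Q:=\fint_{\Omega_{\F(\epsilon r),Q}}\nabla\mathcal{S}1$. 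Thus $\nabla\mathcal{S}1$ is $L^2$-close to the fixed vector $\vec a_Q$ on the sawtooth $\Omega_{\F(\epsilon r),Q}$, which is adjacent, at heights $\gtrsim\epsilon r$, to a piece of $\pom$ of diameter $\approx r$ about $x$.

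It remains to convert this nearly-affine behaviour into an exterior Corkscrew for $\Omega$, which is the heart of the matter; I would proceed in three steps. (a) A lower bound $|\vec a_Q|\geq c>0$ for $\eta$ small: applying the divergence theorem to $\nabla\mathcal{S}1$ on a sub-region of $\Omega$ of size $\approx r$ adjacent to the un-removed part of $Q$, and using $-\Delta\mathcal{S}1=\sigma$ with the ADR lower bound, forces $\int|\nabla\mathcal{S}1|\,dH^n\gtrsim r^n$ on part of its boundary, hence $\fint_{\Omega_{\F(\epsilon r),Q}}|\nabla\mathcal{S}1|^2\gtrsim_\epsilon 1$ by the mean value property and the Harnack Chain structure; the displayed bound then gives $|\vec a_Q|^2\geq c_\epsilon-C_\epsilon\eta\geq c_\epsilon/2$. (b) Since $\pom$ is UR, $\mathcal{S}1$ has non-tangential traces $\sigma$-a.e.\ and $\pom$ has an approximate tangent plane $\sigma$-a.e.; using the jump relation $(\nabla\mathcal{S}1)^{int}-(\nabla\mathcal{S}1)^{ext}=-\nu$ together with the subharmonicity of $|\nabla\mathcal{S}1-\vec a_Q|^2$, I would conclude that, off a set of $\sigma$-measure $\leq C_\epsilon\sqrt\eta\,r^n$ of $\pom\cap B(x,c_5 r)$, the unit normal $\nu$ is within $C_\epsilon\eta^{1/4}$ of the fixed direction $-\vec a_Q/|\vec a_Q|$. (c) This small mean oscillation of $\nu$ and the ADR condition confine $\pom\cap B(x,c_5 r)$ to a slab of width $C_\epsilon\eta^{1/4}r$ perpendicular to $\vec a_Q$; and since $\Omega$ is $1$-sided NTA, the interior sawtooth --- built from Whitney cubes of $\Omega$ and descending to $\pom$ --- occupies essentially all of $\Omega$ on one side of that slab, so the opposite half-ball $\{Y\in B(x,c_5 r/2):\langle\vec a_Q,Y-x\rangle<-C_\epsilon\eta^{1/4}r\}$ lies in $\ree\setminus\overline\Omega$. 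Choosing first $\epsilon$ and then $\eta_0$ with $C_\epsilon\eta_0^{1/4}<c_5/8$ yields the volume bound, and all constants in (a)--(c) depend only on dimension and the $1$-sided NTA and ADR parameters, in particular not on $N$. The main obstacle is precisely steps (b)--(c): extracting genuine one-sided flatness of $\pom$, \emph{with the correct orientation}, from the mere $L^2$-closeness of $\nabla\mathcal{S}1$ to a nonzero constant. Flatness by itself is not enough --- a slit-type domain is flat but has no exterior Corkscrew --- and it is the sign information in the layer-potential jump relation, combined with the Harnack Chain condition (which such slit domains violate), that pins down which side of the slab is exterior.
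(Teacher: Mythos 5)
Your reduction skeleton matches the paper's: you dispose of the configurations where $x$ is far from $\pom$, or near some $Q_j\in\F$ (or $\pom\setminus Q_0$), via the Whitney structure and Lemma \ref{lemma4.9}, and in the remaining ``deep'' case you reduce, via Lemma \ref{lemma4.7} applied to $\Omega_{\F,Q_0}$, to a volume lower bound. Your use of Lemma \ref{poincare} to show that $\nabla\mathcal{S}1$ is $L^2$-close to a constant vector on $\Omega_{\F(\eps r),Q}$, with the error controlled by $C_\eps\eta$, is exactly the role the Poincar\'e inequality plays in the paper's estimate of the term ``$I$''. (One small technical point: for unbounded $\pom$ the full vector $\nabla\mathcal{S}1$ need not be absolutely convergent, so your $\vec a_Q$ should be defined by first subtracting a fixed constant such as $\nabla\mathcal{S}1_{(2\Delta^*_Q)^c}(x_Q)$, as the paper does; the Poincar\'e inequality only sees $\nabla^2\mathcal{S}1$, so this is a harmless normalization.)

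The divergence from the paper, and the gap, is in your steps (a)--(c). You try to upgrade the $L^2$-closeness of $\nabla\mathcal{S}1$ to a nonzero constant into pointwise control of the unit normal $\nu$ via the jump relation $(\nabla\mathcal{S}1)^{int}-(\nabla\mathcal{S}1)^{ext}=-\nu$. But the Poincar\'e/Carleson hypothesis \eqref{eq4.main1} lives entirely in the interior sawtooth $\Omega^{fat}_{\F,Q}\subset\Omega$, so it constrains only the \emph{interior} nontangential trace of $\nabla\mathcal{S}1$; it gives no information whatsoever about the exterior trace. Writing the interior trace as $T\sigma-\tfrac12\nu$ (with $T\sigma$ the principal-value Riesz transform of $\sigma$), your input is $T\sigma-\tfrac12\nu\approx\vec a_Q$ on most of $\pom\cap B(x,c_5r)$, and from this alone $\nu$ is entirely undetermined: the relation couples $\nu$ to the unknown $T\sigma$. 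Only if one knew $T\sigma\approx 0$ (which is the flat-plane intuition) could one extract $\nu\approx-2\vec a_Q$; but establishing smallness of $T\sigma$, or of the exterior trace, would require exterior access to $\pom$, which is precisely what is not available and is what the whole lemma is designed to manufacture. So step (b) is circular as stated, and the ``one-sided flatness with orientation'' in step (c) is not a consequence of the hypotheses by the route you describe.

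The paper sidesteps orientation entirely. Instead of trying to locate $\pom$ relative to a slab, it writes $r^{n+1}\lesssim\int_{\pom}\Phi\,d\sigma=\iint_{\ree}(\nabla\mathcal{S}1-\text{const}-\vec\alpha)\cdot\nabla\Phi$, splits the integral into the far sawtooth $\Omega_{\F(\eps r),Q}$ (handled by Poincar\'e, your inequality), the boundary strip $\Omega_{\F,Q_0}\setminus\Omega_{\F(\eps r),Q}$ (handled by Lemma \ref{lemma4.3} and Corollary \ref{corsioball}), and the complement $\ree\setminus\overline{\Omega_{\F,Q_0}}$. The first two pieces are hidden on the left by choosing first $\eps$ and then $\eta_0$ small; the third is bounded by $|\ree\setminus\overline{\Omega_{\F,Q_0}}\cap B_Q'|^{1/q'}r^{(n+1)/q}$ via H\"older and the $L^q$ bound of Lemma \ref{lemma4.1} with $q<(n+1)/n$, which forces the complement to have volume $\gtrsim r^{n+1}$ and completes the proof via Lemma \ref{lemma4.7}. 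This is the part of the argument you would need to replace steps (a)--(c) with; the rest of your proposal is sound.
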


To avoid confusion, we note that, as usual, $\delta(X):=\dist(X,\partial\Omega)$,
and $\Delta = B\cap\Omega$ denotes a surface ball on $\partial\Omega$;
we shall use the notation $\tdelta(X):=\dist(X,\partial \Omega_{\F,Q_0}),$
and $\td := B\cap \partial\Omega_{\F,Q_0}$.

\begin{proof}
We fix $x\in \partial\Omega_{\F,Q_0}$, and consider two separate cases.  Let $M$ be a
sufficiently large constant, to be chosen, whose value will remain fixed throughout the proof of the present lemma.

\noindent{\bf Case 1}:  $\dist(x,\partial\Omega)>r/(MK_0)$.

In this case, $x\in \partial I^*\cap J$, where as usual $I^* =(1+\lambda)I$ and  $I\in \W_{\F,Q_0}$ (cf. \eqref{eq3.4}),
and where $J\in\W$ with $\tau J\subset\ree\setminus \Omega_{\F,Q_0}$,
for some $\tau\in(1/2,1)$
(cf. \eqref{eq2.30*}).
By the nature of Whitney cubes, we have
$\ell(I)\approx \ell(J) \approx \dist(x,\partial\Omega) >r/(MK_0)$.
In this case, it is evident that there is a Corkscrew point in $J$, with $c \approx (MK_0)^{-1}$.

\noindent{\bf Case 2}:  $\dist(x,\partial\Omega)\leq r/(MK_0)$.

In this case, either $x\in \partial\Omega\cap\partial\Omega_{\F,Q_0}$, or
else $x$ lies on a face of $I^*$ for some
Whitney cube $I\in \W_{\F,Q_0}$, with $\ell(I)\lesssim r/(MK_0).$
In the former scenario,  by Proposition \ref{prop:sawtooth-contain} below,
we may
choose $Q\in\dd_{Q_0}$, with
$x\in \overline{Q}\subset B$,
and $\ell(Q) \approx r$.  If $Q\subseteq Q_j$, for some $Q_j \in \F$
(which might happen if $x\in\partial Q_j$), then by
Lemma \ref{lemma4.9} we immediately obtain the existence of the desired exterior
Corkscrew point for $\Omega_{\F,Q_0}$,
at the scale $r$.   Thus, in this scenario, it is enough to suppose that
$Q$ is not contained in any $Q_j\in\F$.

Otherwise, if $x\in\partial I^*$ for some $I\in \W_{\F,Q_0}$,
with $\ell(I)\lesssim r/(MK_0)$, then
there is a $Q_I\in \dd_{\F,Q_0}$ such that $\ell(Q_I)\approx \ell(I)$,
and $\dist(Q_I,I)\lesssim K_0\,\ell(I) \lesssim r/M.$  Consequently,
we have $\dist(I,Q)\lesssim  r/M$ for any $Q\in \dd$ with $Q_I\subseteq Q\subseteq Q_0.$
Choosing $M$ large enough, we may then fix such a $Q$ with $\ell(Q)\approx r$, and $Q\subset B$.
If $Q$ is contained in some $Q_j\in \F$, then by Lemma \ref{lemma4.9},
we again obtain the existence of an exterior Corkscrew point exactly as before.

Therefore, in either scenario, we have reduced matters to the following situation:
there is a $Q \in \dd_{\F,Q_0}$ (i.e., not contained in any $Q_j\in \F$), with
$\ell(Q) \approx r$, and $Q\subset B$.
Having fixed this $Q$, we recall that,
by Lemma \ref{lemma2.31},
there is a ball $B'_Q :=B(x_Q,s)$, with radius $s \approx (K_0)^{-1}\ell(Q)$,
such that \eqref{eq4.12a} holds.

By Lemma \ref{lemma2.30}, the sawtooth domain $\Omega_{\F,Q_0}$ inherits
the 1-sided NTA
(i.e., interior Corkscrew and Harnack Chain) and ADR properties from $\Omega$.
Thus, by Lemma \ref{lemma4.7}, applied with $\Omega_{\F,Q_0}$ in place
of  $\Omega$, and $B'_Q$ in place of $B$,
it is enough to establish the analogue of \eqref{eq4.8}
with $a$ depending only on the allowable parameters.

To this end, we proceed by a variant of the argument in \cite[pp. 254--256]{DS2}.
We remind the reader of the definition of the
family $\Fr$ (see the discussion at the beginning of Section \ref{spoincare}), and we also note that,
by construction, there is a purely dimensional constant $C_n$ such that
\begin{equation}\label{eq4.13}
T^{fat}_Q\subset B(x_Q, C_n K_0\, \ell(Q))=: B_Q^*
\end{equation}
Set $\Delta^*_Q :=B_Q^*\cap\partial \Omega$ and
$\Delta_Q':=B_Q'\cap \partial\Omega$, and
let $\Phi \in C^\infty_0(B'_Q)$,
with $\Phi \equiv 1$ on $B(x_Q,s/2)$, $0\leq\Phi\leq 1$, and $\|\nabla\Phi\|_\infty\lesssim s^{-1}.$
Let $\mathcal{L} := \nabla\cdot\nabla$
denote the usual Laplacian in $\ree$.
By the ADR property, and the fact that $s\approx\ell(Q)\approx r$, we have
\begin{multline}\label{eq4.main} r^{n+1} \approx
r\,\sigma\left(\frac12 \Delta'_Q \right) \leq r\int_{\partial\Omega} \Phi\, d\sigma
=r\,\langle -\mathcal{L}\,\mathcal{S}1,\Phi\rangle\\[4pt]
=r \iint_{\ree}\Big(\nabla \mathcal{S}1(X)-
\nabla \mathcal{S}1_{(2\Delta^*_Q)^c}(x_Q)- \vec{\alpha}\Big)\cdot \nabla\Phi(X)\,dX\\[4pt]
\lesssim \,\, \iint_{B'_Q}\big|\nabla \mathcal{S}1(X)-
\nabla \mathcal{S}1_{(2\Delta^*_Q)^c}(x_Q)- \vec{\alpha}\big|\,dX
\,=\,\iint_{\Omega\cap B'_Q}
\,\,+\,\,\iint_{\Omega_{ext}\cap B'_Q}\\[4pt]
=\,\iint_{\Omega_{\F(\epsilon r),Q}\cap B'_Q}\,\,+
\,\,\iint_{\big(\Omega_{\F,Q_0}\setminus \Omega_{\F(\epsilon r),Q}\big)\cap B'_Q}\,\,+
\,\,\iint_{\big(\Omega\setminus \Omega_{\F,Q_0}\big)\cap B'_Q}
\,\,+\,\,\iint_{\Omega_{ext}\cap B'_Q}\\[4pt]
=:\, I+II+III+IV\,,
\end{multline}
where $\vec{\alpha}$ is a constant vector at out disposal, $\epsilon >0$
is a small number to be determined, and where
as above, $\Omega_{ext}:= \ree\setminus \overline{\Omega}.$

We now set
$$\vec{\alpha}:= \frac1{|\Omega_{\F(\epsilon r),Q}|}
\iint_{\Omega_{\F(\epsilon r),Q}}\Big(\nabla \mathcal{S}1(X)-
\nabla \mathcal{S}1_{(2\Delta_Q^*)^c}(x_Q)\Big)\,dX.$$
We note for future reference that by standard Calder\'on-Zygmund estimates,
\begin{equation}\label{eq4.14}
\big|\nabla \mathcal{S}1_{(2\Delta^*_Q)^c}(X)-
\nabla \mathcal{S}1_{(2\Delta_Q^*)^c}(x_Q)\big|\,\leq \, C,\qquad \forall X\in B^*_Q.
\end{equation}
We also note that by Lemma \ref{lemma2.30}, the sawtooth domain $\Omega_{\F(\epsilon r),Q}$,
if non-empty, must contain a Corkscrew point at the scale of $\ell(Q)\approx r$,
so that, in particular,
$$r^{n+1}\lesssim |\Omega_{\F(\epsilon r),Q}|.$$
Consequently, by \eqref{eq4.13} and the fact that $\Omega_{\F,Q}\subset T_Q\subset T_Q^{fat}$
for any pairwise disjoint family $\F$ and every $Q\in \dd$, we have
\begin{equation}\label{eq4.12}
|\vec{\alpha}|\,
\leq \,\frac{C_{K_0}}{|B^*_Q|}
\iint_{B_Q^*}\big|\nabla \mathcal{S}1_{2\Delta^*_Q}(X)\big|\,dX
\,+\, C\,\leq C_{K_0}\,,
\end{equation}
where in the last step we have used Lemma \ref{lemma4.1}.

By the Poincar\'e inequality (Lemma \ref{poincare}), \eqref{eq4.13}, and \eqref{eq3.2} with
$\rho =\epsilon r$, we obtain
\begin{align*}
I
& \leq C_\epsilon r\iint_{\Omega^{fat}_{\F(\epsilon r),Q}}|\nabla^2\mathcal{S} 1(X)| \,
dX 
\\[4pt]
&\leq C_{\epsilon,K_0}\, r^{(n+3)/2}\,
\left(\iint_{\Omega^{fat}_{\F(\epsilon r),Q}}|\nabla^2\mathcal{S} 1(X)|^2 \,
dX \right)^{1/2}
\\[4pt] 
&\leq C_{\epsilon,K_0}\, r^{(n+2)/2}
\left(\iint_{\Omega^{fat}_{\F(\epsilon r),Q}}|\nabla^2\mathcal{S} 1(X)|^2 \delta(X)\,
dX\right)^{1/2}
\\
&\leq C_{\epsilon,K_0}\sqrt{\eta}\,r^{n+1}\,,
\end{align*}
by hypothesis \eqref{eq4.main1}, since $\ell(Q)\approx r$, and $\Omega^{fat}_{\F(\epsilon r),Q}\subset
\Omega^{fat}_{\F,Q}$.

Next, we claim that, for each $\gamma\in (0,1/(n+1))$, we have
\begin{equation}\label{eq4.17} II \leq C_{\gamma,K_0}\, \epsilon^\gamma r^{n+1}.
\end{equation}
We defer the proof of this claim momentarily, and observe that
$$III + IV = \iint_{\big(\ree\setminus\overline{\Omega_{\F,Q_0}}\big)\cap B'_Q}
\big|\nabla \mathcal{S}1(X)-
\nabla \mathcal{S}1_{(2\Delta^*_Q)^c}(x_Q)- \vec{\alpha}\big|\,dX$$
(to avoid possible confusion, we point out that the boundaries of $\Omega$ and all
of its sub-domains that we consider here, have $(n+1)$-dimensional Lebesgue measure
equal to zero).  Then by \eqref{eq4.14}, \eqref{eq4.12}, H\"older's inequality and Lemma
\ref{lemma4.1}, we deduce that for any $q\in (1,(n+1)/n)$,
$$III + IV\,\leq\, C_{K_0} \left(
\big|\big(\ree\setminus\overline{\Omega_{\F,Q_0}}\big)\cap B'_Q\big|^{1/q'} r^{(n+1)/q}
\,+\,\big|\big(\ree\setminus\overline{\Omega_{\F,Q_0}}\big)\cap B'_Q\big|\right).$$
Now, choosing first $\epsilon$, and then $\eta$ sufficiently small, we can hide $I+II$
on the left hand side of \eqref{eq4.main}.  Our estimate for $III+IV$ then implies that
$$r^{n+1} \,\leq\, C_{K_0}\,\big|\big(\ree\setminus\overline{\Omega_{\F,Q_0}}\big)\cap B'_Q\big|.$$
As noted above, the existence of an exterior Corkscrew point now follows
by applying Lemma \ref{lemma4.7}, with $B_Q'$ in place of $B$, and $\Omega_{\F,Q_0}$
in place of $\Omega$.

To complete the proof of Lemma \ref{lemma4.main}, it remains only to prove the claimed estimate
\eqref{eq4.17}.  By \eqref{eq4.12a}, we may replace $\Omega_{\F,Q_0}$ by $\Omega_{\F,Q}$ in the domain of integration which defines $II$.  Consequently, by \eqref{eq3.3} with $\rho =\epsilon r$,
we have that
$$II\leq \int_{\Sigma_{C\epsilon r}\cap B_Q'}\big|\nabla \mathcal{S}1(X)-
\nabla \mathcal{S}1_{(2\Delta^*_Q)^c}(x_Q)- \vec{\alpha}\big|\,dX$$
where $\Sigma_{\rho}:=\{X\in\ree: \delta(X)<\rho\}$.
The desired bound now follows readily from
\eqref{eq4.14}, \eqref{eq4.12}, Lemma \ref{lemma4.3} and Corollary \ref{corsioball}.
We omit the routine details.
\end{proof}

We conclude this section with an estimate for harmonic measure
in ``good'' sawtooth regions (that is, those for which \eqref{eq4.main1} holds for
sufficiently small $\eta$).  Given a subdomain $\Omega'\subset\Omega$,
we shall use the notational convention that
$\tom^X$ denotes harmonic measure for $\Omega'$ with pole at $X$,
when there is no chance for confusion.
\begin{corollary}\label{cor4.18}
Suppose that $\Omega$ is a 1-sided NTA domain with ADR boundary. Suppose also that \eqref{eq4.main1} holds for some $Q_0\in \dd$, and
some pairwise disjoint family $\F\subset \dd_{Q_0}$, with $\eta\le \eta_0$  (cf. Lemma \ref{lemma4.main}). Let $\tom^X$ denote harmonic measure
for $\Omega_{\F,Q_0}$ with pole at $X$. Then, for every $x\in \partial \Omega_{\F,Q_0}$,
every $r\leq \diam(Q_0)$,
and every surface ball $\td= \td(x,r)$, the harmonic measure
$\tom^{X_{\td}}$ belongs to $A_\infty(\td)$  (cf. Definition \ref{def1.ainfty}),
with uniform $A_\infty$ constants depending only upon dimension and
the ADR, Harnack Chain and Corkscrew
constants, including $K_0$.
\end{corollary}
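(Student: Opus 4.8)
The plan is to deduce the $A_\infty$ property of $\tom^{X_{\td}}$ by combining the criterion for exterior Corkscrew points established in Lemma \ref{lemma4.main} with the known results on harmonic measure in the presence of two-sided Corkscrew conditions. First I would observe that, under hypothesis \eqref{eq4.main1} with $\eta\le\eta_0$, Lemma \ref{lemma4.main} gives that for every surface ball $\td=\td(x,r)$ centered on $\partial\Omega_{\F,Q_0}$ with $r\le\diam(Q_0)$, there is an exterior Corkscrew point $X_{\td}^-\in B(x,r)\setminus\Omega_{\F,Q_0}$; moreover, by Lemma \ref{lemma2.30}, $\Omega_{\F,Q_0}$ already inherits the interior Corkscrew, Harnack Chain, and ADR properties from $\Omega$. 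Thus $\Omega_{\F,Q_0}$ satisfies the hypotheses of the main theorem of \cite{DJe} (NTA plus ADR, or even their relaxed version), at least at all scales $r\le\diam(Q_0)$, with constants controlled only by the allowable parameters and $\eta_0, K_0$. By the results of \cite{DJe} (together with the deep theorem of \cite{Dah} and the maximum-principle/comparison-principle self-improvement therein), harmonic measure for $\Omega_{\F,Q_0}$ belongs to $A_\infty$ with respect to surface measure on $\partial\Omega_{\F,Q_0}$, in the scale-invariant local sense: for each $\td=\td(x,r)$ with $r\le\diam(Q_0)$ and each pole at the corresponding Corkscrew point $X_{\td}$, one has $\tom^{X_{\td}}\in A_\infty(\td)$ with uniform constants. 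That is exactly the assertion of the corollary.

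In carrying this out, the one point requiring care is to check that the geometric hypotheses of \cite{DJe} are verified \emph{uniformly at all the relevant scales}, i.e., for all surface balls $\td(x,r)$ with $x\in\partial\Omega_{\F,Q_0}$ and $r\le\diam(Q_0)$, rather than merely locally. For the interior conditions this is immediate from Lemma \ref{lemma2.30}, since those are inherited at every scale. For the exterior Corkscrew condition, Lemma \ref{lemma4.main} is stated precisely for all such $\td$, provided \eqref{eq4.main1} holds; so one simply notes that hypothesis \eqref{eq4.main1} is scale-invariant (the supremum runs over all $Q\in\dd_{Q_0}$), and invokes Lemma \ref{lemma4.main} at each scale. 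I would also record that, by the Harnack Chain condition for $\Omega_{\F,Q_0}$ (again from Lemma \ref{lemma2.30}) together with Lemma \ref{lemma2.cfms} and Corollary \ref{cor2.poles} applied in $\Omega_{\F,Q_0}$, the choice of Corkscrew point $X_{\td}$ among the various admissible ones is immaterial up to harmless constants, so the statement is well-posed.

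I expect the main (minor) obstacle to be bookkeeping: ensuring that the $A_\infty$ constants produced genuinely depend only on dimension and on the ADR, Harnack Chain, and Corkscrew constants (including $K_0$) and on $\eta_0$, and in particular \emph{not} on $N$ from the qualitative exterior Corkscrew assumption nor on $Q_0$ or the family $\F$. This follows because every quantitative input—Lemma \ref{lemma2.30}, Lemma \ref{lemma4.main}, and the \cite{DJe}/\cite{Dah} machinery—produces constants with exactly this dependence, uniformly over sawtooth domains; there is no place in the argument where the qualitative parameter $N$ or the combinatorial data $(\F,Q_0)$ can enter the final bound. I would conclude by noting that the restriction $r\le\diam(Q_0)$ is natural, since for $r\gtrsim\diam(Q_0)$ the surface ball $\td(x,r)$ exhausts $\partial\Omega_{\F,Q_0}$ and the statement is trivial; and I would leave the routine verifications to the reader.
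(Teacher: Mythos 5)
Your proposal is correct and follows essentially the same route as the paper's own (very short) proof: combine Lemma \ref{lemma2.30} (interior Corkscrew, Harnack Chain, ADR inherited by $\Omega_{\F,Q_0}$) with Lemma \ref{lemma4.main} (exterior Corkscrew under the small-Carleson hypothesis \eqref{eq4.main1}), conclude that $\Omega_{\F,Q_0}$ is NTA with ADR boundary, and invoke \cite[Theorem~2]{DJe}. The additional remarks you make about uniformity of scales, independence of the qualitative parameter $N$, and insensitivity to the choice of Corkscrew point are sensible bookkeeping but not new ideas beyond what the paper leaves implicit.
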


\begin{proof}
By Lemma \ref{lemma2.30}, $\Omega_{\F,Q_0}$ is a 1-sided NTA domain
with ADR boundary. Moreover, by Lemma \ref{lemma4.main},
it also satisfies an exterior Corkscrew condition.  The conclusion of the corollary now
follows immediately by \cite[Theorem 2]{DJe}.
\end{proof}

\section{$\F$-Projections and a Dahlberg-Jerison-Kenig ``Sawtooth Lemma''}\label{s6*}

In this section, we present a dyadic version of the main lemma of \cite{DJK}.
Our approach here is modeled
on an analogous result in the Euclidean case which appeared in our previous work \cite{HM} (see also \cite{HM-ANU}).
As in \cite{HM}, we shall utilize a certain projection operator adapted to a pairwise
disjoint family $\F$.

Consider now such a family $\F=\{Q_j\}\subset \dd$.
The projection operator $\P_\F$ associated to $\F$
(the ``$\F$-projection operator'') is defined by:
$$   \P_\F f(x):=f(x)\,1_{\partial\Omega\setminus (\cup_\F Q_j)}(x)+
\sum_\F\left(\,\fint_{Q_j} f\,d\sigma \right)\,1_{Q_j}(x).$$
We may naturally extend $\P_\F$ to act on non-negative Borel measures on $\partial\Omega$.
Suppose that  $\mu$ is such a measure, and let $A\subset \partial \Omega$.   We then define the measure $\P_\F\,\mu$ as follows:
$$\P_\F\, \mu(A) : = \int_{\partial\Omega} \P_\F\left(1_A\right) d\mu
=  \mu (A\setminus \cup_\F Q_j)+\sum_\F \frac{\sigma(A\cap Q_j)}{\sigma (Q_j)}\,\mu(Q_j).$$
In particular, we have that $\P_\F\, \mu(Q)=\mu(Q)$, for every $Q\in\dd_\F$ (i.e.,
for $Q$ not contained in any $Q_j\in\F$), and also that
$\P_\F\, \mu(Q_j)=\mu(Q_j)$ for every $Q_j\in\F$.

We shall prove a version of the main lemma in \cite{DJK} which is valid
for $\F$-projections of harmonic measure.
Our proof follows the idea of the argument in \cite{DJK}, but is technically  simpler
(given certain geometric preliminaries), owing to the dyadic setting in which we work here.  In more precise detail, we follow our earlier Euclidean version of this
lemma, which appears in \cite[Lemma A.1]{HM}.

Let us set a bit of notation:  given  $Q_0\in\dd$, a pairwise disjoint family $\F\subset\dd$, and
the corresponding sawtooth domain $\Omega_{\F,Q_0}$
(cf. \eqref{eq2.discretecarl}-\eqref{eq2.whitney3} and
\eqref{eq2.sawtooth2}; also \eqref{eq3.4} and \eqref{eq3.saw}), we let
$\td$, $\tdelta$, and $\tom^X$ denote, respectively, a surface ball
on $\partial\Omega_{\F,Q_0}$, the distance to the boundary of $\partial\Omega_{\F,Q_0}$,
and harmonic measure for the domain $\Omega_{\F,Q_0}$ with pole at $X$; i.e.,
for $x\in \partial\Omega_{\F,Q_0}$,
$\td(x,r)=B(x,r)\cap\partial\Omega_{\F,Q_0}$, and for $X\in \Omega_{\F,Q_0}$,
$\tdelta(X):=\dist(X,\partial\Omega_{\F,Q_0})$.
We continue to use $\Delta=\Delta(x,r), \delta(X)$ and
$\omega^X$ to denote the analogous objects in reference to
the original domain $\Omega$ and its boundary.

Before stating our sawtooth lemma, let us record some useful geometric observations.
We recall that by Lemma \ref{lemma2.30}, the sawtooth domain $\Omega_{\F,Q_0}$ inherits the
1-sided NTA (i.e., interior Corkscrew and Harnack Chain)
and ADR properties from $\Omega$.
We begin with the following.
\begin{proposition}\label{prop:sawtooth-contain}  Suppose that $\Omega$ is a
1-sided NTA domain with ADR boundary.
Fix $Q_0\in \dd$, and let $\F\subset\dd_{Q_0}$ be a disjoint family.  Then
\begin{equation}\label{eq5.0}
Q_0\setminus \left(\cup_\F Q_j\right)\subset\partial\Omega\cap\partial\Omega_{\F,Q_0}
\subset \overline{Q_0} \setminus \left(\cup_\F \,\,{\rm int}\!\left(Q_j\right)\right)
\end{equation}
\end{proposition}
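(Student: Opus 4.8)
The plan is to establish the two inclusions of \eqref{eq5.0} separately, keeping in mind that $\Omega_{\F,Q_0}$ is by definition the interior of $\bigcup_{Q'\in\dd_{\F,Q_0}} U_{Q'}$, a union of (fattened) Whitney cubes, where $\dd_{\F,Q_0}$ consists of those $Q'\subseteq Q_0$ that are not contained in any $Q_j\in\F$. The essential geometric fact I would use repeatedly is that, for any $Q'\in\dd$, the Whitney region $U_{Q'}$ is built from cubes $I$ with $\ell(I)\approx\ell(Q')$ and $\dist(I,Q')\lesssim\ell(Q')$ (cf. \eqref{eq2.whitney1}--\eqref{eq2.whitney3} and \eqref{eq2.whitney2}), so that the Whitney cubes contributing to $\Omega_{\F,Q_0}$ near a boundary point $x\in\pom$ are exactly those associated to small cubes $Q'\ni x$ (or with $x$ near $Q'$) lying in $\dd_{\F,Q_0}$; equivalently, $x$ ``sees'' arbitrarily small Whitney cubes of $\Omega_{\F,Q_0}$ precisely when $x$ belongs to $Q'$'s of arbitrarily small side length in $\dd_{\F,Q_0}$.

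For the first inclusion, let $x\in Q_0\setminus(\cup_\F Q_j)$. First, $x\in\pom$. To show $x\in\partial\Omega_{\F,Q_0}$ I would argue that $x\in\overline{\Omega_{\F,Q_0}}$ but $x\notin\Omega_{\F,Q_0}$: the latter holds since $x\in\pom$ and $\Omega_{\F,Q_0}\subset\Omega$; for the former, by property $(i)$ of Lemma~\ref{lemmaCh} there is, for every small $k$, a cube $Q'\in\dd_k$ with $x\in Q'$, and since $x\notin\cup_\F Q_j$ and $Q'$ has small side length, $Q'\subseteq Q_0$ and $Q'$ is not contained in any $Q_j\in\F$ (here I use that the $Q_j$ are among the cubes of $\dd$ and that $x$ avoids all of them, together with the nesting property $(ii)$), so $Q'\in\dd_{\F,Q_0}$. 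Then $U_{Q'}\subset\Omega_{\F,Q_0}$ contains points within distance $\lesssim\ell(Q')\to 0$ of $x$, so $x\in\overline{\Omega_{\F,Q_0}}$. Hence $x\in\partial\Omega_{\F,Q_0}$, and trivially $x\in\pom$, giving the first inclusion.

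For the second inclusion, suppose $x\in\pom\cap\partial\Omega_{\F,Q_0}$. Since $x\in\partial\Omega_{\F,Q_0}\subset\overline{\Omega_{\F,Q_0}}$, there are points of $\Omega_{\F,Q_0}$ arbitrarily close to $x$, hence Whitney cubes $I_m\in\W^*_{Q'_m}$ with $Q'_m\in\dd_{\F,Q_0}$ and $\dist(x,I_m)\to 0$; since $\ell(I_m)\approx\ell(Q'_m)$ and $\dist(I_m,Q'_m)\lesssim\ell(Q'_m)$, we get $\ell(Q'_m)\to 0$ and $\dist(x,Q'_m)\to 0$. Choosing points $y_m\in Q'_m\subseteq Q_0$ with $y_m\to x$ shows $x\in\overline{Q_0}$. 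It remains to rule out $x\in\interior(Q_j)$ for some $Q_j\in\F$: if $x\in\interior(Q_j)$, then for $m$ large $\dist(x,Q'_m)<\dist(x,\pom\setminus Q_j)$ would force $Q'_m$ to meet $Q_j$ while having side length far smaller, so by the nesting property $(ii)$ of Lemma~\ref{lemmaCh} we would have $Q'_m\subseteq Q_j$ (once $\ell(Q'_m)\le\ell(Q_j)$, disjointness of $Q'_m$ and $Q_j$ is impossible since $y_m$ is interior to $Q_j$), contradicting $Q'_m\in\dd_{\F,Q_0}$. This yields $x\in\overline{Q_0}\setminus\cup_\F\interior(Q_j)$, completing the second inclusion.

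The main obstacle is making the ``$x$ interior to $Q_j$ forces $Q'_m\subseteq Q_j$'' step fully rigorous: dyadic ``cubes'' on $\pom$ are only quasi-cubes, so I must be careful that ``$\dist(x,\pom\setminus Q_j)>0$'' is genuine — this uses that $x\in\interior(Q_j)$ in the relative topology of $\pom$ — and then combine the diameter bound $\diam(Q'_m)\le C_1\ell(Q'_m)\to 0$ (property $(iv)$) with the dichotomy in $(ii)$ to conclude containment once the scales are separated. A clean way is to pick $\rho:=\dist(x,\pom\setminus\interior(Q_j))>0$, note $Q'_m\subset B(x,\rho/2)\cap\pom\subset\interior(Q_j)$ for $m$ large (since $\diam(Q'_m)+\dist(x,Q'_m)<\rho/2$), hence $Q'_m\cap(\pom\setminus Q_j)=\emptyset$, so $Q'_m\cap Q_j\ne\emptyset$, and then $(ii)$ with $\ell(Q'_m)\le\ell(Q_j)$ gives $Q'_m\subseteq Q_j$.
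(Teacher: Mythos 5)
Your argument is correct and follows essentially the same route as the paper's proof: for $x\in Q_0\setminus(\cup_\F Q_j)$ you produce a shrinking sequence of cubes in $\dd_{\F,Q_0}$ containing $x$ whose Whitney regions approach $x$, and for $x\in\partial\Omega\cap\partial\Omega_{\F,Q_0}$ you extract from a sequence $X^k\to x$ the associated Whitney cubes $I_k\in\W^*_{Q^k}$, deduce $\ell(Q^k)+\dist(Q^k,x)\to 0$, and rule out $x\in\interior(Q_j)$ because a small cube of $\dd_{\F,Q_0}$ near $x$ would then be forced into $Q_j$. The only cosmetic differences are the order of the two containments and your slightly more explicit finishing argument via $\rho=\dist(x,\pom\setminus\interior(Q_j))$, where the appeal to the nesting dichotomy is in fact redundant once you have $Q'_m\subset\interior(Q_j)$.
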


\begin{proof} We first prove the right hand containment. Suppose that
$x\in\partial\Omega\cap\partial\Omega_{\F,Q_0}$.  Then there is a sequence
$X^k\in\Omega_{\F,Q_0}$, with $X^k\to x$.  By definition of $\Omega_{\F,Q_0}$,
each $X^k$ is contained in $I^*_k$ for some $I_k\in \W_{\F,Q_0}$ (cf. \eqref{eq3.4}-\eqref{eq3.saw}),
so that $\ell(I_k) \approx \delta(X^k)\to 0$.  Moreover, again by definition, each $I_k$
belongs to some $\W^*_{Q^k}$, $Q^k\in\dd_\F$ so that,
$$\dist(Q^k,I_k)\lesssim K_0\,\ell(Q^k)
\approx K_0\,\ell(I_k) \to 0.$$
Consequently, $\dist(Q^k,x)\to 0$.  Since each $Q^k\subset Q_0$, we have $x\in \overline{Q_0}$.
On the other hand, if $x\in {\rm int}(Q_j)$, for some $Q_j\in\F$,  then there is an $\epsilon >0$ such that
$\dist(x,Q)>\epsilon$  for every $Q\in \dd_{\F,Q_0}\,$ with $\ell(Q)\ll\epsilon$, because no
$Q\in\dd_{\F,Q_0}$ can be contained in any $Q_j$.  Since this cannot happen if
$\ell(Q^k) +\dist(Q^k,x)\to 0$, the right hand containment is established.

Now suppose that $x\in Q_0\setminus (\cup_\F Q_j)$.  By definition,
if $x\in Q\in\dd_{Q_0}$, then
$Q\in \dd_{\F,Q_0}$.   Therefore, we may choose a sequence $\{Q^k\}\subset\dd_{\F,Q_0}$
shrinking to $x$,
whence there exist $I_k\in \W^*_{Q^k}\subset\W_{\F,Q_0}$ with $\dist(I_k,x)\to 0$.
The left hand containment now follows.
\end{proof}

\begin{proposition}\label{prop5.0a}  Suppose that $\partial\Omega$ is ADR, and that $\mu$ is a doubling measure on
$\partial\Omega$; i.e, there is a uniform constant $M_0$ such that
$\mu(2\Delta)\leq M_0\,\mu(\Delta)$ for every surface ball $\Delta$.
Then $\partial Q:=\overline{Q}\setminus {\rm int}(Q)$
has $\mu$-measure 0, for every $Q\in\dd$.  In particular,
the sets in \eqref{eq5.0} have the same $\mu$ measure.
\end{proposition}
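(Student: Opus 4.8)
The plan is to reduce the statement to the single-cube claim that $\mu(\partial Q)=0$ for every $Q\in\dd$, and then to prove that claim by a Lebesgue density argument that uses only the nesting and diameter control of the dyadic lattice together with property $(v)$ of Lemma \ref{lemmaCh} (each cube contains a surface ball of proportional radius).

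For the reduction, write $A:=Q_0\setminus(\cup_\F Q_j)$ and $B:=\overline{Q_0}\setminus(\cup_\F\interior(Q_j))$. By \eqref{eq5.0} we have $A\subset\partial\Omega\cap\partial\Omega_{\F,Q_0}\subset B$, so it is enough to check $\mu(B\setminus A)=0$. A point of $B\setminus A$ either fails to lie in $Q_0$, hence lies in $\overline{Q_0}\setminus\interior(Q_0)=\partial Q_0$, or lies in some $Q_j\in\F$ but not in $\interior(Q_j)$, hence lies in $\partial Q_j$; thus $B\setminus A\subset\partial Q_0\cup\bigcup_\F\partial Q_j$, a countable union, and the result follows once $\mu(\partial Q)=0$ is known for each $Q\in\dd$.

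To prove $\mu(\partial Q)=0$, I would fix $Q=Q^k_j$, take $x\in\partial Q=\overline Q\setminus\interior(Q)$, and run the following argument along a sequence of scales shrinking to $0$. For each $k''>k$, $x$ lies in a cube $Q''\in\dd_{k''}$, and by property $(ii)$ of Lemma \ref{lemmaCh} this $Q''$ is contained in $Q$ if $x\in Q$ and disjoint from $Q$ if $x\notin Q$. By property $(v)$, $Q''$ contains a surface ball $\widetilde\Delta_{k''}=\Delta(y_{k''},a_0 2^{-k''})$; being relatively open in $\partial\Omega$ and contained in $Q$ (resp. in $\partial\Omega\setminus Q$), this surface ball lies inside $\interior(Q)$ (resp. inside $\partial\Omega\setminus\overline Q$), and in either case $\widetilde\Delta_{k''}\cap\partial Q=\emptyset$. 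By property $(iv)$, $\diam(Q'')\lesssim 2^{-k''}$, so $|x-y_{k''}|\lesssim 2^{-k''}$ and therefore $\widetilde\Delta_{k''}\subset B(x,r_{k''})$ for some $r_{k''}\approx 2^{-k''}$. Since $B(x,r_{k''})$ is in turn contained in a fixed dilate of $\widetilde\Delta_{k''}$, a bounded number of applications of the doubling property of $\mu$ yields $\mu(\widetilde\Delta_{k''})\geq c\,\mu(B(x,r_{k''}))$ with $c>0$ depending only on the allowable parameters, whence
\[
\mu\big(B(x,r_{k''})\cap\partial Q\big)\leq (1-c)\,\mu\big(B(x,r_{k''})\big),\qquad r_{k''}\to 0 .
\]
Consequently no point of $\partial Q$ is a $\mu$-density point of $\partial Q$; but $\mu$ is doubling, so the Lebesgue density theorem in the space of homogeneous type $(\partial\Omega,|\cdot|,\mu)$ forces $\mu$-a.e. point of $\partial Q$ to be a density point whenever $\mu(\partial Q)>0$. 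Hence $\mu(\partial Q)=0$, and feeding this back into the reduction gives $\mu(A)=\mu(\partial\Omega\cap\partial\Omega_{\F,Q_0})=\mu(B)$.

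The step I expect to be the main obstacle — or at least the one demanding care — is this last mechanism, for two reasons. A naive attempt to cover $\partial Q$ by balls of radius $\approx\tau\ell(Q)$ and sum their $\mu$-measures gives only $\mu(\partial Q)\lesssim\mu(C B_Q)$ with no decay in $\tau$, because a doubling measure may be extremely non-uniform; one genuinely needs the density theorem rather than a counting argument. And the topological bookkeeping must be done correctly: one must verify that the relative openness of $\widetilde\Delta_{k''}$ places it in $\interior(Q)$ or in $\partial\Omega\setminus\overline Q$ (so that it is disjoint from $\partial Q$, not merely from $Q$), and that the resulting density deficit $c$ is uniform. It is precisely at this point that the doubling hypothesis on $\mu$ is what converts the surface-ball structure of the dyadic lattice into a statement about the arbitrary measure $\mu$, with no use of surface measure $\sigma$.
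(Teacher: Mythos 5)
Your proof is correct, but it follows a genuinely different route from the paper's. You argue pointwise: at every small scale $2^{-k''}$, the dyadic cube of that generation containing $x\in\partial Q$ provides, via property $(v)$ of Lemma \ref{lemmaCh}, a surface ball of proportional radius lying either strictly inside $\interior(Q)$ or strictly outside $\overline Q$, hence disjoint from $\partial Q$; this means no $x\in\partial Q$ is a $\mu$-density point of $\partial Q$, and you then appeal to the Lebesgue density theorem for the doubling measure $\mu$ on the space of homogeneous type $(\partial\Omega,|\cdot|,\mu)$. The paper proceeds by explicit iteration instead: it tiles $\overline Q$ at a finer dyadic scale $2^{-k-m}$, isolates the family $R^{1,1}$ of subcubes whose boundaries touch $\partial Q$, observes that $R^{1,1}$ misses the central surface ball $\Delta(x_Q,r/2)\subset Q$, uses doubling to obtain $\mu(R^{1,1})\le\theta\,\mu(\overline Q)$ with $\theta<1$, and iterates this decomposition inside the boundary strip to force $\mu(\partial Q)\lesssim\theta^k\to 0$. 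In effect, the paper reproves by hand, adapted to the dyadic grid, the mass-loss-at-every-scale estimate that your appeal to the density theorem encapsulates abstractly; your version is shorter and more conceptual, theirs is self-contained with constants tracked explicitly through Lemma \ref{lemmaCh}. Two small points of precision in your write-up: $\mu(B(x,r_{k''}))$ should be read as $\mu(\Delta(x,r_{k''}))$, and ``$B(x,r_{k''})$ is contained in a fixed dilate of $\widetilde\Delta_{k''}$'' should refer to its trace on $\partial\Omega$ (with that adjustment the boundedly-many-doublings step is exactly right). Your remark that one ``genuinely needs the density theorem rather than a counting argument'' also overstates the case: the paper's argument is precisely a counting-type argument iterated across scales, and it succeeds; the real obstruction, as you correctly identify, is that a single-scale covering without iteration gives no decay.
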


\begin{proof}  The argument is a refinement of that in \cite[p. 403]{GR}, where the Euclidean case was
treated.
Fix an integer $k$, a cube $Q\in \dd_k$,
and a positive integer $m$ to be chosen.   We set $$\{Q^1_i\}:=\dd^1:=\dd_Q\cap \dd_{k+m}\,,$$
and make the
disjoint decomposition
$Q=\cup Q^1_i.$  We then split $\dd^1=\dd^{1,1} \cup \dd^{1,2}$, where
$Q^1_i\in \dd^{1,1}$ if $\partial Q_i^1$ meets $\partial Q$, and
$Q^1_i\in \dd^{1,2}$ otherwise.  We then write $\overline{Q}=R^{1,1}\cup R^{1,2}$, where
$$R^{1,1}:= \cup_{\dd^{1,1}} \widehat{Q}^1_i\,,\qquad R^{1,2}:=
\cup_{\dd^{1,2}} Q^1_i,$$
and for each cube $Q^1_i\in\dd^{1,1}$, we construct $\widehat{Q}^1_i$ as follows.
We enumerate the elements in $\dd^{1,1}$ as $Q^1_{i_1},Q^1_{i_2},\dots,Q^1_{i_N}$, and then set $(Q^1_i)^*=Q^1_i\cup(\partial Q^1_i\cap \partial Q)$ and
$$
\widehat{Q}^1_{i_1}:=(Q^1_{i_1})^*,
\quad
\widehat{Q}^1_{i_2}:=(Q^1_{i_2})^*\setminus(Q^1_{i_1})^*,
\quad
\widehat{Q}^1_{i_3}:=(Q^1_{i_3})^*\setminus((Q^1_{i_1})^*\cup (Q^1_{i_2})^*),\ \dots
$$
so that $R^{1,1}$ covers $\partial Q$ and the modified cubes $\widehat{Q}^1_i$
are pairwise disjoint.

We recall the surface ball $\Delta_Q=\Delta(x_Q,r)\subset Q$, with $r\approx \ell(Q)$
as in \eqref{cube-ball}-\eqref{cube-ball2}.  Then
$$\dist\Big(\Delta(x_Q,r/2),\partial Q\Big) \geq \frac{r}2\geq c_0\,\ell(Q)=c_0\,2^{-k}\,,$$
for some uniform constant $c_0$.  By Lemma \ref{lemmaCh}, there is a uniform constant
$C_1$ such that $\diam(Q')\leq C_1\ell(Q')$, for every $Q'\in\dd$.
We may therefore choose $m$ depending only on the ADR constants and dimension so that
$2^{-m}<c_0/C_1$, whence
$$\diam(Q^1_i)\leq C_12^{-k-m}< c_0\,2^{-k}.$$  Consequently, $R^{1,1}$ misses $\Delta(x_Q,r/2)$,
so that by the doubling property,
$$\mu(\overline{Q})\leq C_{M_0}\,\mu(\Delta(x_Q,r/2)\leq C_{M_0}\,\mu(R^{1,2}).$$
Since $R^{1,1}$ and $R^{1,2}$ are disjoint, the latter estimate yields
$$\mu(R^{1,1})\leq \Big(1-\frac1{C_{M_0}}\Big)\,\mu(\overline{Q})=:\theta\,\mu(\overline{Q}),$$
where we note that $\theta<1$.

Let us now repeat this procedure, decomposing $\widehat{Q}^1_i$ for each $Q_i^1\in\dd^{1,1}$. We set $\dd^2(Q^1_i)=\dd_{Q^1_i}\cap \dd_{k+2m}$ and split it into $\dd^{2,1}(Q^1_i)$ and $\dd^{2,2}(Q^1_i)$ where $Q'\in \dd^{2,1}(Q^1_i)$ if
$\partial Q'$ meets $\partial Q\cap \widehat{Q}^1_i$ (this set plays the role of $\partial Q$ in the previous step). Associated to any $Q'\in \dd^{2,1}(Q^1_i)$ we set $(Q')^*=(Q'\cap \widehat{Q}^1_i)\cup(\partial Q'\cap (\partial Q\cap \widehat{Q}^1_i))$. Then we make these sets disjoint as before and we have that $R^{2,1}(Q^1_i)$ is defined as the disjoint union of the corresponding $\widehat{Q'}$. Note that $\widehat{Q}^1_i= R^{2,1}(Q^1_i)\cup R^{2,2}(Q^1_i)$ and this a disjoint union. As before, $R^{2,1}(Q^1_i)$ misses
$(1/2)\Delta_{Q_i^1}$
so that  by the doubling property
$$
\mu(\widehat{Q}^1_i)\leq C_{M_0}\,\mu\left(\frac12\Delta_{Q_i^1}
\right)\leq C_{M_0}\,\mu(R^{2,2}(Q^1_i))
$$
and then $\mu(R^{2,1})\leq \theta\,\mu(\widehat{Q}^1_i).$ Next we set $R^{2,1}$ and $R^{2,2}$ as the union of the corresponding $R^{2,1}(Q^1_i)$ and $R^{2,2}(Q^1_i)$ with $Q_i^1\in\dd^{1,1}$. Then,
\begin{multline*}
\mu\big(R^{2,1}\big)
:=
\mu \Big(\bigcup_{Q_i^1\in\dd^{1,1}} R^{2,1}(Q^1_i)\Big)
=
\sum_{Q_i^1\in\dd^{1,1}} \mu\big(R^{2,1}(Q^1_i)\big)
\\
\leq
\theta
\sum_{Q_i^1\in\dd^{1,1}} \mu(\widehat{Q}^1_i)
=
\theta\,\mu(R^{1,1})
\le
\theta^2\,\mu(\overline{Q}).
\end{multline*}
A straightforward iteration argument now yields that $\mu(\partial Q) = 0$.   We omit the details.
\end{proof}

\begin{proposition}\label{prop:cork-both}
Suppose that $\Omega$ is a 1-sided NTA domain with ADR boundary.
Fix $Q_0\in \dd$, and let $\F\subset\dd_{Q_0}$ be a disjoint family.  Then for each $Q\in\dd_{\F,Q_0}$,
there is a radius $r_Q \approx K_0\,\ell(Q)$, and a point $A_Q\in\Omega_{\F,Q_0}$ which serves as a Corkscrew point simultaneously for $\Omega_{\F,Q_0}$,
with respect to the surface ball $\td(y_Q,r_Q)$, for some $y_Q\in \partial \Omega_{\F,Q_0}$,
and for  $\Omega$, with respect to each surface ball
$\Delta(x,r_Q)$, for every $x\in Q$.  \end{proposition}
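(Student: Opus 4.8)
The plan is to show that one may take $A_Q:=X_Q$, the Corkscrew point for $\Omega$ relative to $Q$. By Remark~\ref{remark1.18} together with the construction of $\W_Q$ (cf.~\eqref{eq2.whitney1}), we may assume $X_Q$ is the \emph{center} of a Whitney cube $I\in\W_Q\subset\W^*_Q$ with $\ell(I)\approx\ell(Q)$. The radius will be $r_Q:=C_2K_0\,\ell(Q)$, where $C_2$ is a large constant, depending only on the allowable parameters, to be fixed below, so that $r_Q\approx K_0\,\ell(Q)$ as asserted. Everything then reduces to three essentially immediate observations.

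First, I claim that $X_Q\in\Omega_{\F,Q_0}$ and $\dist\!\big(X_Q,\partial\Omega_{\F,Q_0}\big)\geq \tfrac12\ell(I)\approx\ell(Q)$. Indeed, since $Q\in\dd_{\F,Q_0}$ and $I\in\W^*_Q$, we have $I\in\W_{\F,Q_0}$ (cf.~\eqref{eq3.4}), so $I^*=(1+\lambda)I\subset\bigcup_{I'\in\W_{\F,Q_0}}(I')^*$ and hence $\interior(I^*)\subset\Omega_{\F,Q_0}$ by~\eqref{eq3.saw}. As $I^*$ is concentric with $I$ and has side length $(1+\lambda)\ell(I)$, the ball $B\big(X_Q,\ell(I)/2\big)$ lies in $\interior(I^*)$, which proves the claim. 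Second, by~\eqref{eq1.cork} we also have $\delta(X_Q)\approx\ell(Q)$; since $\Omega_{\F,Q_0}\subset\Omega$, this gives in particular $\dist\!\big(X_Q,\partial\Omega_{\F,Q_0}\big)\leq\delta(X_Q)\lesssim\ell(Q)$. Third, $X_Q$ lies within distance $\lesssim\ell(Q)$ of every point of $Q$ and of a suitable point of $\partial\Omega_{\F,Q_0}$: if $x\in Q$ then, using $X_Q\in B_Q$ and $x\in Q\subset\Delta(x_Q,Cr)$ with $r\approx\ell(Q)$ (cf.~\eqref{cube-ball}), the triangle inequality yields $|X_Q-x|\lesssim\ell(Q)$; and letting $y_Q\in\partial\Omega_{\F,Q_0}$ be a nearest point to $X_Q$, the second observation gives $|X_Q-y_Q|=\dist(X_Q,\partial\Omega_{\F,Q_0})\lesssim\ell(Q)$. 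Choosing $C_2$ large enough (depending only on the allowable parameters) then forces $|X_Q-x|<r_Q/2$ for all $x\in Q$ and $|X_Q-y_Q|<r_Q/2$.

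It remains to combine these. Let $c_1>0$ denote the (allowable) implicit constant for which $\min\{\delta(X_Q),\dist(X_Q,\partial\Omega_{\F,Q_0})\}\geq c_1\,\ell(Q)$, and set $c_Q:=c_1/(C_2K_0)$, so that $c_Q\,r_Q=c_1\,\ell(Q)\leq\min\{\delta(X_Q),\dist(X_Q,\partial\Omega_{\F,Q_0})\}$. Then $B(X_Q,c_Q\,r_Q)\subset\Omega_{\F,Q_0}$ (hence also $\subset\Omega$), while $c_Q\,r_Q<r_Q/2$ together with the third observation gives $B(X_Q,c_Q\,r_Q)\subset B(x,r_Q)$ for every $x\in Q$ and $B(X_Q,c_Q\,r_Q)\subset B(y_Q,r_Q)$. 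Thus $A_Q:=X_Q$ is simultaneously a Corkscrew point, with constant $c_Q$, for $\Omega_{\F,Q_0}$ relative to $\td(y_Q,r_Q)$ and for $\Omega$ relative to each surface ball $\Delta(x,r_Q)$, $x\in Q$; all constants depend only on dimension and on the ADR, Corkscrew and Harnack Chain constants (recall $K_0$ and $\lambda$ have this dependence).

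The only step that is not completely routine is the lower bound $\dist(X_Q,\partial\Omega_{\F,Q_0})\gtrsim\ell(Q)$ — that is, checking that the chosen point sits well inside the \emph{sawtooth}, not merely inside $\Omega$ — and this is precisely why we take $X_Q$ to be the center of a Whitney cube belonging to $\W^*_Q$, rather than an arbitrary interior Corkscrew point of $\Omega$; no genuine obstacle arises.
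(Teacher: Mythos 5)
Your proof is correct and takes essentially the same route as the paper: both choose $A_Q$ to be the center of a Whitney cube $I\in\W_Q\subset\W_{\F,Q_0}$ (the paper picks an arbitrary such $I$, you pick the one whose center is $X_Q$), so that $I\subset\interior(I^*)\subset\Omega_{\F,Q_0}$ gives the lower bound $\dist(A_Q,\partial\Omega_{\F,Q_0})\gtrsim\ell(Q)$, while $\dist(I,Q)\leq C_0\ell(Q)$ controls the distance to $Q$, and one then fixes $r_Q\approx K_0\ell(Q)$ large enough. The only superficial difference is that you bound $\dist(A_Q,\partial\Omega_{\F,Q_0})$ from above via $\delta(X_Q)$ whereas the paper uses $\dist(A_Q,Q)$; both yield $\lesssim\ell(Q)$.
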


\begin{proof}
Let $Q\in \dd_{\F,Q_0}.$  Recall that by construction, $\W_Q$ is non-empty.
It follows that there is an $I$ for which
$\ell(I) \approx \ell(Q)$ and $\dist(Q,I)\leq C_0\,\ell(Q)$.  Furthermore,
$I\subset \Omega_{\F,Q_0}$, and
$\dist(I,\partial\Omega_{\F,Q_0})\leq \dist(I,\partial\Omega)\approx\ell(I)$.
We let $A_Q$ denote the center of this particular $I$, so that
\begin{equation}\label{eq5.1aa}
\ell(Q)\approx \dist(A_Q,\partial\Omega_{\F,Q_0}) \leq \dist(A_Q,Q)\lesssim C_0\,\ell(Q).
\end{equation}
Fix $y_Q\in \partial\Omega_{\F,Q_0}$ so that $\dist(A_Q,\partial\Omega_{\F,Q_0})=
|A_Q-y_Q|$.
Then $A_Q$ is the promised simultaneous Corkscrew point, for
$r_Q \approx K_0\,\ell(Q)\geq C_0\,\ell(Q)$.
\end{proof}

\begin{corollary}\label{cor5.5} The point $A_{Q_0}$
is a Corkscrew point with respect to $\td(x,r_{Q_0})$, for all $x\in \partial\Omega_{\F,Q_0}$, and for
$\Delta(x,r_{Q_0})$, for all $x\in Q_0$, with $r_{Q_0} \approx K_0\,\ell(Q_0)$.
\end{corollary}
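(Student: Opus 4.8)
The plan is to read the corollary off from Proposition \ref{prop:cork-both} applied with $Q=Q_0$, so that the only genuinely new ingredient is a crude diameter bound for $\partial\Omega_{\F,Q_0}$. Since $\F\subset\dd_{Q_0}$, the cube $Q_0$ lies in $\dd_{\F,Q_0}$, and Proposition \ref{prop:cork-both} then supplies a radius $\widetilde r\approx K_0\,\ell(Q_0)$ and a point $A_{Q_0}\in\Omega_{\F,Q_0}$, namely the center of some Whitney cube $I$ with $\ell(I)\approx\ell(Q_0)$, which is a Corkscrew point for $\Omega$ relative to $\Delta(x,\widetilde r)$ for every $x\in Q_0$, and a Corkscrew point for $\Omega_{\F,Q_0}$ relative to $\td(y_{Q_0},\widetilde r)$ for one particular $y_{Q_0}\in\partial\Omega_{\F,Q_0}$. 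The first of these is already the ``$\Delta(x,r_{Q_0})$, $x\in Q_0$'' half of the corollary; the task is to promote the second from the single center $y_{Q_0}$ to an arbitrary $x\in\partial\Omega_{\F,Q_0}$.

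First I would record, using \eqref{eq5.1aa} and $I\subset\Omega$, that there is a uniform $c_1\in(0,1)$ with $B(A_{Q_0},c_1\ell(Q_0))\subset\Omega\cap\Omega_{\F,Q_0}$, and also that $\dist(A_{Q_0},Q_0)\lesssim\ell(Q_0)$. Next I would bound the size of the sawtooth: since $\Omega_{\F,Q_0}\subset T_{Q_0}\subset T^{fat}_{Q_0}$ (the middle inclusion because $\dd_{\F,Q_0}\subset\dd_{Q_0}$ and $(1+\lambda)I\subset 4I$, cf. \eqref{eq2.box}, \eqref{eq2.sawtooth2}, \eqref{eq2.whitney3}, \eqref{eq2.whitney3fat}), the containment \eqref{eq4.13} yields $\overline{\Omega_{\F,Q_0}}\subset\overline{B}(x_{Q_0},C_nK_0\ell(Q_0))$; hence $|A_{Q_0}-x|\lesssim K_0\ell(Q_0)$ for every $x\in\partial\Omega_{\F,Q_0}$, and likewise $|A_{Q_0}-x|\lesssim K_0\ell(Q_0)$ for every $x\in Q_0$ by the previous sentence together with $\diam(Q_0)\lesssim\ell(Q_0)$.

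Finally I would set $r_{Q_0}:=C\,K_0\,\ell(Q_0)$ for a purely dimensional constant $C$ large enough that $r_{Q_0}\ge\widetilde r$ and $r_{Q_0}>|A_{Q_0}-x|+c_1\ell(Q_0)$ for all the points $x$ above; then $r_{Q_0}\approx K_0\ell(Q_0)$. For any $x\in\partial\Omega_{\F,Q_0}$ this choice gives $B(A_{Q_0},c_1\ell(Q_0))\subset B(x,r_{Q_0})$, which combined with $B(A_{Q_0},c_1\ell(Q_0))\subset\Omega_{\F,Q_0}$ exhibits $A_{Q_0}$ as a Corkscrew point for $\Omega_{\F,Q_0}$ relative to $\td(x,r_{Q_0})$ with constant $c_1/(CK_0)$ (the remark in Definition \ref{def1.cork} allows the scale $r_{Q_0}$, which exceeds $\diam(\partial\Omega_{\F,Q_0})$ by at most a fixed factor); the same computation with $\Omega$ in place of $\Omega_{\F,Q_0}$ and $x\in Q_0$ recovers the other half. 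I do not anticipate any real obstacle here; the only point requiring care is keeping every radius comparable to $K_0\ell(Q_0)$, so that the resulting Corkscrew constants are uniform (necessarily of order $1/K_0$, as is already the case in Proposition \ref{prop:cork-both} itself).
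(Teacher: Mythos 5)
Your proposal is correct and takes exactly the approach the paper has in mind: the paper omits the proof with the single remark that $\diam(\Omega_{\F,Q_0})\lesssim K_0\,\ell(Q_0)$, which is precisely the diameter bound you derive (via $\Omega_{\F,Q_0}\subset T_{Q_0}\subset T^{fat}_{Q_0}\subset B^*_{Q_0}$, cf.~\eqref{eq4.13}) and then combine with the interior ball $B(A_{Q_0},c_1\ell(Q_0))\subset\Omega\cap\Omega_{\F,Q_0}$ furnished by \eqref{eq5.1aa}. The one point worth keeping in mind, which you do flag, is that $r_{Q_0}\approx K_0\ell(Q_0)$ may exceed $\diam(\partial\Omega_{\F,Q_0})$ by a bounded (but $K_0$-dependent) factor, so the resulting Corkscrew constant is of order $1/K_0$; this is consistent with Proposition \ref{prop:cork-both} and handled by the remark in Definition \ref{def1.cork}.
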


The proof is almost immediate, since $\diam(\Omega_{\F,Q_0})\lesssim K_0 \,\ell(Q_0)$, and we omit it.

\begin{proposition}\label{prop:Pj}  Suppose that
$\Omega$ is a 1-sided NTA domain with ADR boundary.
Fix $Q_0\in \dd$, and let $\F\subset\dd_{Q_0}$ be a disjoint family.  Then for each
$Q_j\in\F$, there is an $n$-dimensional cube $P_j\subset \partial\Omega_{\F,Q_0}$, which is
contained in a face of $I^*$, for some $I\in\W$, and which satisfies
\begin{equation}\label{eq5.1a}
\ell(P_j)\approx \dist(P_j,Q_j)\approx \dist(P_j,\partial\Omega) \approx \ell(I)\approx \ell(Q_j),
\end{equation}
where the uniform implicit constants are allowed to depend upon $K_0$.
\end{proposition}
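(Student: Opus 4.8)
The plan is to construct $P_j$ explicitly from the Whitney-cube structure near $Q_j$. Since $Q_j \in \F$, the cube $Q_j$ is a maximal stopping cube, so $Q_j \notin \dd_{\F,Q_0}$ but its parent $\widetilde{Q}_j$ (or at least some ancestor $Q'$ with $Q_j \subsetneq Q' \subseteq Q_0$) does lie in $\dd_{\F,Q_0}$. First I would pick such a $Q' \in \dd_{\F,Q_0}$ with $\ell(Q') \approx \ell(Q_j)$ and $\dist(Q',Q_j)\lesssim \ell(Q_j)$ — the parent works if $Q_0 \neq Q_j$, and one should note the trivial reduction that we may assume $Q_j \subsetneq Q_0$ (otherwise $\F = \{Q_0\}$ and $\Omega_{\F,Q_0}$ is interpreted appropriately). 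Then by construction $\W_{Q'} \subseteq \W_{Q'}^* \subseteq \W_{\F,Q_0}$ is non-empty, so there is a Whitney cube $I \in \W_{Q'}$ with $\ell(I) \approx \ell(Q') \approx \ell(Q_j)$ and $\dist(I,Q_j) \lesssim \dist(I,Q') + \dist(Q',Q_j) \lesssim K_0\,\ell(Q_j)$; moreover $I^* = (1+\lambda)I \subset \Omega_{\F,Q_0}$ and $\dist(I,\partial\Omega)\approx\ell(I)$ by \eqref{eq4.1}.

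Next I would locate a face of $I^*$ that actually lies on $\partial\Omega_{\F,Q_0}$, and carve $P_j$ out of it. The key point is that $Q_j$ itself contributes no Whitney cubes to $\W_{\F,Q_0}$ — indeed, for the cube $Q_I\in\dd$ attached to any $I'\in\W_{\F,Q_0}$ one has $Q_I \in \dd_{\F,Q_0}$, so $Q_I$ is not contained in $Q_j$. Thus the Whitney cubes $J$ of comparable size to $I$ that sit ``directly over'' $Q_j$ (i.e. with $\dist(J,Q_j)\lesssim \ell(J)\approx\ell(Q_j)$) are generically not in $\W_{\F,Q_0}$. Using this, together with \eqref{eq2.29*} and \eqref{eq2.30*} which guarantee that the fattening parameter $\lambda$ is small enough that neighboring fattened cubes only overlap when their boundaries touch, I would argue that $I^*$ has a face $F$ (the one pointing toward $Q_j$) such that a definite proportion of $F$ — say a concentric $n$-dimensional subcube $P_j \subseteq F$ with $\ell(P_j) \approx \lambda\,\ell(I)$ — lies in the topological boundary of $\Omega_{\F,Q_0}$. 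Concretely: a point $z$ in the relative interior of such a subcube of $F$ is a limit of points in $I^* \subset \Omega_{\F,Q_0}$ from one side, and on the other side it lies in $\tau J$ for a Whitney cube $J \notin \W_{\F,Q_0}$, hence $z \notin \Omega_{\F,Q_0}$ by \eqref{eq2.30*} and the definition \eqref{eq2.sawtooth2} of the sawtooth as the interior of the union of the $I^*$'s. With $P_j$ so chosen, the four comparabilities in \eqref{eq5.1a} are immediate: $\ell(P_j)\approx\ell(I)$ by construction; $\dist(P_j,\partial\Omega)\approx\ell(I)$ since $P_j\subset I^*$ and $I$ is Whitney; $\dist(P_j,Q_j)\lesssim \dist(I,Q_j)+\diam(I^*)\lesssim K_0\ell(Q_j)$ and $\dist(P_j,Q_j)\gtrsim \dist(I,\partial\Omega)-\diam(I^*)\gtrsim \ell(I)$ after possibly shrinking to a subcube bounded away from $\partial I^*$; and $\ell(I)\approx\ell(Q_j)$ as already noted.

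The main obstacle I anticipate is the careful verification that a face of $I^*$ — rather than just an edge or a lower-dimensional piece — genuinely lies on $\partial\Omega_{\F,Q_0}$, and that we can find a non-degenerate $n$-dimensional subcube of it avoiding all the other fattened Whitney cubes $J^* \in \W_{\F,Q_0}$ that might poke in near the edges of $I^*$. This is where the geometry of Whitney cubes and the smallness of $\lambda$ enter essentially: by \eqref{eq2.29*}, $I^*$ meets $J^*$ only for Whitney cubes $J$ touching $I$, of comparable size, and there are boundedly many of these; the ``bad'' directions are the (boundedly many) faces of $I$ adjacent to cubes $J$ with $J \in \W_{\F,Q_0}$, and I must show at least one face points into the complement. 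Here I would invoke Proposition \ref{prop:sawtooth-contain}: since $\partial Q_j \subset \overline{Q_0}\setminus \bigcup_\F \mathrm{int}(Q_j) \subseteq \partial\Omega\cap\partial\Omega_{\F,Q_0}$ while $\mathrm{int}(Q_j)$ is disjoint from $\partial\Omega\cap\partial\Omega_{\F,Q_0}$, there must be Whitney cubes near $Q_j$ on the ``$\Omega_{\F,Q_0}$ side'' and a genuine boundary portion of $\Omega_{\F,Q_0}$ hovering over $\mathrm{int}(Q_j)$; choosing $I$ to be (essentially) a Whitney cube realizing $\dist(\cdot,\partial\Omega_{\F,Q_0})$ from a point over the center of $Q_j$ forces the face of $I^*$ toward $Q_j$ to be exposed. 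The remaining bookkeeping — shrinking $P_j$ to a concentric subcube of the face to gain the lower bounds $\dist(P_j,Q_j)\gtrsim\ell(P_j)$ and $\dist(P_j,\partial I^*)\gtrsim\ell(P_j)$ — is routine, and I would leave those details to the reader as the paper tends to do.
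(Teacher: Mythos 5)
Your geometric picture is right, but the execution has a genuine gap where you yourself flag the main obstacle, and the patch you propose does not close it.

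The central issue is the existence of an exposed face of $I^*$. You correctly observe that Whitney cubes $J$ sitting over the interior of $Q_j$ whose associated dyadic cube $Q_J$ would be contained in $Q_j$ cannot belong to $\W_{\F,Q_0}$. But this forces $J$ out of $\W_{\F,Q_0}$ only at scales $\ell(J)\ll\ell(Q_j)/K_0$; at scale $\ell(J)\approx\ell(Q_j)$ (the scale of your chosen $I$), a cube $J$ over $Q_j$ can perfectly well lie in $\W^*_{\widetilde{Q}_j}$ or $\W^*_{Q''}$ for a sibling $Q''$ of $Q_j$, so your assertion that such $J$ are ``generically not in $\W_{\F,Q_0}$'' fails at precisely the scale you work at. Consequently, picking $I\in\W_{Q'}$ with $Q'=\widetilde{Q}_j$ gives no control on whether any face of $I^*$ is exposed; $I$ may be entirely surrounded by other cubes of $\W_{\F,Q_0}$. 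Your fallback (``choose $I$ realizing $\dist(\cdot,\partial\Omega_{\F,Q_0})$ from a point over the center of $Q_j$'') is a heuristic, not an argument, and the appeal to Proposition~\ref{prop:sawtooth-contain} is also reversed: that proposition gives $\partial\Omega\cap\partial\Omega_{\F,Q_0}\subset \overline{Q_0}\setminus\bigcup_\F\interior(Q_j)$, not the containment $\overline{Q_0}\setminus\bigcup_\F\interior(Q_j)\subset\partial\Omega\cap\partial\Omega_{\F,Q_0}$ that you invoke.

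What is actually missing is Lemma~\ref{lemma4.9}, which is the tool that turns your intuition into a usable fact: it produces a \emph{ball} $B'\subset\ree\setminus\Omega_{\F}$, centered on $\pom$, of radius $\approx\ell(Q_j)/K_0$, with $B'\cap\pom\subset Q_j$. Applying the interior Corkscrew condition in $B'$ gives a Whitney cube $I_1\subset\Omega\setminus\Omega_{\F,Q_0}$ with $\ell(I_1)\approx\ell(Q_j)/K_0$, $\dist(I_1,Q_j)\lesssim\ell(Q_j)/K_0$. Separately, as you note, $\W^*_{\widetilde{Q}_j}$ furnishes $I_2$ with $\interior(I_2^*)\subset\Omega_{\F,Q_0}$, $\ell(I_2)\approx\ell(Q_j)$, $\dist(I_2,Q_j)\lesssim K_0\ell(Q_j)$. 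The paper then connects $X(I_1)$ to $X(I_2)$ by a Harnack Chain in $\Omega$; since all balls in this chain have $\dist(\cdot,\pom)\approx\diam(\cdot)$ and the endpoints have comparable scales and distances from $Q_j$, the chain stays at scale $\approx\ell(Q_j)$ and within $\lesssim K_0\ell(Q_j)$ of $Q_j$. As the chain passes from the exterior of $\Omega_{\F,Q_0}$ to its interior, it must cross $\partial\Omega_{\F,Q_0}$, and the crossing occurs at an interface between two adjacent Whitney cubes $I,J$ of comparable side $\approx\ell(Q_j)$, with $\interior(I^*)\subset\Omega_{\F,Q_0}$ and $J\notin\W^*_{Q}$ for any $Q\in\dd_{\F,Q_0}$ (so $\tau J$ lies outside the sawtooth by \eqref{eq2.30*}). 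The shared face, fattened by $(1+\lambda)$ with $\lambda$ small, contains the $n$-cube $P_j$ with all the stated comparabilities. Without the explicit ``outside'' cube $I_1$ and the Harnack Chain that forces a crossing, your argument cannot rule out that every face of your $I^*$ is interior to the sawtooth.
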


\begin{proof}  Fix $Q_j\in\F$. It follows from Lemma \ref{lemma4.9} (with $Q=Q_j$)
and the Corkscrew condition that there is an $I_1\in\W$ with
$I_1\subset \Omega\setminus\Omega_{\F,Q_0}$,
$\ell(I_1)\approx\ell(Q_j)/K_0$, and
$\dist(I_1,Q_j)\lesssim \ell(Q_j)/K_0$.  On the other hand,
the dyadic parent $\widetilde{Q}_j$
of $Q_j$ belongs to $\dd_{\F,Q_0}$, so there is an $I_2\in\W^*_{\widetilde{Q}_j}$ with
$I_2\subset \Omega_{\F,Q_0}$,
$\ell(I_2)\approx \ell(Q_j)$, and $\dist(Q_j,I_2)\lesssim K_0\,\ell(Q_j)$.
The Harnack Chain (in $\Omega$) connecting the centers
of $I_1$ and $I_2$, then passes through $\partial\Omega_{\F,Q_0}$, and maintains a distance to
$\partial\Omega$ on the order of $\ell(Q_j)$.  Consequently, there is an interface between some
pair $I,J\in\W$, with $\interior(I^*)\subset\Omega_{\F,Q_0}$ and
$J\notin \W_Q^*$, for any $Q\in \dd_{\F,Q_0}$
(so that $\tau J\subset \Omega\setminus\Omega_{\F,Q_0}$
for some $\tau\in(1/2,1)$; cf. \eqref{eq2.30*}),
and
$$
\dist(I,Q_j)\approx\dist(J,Q_j)\approx\ell(I)\approx\ell(J)\approx\ell(Q_j)
$$
(here, some of the implicit constants may depend upon $K_0$).  Of course, the interface between
$I$ and $J$ is precisely one face of the smaller of these two cubes.  Therefore,
if $\lambda$ is chosen small enough, then
$\partial I^* \cap J$
contains an $n$-dimensional cube $P_j$ with the stated properties.
\end{proof}

\begin{remark}\label{remark:Pj}
It follows from the proof that if $P_j\cap P_k$ then $\ell(Q_j)\approx \ell(Q_k)$ since two
adjacent Whitney cubes have comparable side length.
Thus, $\dist(Q_j,Q_k)\lesssim \ell(Q_j)$ and therefore we have the bounded overlap property
$$
\sum_j 1_{P_j}(x)
\le C\,,
$$
with $C$ depending on the ADR constants.\end{remark}

For future reference, we note that,
under the assumptions of Proposition \ref{prop:Pj}, if  $x^\star_j$ denotes the center
of $P_j$, then for an appropriate choice of $r_j\approx K_0\,\ell(Q_j)$, we have $P_j\subset \td(x^\star_j,r_j)$ and
\begin{equation}\label{eqn:T-Pj}
\overline{T_{Q_j}} \subset B(x^\star_j,r_j),
\end{equation}
since $\diam(T_{Q_j})\lesssim K_0\,\ell(Q_j)$.
Moreover, given $Q\in\dd_{\F,Q_0}$  and $r_Q\approx K_0\,\ell(Q)$ from
Proposition \ref{prop:cork-both}, by choosing $\widehat{r}_Q\approx r_Q$
(with implicit constants depending on $K_0$) we may suppose that
\begin{equation}\label{eqn:Pj-cork}
Q \cup \left(\cup_{Q_j\in\F:\,Q_j\subset Q}B(x^\star_j,r_j)\right)\subset B(y_Q,\widehat{r}_Q)\,.
\end{equation}
Here, $y_Q$ is the center of $\td(y_Q,r_Q)\subset\partial\Omega_{\F,Q_0}$, appearing in
Proposition \ref{prop:cork-both}.  We omit the routine geometric argument.

We conclude this preamble with the following.
\begin{proposition}\label{prop:surface-ball-sawtooth}
Suppose that $\Omega$ is a 1-sided NTA domain
with ADR boundary.
Fix $Q_0\in \dd$, and let $\F\subset\dd_{Q_0}$ be a disjoint family.
For $Q_j\in\F$, let $B(x_j^\star,r_j)$
be the ball, concentric with $P_j$, satisfying \eqref{eqn:T-Pj}.
Then for each $Q\in \dd_{\F,Q_0}$, there is a surface ball
$$\td^Q:=\td(x_Q^\star, t_Q)\subset \left(Q\cap\partial\Omega_{\F,Q_0}\right)
\cup\left(\cup_{Q_j\in\F:\, Q_j\subset Q} \left(B(x_j^\star,r_j)\cap\partial
\Omega_{\F,Q_0}\right)\right)\,,$$
with $t_Q\approx\ell(Q)$, $x_Q^\star\in \partial\Omega_{\F,Q_0}$, and $\dist(Q,\td^Q)\lesssim \ell(Q)$,
where the implicit constants may depend upon $K_0$.
\end{proposition}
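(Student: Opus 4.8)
The plan is to construct $\td^Q$ by a dichotomy according to whether a cube of $\F$ of size comparable to $\ell(Q)$ meets the ``centre'' of $Q$. Fix $Q\in\dd_{\F,Q_0}$, let $\Delta_Q=\Delta(x_Q,r)\subset Q$ with $r\approx\ell(Q)$ be as in \eqref{cube-ball}--\eqref{cube-ball2}, denote by $S_Q$ the set on the right of the asserted inclusion, and fix a large constant $M$ (depending only on the allowable parameters, $C_0$ and $K_0$) to be chosen last. I would lean repeatedly on the following description of $\partial\Omega_{\F,Q_0}$: every $x\in\partial\Omega_{\F,Q_0}$ either lies in $\partial\Omega$ (hence, by Proposition \ref{prop:sawtooth-contain}, in $\overline{Q_0}\setminus\bigcup_\F\interior(Q_j)$), or lies on a face of $I^*$ for some $I\in\W_{\F,Q_0}$. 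In the second alternative $\delta(x)\approx\ell(I)$; there is $Q'\in\dd_{\F,Q_0}$ with $I\in\W^*_{Q'}$, $\ell(Q')\approx\ell(I)$, $\dist(I,Q')\lesssim K_0\ell(Q')$; and, since $x$ is a \emph{boundary} point of $\Omega_{\F,Q_0}$, there is a Whitney cube $J\notin\W_{\F,Q_0}$ with $x\in J$ and $\ell(J)\approx\ell(I)$, so that if $Q''\in\dd$ is chosen with $\ell(Q'')=\ell(J)$ and $\dist(Q'',J)\lesssim\ell(J)$ then $\dist(x,Q'')\lesssim\ell(I)$ and $Q''\notin\dd_{\F,Q_0}$ (otherwise $J\in\W_{Q''}\subset\W_{\F,Q_0}$); thus $Q''\not\subset Q_0$, or $Q''\subset Q_j$ for some $Q_j\in\F$.

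\emph{Case 1: some $Q_j\in\F$ meets $\tfrac18\Delta_Q$ and has $\ell(Q_j)\geq\ell(Q)/M$.} Since $Q\not\subset Q_j$ while $Q_j\cap Q\neq\emptyset$, dyadic nesting gives $Q_j\subsetneq Q$, and $\ell(Q)/M\le\ell(Q_j)\le\ell(Q)$. With $P_j\subset\partial\Omega_{\F,Q_0}$, its centre $x_j^\star$, and $r_j\approx K_0\ell(Q_j)$ as in Proposition \ref{prop:Pj} and \eqref{eqn:T-Pj}, I would simply take $x_Q^\star:=x_j^\star$ and $t_Q:=r_j$; then $\td^Q=\td(x_j^\star,r_j)=B(x_j^\star,r_j)\cap\partial\Omega_{\F,Q_0}\subset S_Q$ is immediate, being one of the sets in the union defining $S_Q$ (with $Q_j\subset Q$). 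Moreover $x_j^\star\in P_j\subset\partial\Omega_{\F,Q_0}$, $t_Q=r_j\approx K_0\ell(Q_j)\approx\ell(Q)$ (constants depending on $K_0$ and $M$), and $\dist(Q,\td^Q)\le\dist(Q,x_j^\star)\le\diam(Q_j)+\dist(Q_j,P_j)\lesssim\ell(Q_j)\le\ell(Q)$ because $Q_j\subset Q$.

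\emph{Case 2: every $Q_j\in\F$ meeting $\tfrac18\Delta_Q$ has $\ell(Q_j)<\ell(Q)/M$.} I would choose a dyadic cube $R\subset Q$ with $\ell(R)\approx\ell(Q)/M$ and $R\subset\Delta(x_Q,C_2\ell(Q)/M)$ for a suitable dyadic constant $C_2$ (possible by ADR and Lemma \ref{lemmaCh}); then $R\in\dd_{\F,Q_0}$, because any $Q_j\in\F$ containing $R$ would meet $\tfrac18\Delta_Q$ (if $M\ge 8C_2$) yet have $\ell(Q_j)\ge\ell(R)\ge\ell(Q)/M$, contradicting the case hypothesis. Proposition \ref{prop:cork-both} (and its proof), applied with $R$ in place of $Q$, supplies a point $A_R\in\Omega_{\F,Q_0}$ with $\dist(A_R,\partial\Omega_{\F,Q_0})\approx\dist(A_R,\partial\Omega)\approx\ell(R)$ and $\dist(A_R,R)\lesssim C_0\ell(R)$. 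Put $d':=\dist(A_R,\partial\Omega_{\F,Q_0})$, pick $y_R\in\partial\Omega_{\F,Q_0}$ realising this distance, and set $x_Q^\star:=y_R$, $t_Q:=d'/2\approx\ell(Q)$ (with $M$-dependent constants). If $x\in\partial\Omega_{\F,Q_0}$ with $|x-y_R|<t_Q$, then $d'\le|x-A_R|<\tfrac32 d'$, whence $\delta(x)\lesssim\ell(Q)/M$ and $|x-x_Q|\lesssim\ell(Q)/M$; taking $M$ large makes $x$ ``deep'' in $Q$, meaning $\dist(x,\partial\Omega\setminus Q)\gtrsim\ell(Q)$. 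Apply the description above to such $x$: if $x\in\partial\Omega$ then deepness gives $x\in Q$, so $x\in Q\cap\partial\Omega_{\F,Q_0}\subset S_Q$; if $x$ lies on a face of some $I^*$, then in the notation above $\dist(x,Q'')\lesssim\ell(I)\lesssim\delta(x)\lesssim\ell(Q)/M$, so $Q''$ too is deep in $Q\subset Q_0$, forcing $Q''\subset Q_0$ and hence $Q''\subset Q_j$ for some $Q_j\in\F$; then $Q_j\cap Q\neq\emptyset$ (deepness again) forces $Q_j\subsetneq Q$, while $\dist(x,Q_j)\le\dist(x,Q'')\lesssim\ell(I)\lesssim\ell(Q_j)$ together with $\dist(x_j^\star,Q_j)\lesssim\ell(Q_j)$ (Proposition \ref{prop:Pj}) give $|x-x_j^\star|\lesssim\ell(Q_j)<r_j$ for $K_0$ large, i.e.\ $x\in B(x_j^\star,r_j)\cap\partial\Omega_{\F,Q_0}\subset S_Q$. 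Hence $\td^Q\subset S_Q$, while $y_R\in\partial\Omega_{\F,Q_0}$, $t_Q\approx\ell(Q)$, and $\dist(Q,\td^Q)\le\dist(Q,A_R)+d'\lesssim\ell(Q)$.

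I expect Case 2 to be the main obstacle: one must exclude every ``stray'' piece of $\partial\Omega_{\F,Q_0}$ near $y_R$ that projects outside $Q$ or onto a cube of $\F$ not contained in $Q$. The deepness of $x$ — engineered by placing $R$, of size $\approx\ell(Q)/M$ with $M$ large, very near the centre of $Q$ — is precisely what excludes this, in concert with the elementary but essential fact that the exterior neighbour $J$ of a boundary Whitney face of $\Omega_{\F,Q_0}$, being outside $\W_{\F,Q_0}$, must sit inside a cube of $\F$. Finally, the choice of $M$ (and the several largeness requirements on $K_0$) depends only on the allowable parameters together with $K_0$, consistently with the statement.
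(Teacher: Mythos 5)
Your overall strategy matches the paper's: split according to whether a stopping cube $Q_j$ of size $\gtrsim\ell(Q)/M$ sits near the centre of $Q$, and in the ``small scales'' case build the surface ball around a point deep inside $Q$. The mechanics of your Case 2 differ a bit from the paper's: the paper locates the centre of $\td^Q$ via Lemma \ref{lemma2.31} (the ball $B_Q'$ with $B_Q'\cap\Omega\subset T_Q$) and then reduces matters to the structural containment \eqref{eq6.setcontainment}, whereas you construct the centre $y_R$ from Proposition \ref{prop:cork-both} applied to a dyadic cube $R$ of scale $\ell(Q)/M$ near $x_Q$ and then classify each boundary point $x\in\td^Q$ directly via its Whitney ``witnesses'' $I$ and $J$. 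Both routes use the same underlying observation — the exterior Whitney neighbour $J$ of a boundary Whitney face, not being in $\W_{\F,Q_0}$, must sit in $\W_{Q''}$ for some $Q''$ inside a stopping cube — and your version is arguably a touch more hands-on. Your Case 1 also differs cosmetically: the paper takes $\td^Q$ to be a small surface ball inside $P_{j_0}$, while you take the whole $\td(x_j^\star,r_j)$, which works equally well.

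There is, however, a genuine gap in the very last step of Case 2, where you assert
$|x-x_j^\star|\lesssim\ell(Q_j)<r_j$ ``for $K_0$ large.'' This does not hold up: $K_0$ is a fixed constant (it is determined by the sawtooth construction), not a parameter at your disposal, and — more to the point — the bound $\dist(x_j^\star,Q_j)\lesssim\ell(Q_j)$ you cite from Proposition \ref{prop:Pj} already has an implicit constant that is permitted to grow with $K_0$ (see the parenthetical in that proposition), while $r_j\approx K_0\,\ell(Q_j)$ has the same order of magnitude. So you are comparing two quantities each of size $\approx K_0\,\ell(Q_j)$, and ``for $K_0$ large'' does not force one to dominate the other. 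The clean fix is already available in your setup and avoids any constant-chasing: you have $x\in J$, $J\in\W_{Q''}$ (that is exactly what forces $Q''\notin\dd_{\F,Q_0}$ in your preamble), and $Q''\in\dd_{Q_j}$; hence $J\subset J^{*}\subset U_{Q''}\subset \overline{T_{Q_j}}$, and therefore $x\in\overline{T_{Q_j}}\subset B(x_j^\star,r_j)$ by \eqref{eqn:T-Pj}. This is in fact the containment the paper leans on, via claim \eqref{eq6.setcontainment}. With this substitution your argument is complete.
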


\begin{proof}  Suppose first that there is some $Q_{j_0}\subset Q$, for which
$\ell(Q_{j_0})\geq \ell(Q)/M$, where $M$ is a sufficiently large number to
be chosen.
We  then set $\td^Q=\td(x_{j_0}^\star,\ell(P_{j_0})/2)$,
a surface ball contained in the cube $P_{j_0}$ whose existence was
established in Proposition \ref{prop:Pj}.

Now suppose that $\ell(Q_j)<\ell(Q)/M$, for every $Q_j\subset Q$.  By Lemma
\ref{lemma2.31}, there is a ball $B_Q'=B(x_Q,s)$ with $s\approx\ell(Q)$
and $B_Q'\cap\Omega\subset T_Q\subseteq T_{Q_0}$.   In particular,
$B_Q'$ misses $\partial T_{Q_0}\setminus Q_0$.  Moreover,
$\Delta'_Q:=B_Q'\cap\partial\Omega\subset Q$.  Consider those $Q_j\subset Q$ which meet
$\Delta(x_Q,s/(4\sqrt{M}))$.  If there are no such $Q_j$, then we set $\td^Q=\Delta(x_Q,s/(4\sqrt{M}))$,
which in this case is contained in $Q\cap\partial\Omega_{\F,Q_0}$ by Proposition \ref{prop:sawtooth-contain}.
On the other hand, suppose that there is some $Q_{j_0}\subset Q$ which meets
$\Delta(x_Q,s/(4\sqrt{M}))$.
Then for $M$ large enough, depending on $K_0$, we have
$P_{j_0}\subset B(x_Q, s/(2\sqrt{M})),$ and thus also
\begin{equation}\label{eq5.3}
\td^Q\,:=\,\Delta_\star(x^\star_{j_0},s/(2\sqrt{M}))
\,\subset \,B(x_Q,s/\sqrt{M})\,\subset\, B_Q',
\end{equation}
by the triangle inequality.
Consequently, $\td^Q$ misses $\partial T_{Q_0}\setminus Q_0$,
and $\td^Q\cap\partial\Omega\subset Q$,
by the properties of $B_Q'$.  Moreover, we claim that
\begin{equation}\label{eq6.setcontainment}
\partial\Omega_{\F,Q_0}\subset \left(\partial T_{Q_0}\setminus Q_0\right)
\cup\left(\partial\Omega_{\F,Q_0}\cap Q_0\right) \cup \left( \cup_{Q_j\in\F}
\big(\partial\Omega_{\F,Q_0}\cap\overline{T_{Q_j}}\,\big)\right).
\end{equation}
Let us defer for the moment the proof of this claim.  Given \eqref{eq6.setcontainment},
by \eqref{eqn:T-Pj} and properties of dyadic cubes,
it is enough to verify that if $\td^Q$ meets $\overline{T_{Q_j}}$, for some $Q_j\in\F$,
then $Q_j$ meets $Q$.   Suppose now that $\td^Q$ meets $\overline{T_{Q_j}}$.  By \eqref{eq5.3}
and the definition of $\overline{T_{Q_j}}$, this means that there is a $Q'\subseteq Q_j$,
and an $I\in \W^*_{Q'}$ such that $I^*$ meets $B(x_Q,s/\sqrt{M})$.  It follows that
$$\ell(Q')\approx\ell(I)\approx\dist(I^*,\partial\Omega)\leq\dist(I^*,x_Q)\leq s/\sqrt{M}.$$
Since $\dist(I^*,Q')\lesssim K_0\,\ell(Q')$, by the triangle inequality we have
$$|y-x_Q|\lesssim K_0\, s/\sqrt{M} < s\,,\qquad \forall y\in Q',$$
if $M \gg (K_0)^2$;  i.e., $Q'\subset \Delta_Q' := \Delta(x_Q,s)\subset Q$, whence $Q_j$ meets $Q$,
as desired.

Finally, we establish \eqref{eq6.setcontainment}.
Let $X\in \partial\Omega_{\F,Q_0}$.  There are two cases.

\smallskip

\noindent{\bf Case 1}:  $\delta(X)= 0$.  If $X\in Q_0$ we are done.  Otherwise, since
$$\partial\Omega_{\F,Q_0}\subset\overline{\Omega_{\F,Q_0}}\subset \overline{T_{Q_0}}\,,$$
it suffices to show that $X\notin T_{Q_0}$.  But this is trivial, since $T_{Q_0} \subset \Omega$,
and for $X\in \Omega$, we have that $\delta(X)>0$.

\smallskip

\noindent{\bf  Case 2}: $\delta(X)>0$.  If $X\in \overline{T_{Q_j}}$ for some $j$, we are done, so suppose that this never happens.  As in Case 1, it is enough to show that $X\notin T_{Q_0}$, so suppose by way of contradiction that $X\in T_{Q_0}$.  Since $T_{Q_0}$ is open, this means that there is a small
number $\eps_0\ll\delta(X)$ such that the
ball $B(X,\eps)\subset T_{Q_0}$, whenever $\eps\leq\eps_0$.
By definition of $T_{Q_0}$, and properties
of Whitney cubes,
there exist a uniformly bounded number of Whitney cubes, say, $I_1,\dots,I_M$,
such that $$B(X,\eps_0)\subset \cup_{k=1}^M I_k^*\,,$$
and for each $k\in [1,M]$, there is a $Q^k\in \dd_{Q_0}$ with $I_k\in \W_{Q^k}^*$.
It is possible that for a smaller $\eps$, there may be a smaller collection of $I_k$'s required
to cover $B(X,\eps)$, but these $I_k$'s are of course always chosen from the original collection
(i.e., the one for $\eps_0$.)  Observe that since $B(X,\eps)$ is open, if $B(X,\eps)$ meets
$I^*_k$, then it meets $\interior(I^*_k)$.
For a given $\eps$, we may assume that the covering collection is ``minimal'' in the sense that
$B(X,\eps)$ meets $\interior(I_k^*)$ for each $k$, i.e., we
remove those $I_k^*$ which do not meet  $B(X,\eps)$.

We claim that there must be some $\eps>0$ and a corresponding  ``minimal'' collection,
with the property that
each $Q^k\in\dd_{\F,Q_0}$.   Indeed, if not, then there is a sequence
$\eps_i\to 0$, and for each $i$,  a $Q_j(i)\in\F$, and a $k(i)\in[1,M]$,
such that $Q^{k(i)}\in \dd_{Q_j(i)}$.  Since there were only a bounded number of $I_k$'s
and thus also $Q^k$'s, to start with,
there must be some subsequence, again call it $\eps_i$, such that $k(i) = constant$.
But this means that there is a fixed $Q_j\in\F$, and a sequence $\eps_i$ such that
$B(X,\eps_i)$ meets $T_{Q_j}$, which contradicts our assumption that $X\notin \overline{T_{Q_j}}$
for any $Q_j\in \F$.  This proves the claim.

We now choose $\eps$ as in the claim, and observe that we then have
$$B(X,\eps)\subset \bigcup_{Q\in \dd_{\F,Q_0}}\bigcup_{\W_Q^*}I^*\,,$$
i.e., that $X$ is an {\it interior} point for the set $\bigcup_{Q\in \dd_{\F,Q_0}}\bigcup_{\W_Q^*}I^*$.
But by definition, this means that $X\in \Omega_{\F,Q_0}$, and since the latter set is open,
this contradicts that $X\in \partial \Omega_{\F,Q_0}$.
\end{proof}

\begin{lemma}[Dyadic sawtooth lemma for projections]\label{lemma:DJK-dyadic-proj}
Suppose that $\Omega$ is a 1-sided NTA domain with ADR boundary and that  it also satisfies the qualitative exterior corkscrew condition.
Fix $Q_0\in\dd$,  let $\F=\{Q_j\}\subset \dd_{Q_0}$ be a family of pairwise disjoint dyadic cubes and let $\P_\F$ be the corresponding projection operator.
We write $\hm=\hm^{X_0}$  and $\tom=\tom^{X_0}$ to denote the respective
harmonic measures for the domains $\Omega$ and $\Omega_{\F,Q_0}$,
with fixed pole at the corkscrew point $X_0:=A_{Q_0}$ whose existence was noted
in Proposition \ref{prop:cork-both} and Corollary \ref{cor5.5}.
Let $\nu=\nu^{X_0}$ be the measure defined by
\begin{equation}\label{defi:nu-bar}
\nu(F)
=\tom\left(F\setminus(\cup_{\F} Q_j)\right) +\sum_{Q_j\in\F} \frac{\hm(F\cap Q_j)}{\hm(Q_j)}\,
\tom(P_j),
\qquad F\subset Q_0,
\end{equation}
where $P_j$ is the $n$-dimensional cube produced by Proposition \ref{prop:Pj}.
Then $\P_\F\nu$ depends only on $\tom$ and
not on $\hm$.  More precisely,
\begin{equation}\label{defi-proj-nubar}
\P_\F \nu(F)
=
\tom\left(F\setminus(\cup_{ \F} Q_j)\right)+\sum_{Q_j\in\F} \frac{\sigma(F\cap Q_j)}{\sigma(Q_j)}\,
\tom(P_j),
\qquad F\subset Q_0.
\end{equation}
Moreover, there exists $\theta>0$ such that for all  $Q\in\dd_{Q_0}$ and $F\subset Q$, we have
\begin{equation}\label{dyadic-DJK:proj}
\left(\frac{\P_\F \hm(F)}{\P_\F \hm(Q)}\right)^\theta
\lesssim
\frac{\P_\F \nu(F)}{\P_\F \nu(Q)}
\lesssim
\frac{\P_\F \hm(F)}{\P_\F \hm(Q)}.
\end{equation}
\end{lemma}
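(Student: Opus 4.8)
The plan is to follow the Euclidean template of \cite[Lemma A.1]{HM}, adapting each step to the dyadic/ADR setting using the geometric preliminaries (Propositions \ref{prop:sawtooth-contain}, \ref{prop:cork-both}, \ref{prop:Pj}, \ref{prop:surface-ball-sawtooth} and the estimates of Section \ref{section-fund-est}). The first observation is that \eqref{defi-proj-nubar} is a purely formal computation: by definition of $\P_\F$, for any $Q_j\in\F$ one has $\P_\F\nu(Q_j)=\nu(Q_j)$, and by \eqref{defi:nu-bar} the ``cross terms'' $\hm(Q_j\cap Q_j)/\hm(Q_j)=1$, so $\nu(Q_j)=\tom(P_j)$; then one redistributes $\tom(P_j)$ over $Q_j$ by $\sigma$, exactly as $\P_\F$ prescribes, and on the complement of $\cup_\F Q_j$ we have $\P_\F\nu=\nu=\tom$ by Proposition \ref{prop:surface-ball-sawtooth} and \eqref{eq5.0} (up to a set of $\tom$-measure zero, using Proposition \ref{prop5.0a} since $\tom$ is doubling by Corollary \ref{cor2.double} applied in the sawtooth). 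This disposes of the first assertion with no real work.

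\textbf{The core estimate \eqref{dyadic-DJK:proj}.} The right-hand inequality and the left-hand inequality are proved separately but by the same mechanism: comparing, for $Q\in\dd_{\F,Q_0}$ (the case $Q\subset$ some $Q_j$ being trivial since then $\P_\F\hm$ and $\P_\F\nu$ are both just multiples of $\sigma$ on $Q$), the quantities $\P_\F\hm(Q)$ and $\P_\F\nu(Q)$ to harmonic measure \emph{evaluated at the common corkscrew point} $A_Q$ furnished by Proposition \ref{prop:cork-both}. Concretely, I would first establish the two ``change of pole'' comparisons
\begin{equation}\label{plan:cp1}
\P_\F\hm^{X_0}(F)\approx \hm^{X_0}(Q)\,\P_\F\hm^{A_Q}(F),\qquad
\P_\F\nu^{X_0}(F)\approx \tom^{X_0}(Q)\,\P_\F\nu^{A_Q}(F),
\end{equation}
valid for $F\subset Q$, where the implicit constants are uniform. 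For the $\hm$ side this follows from Corollary \ref{cor2.poles} applied on $\Omega$ at the scale of $Q$ (on the portion of $F$ outside $\cup_\F Q_j$) together with the CFMS-type estimate \eqref{eq2.CFMS1}--\eqref{eq2.CFMS2} and Lemma \ref{lemma2.comparison} to handle the pieces $F\cap Q_j$ via the balls $B(x_j^\star,r_j)$ of \eqref{eqn:Pj-cork}; for the $\nu$ side one uses the same tools in the sawtooth domain $\Omega_{\F,Q_0}$ (legitimate by Lemma \ref{lemma2.30}, Lemma \ref{lemma:inher-quali}, and Remark \ref{remark:tent-inherits}), noting that $\nu$ is built from $\tom$ on the complement of $\cup_\F Q_j$ and from $\tom(P_j)$ on the cubes $Q_j$. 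With \eqref{plan:cp1} in hand, the ratios $\P_\F\hm(F)/\P_\F\hm(Q)$ and $\P_\F\nu(F)/\P_\F\nu(Q)$ are comparable to $\P_\F\hm^{A_Q}(F)/\P_\F\hm^{A_Q}(Q)$ and $\P_\F\nu^{A_Q}(F)/\P_\F\nu^{A_Q}(Q)$ respectively, so it suffices to prove \eqref{dyadic-DJK:proj} with pole $A_Q$ and $Q=Q_0$, i.e. to compare $\P_\F\hm^{A_{Q_0}}$ and $\P_\F\nu^{A_{Q_0}}$ directly on $Q_0$.

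\textbf{Comparing $\P_\F\hm$ and $\P_\F\nu$ at the fixed pole $X_0=A_{Q_0}$.} For $F\subset Q_0$ write both as sums over the common ``outer'' part $F_0:=F\setminus\cup_\F Q_j$ and the pieces $F\cap Q_j$. On $F_0$, one needs $\tom^{X_0}(F_0)\approx \hm^{X_0}(F_0)$ — this is \emph{precisely} where Corollary \ref{cor4.18}/\cite[Theorem 2]{DJe} and the interior-corkscrew structure are irrelevant; instead it comes from the maximum principle comparing solutions in $\Omega$ and in $\Omega_{\F,Q_0}\subset\Omega$, since on $Q_0\setminus\cup_\F Q_j\subset\partial\Omega\cap\partial\Omega_{\F,Q_0}$ the two domains share boundary data, combined with Bourgain's estimate (Lemma \ref{Bourgainhm}) and Harnack chaining to transfer the pole — this yields $\hm^{X_0}(F_0)\lesssim\tom^{X_0}(F_0)$, and the reverse follows similarly using that $\tom^{X_0}(\partial\Omega_{\F,Q_0}\cap Q_0)\approx 1$. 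For the pieces over $Q_j$: in $\P_\F\hm$ the $Q_j$-contribution is $\tfrac{\sigma(F\cap Q_j)}{\sigma(Q_j)}\hm(Q_j)$, while in $\P_\F\nu$ it is $\tfrac{\sigma(F\cap Q_j)}{\sigma(Q_j)}\tom(P_j)$, so the required comparison reduces to $\hm^{X_0}(Q_j)\approx\tom^{X_0}(P_j)$ for every $Q_j\in\F$. This last equivalence is the heart of the matter and is obtained by yet another maximum-principle argument between $\Omega$ and $\Omega_{\F,Q_0}$ localized near $Q_j$: $\tom^{X_0}(P_j)$ is comparable to $\tom^{X_0}$ of a surface ball $\td$ on $\partial\Omega_{\F,Q_0}$ sitting ``just outside'' $Q_j$ (Proposition \ref{prop:Pj} and \eqref{eqn:T-Pj}), and by the Carleson estimate (Lemma \ref{lemma2.carleson}) and comparison principle (Lemma \ref{lemma2.comparison}) both $\hm^{X_0}(Q_j)$ and $\tom^{X_0}(P_j)$ are comparable to $r_j^{n-1}$ times Green functions evaluated at comparable points, namely $G(\cdot,X_0)$ (resp. $G_{\Omega_{\F,Q_0}}(\cdot,X_0)$) at a corkscrew point for $Q_j$; one then invokes $G\approx G_{\Omega_{\F,Q_0}}$ at interior points of $\Omega_{\F,Q_0}$ that stay a definite distance from $\partial\Omega$ (Harnack plus \eqref{eq2.green2}, since such points have $\delta\approx\delta_\star$). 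Assembling the ``outer'' comparison and the $\hm(Q_j)\approx\tom(P_j)$ comparison gives the \emph{two-sided} bound $\P_\F\hm^{X_0}(F)\approx\P_\F\nu^{X_0}(F)$ when $F$ ranges over sets that are unions of the outer part with full cubes $Q_j$; in general, because $\P_\F\hm$ and $\P_\F\nu$ agree (up to constants) cube-by-cube and are both absolutely continuous w.r.t. $\sigma$ on each $Q_j$, we get the same two-sided comparison for all $F\subset Q_0$ with $F\subset Q_j$-pieces, \emph{provided} the outer pieces are handled — and here the right inequality of \eqref{dyadic-DJK:proj} needs only $\lesssim$, while the left inequality, with the exponent $\theta$, requires the nontrivial input that $\P_\F\hm^{X_0}\in A_\infty^{\mathrm{dyadic}}(Q_0)$-type self-improvement fails in general, so one instead uses that $\tom\in A_\infty$ of surface measure on $\partial\Omega_{\F,Q_0}$... \emph{no} — rather, one uses that on $F_0$ the comparison $\tom^{X_0}(F_0)\lesssim\hm^{X_0}(F_0)$ may degrade and must be compensated by a power, which is exactly the source of $\theta$: the left inequality follows from the right inequality applied with roles reversed \emph{if} $\P_\F\hm$ were doubling, which it need not be, so instead one argues as in \cite{DJK,HM} that $\P_\F\hm(F)/\P_\F\hm(Q)$ small forces $\P_\F\nu(F)/\P_\F\nu(Q)$ small via a stopping-time/good-$\lambda$ argument on the cubes $Q_j$, the power $\theta$ emerging from geometric decay.

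\textbf{Main obstacle.} The genuinely delicate point, and the one I expect to consume most of the work, is the left-hand inequality of \eqref{dyadic-DJK:proj} — controlling $\P_\F\nu(F)/\P_\F\nu(Q)$ from above by a \emph{power} of $\P_\F\hm(F)/\P_\F\hm(Q)$ — because $\hm$ is not assumed doubling at this stage, so one cannot simply run the comparison backwards; this is precisely where the dyadic stopping-time argument of \cite{DJK} (in the streamlined form of \cite[Lemma A.1]{HM}) must be reproduced, decomposing $F$ according to the ``density'' of $\P_\F\hm$ in the subcubes $Q_j$ and summing a geometric series whose ratio gives $\theta$. The $\hm(Q_j)\approx\tom(P_j)$ comparison, by contrast, while central, is a fairly direct consequence of the CFMS/Carleson/comparison machinery of Section \ref{section-fund-est} once one checks the requisite geometric containments \eqref{eq5.1a}, \eqref{eqn:T-Pj}, \eqref{eqn:Pj-cork}, and I expect it to go through routinely.
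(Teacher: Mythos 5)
Your scheme for the right-hand inequality of \eqref{dyadic-DJK:proj} is essentially correct and matches the paper: handle the case $Q\subset Q_j$ by inspection, and for $Q\in\dd_{\F,Q_0}$ change the pole to $A_Q$ via Corollary \ref{cor2.poles}, then use the maximum principle in two forms ($\tom^{A_Q}\lesssim\hm^{A_Q}$ on $E_0$, and $\tom^{A_Q}(P_j)\lesssim\hm^{A_Q}(Q_j)$ via Bourgain's lemma and Harnack). Note, however, that you do not need (and the paper does not prove) the two-sided comparisons $\tom(F_0)\approx\hm(F_0)$ and $\hm(Q_j)\approx\tom(P_j)$; only the one-sided bounds in \eqref{claim:dyadic-DJK} are required, and the reverse inequalities would in fact only hold in an $A_\infty$ rather than pointwise sense.

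The genuine gap is in your treatment of the left-hand inequality. You assert that ``$\hm$ is not assumed doubling at this stage'' and that $\P_\F\hm$ ``need not be'' doubling, and on that basis you propose to reconstruct a $\theta$-power bound through a stopping-time/good-$\lambda$ argument on the cubes $Q_j$. But the hypotheses of the lemma include the qualitative exterior corkscrew condition, and under that hypothesis Corollary \ref{cor2.double} shows $\hm$ \emph{is} doubling (this is exactly why the qualitative assumption is carried through the statement). Consequently $\P_\F\hm$ is dyadically doubling by Lemma \ref{lemma:dyad-doubling-proj}, and $\P_\F\nu$ is dyadically doubling by Lemma \ref{lemma:dyad-doubling-nu}. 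Once both projections are dyadically doubling, the left-hand inequality of \eqref{dyadic-DJK:proj} is a direct application of the dyadic Coifman--Fefferman self-improvement Lemma \ref{lemma:CF-dyadic}, with the power $\theta$ coming for free from that lemma; no fresh stopping-time construction is needed at this point (the Calder\'on--Zygmund decomposition is already buried inside the proof of Lemma \ref{lemma:CF-dyadic}). Without the doubling observation, your proposed argument has no way to get started, since the reverse H\"older machinery you gesture at requires doubling of the underlying measures against which the comparison is made.
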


\begin{proof}
We observe that \eqref{defi-proj-nubar} follows immediately
from the definitions of $\P_\F$ and $\nu$, as the reader may readily verify.
We omit the details.

Our first main task is to establish
the righthand side inequality in \eqref{dyadic-DJK:proj}. Let us fix $Q\in\dd_{Q_0}$, $F\subset Q$.

\noindent {\bf Case 1}: There exists $Q_j\in \F$ such that $Q\subset Q_j$.
Note that by \eqref{defi-proj-nubar} we have
$$
\frac{\P_\F\nu(F)}{\P_\F\nu(Q)}
=
\frac
{\frac{\sigma(F\cap Q_j)}{\sigma(Q_j)}\,\tom(P_j)}
{\frac{\sigma(Q\cap Q_j)}{\sigma(Q_j)}\,\tom(P_j)}
=
\frac{\sigma(F)}{\sigma(Q)}
=
\frac
{\frac{\sigma(F\cap Q_j)}{\sigma(Q_j)}\,\hm(Q_j)}
{\frac{\sigma(Q\cap Q_j)}{\sigma(Q_j)}\,\hm(Q_j)}
=\frac{\P_\F \hm(F)}{\P_\F \hm(Q)}.
$$

\noindent {\bf Case 2}: $Q$ is not contained in any  $Q_j\in \F$ (i.e.,
$Q\in \dd_{\F,Q_0}$). Notice that if $Q_j\in \F$ with $Q_j\cap Q\neq\emptyset$, then $Q_j$ is {\it strictly} contained in $Q$.    Let us note also that $\tom$ satisfies the doubling property,
by   Lemma \ref{lemma2.30}, Lemma \ref{lemma:inher-quali} and Corollary \ref{cor2.double}.
Set $E_0=Q_0\setminus (\cup_{\F} Q_j)$.
Using \eqref{defi-proj-nubar} we observe that
\begin{align}
\P_\F\nu(Q)
&=
\tom(Q\cap E_0) +\sum_{Q_j\in\F, Q_j\subsetneq Q} \frac{\sigma(Q\cap Q_j)}{\sigma(Q_j)}\,
\tom(P_j)
\nonumber
\\\label{proj-nu-bar:Q_1}
&=
\tom(Q\cap E_0) +\sum_{Q_j\in\F, Q_j\subsetneq Q} \tom(P_j)\\\nonumber
&\gtrsim \tom(Q\cap E_0) + \sum_{Q_j\in\F, Q_j\subsetneq Q}
\tom\left(B(x_j^\star,r_j)\cap \partial \Omega_{\F,Q_0}\right)
\\\nonumber&\geq \tom(\td^Q),
\end{align}
where in the third line we have used the doubling property of $\tom$ (plus a subdivision and Harnack Chain argument if $\ell(Q_j)\approx \ell(Q_0)$), and in the last line we have used
Proposition \ref{prop:surface-ball-sawtooth}, along with
Propositions \ref{prop:sawtooth-contain} and \ref{prop5.0a} and the doubling property to ignore
the difference between $Q\setminus (\cup_\F Q_j)$ and $Q\cap\partial\Omega_{\F,Q_0}$.

Let $A_{Q}$ be as in Proposition \ref{prop:cork-both}.   Then by Corollary \ref{cor2.poles}
plus the doubling property and Harnack Chain condition,
and a differentiation argument, we have that for any
Borel set $H\subset Q$,
\begin{equation}\label{w:AQ-X0}
\hm^{A_{Q}}(H) \approx \frac{\hm^{X_0}(H)}{\hm^{X_0}(Q)}=\frac{\hm(H)}{\hm(Q)}.
\end{equation}
The same occurs for $\tom$ and $\tom^{A_{Q}}$ and for any $H_\star\subset
\td(y_Q, \hat{r}_Q)$, (see \eqref{eqn:Pj-cork} and Proposition \ref{prop:cork-both}). More precisely,
\begin{equation}\label{nu:AQ-X0}
\tom^{A_{Q}}(H_\star) \approx \frac{\tom^{X_0}(H_\star)}{\tom^{X_0}\left(\td(y_Q,\widehat{r}_Q)\right)}
=\frac{\tom(H_\star)}{\tom\left(\td(y_Q,\widehat{r}_Q)\right)}\approx
\frac{\tom(H_\star)}{\tom\left(\td^Q\right)},
\end{equation}
where $\td^Q$ is the surface ball in Proposition \ref{prop:surface-ball-sawtooth}, and where the last step
follows by the
doubling property of $\tom$, since $\dist(\td^Q,\td(y_Q,\widehat{r}_Q)) \lesssim \ell(Q)$, and the radius of
each surface ball is comparable to $\ell(Q)$.

Using \eqref{proj-nu-bar:Q_1} and \eqref{nu:AQ-X0} (and \eqref{eqn:Pj-cork}), we obtain
\begin{align}\label{eq5.19}
\frac{\P_\F\nu(F)}{\P_\F\nu(Q)}
&
\lesssim
\frac{\tom(F\cap E_0)}{\tom(\td^Q)} +
\sum_{Q_j\in\F, Q_j\subsetneq Q} \frac{\sigma(F\cap Q_j)}{\sigma(Q_j)}\,
\frac{\tom(P_j)}{\tom(\td^Q)}
\\ \nonumber
&\approx
\tom^{A_{Q}}(F\cap E_0)
+
\sum_{Q_j\in\F, Q_j\subsetneq Q}
\frac{\sigma(F\cap Q_j)}{\sigma(Q_j)}
\tom^{A_{Q}}(P_j).
\end{align}
We claim that the following estimates hold:
\begin{equation}\label{claim:dyadic-DJK}
\tom^{A_{Q}}(F\cap E_0)
\lesssim
\hm^{A_{Q}}(F\cap E_0),
\qquad\quad
\tom^{A_{Q}}(P_j)
\lesssim
\hm^{A_{Q}}(Q_j).
\end{equation}
Indeed, the first estimate follows immediately from the maximum principle, since $\Omega_{\F,Q_0}\subset\Omega$, and $E_0\subset\partial\Omega\cap\partial\Omega_{\F,Q_0}$, by Proposition
\ref{prop:sawtooth-contain}.
To prove the second estimate, we observe that,
again by the maximum principle, it suffices to show that $\omega^X(Q_j)\gtrsim 1$,
for $X\in P_j$.  But the latter bound follows immediately from \eqref{eq2.Bourgain2} with
$\Delta =\Delta_{Q_j}$ (cf. \eqref{cube-ball}-\eqref{cube-ball2}),
the Harnack Chain condition and \eqref{eq5.1a}.

The bounds in \eqref{eq5.19}, \eqref{claim:dyadic-DJK} and \eqref{w:AQ-X0} imply
\begin{multline*}
\frac{\P_\F\nu(F)}{\P_\F\nu(Q)}
\lesssim
\hm^{A_{Q}}(F\cap E_0)
+
\sum_{Q_j\in\F, Q_j\subsetneq Q} \frac{\sigma(F\cap Q_j)}{\sigma(Q_j)}\,
\hm^{A_{Q}}(Q_j)\\
\approx
\frac{\hm(F\cap E_0)}{\hm(Q)}
+
\sum_{Q_j\in\F, Q_j\subsetneq Q}\frac{\sigma(F\cap Q_j)}{\sigma(Q_j)}\,
\frac{\hm(Q_j)}{\hm(Q)}
=
\frac{\P_\F \hm(F)}{\hm(Q)}
=
\frac{\P_\F \hm(F)}{\P_\F \hm(Q)},
\end{multline*}
where in the last equality we have used that $\P_\F \hm(Q)=\hm(Q)$.
Thus, we have established the righthand  inequality in \eqref{dyadic-DJK:proj}.
We may now obtain the left hand side
of \eqref{dyadic-DJK:proj} by a direct application of Lemma \ref{lemma:CF-dyadic} (see Appendix
\ref{appendixB} below), using the  fact that $\P_\F \hm$ and $\P_\F \nu$ are dyadically doubling by Lemmas  \ref{lemma:dyad-doubling-proj} and \ref{lemma:dyad-doubling-nu}.
\end{proof}

\section{A discrete Corona decomposition}\label{scorona}

In this section we present a discretized version of the stopping time decomposition of a Carleson region
appearing in \cite{CG}, \cite{AHLT}, \cite{AHMTT}, \cite{HM} and \cite{HM-ANU}
(cf.  \cite{Car2}, \cite{LM}, \cite{HL}).  We suppose that
$\{\alpha_Q\}_{Q\in\dd}$ is a sequence of non-negative numbers indexed on the dyadic ``cubes'',
and for any collection $\dd'\subset\dd$, we define
$$\mut(\dd'):= \sum_{Q\in\dd'}\alpha_{Q}.$$
For a fixed $Q_0\in\dd$, we say that $\mut$ is a ``Carleson measure'' on $\dd_{Q_0}$
(with respect to $\sigma$), and we write
$\mut\in \C(Q_0)$, if
\begin{equation*}
\|\mut\|_{\C(Q_0)}:= \sup_{Q\in\dd_{Q_0}} \frac{\mut(\dd_{Q})}{\sigma(Q)} <\infty.
\end{equation*}
We also write
\begin{equation}\label{eq6.0}
\|\mut\|_{\C}:= \sup_{Q\in\dd} \frac{\mut(\dd_{Q})}{\sigma(Q)} <\infty
\end{equation}
to denote the ``global'' Carleson norm on $\dd$.
We furthermore set $\dd_Q^{short}:= \dd_Q\setminus\{Q\}$, and given a family $\F\subset \dd$ of
pairwise disjoint cubes, we define the ``restriction of $\mut$ to the sawtooth $\dd_\F$'' by
\begin{equation*}
\mut_\F(\dd'):=\mut(\dd'\cap\dd_\F)= \sum_{Q\in\dd'\setminus (\cup_\F \dd_{Q_j})}\alpha_{Q}.
\end{equation*}

\begin{figure}
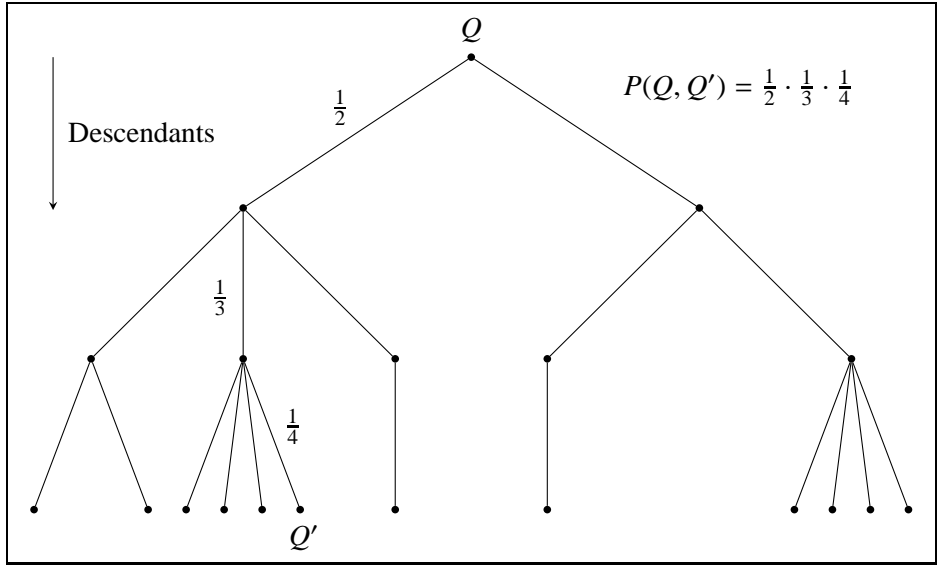

\fbox{\begin{pgfpicture}{-6cm}{-6.6cm}{6cm}{.6cm}
\pgfnodecircle{Node1}[fill]{\pgfxy(0,0)}{0.05cm}

\pgfnodecircle{Node11}[fill]{\pgfxy(-3,-2)}{0.05cm}
    \pgfnodecircle{Node111}[fill]{\pgfxy(-5,-4)}{0.05cm}
        \pgfnodecircle{Node1111}[fill]{\pgfxy(-5.75,-6)}{0.05cm}
        \pgfnodecircle{Node1112}[fill]{\pgfxy(-4.25,-6)}{0.05cm}
    \pgfnodecircle{Node112}[fill]{\pgfxy(-3,-4)}{0.05cm}
        \pgfnodecircle{Node1121}[fill]{\pgfxy(-3.75,-6)}{0.05cm}
        \pgfnodecircle{Node1122}[fill]{\pgfxy(-3.25,-6)}{0.05cm}
        \pgfnodecircle{Node1123}[fill]{\pgfxy(-2.75,-6)}{0.05cm}
        \pgfnodecircle{Node1124}[fill]{\pgfxy(-2.25,-6)}{0.05cm}
    \pgfnodecircle{Node113}[fill]{\pgfxy(-1,-4)}{0.05cm}
        \pgfnodecircle{Node1131}[fill]{\pgfxy(-1,-6)}{0.05cm}

\pgfnodecircle{Node12}[fill]{\pgfxy(3,-2)}{0.05cm}
    \pgfnodecircle{Node121}[fill]{\pgfxy(1,-4)}{0.05cm}
        \pgfnodecircle{Node1211}[fill]{\pgfxy(1,-6)}{0.05cm}
    \pgfnodecircle{Node122}[fill]{\pgfxy(5,-4)}{0.05cm}
        \pgfnodecircle{Node1221}[fill]{\pgfxy(4.25,-6)}{0.05cm}
        \pgfnodecircle{Node1222}[fill]{\pgfxy(4.75,-6)}{0.05cm}
        \pgfnodecircle{Node1223}[fill]{\pgfxy(5.25,-6)}{0.05cm}
        \pgfnodecircle{Node1224}[fill]{\pgfxy(5.75,-6)}{0.05cm}

\pgfnodeconnline{Node1}{Node11}
    \pgfnodeconnline{Node11}{Node111}
        \pgfnodeconnline{Node111}{Node1111}
        \pgfnodeconnline{Node111}{Node1112}
    \pgfnodeconnline{Node11}{Node112}
        \pgfnodeconnline{Node112}{Node1121}
        \pgfnodeconnline{Node112}{Node1122}
        \pgfnodeconnline{Node112}{Node1123}
        \pgfnodeconnline{Node112}{Node1124}
    \pgfnodeconnline{Node11}{Node113}
        \pgfnodeconnline{Node113}{Node1131}

\pgfnodeconnline{Node1}{Node12}
    \pgfnodeconnline{Node12}{Node121}
        \pgfnodeconnline{Node121}{Node1211}
    \pgfnodeconnline{Node12}{Node122}
        \pgfnodeconnline{Node122}{Node1221}
        \pgfnodeconnline{Node122}{Node1222}
        \pgfnodeconnline{Node122}{Node1223}
        \pgfnodeconnline{Node122}{Node1224}

\pgfnodelabel{Node1}{Node11}[0.5][-0.4cm]{\pgfbox[center,center]{$\frac12$}}
\pgfnodelabel{Node11}{Node112}[0.6][-0.3cm]{\pgfbox[center,center]{$\frac13$}}
\pgfnodelabel{Node112}{Node1124}[0.5][0.3cm]{\pgfbox[center,center]{$\frac14$}}

\pgfputat{\pgfxy(0,0.2)}{\pgfbox[center,bottom]{$Q$}}
\pgfputat{\pgfxy(-2.2,-6.2)}{\pgfbox[center,top]{$Q'$}}

\pgfputat{\pgfxy(2,-0.4)}{\pgfbox[left,center]{$P(Q,Q')=\frac12\cdot\frac13\cdot\frac14$}}

\pgfsetendarrow{\pgfarrowsingle}\pgfxyline(-5.5,0)(-5.5,-2)
\pgfputat{\pgfxy(-5.3,-1)}{\pgfbox[left,center]{Descendants}}
\end{pgfpicture}}

\caption{``Tree-graph'' with vertex $Q$ and its random walk}\label{figure:graph}

\end{figure}

We fix $Q_0\in\dd$, and construct a ``tree-graph'' with a vertex for each $Q\in\dd_{Q_0}$, and with
edges connecting a given $Q$ to each of its dyadic ``children'' (these are the subcubes of
$Q$ which lie in the very next dyadic generation $\dd_{k(Q)+1}$).   We consider a random walk
along the graph, in which it is permitted to move only to the descendant generation, but not to
the ancestral generation (nor to any other cube in the same generation),
and we suppose that from a given
$Q\in \dd_{Q_0}$, there is an equal probability of arriving at any of its children.
We set $P(Q,Q)=1$, and in general for $Q'\subseteq Q\in \dd_{Q_0}$,
we denote by $P(Q,Q')$ the probability that such a random walk beginning at $Q$ arrives at $Q'$ (thus also if $Q$ is strictly contained in $Q'$, or if $Q$ and $Q'$ are disjoint,
we have $P(Q,Q')=0$). See Figure \ref{figure:graph}.

\begin{lemma}\label{lemma:Corona}
Suppose that $\partial\Omega$ is ADR.  Fix $Q_0\in \dd$
and $\mut$ as above.  Let $a\geq 0$ and $b>0$, and suppose that
$\mut(\dd_{Q_0})\leq (a+b)\,\sigma(Q_0).$
Then there is a family $\F=\{Q_j\}\subset\dd_{Q_0}$
of pairwise disjoint cubes, and a constant $C$ depending only on dimension
and the ADR constants such that
\begin{equation} \label{Corona-sawtooth}
\|\mut_\F\|_{\C(Q_0)}
\leq C b,
\end{equation}
\begin{equation}
\label{Corona-bad-cubes}
\sigma(B)
\leq \frac{a+b}{a+2b}\, \sigma(Q_0)\,,
\end{equation}
where $B$ is the union of those $Q_j\in\F$ such that $\mut(\dd^{short}_{Q_j})>a\,\sigma(Q_j)$.
\end{lemma}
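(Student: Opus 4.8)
The plan is to run a standard stopping-time (Calderón–Zygmund-type) selection on $\dd_{Q_0}$, using two thresholds simultaneously. First I would stop a cube $Q\subset Q_0$ and place it in $\F$ when it is the \emph{maximal} dyadic subcube of $Q_0$ satisfying
\[
\mut(\dd_{Q}^{short}) > a\,\sigma(Q)\qquad\text{or}\qquad \mut(\dd_{Q}) > (a+2b)\,\sigma(Q).
\]
(One must be slightly careful with the two cases; the ``$short$'' truncation is needed because we want control on the proper descendants while still measuring the ``mass'' at the stopping scale itself.) Let $\F=\{Q_j\}$ be this pairwise disjoint family of maximal stopping cubes. By maximality, every $Q\in\dd_{\F,Q_0}$ (i.e. every $Q\in\dd_{Q_0}$ not contained in any $Q_j$) satisfies $\mut(\dd_Q)\le (a+2b)\,\sigma(Q)$ — this is the key non-stopping property. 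From this I expect to deduce \eqref{Corona-sawtooth}: for any $Q\in\dd_{Q_0}$, split $\mut_\F(\dd_Q)=\mut(\dd_Q\cap\dd_\F)$. If $Q\in\dd_{\F,Q_0}$ the non-stopping property gives $\mut_\F(\dd_Q)\le\mut(\dd_Q)\le(a+2b)\sigma(Q)$; if $Q$ is contained in some $Q_j$ then $\dd_Q\cap\dd_\F=\emptyset$ so the ratio is $0$; and if $Q$ properly contains some stopping cubes, one sums over the maximal cubes of $\dd_\F$ lying below $Q$ (which are dyadic children's worth of cubes in $\dd_{\F,Q_0}$), uses the non-stopping bound on each together with the disjointness and the ADR doubling of $\sigma$ to pick up only a bounded-overlap constant. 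This yields $\|\mut_\F\|_{\C(Q_0)}\le C(a+2b)$; a small additional argument (peeling off the top cube, or noting $a$ can be absorbed since $\mut(\dd_Q^{short})\le a\sigma(Q)$ off the stopping cubes) upgrades the $a+2b$ to the claimed $Cb$. I expect this refinement — getting $Cb$ rather than $C(a+b)$ on the right — to be the main technical obstacle, and it is presumably where the ``$short$'' truncation and the random-walk/probability bookkeeping of Figure~\ref{figure:graph} enter: one writes $\mut_\F(\dd_Q)$ as a sum over descendants reached before stopping, reorganizes it via the transition probabilities $P(Q,Q')$, and uses that along any non-stopped chain the accumulated mass grows at the rate $b$ per ``unit'' of $\sigma$.

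For \eqref{Corona-bad-cubes}, let $B=\bigcup\{Q_j\in\F:\ \mut(\dd_{Q_j}^{short})>a\,\sigma(Q_j)\}$. On each such $Q_j$ one has, since $Q_j$ has a dyadic parent $\widehat Q_j\in\dd_{\F,Q_0}$ obeying the non-stopping bound, the estimate
\[
\mut(\dd_{Q_j})=\alpha_{Q_j}+\mut(\dd_{Q_j}^{short})\le \mut(\dd_{\widehat Q_j})\le (a+2b)\,\sigma(\widehat Q_j)\lesssim (a+2b)\,\sigma(Q_j),
\]
but more to the point, summing the \emph{lower} bound $\mut(\dd_{Q_j}^{short})>a\,\sigma(Q_j)$ over the disjoint family and combining with the hypothesis $\mut(\dd_{Q_0})\le (a+b)\sigma(Q_0)$ and the sawtooth bound \eqref{Corona-sawtooth} to account for the mass \emph{not} below the bad $Q_j$'s, one gets
\[
a\,\sigma(B) < \sum_{Q_j\subset B}\mut(\dd_{Q_j}^{short}) \le \mut(\dd_{Q_0}) \le (a+b)\,\sigma(Q_0),
\]
which already gives $\sigma(B)\le \tfrac{a+b}{a}\sigma(Q_0)$ — too weak. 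To reach the sharp $\tfrac{a+b}{a+2b}$ one must instead bound $\sum_{Q_j\subset B}\mut(\dd_{Q_j})$ from \emph{below} by $(a+2b)\sigma(Q_j)$ on a suitable sub-collection (the cubes where the \emph{total}, not truncated, mass exceeded the threshold), and from above by $\mut(\dd_{Q_0})\le(a+b)\sigma(Q_0)$, giving $(a+2b)\,\sigma(B)\le(a+b)\,\sigma(Q_0)$; reconciling the two stopping conditions so that the ``bad'' cubes of the statement are exactly those contributing the $(a+2b)$-lower bound is the delicate point, and is handled by noting $\mut(\dd_{Q_j})=\alpha_{Q_j}+\mut(\dd_{Q_j}^{short})$ with $\alpha_{Q_j}\le\mut(\dd_{\widehat Q_j}^{short})\le a\,\sigma(\widehat Q_j)$, so on a bad $Q_j$ one indeed has $\mut(\dd_{Q_j})>2b\,\sigma(Q_j)$ beyond the $a$-part, up to the doubling constant — after which a harmless renormalization of constants in the selection (replacing $a+2b$ by exactly the right threshold and $2b$ by $b$ as needed, using ADR) yields \eqref{Corona-bad-cubes} on the nose. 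I would organize the final write-up so that the selection thresholds are chosen \emph{after} seeing which constants are needed, which is the usual way these lemmas are made to come out clean.

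Throughout, the only external inputs are the ADR property of $\partial\Omega$ (used only through the existence and basic combinatorics of the dyadic grid in Lemma~\ref{lemmaCh}, and through $\sigma$ being dyadically doubling so that $\sigma(\widehat Q_j)\approx\sigma(Q_j)$ for a cube and its parent), plus elementary manipulation of the additive set function $\dd'\mapsto\mut(\dd')$. No harmonic analysis or PDE is involved; this is a purely combinatorial Carleson-packing argument, and the whole difficulty is bookkeeping the constants so that the sawtooth Carleson norm depends only on $b$ and the measure of the bad set carries the factor $\tfrac{a+b}{a+2b}$.
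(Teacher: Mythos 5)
Your proposal correctly recognizes the general shape of the argument (a dyadic stopping-time selection plus a Carleson packing inequality with sharp constant tracking), and it honestly flags the two places where the naive approach breaks down. But at both of those places the argument you sketch does not actually close, and the reason is that your choice of stopping-time quantity is not the one that makes the lemma work.

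You stop on the Carleson totals $\mut(\dd_Q^{short})$ and $\mut(\dd_Q)$. The paper stops on a different quantity: the random-walk average
$\beta(Q') = \sum_{Q:\,Q'\subseteq Q\subseteq Q_0} P(Q,Q')\,\alpha_Q$,
selecting the maximal $Q_j$ with $\beta(Q_j) > 2b\,\sigma(Q_j)$ (after disposing of the trivial case $\alpha_{Q_0}>b\,\sigma(Q_0)$). You mention the transition probabilities $P(Q,Q')$, but only as a device for \emph{reorganizing} $\mut_\F(\dd_Q)$ after the fact; in the actual proof they appear earlier, inside the definition of the object one stops on. This matters for both conclusions.

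For \eqref{Corona-sawtooth}: with your rule, the non-stopped cubes give $\mut(\dd_Q)\le(a+2b)\sigma(Q)$ and hence $\mut_\F(\dd_Q)\le (a+2b)\sigma(Q)$, which has the wrong dependence (it cannot be ``absorbed'' or ``peeled off'' when $a\gg b$). With $\beta$, one approximates $\dd_{\F,Q}$ by $\dd_{\F_N,Q}$ (where $\F_N$ covers $Q$ entirely), uses the normalization $\sum_{Q_i^N\subseteq Q'} P(Q',Q_i^N)=1$ to write $\mut(\dd_{\F_N,Q})$ as a sum over the leaves $Q_i^N\in\F_N\cap\dd_Q$, and then the inner sum over ancestors $Q'$ of $Q_i^N$ is dominated by $\beta(Q_i^N)$ (or $\beta(\widetilde Q_j)$ for the dyadic parent, using the ADR doubling). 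Every such $\beta$ is $\le 2b\,\sigma(\cdot)$ by non-stopping or by maximality. That is exactly how the constant comes out as $Cb$, with no $a$.

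For \eqref{Corona-bad-cubes}: you correctly observe that the naive packing $a\,\sigma(B)<\sum\mut(\dd_{Q_j}^{short})\le\mut(\dd_{Q_0})$ is too weak, and then try to supply the missing $2b\,\sigma(Q_j)$ from the second stopping condition. But your bad set is defined by $\mut(\dd_{Q_j}^{short})>a\,\sigma(Q_j)$, which does not imply $\mut(\dd_{Q_j})>(a+2b)\sigma(Q_j)$, and the estimate you invoke ($\alpha_{Q_j}\le\mut(\dd_{\widehat Q_j}^{short})\le a\,\sigma(\widehat Q_j)$) is an \emph{upper} bound on $\alpha_{Q_j}$ when a \emph{lower} bound would be needed; the claim ``on a bad $Q_j$ one has $\mut(\dd_{Q_j})>2b\,\sigma(Q_j)$ beyond the $a$-part'' does not follow. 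In the paper, every stopping cube automatically carries the extra $\beta(Q_j)>2b\,\sigma(Q_j)$, and the identity
$\mut(\dd^{short}_{Q_j}) + \beta(Q_j) = \mut(\dd_{Q_j}) + \sum_{Q_j\subsetneq Q\subseteq Q_0} P(Q,Q_j)\,\alpha_Q$,
together with $\sum_{Q_j\in\F}P(Q,Q_j)\le 1$, yields the \emph{non-trivial} packing bound
$\sum_{Q_j\in\F}\big(\mut(\dd^{short}_{Q_j}) + \beta(Q_j)\big)\le\mut(\dd_{Q_0})$,
from which $(a+2b)\,\sigma(B)\le(a+b)\,\sigma(Q_0)$ drops out immediately for the bad cubes. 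None of this is available with the stopping rule you chose, so the gap is structural, not a matter of ``renormalizing constants after the fact.''
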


\begin{remark}\label{remark:general-doubling-sawtooth}
In the proof of this result, the only feature of $\sigma$ that we shall use,
is that it is a non-negative Borel measure satisfying the ``dyadically doubling property on $Q_0$'' (by this we mean that there is a uniform constant $c_\sigma$ such that $\sigma(\widetilde{Q})\leq c_\sigma\,\sigma(Q)$, whenever $\widetilde{Q}\in\dd_{Q_0}$
is the dyadic parent of $Q$). Notice that this property follows at once for our measure $\sigma= H^n\big|_{\partial\Omega}$ by the ADR property. Therefore, Lemma \ref{lemma:Corona} admits an extension in which $\sigma$ can be any non-negative dyadically doubling Borel measure on $Q_0$.
\end{remark}

\begin{proof} We note that $\mut(\dd_{Q_0}) = \mut(\dd^{short}_{Q_0}) + \alpha_{Q_0}$.  Thus,
if $\alpha_{Q_0} > b\sigma(Q_0)$, the result is trivial:  in this case
$\mut(\dd^{short}_{Q_0})\leq a\sigma(Q_0)$, so we may set $\F=\{Q_0\}$, and $B=\emptyset$.

Suppose now that $\alpha_{Q_0} \leq b\,\sigma(Q_0)$.  For $Q'\in \dd_{Q_0}$, we set
$$\beta(Q'):= \sum_{Q:\, Q'\subseteq Q\subseteq Q_0}P(Q,Q')\,\alpha_{Q}.$$
In particular, $\beta(Q_0)=\alpha_{Q_0}\leq b\,\sigma(Q_0).$
We now perform a standard stopping time argument
to select the collection $\F=\{Q_j\}$, comprised of the subcubes of $Q_0$
which are maximal with respect to the property that
\begin{equation}\label{eq6.4a}\beta(Q_j)> 2b\sigma(Q_j).
\end{equation}
If $\F$ is empty, we simply have that $\dd_\F=\dd$, $\mut_\F=\mut$
and $B=\emptyset$.

We now verify that $\F$ satisfies the desired properties.  We start by proving \eqref{Corona-bad-cubes}.
To this end let us record some useful facts.  We first note that, given a fixed $Q\in\dd_{Q_0}$,
\begin{equation}\label{eq6.4} \sum_{Q_j\in\F} P(Q,Q_j)=
\sum_{Q_j\in\F:\,Q_j\subseteq Q} P(Q,Q_j) \,\leq \,1\,,
\end{equation}
since the cubes in $\F$, and therefore also the events in the sum, are disjoint.
Next, we note that since $P(Q_j,Q_j) = 1$,
\begin{equation}\label{eq6.5}
\mut(\dd^{short}_{Q_j}) + \beta(Q_j) = \mut(\dd_{Q_j})
+\sum_{Q:\,Q_j\subsetneq Q\subseteq Q_0}
P(Q,Q_j)\,\alpha_Q\,,
\end{equation}
where the last sum runs over those $Q\in \dd_{Q_0}$ that {\it strictly} contain $Q_j$.
Consequently,
\begin{multline}\label{eq6.6}\sum_{Q_j\in\F}
\Big(\mut(\dd^{short}_{Q_j}) + \beta(Q_j)\Big)\\[4pt]
=\,\sum_{Q_j\in\F}\mut(\dd_{Q_j}) \,\,
+\sum_{Q\in \dd_{\F,Q_0}}\alpha_Q\sum_{Q_j\in\F:\,Q_j\subsetneq Q}
P(Q,Q_j)\,\,\leq\,\,\mut(\dd_{Q_0})\,,
\end{multline}
by \eqref{eq6.4} and the definition of $\dd_{\F,Q_0}$
(cf. \eqref{eq2.discretesawtooth2}).

We now set $\F_{bad}:=\{Q_j\in\F:\, \mut(\dd^{short}_{Q_j})>a\,\sigma(Q_j)\}$.  Then
by definition of $B$ and the stopping time construction,
\begin{multline*}(a+2b)\,\sigma(B)\,=\,(a+2b)\sum_{Q_j\in\F_{bad}}\sigma(Q_j)\\[4pt]
\leq\, \sum_{Q_j\in\F_{bad}}
\Big(\mut(\dd^{short}_{Q_j}) + \beta(Q_j)\Big)
\leq \,\mut(\dd_{Q_0})\,\leq\, (a+b)\,\sigma(Q_0)\,,\end{multline*}
where in the last line we have used \eqref{eq6.6} and our hypothesis.  Estimate \eqref{Corona-bad-cubes}  follows.

We now turn to the proof of $\eqref{Corona-sawtooth} $.  Let us fix $Q\in\dd_{Q_0}$.
We consider
$$\mut_\F(\dd_Q)= \mut(\dd_{\F,Q})=\sum_{Q'\in\dd_{\F,Q}}\alpha_{Q'}=\lim_{N\to\infty}
\sum_{Q'\in \dd_{\F_N,Q}}\alpha_{Q'}=\lim_{N\to\infty}\mut(\dd_{\F_N,Q})\,,$$
where $\F_N:=\F(2^{-N-1})$ is derived from $\F$ as in the discussion at the
beginning of Section \ref {spoincare};  i.e.,
$\F_N=\{Q_i^N\}$ is the collection of maximal cubes of
$$\F\cup\{Q'\in \dd_Q:
\ell(Q')\leq 2^{-N-1}\}\,.$$
Thus,
$$\dd_{\F_N,Q} =\left\{Q'\in \dd_{\F,Q}:\, \ell(Q')\geq 2^{-N}\right\}\,,\qquad N\geq k(Q).$$
It is therefore enough to establish the bound
\begin{equation}\label{eq6.7}
\mut(\dd_{\F_N,Q})\,\leq \,Cb\, \sigma(Q)
\end{equation}
uniformly in $N$.    To this end, we observe that equality holds in \eqref{eq6.4},
for a given cube $Q$ and pairwise disjoint family $\F$, if
$Q$ is covered by a union of cubes in $\F$.  Since this is the case for the family $\F_N$
and for every $Q' \in \dd_{\F_N,Q}$,  we have
\begin{multline*}
\mut(\dd_{\F_N,Q})
=
\sum_{Q'\in \dd_{\F_N,Q}}\alpha_{Q'}\sum_{Q_i^N\in\F_N:\, Q_i^N\subseteq Q'} P(Q',Q^N_i)
\\[4pt]
=
\sum_{Q_i^N\in\F_N \cap \dd_Q}\,\,\sum_{Q':Q_i^N\subset Q'\in \dd_{\F_N,Q}}
P(Q',Q^N_i)\,\alpha_{Q'} \,=\,\Sigma_1 \,+\,\Sigma_2\,,
\end{multline*}
where in $\Sigma_1$ the first sum runs over those $Q_i^N\in \F_N \cap \dd_Q$ which are equal to some $Q_j\in\F$ (i.e., $Q_i^N\in \F_N\cap\F\cap \dd_Q$),
while in $\Sigma_2$ the first sum  runs over the remaining cubes in $\F_N \cap\dd_Q$  (i.e., over $Q_i^N\in (\F_N\setminus\F)\cap \dd_Q$, equivalently those $Q_i^N$ which are not contained
in any $Q_j\in\F$). We then have
\begin{multline*}
\Sigma_2\,=\,\sum_{Q_i^N\in (\F_N\setminus\F)\cap \dd_Q} \Big(\sum_{Q':Q_i^N\subset Q'\in \dd_{\F_N,Q}}
P(Q',Q^N_i)\,\alpha_{Q'}\Big)
\\[4pt]
\leq\,
\sum_{Q_i^N\in (\F_N\setminus\F)\cap \dd_Q} \beta(Q_i^N)\, \leq \,2b\sum_{Q_i^N\in (\F_N\setminus\F)\cap \dd_Q}\sigma(Q^N_i)
\,\leq \,2b\, \sigma(Q),\end{multline*}
by the stopping time construction of $\F$, since $Q_i^N$ is not contained in any
$Q_j\in \F$, and $Q_i^N\in\dd_Q$.

We now consider $\Sigma_1$.   We first note that
no $Q'$ appearing in the sum
can be contained in any $Q_j\in\F$, since
$\dd_{\F_N,Q}\subset \dd_{\F,Q}$.  Therefore, if some $Q_j\in\F$ is contained in
any such $Q'$, then so is its dyadic parent $\widetilde{Q}_j$.  Moreover,
\begin{equation*}
P(Q',Q_j)\leq P(Q',\widetilde{Q}_j)\,,\qquad \forall Q_j\in\F\,. \end{equation*}
We then have that, by definition,
\begin{multline*}
\Sigma_1 \,
\leq
\,\sum_{Q_j\in\F} \,\sum_{Q':\widetilde{Q}_j\subseteq Q'\subseteq Q} P(Q',\widetilde{Q}_j)\,\alpha_{Q'}
\leq \sum_{Q_j\in\F:\,Q_j\subset Q}\beta(\widetilde{Q}_j)\\[4pt]
\leq 2b\!\sum_{Q_j\in\F:\, Q_j\subset Q}\sigma(\widetilde{Q}_j) \leq Cb\,\sigma (Q),\end{multline*}
where the next-to-last inequality holds because the cubes $Q_j$ are maximal with respect to
the property \eqref{eq6.4a}, and the last one holds by the dyadic doubling property of $\sigma$
(see Remark \ref{remark:general-doubling-sawtooth}),
and the pairwise disjointness of the cubes in $\F$.
\end{proof}

\section{Proofs of Theorems \ref{theor-main-I} and \ref{theor-main-I:BP}}
\subsection{Relating geometric and discrete Carleson measures}

We recall that the UR property may be characterized in terms of the Carleson measure estimate
\eqref{eq1.sf}, which we shall invoke with $E=\partial\Omega$.  We also remind the reader
that we may assume that for every Whitney cube $I\in\W$, we have
$\dist(4I,\partial\Omega)\approx\ell(I)$ (cf. \eqref{eq4.1}).   In this case, the ``fattened'' Whitney cubes
$4I$ have bounded overlaps.  From this fact, properties of Whitney cubes,
and the ADR property, it follows that the fattened Whitney
regions $U_Q^{fat}= \cup_{\W^*_Q} 4I$ (cf. \eqref{eq2.whitney1}-\eqref{eq2.whitney2}
and \eqref{eq2.whitney3fat}) also have the bounded overlap property:
\begin{equation}\label{eq7.1}\sum_{Q\in\dd} 1_{U_Q^{fat}}(X) \leq C\,.\end{equation}
We now set
\begin{equation}\label{eq7.2a}
\alpha_Q := \dint_{U_Q^{fat}}|\nabla^2 \mathcal{S}1(X)|^2 \,\delta(X) \,dX \,,\end{equation}
and for any sub-collection $\dd'\subset\dd$, we define
\begin{equation}\label{eq7.3a}\mut(\dd'):= \sum_{Q\in\dd'}\alpha_Q\,,
\end{equation}
as in the Section \ref{scorona}.  By \eqref{eq7.1}, for every
pairwise disjoint family $\F\subset\dd$, and every $Q\in\dd_\F$, we have
\begin{equation}\label{eq7.2}
\mut_\F(\dd_Q)=
\mut(\dd_{\F,Q}) \approx \dint_{\Omega_{\F,Q}^{fat}}|\nabla^2 \mathcal{S}1(X)|^2 \,\delta(X) \,dX
\end{equation}
where $\Omega_{\F,Q}^{fat}$ is defined in \eqref{eq2.sawtooth2fat} (we have used in \eqref{eq7.2}
the rather trivial fact that $\partial\Omega_{\F,Q}^{fat}$ has $(n+1)$-dimensional Lebesgue measure 0).
In particular, taking $\F=\emptyset$, in which case $\dd_{\F,Q}=\dd_Q$,
and $\Omega_{\F,Q}^{fat}=T_{Q}^{fat}$ (cf.
\eqref{eq2.boxfat}), we obtain from \eqref{eq4.13} that $\mut$ inherits the Carleson measure property
\eqref{eq6.0} from \eqref{eq1.sf}, and that the Carleson norm $\|\mut\|_\C$ depends only on dimension
and the various ADR, UR, Corkscrew and Harnack Chain constants for $\Omega$ (including $K_0$).

\subsection{Proof of Theorem \ref{theor-main-I}  with ``qualitative assumptions''}\label{ssqual}
In this subsection, we present the proof of Theorem \ref{theor-main-I}, in the special case that
the  qualitative exterior Corkscrew condition holds in $\Omega$
(and therefore also  in its sawtooths and Carleson boxes).
As we observed in Section \ref{section-fund-est}, this qualitative hypothesis
(along with our standard quantitative assumptions),
were enough to imply the doubling condition for the harmonic measure for $\Omega$ and the sawtooth regions, and
also to allow us to obtain the ``Dyadic Sawtooth'' Lemma \ref{lemma:DJK-dyadic-proj}.
We shall remove the qualitative assumptions, and also give the proof of Theorem
\ref{theor-main-I:BP}, in subsection \ref{sremove}.

We shall use the method of ``extrapolation of Carleson measures'', based on ideas originating
in \cite{CG} and \cite{LM} (cf. \cite{HL}, \cite{AHLT}, \cite{AHMTT}, \cite{HM}).  In more precise detail, we follow our related work in the Euclidean setting \cite{HM}.
In the sequel, we say that a measure $\mu$ is ``dyadically doubling on $Q_0$'' if
there is a uniform constant $c_\mu$ such that $\mu(\widetilde{Q})\leq c_\mu\,\mu(Q)$, whenever $\widetilde{Q}\in\dd_{Q_0}$
is the dyadic parent of $Q$.

\begin{lemma}\label{lemma:extrapol}
We fix $Q_0\in \dd$.
Let $\sigma$ and $\omega$ be a pair of
non-negative,
dyadically doubling Borel measures on $Q_0$,
and let $\mut$ be a discrete Carleson measure with respect to $\sigma$
(cf. Section \ref{scorona}) with
$$\|\mut\|_{\C(Q_0)}\le M_0.$$
Suppose that there is a $\gamma>0$ such that for every $Q\in \dd_{Q_0}$ and every family of pairwise disjoint dyadic subcubes  $\F=\{Q_j\}\subset \dd_{Q}$
verifying
\begin{equation}\label{extrap:Carleson-delta}
\|\mut_{\mathcal{F}}\|_{\mathcal{C}(Q)}
\le\gamma\, ,
\end{equation}
we have that $\P_\F \,\omega$ satisfies the following property:
\begin{equation}\label{extrap:Ainfty:Pw}
\forall\,\varepsilon\in (0,1),\ \exists\, C_\varepsilon>1 \mbox{ such that }
\Big(
F\subset Q,\ \ \frac{\sigma(F)}{\sigma(Q)}\ge
\varepsilon\quad \Longrightarrow \quad
\frac{\P_\F \,\omega(F)}{\P_\F\, \omega(Q)}\ge \frac1{C_\varepsilon}\Big).
\end{equation}
Then, there exist $\eta_0\in(0,1)$  and $C_0<\infty$
such that, for every  $Q\in \dd_{Q_0}$,
\begin{equation}\label{extrap:Ainfty:w}
F\subset Q,\quad \frac{\sigma(F)}{\sigma(Q)}\ge
1-\eta_0\quad \Longrightarrow \quad \frac{\omega(F)}{\omega(Q)}\ge \frac1{C_0}.
\end{equation}
I.e., $\omega \in A^{\rm dyadic}_\infty(Q_0)$.
\end{lemma}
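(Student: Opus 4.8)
The plan is to run the extrapolation (bootstrapping) scheme exactly in the spirit of \cite{CG}, \cite{LM}, and \cite{HM}. Fix $Q_0\in\dd$, and let $M_0$ be the bound on $\|\mut\|_{\C(Q_0)}$ and $\gamma$ the threshold from the hypothesis. The idea is to induct on the ``depth'' of iterated stopping-time decompositions needed to bring the Carleson norm of $\mut$ (restricted to sawtooths) below $\gamma$. Concretely, set $K:=\lceil M_0/\gamma\rceil$ (up to a dimensional factor coming from Lemma \ref{lemma:Corona}), and the goal will be to show that for every $Q\in\dd_{Q_0}$ one can write $Q$ as a union of ``good'' subregions after at most $K$ generations of stopping cubes, at each stage of which one is allowed to invoke hypothesis \eqref{extrap:Ainfty:Pw} with a projection operator.

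First I would set up the induction. Apply Lemma \ref{lemma:Corona} to $\mut$ on $\dd_{Q_0}$, with a suitable choice of $a$ and $b$ with $b$ comparable to $\gamma$ (so that $Cb\le\gamma$ in \eqref{Corona-sawtooth}), obtaining a pairwise disjoint family $\F^1=\{Q_j^1\}\subset\dd_{Q_0}$ with $\|\mut_{\F^1}\|_{\C(Q_0)}\le\gamma$ and with control \eqref{Corona-bad-cubes} on the ``bad'' cubes $B_1$ where $\mut(\dd^{short}_{Q_j})>a\,\sigma(Q_j)$. On the complement of $B_1$ — that is, on the ``good'' stopping cubes — the Carleson norm of $\mut$ restricted to $\dd_{Q_j}$ is already $\le\gamma$, so we recurse into those cubes with one less unit of ``budget''; on the bad cubes $B_1$ we still have $\|\mut\|_{\C(Q_j)}\le M_0$ but we have used \eqref{Corona-bad-cubes} to ensure $\sigma(B_1)\le(1-c)\sigma(Q_0)$ for a fixed $c=c(M_0,\gamma,\mathrm{ADR})>0$. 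Iterating this $K$ times, and multiplying the $\sigma$-measure losses, one finds a constant $\eta_0=\eta_0(M_0,\gamma,n,\mathrm{ADR})\in(0,1)$ so that after $K$ stages the ``fully bad'' set has $\sigma$-measure $<\eta_0\,\sigma(Q_0)$; outside it, $Q_0$ decomposes into a (possibly infinite, but $\sigma$-almost-everywhere disjoint) union of stopping cubes along which $\mut$ has small Carleson norm.

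Second, I would transfer the consequence \eqref{extrap:Ainfty:Pw} along this tower. At the first stage, the family $\F^1$ satisfies \eqref{extrap:Carleson-delta} on $Q_0$, so $\P_{\F^1}\omega$ satisfies \eqref{extrap:Ainfty:Pw}: if $F\subset Q_0$ with $\sigma(F)/\sigma(Q_0)\ge\eps$ then $\P_{\F^1}\omega(F)/\P_{\F^1}\omega(Q_0)\ge 1/C_\eps$. The key algebraic point is that $\P_{\F^1}\omega(Q_0)=\omega(Q_0)$ (since $Q_0\in\dd_{\F^1}$), and that for a set $F$ which is a union of the good stopping cubes one has $\P_{\F^1}\omega(F)=\sum_{Q_j^1\subset F}\omega(Q_j^1)$ (up to the $\sigma$-proportionality on the boundary cubes, which is harmless), so one gets a \emph{lower} bound $\omega(\bigcup\text{good }Q_j^1)\gtrsim 1\cdot\omega(Q_0)$ whenever the good cubes carry a fixed proportion of $\sigma(Q_0)$. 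One then iterates: inside each good $Q_j^1$ the restricted Carleson norm is $\le\gamma$, so we may again pick $\F^2\subset\dd_{Q_j^1}$ with $\|\mut_{\F^2}\|_{\C(Q_j^1)}\le\gamma$ and apply \eqref{extrap:Ainfty:Pw} relative to $Q_j^1$, chaining the lower bounds; the dyadic doubling of $\omega$ and the standard ``$A_\infty$-from-a-lower-bound'' self-improvement (in the dyadic setting, this is elementary) let one pass from ``$\omega(\text{good part})\gtrsim\omega(Q)$'' to the quantitative statement \eqref{extrap:Ainfty:w} with $\eta_0$ as produced in the first step, for $Q=Q_0$; and since the whole argument was carried out for an arbitrary $Q\in\dd_{Q_0}$ in place of $Q_0$ (the hypotheses are stated uniformly over such $Q$), we conclude $\omega\in A_\infty^{\mathrm{dyadic}}(Q_0)$.

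The main obstacle I expect is the bookkeeping in the iteration: one must verify that the constant $C_\eps$ furnished by \eqref{extrap:Ainfty:Pw} does \emph{not} degenerate as the induction deepens — i.e., that after $K$ stages one still has a \emph{uniform} lower bound, with $K$ and the loss per stage both controlled purely by $M_0,\gamma$, and the ADR constants. This requires choosing $a,b$ in Lemma \ref{lemma:Corona} so that the per-stage $\sigma$-loss factor in \eqref{Corona-bad-cubes} is a fixed constant $<1$ independent of the stage, while keeping $b$ proportional to $\gamma$; and it requires care with the fact that $\P_\F$ at each stage is built from the family selected \emph{at that stage} on \emph{that cube}, so the projections at different stages are genuinely different operators and the chaining of lower bounds must be done multiplicatively, with each factor of the form $1/C_\eps$ for a \emph{fixed} $\eps$ depending only on the per-stage geometric loss. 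Handling the $\sigma$-boundary cubes (those $Q_j$ only partially inside the test set $F$) and the switch between $\P_\F$-mass and genuine $\omega$-mass via the identity $\P_\F\omega(Q)=\omega(Q)$ and dyadic doubling is routine but must be stated cleanly. All other ingredients — Lemma \ref{lemma:Corona}, dyadic doubling, and the elementary dyadic self-improvement — are already available.
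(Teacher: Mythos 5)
Your high-level plan --- iterate the discrete Corona decomposition (Lemma \ref{lemma:Corona}) roughly $M_0/\gamma$ times, invoking hypothesis \eqref{extrap:Ainfty:Pw} via the projection $\P_\F$ at each stage --- is the right general strategy, but the recursion mechanism contains a genuine error. You assert that on the ``good'' stopping cubes $Q_j$ (those with $\mut(\dd^{\rm short}_{Q_j})\le a\,\sigma(Q_j)$) ``the Carleson norm of $\mut$ restricted to $\dd_{Q_j}$ is already $\le\gamma$''. This is false. Lemma \ref{lemma:Corona} gives smallness of the \emph{sawtooth} restriction $\|\mut_\F\|_{\C(Q)}\le Cb$, but $\mut_\F(\dd_{Q'})=0$ for every $Q'\subseteq Q_j$, since the sawtooth $\dd_\F$ does not enter the stopping cubes; inside a good $Q_j$ the lemma controls only the \emph{total mass} $\mut(\dd^{\rm short}_{Q_j})\le a\,\sigma(Q_j)$, and the genuine Carleson norm $\|\mut\|_{\C(Q_j)}=\sup_{Q'\in\dd_{Q_j}}\mut(\dd_{Q'})/\sigma(Q')$ can still be as large as $M_0$ (for instance if all the mass sits on one very deep subcube of $Q_j$). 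So the ``budget'' you propose to decrease at each generation --- a Carleson norm --- simply does not decrease, and the proposed tower of stopping times cannot terminate in $\lceil M_0/\gamma\rceil$ steps as written.

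The paper avoids this by making the \emph{total-mass threshold}, not the Carleson norm, the parameter of the induction: the hypothesis $H(a)$ asserts the desired conclusion for all cubes $Q$ with $\mut(\dd_Q)\le a\,\sigma(Q)$, and one proves $H(a)\Rightarrow H(a+b)$ with $Cb=\gamma$. Inside a good $Q_j$ one does not re-run a Corona decomposition at smaller Carleson norm; instead one pigeonholes among the children of $Q_j$ to find a child $Q_j'$ with $\mut(\dd_{Q_j'})\le a\,\sigma(Q_j')$, applies $H(a)$ there, and then combines with \eqref{extrap:Ainfty:Pw} for the full family $\F$. Relatedly, your sketch only obtains a lower bound for $\omega$ of a \emph{union of good stopping cubes}, while \eqref{extrap:Ainfty:w} requires the bound for an \emph{arbitrary} $F$ with $\sigma(F)/\sigma(Q)$ close to $1$; the passage is not ``elementary dyadic self-improvement'' but needs the pigeonhole Lemma \ref{lemma:selection} (to identify which good cubes have ample overlap with $F$), the inductive estimate $\omega(Q_j'\cap F)\ge \omega(Q_j')/C_a$, and the key inequality $\P_\F\,\omega\big(F\cap(E_0\cup\widetilde{G}_1)\big)\lesssim\omega(F)$, which uses dyadic doubling of $\omega$ together with the inductive bound. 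These are exactly the steps your plan glosses over, and they are where the substance of the proof lies.
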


\begin{remark}
Notice that in the statement of the lemma, $\sigma$ and $\omega$
are allowed to be any pair of non-negative, dyadically doubling Borel measures on $Q_0$,  and
that $\sigma$ plays the role of underlying measure. Therefore,  $\sigma$ appears
implicitly in the Carleson conditions, $\P_\F$ and in the definition of the class
$A^{\rm dyadic}_\infty(Q_0)$.
In the present paper,
we shall apply this result in the special case that $\sigma= H^n\big|_{\partial\Omega}$, and
$\omega=\hm^{X_{Q_0}}$, the harmonic measure with pole at the Corkscrew point $X_{Q_0}$.
\end{remark}

\begin{remark} It is known that \eqref{extrap:Ainfty:w}, for every $Q\in\dd_{Q_0}$,
self-improves to \eqref{eq1.ainftydyadic}, but this fact may also be gleaned
from Remark \ref{remark:Ainfty} below. \end{remark}

\begin{remark} The key hypothesis of the lemma,
and the main point that must be verified in applications,
is that (\ref{extrap:Carleson-delta}) implies (\ref{extrap:Ainfty:Pw}), for sufficiently small
$\gamma$.
\end{remark}

In the remainder of this subsection, we shall use Lemma \ref{lemma:extrapol} to prove Theorem
\ref{theor-main-I}, assuming the extra qualitative exterior corkscrew condition.
The qualitative hypothesis  will then be removed in the next subsection.
We defer the proof of the lemma to Section \ref{s8}.

\begin{proof}[Proof of Theorem \ref{theor-main-I} with qualitative  hypothesis]
To begin, we let $\sigma= H^n\big|_{\partial\Omega}$, which is dyadically doubling by the ADR property. Let us fix $Q_0\in\dd$, and set $\hm := \hm^{X_{Q_0}}$, where as usual
$X_{Q_0}$ is a Corkscrew point relative to $Q_0$.  Given the qualitative hypothesis,
it holds in particular that $\hm$ is a doubling measure
(cf. Corollary \ref{cor2.double}), and therefore also dyadically doubling, on
$Q_0$ (to obtain dyadic doubling when $\ell(Q)\approx \ell(Q_0)$,  we may need to invoke
the Harnack Chain condition);
moreover, the doubling constants depend only
upon the constants in the {\it quantitative} hypotheses of Theorem \ref{theor-main-I} (i.e., dimension,
UR, ADR, Harnack Chain and Corkscrew, including the constant $K_0$ which ultimately depended
only upon the other stated parameters).  We define
$\mut$ as in \eqref{eq7.3a}, with $\alpha_Q$ as in \eqref{eq7.2a}.  As observed in the previous subsection, this $\mut$ inherits the discrete Carleson measure property
\eqref{eq6.0} from \eqref{eq1.sf}.    Therefore, once we have verified that
(\ref{extrap:Carleson-delta}), with $\gamma$ small enough, implies (\ref{extrap:Ainfty:Pw}),
we may then conclude from Lemma \ref{lemma:extrapol} that $\omega =\hm^{X_{Q_0}}
\in A_\infty^{\rm dyadic}(Q_0)$, for every $Q_0\in\dd$, and thus by the Harnack Chain condition
that $\hm^{X_{Q_0}} \in A_\infty^{\rm dyadic}(Q_1)$, for every $Q_1$ of the same generation as
$Q_0$ such that $\dist(Q_0,Q_1) \leq 100\diam(Q_0)$.    Since this is true for every
$Q_0\in\dd$, and since $\hm^{X_{Q_0}}$ is {\it concentrically}
doubling, we may conclude that $\hm^{X_\Delta} \in A_\infty(\Delta),$ for every surface ball
$\Delta =\Delta(x,r),\, x\in \partial\Omega$ and $r\leq \diam (\partial\Omega)$, with
$A_\infty$ constants uniformly controlled, and depending only upon
dimension and the UR, ADR, Harnack Chain and Corkscrew constants.
We reached this conclusion by imposing the extra qualitative exterior corkscrew condition,
but as our estimates do not depend quantitatively on that hypothesis,
we shall be able, in
subsection \ref{sremove}, to remove it
by an approximation argument, but at the
the loss of the doubling property of $\hm$.

To complete our task in this subsection, it now remains only to verify that (\ref{extrap:Carleson-delta}),
with $\gamma$ small enough, implies (\ref{extrap:Ainfty:Pw}).
To this end, we fix a $Q\in \dd_{Q_0}$ and a pairwise disjoint family $\F\subset \dd_Q$, and
we suppose that \eqref{extrap:Carleson-delta} holds for some small $\gamma$ to be chosen
momentarily.  By \eqref{eq7.2}, we deduce that
\begin{equation}\label{eq7.11}
\sup_{Q'\in\dd_Q}\frac1{\sigma(Q')}
\dint_{\Omega_{\F,Q'}^{fat}}|\nabla^2 \mathcal{S}1(X)|^2 \,\delta(X) \,dX\, \leq\, C\gamma.
\end{equation}
Consequently, if $\gamma$ is small enough, depending only upon the allowable quantitative parameters, we may apply Corollary \ref{cor4.18}, with $Q$ in place of $Q_0$, to
obtain, for every  surface ball $\td =B\cap\partial \Omega_{\F,Q}$, with
$B=B(x,r),\, x\in \partial\Omega_{\F,Q}$, and $r\leq \diam(Q)$, that
$\tom^{X_{\td}} \in A_\infty(\td)$, where $\tom^X$ denotes harmonic measure for
$\Omega_{\F,Q}$. Moreover, the $A_\infty$ constants are uniformly controlled by the
stated parameters.  By the Harnack Chain condition,
we obtain that $\tom^{A_Q} \in A_\infty(\partial\Omega_{\F,Q})$
(meaning that we view $\partial\Omega_{\F,Q}$ itself as a surface ball $\td^Q$ of radius
$r(\td^Q)\approx K_0\,\ell(Q)$, and that $\tom^{A_Q} \in A_\infty(\td^Q)$)
where $A_Q$ is the simultaneous Corkscrew point
produced in Corollary \ref{cor5.5}, applied with $Q$ in place of $Q_0$.

Let $P_\F \nu$ be defined as
in Lemma \ref{lemma:DJK-dyadic-proj}, but  again with $Q$ in place of $Q_0$.
We shall prove in Lemma \ref{lemmaB.4} (Appendix \ref{appendixB} below) that
$P_\F \nu\in A_\infty^{\rm dyadic} (Q)$.  Thus, by Lemma \ref{lemma:DJK-dyadic-proj},
with $Q$ in place of $Q_0$, we obtain that $P_\F\hm^{A_Q} \in A_\infty^{\rm dyadic} (Q)$.
We may then use Corollary \ref{cor2.poles} (here we only consider the case that $Q$ is not contained in any $Q_j$, otherwise $\P_\F\hm^{X_{Q_0}} \in A_\infty^{\rm dyadic} (Q)$ trivially)
along with the Harnack Chain condition
and a differentiation argument, to replace the pole $A_Q$ by $X_{Q_0}$, the Corkscrew point for
the ambient cube $Q_0$, and to conclude that
$\P_\F\hm =\P_\F\hm^{X_{Q_0}} \in A_\infty^{\rm dyadic} (Q)$.   In particular,
(\ref{extrap:Ainfty:Pw}) holds, by Lemmas \ref{lemma:CF-dyadic} and \ref{lemma:dyad-doubling-proj} in Appendix \ref{appendixB}.
\end{proof}

\subsection{Removing the qualitative  hypothesis,  and conclusion of the proofs
of Theorems \ref{theor-main-I} and \ref{theor-main-I:BP}}\label{sremove}

In this subsection, we first complete the proof of Theorem \ref{theor-main-I}
(modulo the proof of Lemma \ref{lemma:extrapol}
and the technical lemmata that we have deferred
to Appendices), by
removing the  qualitative exterior corkscrew condition.
We then conclude by
giving the proof of Theorem \ref{theor-main-I:BP}.

We define approximating domains as follows.
For each large integer $N$,  set $\F_N := \dd_N$.  We then let
$\Omega_N := \Omega_{\F_N}$ denote the usual (global) sawtooth with respect to the family
$\F_N$ (cf. \eqref{eq2.whitney2}, \eqref{eq2.whitney3} and \eqref{eq2.sawtooth1}.)  Thus,
\begin{equation}\label{eq7.on}
\Omega_N =\interior\left(\bigcup_{Q\in \dd:\,\ell(Q)\geq 2^{-N+1}}U_Q\right),
\end{equation}
so that $\overline{\Omega_N}$ is the union of fattened Whitney cubes $I^*=(1+\lambda)I$, with $\ell(I)\gtrsim 2^{-N}$, and the boundary of $\Omega_N$ consists of portions of faces of $I^*$ with $\ell(I)\approx 2^{-N}$.
By virtue of Lemma \ref{lemma2.30}, each $\Omega_N$ satisfies the ADR,
Corkscrew and Harnack Chain properties.   Moreover, $\partial\Omega_N$ is UR.
We defer the proof of the UR property to Appendix \ref{appendix:approx}.
We note that, for each of these properties, the constants are uniform in $N$, and depend only on dimension and on the corresponding constants for $\Omega$.
In addition, by construction, $\Omega_N$ has exterior corkscrew points at all scales $\lesssim 2^{-N}$. By Lemma \ref{lemma:inher-quali}, the same statement applies to the Carleson boxes $T_Q$ and $T_\Delta$, and to the sawtooth domains $\Omega_\F$ and $\Omega_{\F,Q}$  (all of them relative to $\Omega_N$)  and even to Carleson boxes within sawtooths.

Consequently, by the arguments in the previous subsection, we conclude that for every surface ball $\td=\td^N\subset \partial\Omega_N$, the harmonic measure $\omega_N^{X_{\td}} \in A_\infty(\td)$, uniformly in $N$.  We now consider the limiting case.
Fix a surface ball $\Delta:=\Delta(x,r)\subset\partial\Omega$, and a Borel subset $A\subset\Delta$.
Assuming the hypotheses of Theorem \ref{theor-main-I}, we claim that
\begin{equation}\label{eq7.12}
\sigma(A)\geq \eta \,\sigma(\Delta)\,\,\Longrightarrow\,\,
\hm^{X_\Delta}(A)\geq c_0\,\eta^{\theta}\,,\qquad \forall \eta \in (0,1)\,,
\end{equation}
for some uniform positive constants $c_0$ and $\theta$, where as usual
$X_\Delta$ denotes a Cork\-screw point relative to $\Delta$.  By the outer regularity property of $\hm^X$
(cf. \eqref{eq2.outer}), we may assume that $A$ is (relatively) open.  It then follows that
we may write
$A=\cup_k Q_k$, where $\{Q_k\}\subset \dd$ is a pairwise disjoint collection.
We set $\Delta_k =\Delta(x_k,r_k):=\Delta_{Q_k}$,
where $\Delta_{Q_k}:=B_{Q_k}\cap\pom\subset Q_k$
is the surface ball defined in \eqref{cube-ball}-\eqref{cube-ball2},
so that $r_k\approx \ell(Q_k)$ and $\sigma(Q_k)\approx \sigma(\Delta_k)$.  Then
$$
\eta\,\sigma(\Delta)\leq \sigma(A)=  \sum_k\sigma(Q_k)\approx\sum_k\sigma(\Delta_k).
$$
We now set $A':=\cup_{k=1}^{M} \Delta_k$, where
$M$ is chosen large enough (depending on $A$) so that
\begin{equation}\label{eq7.13}
\sigma(A')=\sum_{k=1}^M\sigma(\Delta_k)\geq \frac1C \,\eta\, \sigma(\Delta).
\end{equation}
By the ADR property and a covering lemma argument, we may further suppose that
the Euclidean balls $B_k:=B_{Q_k}, 1\leq k\leq M,$ are pairwise disjoint.
We now fix $N$ so large that $2^{-N}\ll \min_{1\leq k\leq M} r_k$, and $2^N\gg\diam(\Delta)$.
Fix also a point $\hat{x}\in\pom_N$, with $|x-\hat{x}|\approx 2^{-N}$ (such a point exists,
with implicit constants possibly depending on $K_0$, since $\Omega$
satisfies the Corkscrew condition).
We shall approximate $\Omega$ by a domain $\widehat{\Omega}_N$, which is
defined as follows,
and whose harmonic measure we denote by
$\hat{\hm}^X$.
If $\Omega$ is  bounded, we set $\widehat{\Omega}_N=\Omega_N$.
Otherwise, we define $\widehat{\Omega}_N:=T_{\td^N}$,
where $T_{\td^N}\subset \Omega_N$ denotes the
Carleson box corresponding to $\td^N=B(\hat{x}, 2^N)\cap \pom_N$
for the domain $\Omega_N$.  Then by the arguments in Subsection \ref{ssqual},
for every surface ball $\td=\Delta_{\star,N}\subset \partial\widehat{\Omega}_N$,
the harmonic measure $\hat{\omega}_N^{X_{\td}} \in A_\infty(\td)$, uniformly in $N$,
since $\widehat{\Omega}_N$ is either equal to $\Omega_N$, or else is a subdomain
of $\Omega_N$ which inherits all of the requisite properties as observed above.

We set $B_k':= c B_k$, where $c\in(0,1)$ is the constant in Lemma \ref{Bourgainhm}.
As noted above,
the collection $\{B_k\}_{1\leq k\leq M}$, hence also $\{B_k'\}_{1\leq k\leq M}$, may be taken to be pairwise disjoint.  Let us also note that, since $2^{-N}\ll\min_{1\leq k\leq M} r_k$,
by the ADR properties of $\partial\widehat{\Omega}_N$ and $\pom$, we have
\begin{equation}\label{eq7.14}
H^n\left(\cup_{k=1}^M B_k'\cap\partial\widehat{\Omega}_N\right)\,\gtrsim\, \sum_{k=1}^M r_k^n
\,\gtrsim \,\eta\,\sigma(\Delta)\gtrsim\eta \,H^n\big(\Delta_\star(\hat{x},Cr)\big)\,,
\end{equation}
where in the last pair of inequalities we have used \eqref{eq7.13} and ADR (for both $\pom$ and $\pom_N$).  Moreover, since $2^{-N}\ll \diam(\Delta)\ll 2^N$, we have that
$X_\Delta \in \widehat{\Omega}_N$ is also a Corkscrew point for $\widehat{\Omega}_N$
with respect to the surface ball
$\td(\hat{x},Cr)$, where  $\hat{x}$ is as above,
and where $C$ is chosen large enough that
$\cup_{k=1}^M B_k'\cap\partial\widehat{\Omega}_N \subset \td(\hat{x},Cr)$.

We observe that
$u(X):= \omega^X(A')$ is  harmonic in $\Omega$, and thus also in the bounded subdomain
$\widehat{\Omega}_N$.  Since in bounded domains we have uniqueness by the maximum principle, we obtain
\begin{multline}\label{eq8.17***}
\hm^{X_\Delta}(A')\,=\,\int_{\partial\widehat{\Omega}_N} \hm^Y(A')\, d\hat{\hm}^{X_\Delta}_N(Y)
\,\geq\,  \sum_{k=1}^M
\int_{B_k'\cap\partial\widehat{\Omega}_N} \hm^Y(\Delta_k)\,
d\hat{\hm}^{X_\Delta}_N(Y)\\[4pt]
\gtrsim\, \sum_{k=1}^M
\int_{B_k'\cap\partial\widehat{\Omega}_N}
d\hat{\hm}^{X_\Delta}_N(Y) \,= \,\hat{\hm}^{X_\Delta}_N(\cup_{k=1}^M
B_k'\cap\partial\widehat{\Omega}_N)\, \gtrsim\, \eta^\theta,
\end{multline}
where in the last line we have used Lemma \ref{Bourgainhm} and then
\eqref{eq7.14} and the
$A_\infty$ property of $\hat{\hm}^{X_\Delta}$
(recall that $X_\Delta$ serves as a Corkscrew point for $\td(\hat{x},Cr)$, as we have noted above). Since $A'\subset A$, we then obtain \eqref{eq7.12}.

We now note that \eqref{eq7.12} trivially implies
the following weak version of itself:
for bounded $\Omega$ (the unbounded case is treated below) satisfying the hypotheses of Theorem \ref{theor-main-I},
there exist uniform constants $\eta \in (0,1)$ and $c_0 >0$ such that
\begin{equation}\label{eq7.17}
\sigma(A)\geq \eta \,\sigma(\Delta)\,\,\Longrightarrow\,\,
\hm^{X_\Delta}(A)\geq c_0\,.
\end{equation}
We remark here that to establish \eqref{eq7.17} has really been our main goal.
Indeed, given \eqref{eq7.17}, the remainder of the proof of Theorem \ref{theor-main-I}
will follow
the arguments in \cite{BL}.  We further remark that in \cite{BL}, \eqref{eq7.17}
is essentially taken as a starting point:   by the maximum principle, and the result of \cite{Dah},
an appropriate version of \eqref{eq7.17} (cf. \eqref{eq7.19} below) follows immediately
from the main hypothesis in \cite{BL}, that $\Omega$ has ``interior big pieces''
(in the sense of Definition \ref{def1.bp}) of
Lipschitz sub-domains of $\Omega$, with uniform constants.
Eventually, we shall see that \eqref{eq7.17}, suitably interpreted,
continues to hold under the hypotheses
of Theorem \ref{theor-main-I:BP}.

We now proceed to describe the remaining steps needed to deduce the weak-$A_\infty$ property
of harmonic measure.
By \cite[Lemma 3.1]{BL}, it suffices to show that for each $\eps\in(0,1/1000)$,
there are uniform constants
$\eta_\eps\in(0,1)$ and $C_\eps\in(1,\infty)$,
such that given balls $B,B'$, centered on $\partial\Omega$,
with $2B'\subset B$, and corresponding
surface balls $\Delta := B\cap\pom$ and $\Delta':=B'\cap\pom$,
and a Borel subset $A\subset 2\Delta'$ with
$\sigma(A)\geq \eta_\eps\,\sigma(2\Delta')$, we have
\begin{equation}\label{eq7.15}
\hm^{X_\Delta}(\Delta')\leq \eps\,\hm^{X_\Delta}(2\Delta')+C_\eps \,\hm^{X_\Delta}(A).
\end{equation}
In fact, \cite[Lemma 3.1]{BL} is a purely real variable result which says that any positive Borel measure
$\mu$ on $\partial\Omega$ satisfying \eqref{eq7.15} belongs to weak-$A_\infty(\Delta)$ (equivalently, satisfies the weak
reverse H\"older estimate \eqref{eq1.WRH} for some $q>1$), assuming only that $\partial\Omega$ is ADR.  Under the hypotheses of Theorem \ref{theor-main-I}, we shall establish \eqref{eq7.15} with $\eta_\eps:= \eta$, the constant in \eqref{eq7.17} (independently of $\eps$).

Let us now give the proof of \eqref{eq7.15}.
We prove the desired bound first in the case that $\Omega$ is bounded.
This restriction will be removed at the end of the proof.
We follow the argument in \cite[Lemma 2.2]{BL}
almost verbatim, with some small simplifications permitted by
our hypothesis that the Harnack Chain condition holds in Theorem \ref{theor-main-I}.
Let $B'=B(z,s)$, $\Delta':=B'\cap\pom$,
and suppose $2B'\subseteq B:=B(x,r)$.
We cover $\frac32 \Delta'\setminus \frac54 \Delta'$
by annuli of thickness $\approx \eps s$.  More precisely,
we set
\begin{eqnarray}\label{eq7.16*}&U_k:= \left\{ y\in\pom:\,\left(5/4 +\eps k\right)s\leq|y-z|
< \big(5/4 +\eps (k+1)\big)s\right\},
\\[4pt]\nonumber
&S_k:=\left\{ X\in\Omega:\,|X-z|=\left(5/4 +\eps \big(k+1/2\big)\right)s\right\},
\end{eqnarray}
where $0\leq k\lesssim1/(4\eps)$.  Suppose now $A\subset 2\Delta'$, with
$\sigma(A)\geq \eta\,\sigma(2\Delta')$, for $\eta$ as in \eqref{eq7.17}.
Let $c\in (0,1)$ be the constant
in Lemma \ref{Bourgainhm}.
By the Harnack Chain condition and \eqref{eq7.17}, applied to $2\Delta'$ in place of $\Delta$, we have
\begin{equation}\label{eq7.17hm}
\hm^X(A)\geq c_\eps\, c_0\,,\qquad \forall X \in S_k\cap \{X:\,\delta(X)\geq c\eps s/100\},
\end{equation}
uniformly in $k$.
On the other hand, if $X\in S_k\cap \{X:\,\delta(X)<c\eps s/100\}$, then for a suitable uniform
constant $C$, we have
\begin{equation}\label{eq7.18+}
C\hm^X(U_k)\geq C\omega^X\big(\Delta(\hat{x},\eps s/10)\big) \geq 1\,,
\end{equation}
by Lemma \ref{Bourgainhm}, where $\hat{x}\in\pom$ is chosen so that $|X-\hat{x}|=\delta(X)$.
Thus,
\begin{equation}\label{eq7.16}
\hm^X(\Delta')\leq 1\leq C\hm^X(U_k) +C_\eps\,\hm^X(A)\,,\qquad \forall X\in S_k,
\end{equation}
where $C_\eps= 1/(c_\eps c_0)$.  By the maximum principle, this implies in particular that
\eqref{eq7.16} continues to hold for $X\in \Omega\setminus \frac74 B'$, since $S_k \subset \frac74 B'$,
if $\eps$ is small, for every relevant $k$ (i.e., those for which
$U_k$ meets $\frac32 B'\setminus\frac54B'$).  Since this set of $k$'s has cardinality
$\approx1/\eps$, summing in $k$ we obtain
we obtain
\begin{equation}\label{eq7.20*}
\frac1{\eps} \hm^X(\Delta') \leq C \hm^X(2\Delta') + C_\eps\hm^X(A)\,,\qquad \forall X\in
\Omega\setminus \frac74 B'\,,\end{equation}
since the $U_k$'s are pairwise disjoint and contained in $2\Delta'$.  The desired bound
\eqref{eq7.15} now follows, at least in the case that $\Omega$ is bounded.

Now suppose that $\Omega$ is unbounded.   Given a surface ball $\Delta=\Delta(x,r)$,
we choose $R\gg r$, set $\Delta_R = \Delta(x,R)$, and consider the
domain $\Omega_R:=T_{\Delta_R}$, the Carleson Box associated to $\Delta_R$.
For each such $R$, the argument above may be applied, to obtain
\eqref{eq7.15} for each of the corresponding harmonic measures
$\hm_R^{X_\Delta}$.  For any fixed Borel subset $F\subset \Delta$,
we have that the solutions $u_R(X):= \hm_R^X(F)$ are monotone increasing on any fixed
$\Omega_{R_0}$,
as $R_0\leq R\to\infty$, by the maximum principle.  We then obtain that
$u_R(X)\to u(X):=\hm^X(F)$,
uniformly on compacta, by Harnack's convergence theorem
(as in the discussion at the beginning of Section \ref{section-fund-est}), whence \eqref{eq7.15} follows.
The proof of Theorem \ref{theor-main-I} is now complete, modulo the deferred arguments.

\begin{proof}[Proof of Theorem \ref{theor-main-I:BP}]
Finally, we discuss the modifications needed to prove Theorem \ref{theor-main-I:BP}.
By \cite[Lemma 3.1]{BL} (and a
limiting process to treat the case of an unbounded domain),
it again suffices to establish, for bounded $\Omega$
now satisfying the hypotheses of Theorem \ref{theor-main-I:BP},
an appropriate version of \eqref{eq7.15}.  That is, we seek to show that
for each $\eps\in(0,1/1000)$,
there are uniform constants
$\eta_\eps\in(0,1)$ and $C_\eps\in(1,\infty)$,
such that given balls $B=B(x,r)$ and $B'=B(z,s)$,
with $2B'\subset B$, and corresponding
surface balls $\Delta := B\cap\pom$ and $\Delta':=B'\cap\pom$,
if $A\subset 2\Delta'$ with
$\sigma(A)\geq \eta_\eps\,\sigma(2\Delta')$,  then
\begin{equation}\label{eq7.15*}
\hm^{X}(\Delta')\leq \eps\,\hm^{X}(2\Delta')+C_\eps \,\hm^{X}(A)\,, \qquad \forall X \in \Omega\setminus
B.
\end{equation}

To this end, we first establish a suitable variant of \eqref{eq7.17}.
Given $X\in \Omega$, under the hypotheses of Theorem
\ref{theor-main-I:BP}, there is a point $x\in\pom$, with
$|X-x|=\delta(X)$, and  a subdomain $\Omega'\subset\Omega$ satisfying
the hypotheses of Theorem \ref{theor-main-I},  with the property that
for some constants $K>1$ and $\alpha>0$, we have
$$\sigma\left(\pom' \cap\Delta_X\right)\,\geq\, \alpha\, \sigma(\Delta_X)\,,$$
where
$\Delta_X:= \Delta(x,K\delta(X))$.
We may further suppose that $X$ serves as a Corkscrew point for
$\Omega'$ relative to a surface ball
$\Delta_\star:=B(y,2K\delta(X))\cap\pom'$, with $y\in \Delta_X\cap\pom'$.  That $\Omega'$ exists,
with uniform control
of the various constants involved, is simply a re-statement of
the ``big pieces'' hypothesis of Theorem
\ref{theor-main-I:BP} (cf. Definition \ref{def1.bp}).
We claim that
there exist uniform constants $\eta \in (0,1)$ and $c_0 >0$, such that
for any Borel subset $A\subset \Delta_X$,
\begin{equation}\label{eq7.19}
\sigma(A)\geq \eta \,\sigma(\Delta_X)\,\,\Longrightarrow\,\,
\hm^{X}(A)\geq c_0\,.
\end{equation}
Let us now prove this claim.
Suppose that
$A\subset \Delta_X$, with $\sigma(A)\geq (1-\alpha/2)\sigma(\Delta_X)$. Then
$$\sigma'(\pom'\cap A) \geq \frac\alpha2\,
\sigma (\Delta_X)\approx \alpha\, \sigma'\left(\Delta_\star\right)\,,$$
where $\sigma':=H^n\big|_{\pom'}$ denotes surface measure on $\pom'$
(so $\sigma=\sigma'$ on $\pom\cap\pom'$),
and where we have used that $\pom$ and $\pom'$ are both ADR. Since the hypotheses of Theorem
\ref{theor-main-I} apply in $\Omega'$, we deduce from \eqref{eq7.12} and a formal application of the maximum principle that
\begin{equation}\label{eq8.max_prin} \alpha^\theta\,\lesssim
\omega_{\Omega'}^{X}(\pom'\cap A)\leq
\omega^{X}(A)\,,
\end{equation}
where $\omega_{\Omega'}$ is harmonic measure for $\Omega'$.
Thus, we obtain \eqref{eq7.19}, with $\eta = (1-\alpha/2)$.
We caution the reader
that our use of the maximum principle to obtain the second inequality in
\eqref{eq8.max_prin} is not routine, since we are working in a regime where
the Wiener test may fail, and our solutions $X\to \hm^X(A)$ and $X\to \hm_{\Omega'}^X(\pom'\cap A)$
are not Perron solutions for the same domain, nor are they
continuous on the closures of the respective domains under consideration.
We shall give a rigorous justification of the essential inequality in \eqref{eq8.max_prin}
(namely, that $\alpha^\theta\lesssim \hm^X(A)$), at the end of this section.

It remains to establish \eqref{eq7.15*}.
To this end, we again follow the argument in \cite[Lemma 2.2]{BL}.
Fix $B$ and $B'$ as above, and define $U_k$ and $S_k$ as in
\eqref{eq7.16*}.    In fact, we proceed as we did under the hypotheses
of Theorem \ref{theor-main-I}, except that the proof of \eqref{eq7.17hm} will now be somewhat more delicate,
as we may no longer simply invoke the Harnack Chain condition.  Instead, we return to the original
approach of \cite{BL}.
It is enough to verify \eqref{eq7.16}, as the remainder of the proof is unchanged.
In particular, we obtain \eqref{eq7.20*}, which in turn yields \eqref{eq7.15*}, since
$2B'\subset B$.

As before, \eqref{eq7.16} is a direct consequence of \eqref{eq7.17hm} and \eqref{eq7.18+}.
The latter always holds, by Lemma \ref{Bourgainhm}, so we consider \eqref{eq7.17hm}.
Again we follow \cite{BL} essentially verbatim.
We suppose first that
there exists $Y\in S_k$ with $\delta(Y) =c\eps s/(100K)$,
where $c$ is the constant in Lemma \ref{Bourgainhm}.
For each such $Y$, we fix $y\in \pom$, with $|Y-y|=\delta(Y)$, and set $\Delta_Y:=
\Delta(y,K\delta(Y))$.   If $\eta_\eps\in(0,1)$ is chosen close enough to 1, depending on $\eps$ and the ADR constants of $\pom$,
and if $A\subset 2\Delta'$ with
$\sigma(A)\geq \eta_\eps\,\sigma(2\Delta')$,  then
$$\sigma(A\cap\Delta_Y) \geq \eta\,\sigma(\Delta_Y)\,,$$
for $\eta$ as in \eqref{eq7.19}, so that
$\hm^Y(A)\geq\hm^Y(A\cap\Delta_Y)\geq c_0.$  Thus, \eqref{eq7.17hm} holds in this case
(with $c\eps s/100$ now multiplied by $1/K$), by
Harnack's inequality, because even in the absence of the Harnack Chain condition, there is a Harnack path from any $X\in S_k\cap \{X:\,\delta(X)\geq c\eps s/(100K)\}$ to a point $Y$ in $S_k$ with
$\delta(Y) =c\eps s/(100K)$,
if the latter exists (just follow a geodesic path on $S_k$ from $X$ to the nearest
such Y).

On the other hand, suppose that there is no such $Y$.  Then either
$S_k\subset \{X\in\Omega:\,\delta(X)> c\eps s/(100K)\}$, or
$S_k\subset \{X\in\Omega:\,\delta(X)< c\eps s/(100K)\}$.  In the latter case,
\eqref{eq7.18+} holds now for all $X \in S_k$, so \eqref{eq7.16} follows trivially.
Otherwise, by continuity of $\delta$, there is a number $\rho>0$ such that
\begin{multline}\label{eq7.24}
\left\{ X\in\Omega:\,\rho\leq|X-z|\leq\left(5/4 +\eps \big(k+1/2\big)\right)s\right\}\\[4pt]\subset \,\,
\{X\in\Omega:\,\delta(X)\geq c\eps s/(100K)\}\,,\end{multline}
and $\delta(Y) = c\eps s/(100K)$ for some $Y\in S(\rho):=\{X\in\Omega:
|X-z|=\rho\}$.
In this case, we may repeat the analysis above, in which there was such a $Y$
on $S_k$.  In the present scenario, we have that  \eqref{eq7.17hm} holds for all
$X\in S(\rho)\cap \{X\in\Omega:\,\delta(X)\geq c\eps s/(100K)\}$, which in fact is all
of $S(\rho)$ by \eqref{eq7.24}.  But then by Harnack's inequality  we obtain
\eqref{eq7.17hm} (for all $X\in S_k$), because the containment
in \eqref{eq7.24} allows us to form a radial Harnack path between any $X\in S_k$,
and its projection onto $S(\rho)$.  We conclude that \eqref{eq7.16} holds under all circumstances.

To finish the proof of Theorem \ref{theor-main-I:BP}, it remains only to provide a rigorous justification of
\eqref{eq8.max_prin}.  We shall make up for the lack of continuity of the solutions by
proceeding as in
the removal of the qualitative hypothesis in the proof of Theorem \ref{theor-main-I}, with a few minor modifications.
We fix $\ep >0$ to be chosen momentarily,
and set $F:= A\cap\pom'$.  We recall that $H^n(F)\geq(\alpha/2) H^n(\Delta_X)$.
By outer regularity of Hausdorff measure and $\hm$,  there is a set $\mathcal{O}$,
relatively open in $\pom$, such that
$F\subset \mathcal{O}\subset \Delta_X\subset \pom$, and
$$H^n(\mathcal{O}\setminus F)+\hm^X(\mathcal{O}\setminus F)<\ep\,.$$
We let $\F\subset \dd(\pom)$ be a family of non-overlapping dyadic cubes whose union equals
$\mathcal{O}$, so that $H^n(\mathcal{O}) = \sum_{\F}H^n(Q_k)$, and we set
$$\F':=\left\{Q_k\in \F: H^n(Q_k\cap F)\geq \frac14H^n(Q_k)\right\}.$$
We claim that
\begin{equation}\label{eq8.28+}
\alpha \,H^n(\Delta_X)\lesssim \sum_{\F'}H^n(Q_k).
\end{equation}
Indeed, we have that
$$H^n(F) =\sum_{\F\setminus \F'} H^n(Q_k\cap F) +\sum_{ \F'} H^n(Q_k\cap F)
\leq \frac14 H^n(\mathcal{O}) + \sum_{ \F'} H^n(Q_k) \,,$$
whence \eqref{eq8.28+} follows, if we choose $\ep\ll H^n(F)$ .

Since each $Q_k\in\F'$ has an ample intersection with $F$, by Lemma \ref{lemmaCh} (vi),
we may choose a point $x_k\in Q_k\cap F\subset\pom'\cap\pom$, and a radius $r_k \approx \ell(Q_k)$, such that $Q_k\supset \pom\cap B_k$, where $B_k:= B(x_k,r_k)$.  We emphasize that, in particular, each $B_k$ is centered on $\pom'\cap\pom$.  Set $B_k':=
cB_k$, where $c\in(0,1)$ is the constant in Lemma \ref{Bourgainhm}.    Set $\F_N':= \dd_N(\pom')$, and
let $\Omega_N':= \Omega'_{\F_N}$ be the corresponding approximating domain relative to $\Omega'$.
By the ADR property and a covering lemma argument, and by choice of $N$ sufficiently large,
we can select a finite, pairwise disjoint sub-collection $\{B_k\}_{1\leq k\leq M}$, such that
$$
\alpha \,H^n(\td^N)\approx \alpha \,H^n(\Delta_X) \lesssim \sum_{k=1}^M H^n (B_k' \cap \pom'_N)
$$
where  $\td^N$ is a surface ball on $\pom_N'$ of radius $\approx \delta(X)$, such that $X$ is a Corkscrew point for $\td^N$ in $\Omega_N'$, and
$ \cup_{k=1}^M (B_k' \cap \pom'_N)\subset \td^N$.  We may apply Theorem \ref{theor-main-I} in
$\Omega'_N$ (see the discussion immediately following
\eqref{eq7.on} above), to obtain that $\hm^X_N$, the harmonic measure for the approximating domain
$\Omega_N'$, belongs to $A_\infty(\td^N)$ with bounds that are independent of $N$.

We now set $A':= (\cup_{k=1}^M B_k)\cap\pom$, and observe that $A'\subset\mathcal{O}$.
Since $X\to \omega^X(A')$ is continuous on $\overline{\Omega_N'}$, we may repeat the argument
in \eqref{eq8.17***}, {\it mutatis mutandis}, to obtain that
$$\hm^X(A) +\ep \geq\hm^X(F)+\ep \geq \hm^X(\mathcal{O}) \geq \hm^X(A') \gtrsim \alpha^\theta\,.$$
We choose $\ep\ll\alpha^\theta$, and it follows that $\alpha^\theta\lesssim \hm^X(A)$, as desired.
\end{proof}

\section{Proof of the Extrapolation Lemma}\label{s8}
To finish the proofs of Theorems \ref{theor-main-I} and \ref{theor-main-I:BP},
it remains to prove Lemma \ref{lemma:extrapol}.

\begin{proof}[Proof of Lemma \ref{lemma:extrapol}]
The proof follows the strategy introduced
in \cite{LM}, and developed further in \cite{HL}, \cite{AHLT} and \cite{AHMTT}.
In more precise detail, the argument is based on the systematic treatment given
in \cite{HM} in the Euclidean setting.

The proof uses an induction argument with continuous parameter. The induction hypothesis is the following: given $a\ge 0$,
\\[.3cm]
\null\hskip.5cm \fbox{\rule[8pt]{0pt}{0pt}$H(a)$}\hskip10pt \fbox{\rule[12pt]{0pt}{0pt}\ \parbox[c]{.75\textwidth}{%
There exist $\eta_a\in(0,1)$ and $C_a<\infty$ such that for every $Q\in \dd_{Q_0}$ satisfying  $\mut(\dd_Q)\le a\,\sigma(Q)$, it follows that
    $$
    \null\hskip-4cm
    F\subset Q,\quad \frac{\sigma(F)}{\sigma(Q)}\ge 1-\eta_a\quad \Longrightarrow \quad
    \frac{\omega(F)}{\omega(Q)}\ge \frac1{C_a}.
    $$}\ }

\

The induction argument is split in two steps.

\textbf{Step 1.} Show that $H(0)$ holds.

\textbf{Step 2.} Show that
there exists $b$ depending on $\gamma$,  dimension, and the ADR property such that for all $0\le a\le M_0$, $H(a)$ implies $H(a+b)$.

Once these steps have been carried out, the proof follows easily:  pick $k\ge 1$ such that
$(k-1)\,b<M_0\leq k\,b$ (note that $k$ only depends on $b$
and $M_0$). By \textbf{Step 1} and \textbf{Step 2}, it follows that $H(k\,b)$ holds.
Observe that $\|\mut\|_{\C(Q_0)}\le M_0\le k\,b$ implies $\mut(\dd_Q)\le k\,b \,\sigma(Q)$ for all
$Q\in \dd_{Q_0}$, and by $H(k\,b)$ we conclude \eqref{extrap:Ainfty:w}.

\subsection*{Step 1. $H(0)$ holds}

If $\mut(\dd_Q)=0$ then we take $\F$ to be empty, so that $\dd_Q\cap\dd_{\F} = \dd_Q$,
and $\P_\F \, \omega=\omega$. Then  \eqref{extrap:Carleson-delta} holds (since $0\le \gamma$) and therefore we can use \eqref{extrap:Ainfty:Pw} with $\omega$ in place of $\P_\F \, \omega$, which is the desired property.

\subsection*{Step 2. $H(a)$ implies $H(a+b)$}

Fix $0\le a\le M_0$ and $Q\in\dd_{Q_0}$ such that $\mut(\dd_{Q})\le(a+b)\,\sigma(Q)$, where we
choose $b$ so that $C\,b:= \gamma$ and $C$ is the constant in the righthand side of \eqref{Corona-sawtooth}. We also fix $F\subset Q$ with $\sigma(F)\ge (1-\eta)\,\sigma(Q)$,
where $0<\eta\le\eta_{a,b}$ and $\eta_{a,b}$ is to be chosen.
We may now apply Lemma \ref{lemma:Corona} and Remark \ref{remark:general-doubling-sawtooth} to the cube $Q$, to
construct the non-overlapping family of cubes
$\F=\{Q_j\}\subset\dd_Q$ with the stated properties.  Set
$$
E_0=Q\setminus \bigcup_{Q_j\in\F} Q_j,
\qquad\quad
G
=
\bigcup_{Q_j\in\F_{\rm good}} Q_j,
\qquad\quad
B
=
\bigcup_{Q_j\in\F\setminus \F_{\rm good}} Q_j,
$$
where $\F_{\rm good}=\big\{Q_j\in\F: \mut(\dd^{\rm short}_{Q_j})\le a\,\sigma(Q_j)\big\}$.
We recall that by \eqref{Corona-bad-cubes}, we have
$\sigma(B)/\sigma(Q)\leq (a+b)/(a+2b)\,.$

We shall also require the following ``pigeonhole'' lemma, which says that ``most'' of the cubes $Q_j$
have an ample overlap with $F$.

\begin{lemma}\label{lemma:selection}
Given $0<\tilde{\eta}<1$, we set
$$
\F_1=\{Q_j\in\F_{\rm good}: \sigma(F\cap Q_j)\ge (1-\tilde{\eta})\,\sigma(Q_j)\},
\qquad\qquad
G_1=\bigcup_{Q_j\in \F_1} Q_j.
$$
If $0<\eta\le\eta_1:=\tilde{\eta}\,\frac12\,\big(1-\frac{M_0+b}{M_0+2\,b}\big)$, then
$\sigma(E_0\cup G_1)\ge \eta_1\,\sigma(Q)$.
\end{lemma}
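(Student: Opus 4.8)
The statement of Lemma \ref{lemma:selection} is a routine counting (``pigeonhole'') argument, so I will organize the proof around carefully bookkeeping the measures of the relevant subfamilies. The plan is to show that the portion of $Q$ where $F$ is \emph{not} ample inside the good cubes is small, and then combine with the already-established smallness of $\sigma(B)$ and the hypothesis $\sigma(F)\geq(1-\eta)\sigma(Q)$ to conclude. Recall from the Corona construction (Lemma \ref{lemma:Corona}, applied with $a$ replaced by $M_0$ and $b$ as chosen) that $\sigma(B)\leq \frac{M_0+b}{M_0+2b}\,\sigma(Q)$, so in particular $\sigma(E_0\cup G)\geq \big(1-\frac{M_0+b}{M_0+2b}\big)\sigma(Q)=:2\eta_1/\tilde\eta\cdot\sigma(Q)$ by the definition of $\eta_1$ in the statement.

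\textbf{Key steps.} First I would introduce $\F_2:=\F_{\rm good}\setminus\F_1$ and $G_2:=\bigcup_{Q_j\in\F_2}Q_j$, the ``good but not ample'' cubes, and note that for each $Q_j\in\F_2$ we have $\sigma(F\cap Q_j)<(1-\tilde\eta)\sigma(Q_j)$, i.e. $\sigma(Q_j\setminus F)>\tilde\eta\,\sigma(Q_j)$. Summing over the pairwise disjoint family $\F_2$ gives $\tilde\eta\,\sigma(G_2)\leq \sigma(G_2\setminus F)\leq \sigma(Q\setminus F)\leq \eta\,\sigma(Q)$, hence $\sigma(G_2)\leq (\eta/\tilde\eta)\,\sigma(Q)$. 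Second, since $E_0$, $G_1$, $G_2$, $B$ are pairwise disjoint and their union is $Q$ (up to the measure-zero boundary sets, which we may ignore), we have
\begin{equation*}
\sigma(E_0\cup G_1)=\sigma(Q)-\sigma(G_2)-\sigma(B)\geq \sigma(Q)-\frac{\eta}{\tilde\eta}\sigma(Q)-\frac{M_0+b}{M_0+2b}\sigma(Q).
\end{equation*}
Third, using $\eta\leq\eta_1=\tfrac{\tilde\eta}{2}\big(1-\tfrac{M_0+b}{M_0+2b}\big)$, the term $\frac{\eta}{\tilde\eta}\sigma(Q)$ is at most $\tfrac12\big(1-\tfrac{M_0+b}{M_0+2b}\big)\sigma(Q)$, so
\begin{equation*}
\sigma(E_0\cup G_1)\geq \Big(1-\frac{M_0+b}{M_0+2b}\Big)\sigma(Q)-\frac12\Big(1-\frac{M_0+b}{M_0+2b}\Big)\sigma(Q)=\frac12\Big(1-\frac{M_0+b}{M_0+2b}\Big)\sigma(Q)\geq \eta_1\,\sigma(Q),
\end{equation*}
where in the last step I use $\tilde\eta<1$ so that $\eta_1\leq \tfrac12\big(1-\tfrac{M_0+b}{M_0+2b}\big)$. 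This is exactly the claimed bound.

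\textbf{Main obstacle.} There is no serious analytic obstacle here; the only thing requiring mild care is the bookkeeping of null sets — namely that $\sigma(\partial Q_j)=0$ and $\sigma(\partial Q)=0$, so that $E_0$, $G$, $B$ (and the refinement $G_1$, $G_2$) genuinely partition $Q$ up to $\sigma$-measure zero. This is justified by Proposition \ref{prop5.0a} together with the ADR (hence dyadically doubling) property of $\sigma$, so the disjointness and additivity used above are legitimate. The other point to keep straight is that the Corona decomposition here is being invoked with the threshold parameter equal to $M_0$ (the global Carleson bound), not $a$, which is why $\frac{M_0+b}{M_0+2b}$ rather than $\frac{a+b}{a+2b}$ appears; this is consistent with the definition of $\eta_1$ in the statement and requires only that $a\leq M_0$, which holds throughout Step 2.
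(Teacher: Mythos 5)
Your proof is correct, and it is essentially the same pigeonhole argument as the paper's, using the same partition $Q = E_0 \sqcup G_1 \sqcup G_2 \sqcup B$ (the paper writes $B_1$ for your $G_2$), the bound $\sigma(B)\le\frac{M_0+b}{M_0+2b}\sigma(Q)$, and the observation that cubes in $\F_{\rm good}\setminus\F_1$ meet $F$ in a small proportion. The only difference is bookkeeping: the paper bounds $\sigma(F\cap B_1)\le(1-\tilde\eta)(1-\theta)\sigma(Q)$ and expands $\sigma(F)$ over the four pieces, whereas you bound $\sigma(G_2)\le(\eta/\tilde\eta)\sigma(Q)$ directly from $\sigma(Q\setminus F)\le\eta\,\sigma(Q)$ and subtract; your version is marginally cleaner (and in fact yields a slightly sharper constant, though that is immaterial here). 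One small remark: your caution about boundary null sets is unnecessary, since the dyadic cubes in Lemma \ref{lemmaCh} are genuinely pairwise disjoint Borel sets, so $E_0, G_1, G_2, B$ partition $Q$ exactly and no appeal to Proposition \ref{prop5.0a} is needed.
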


\begin{proof}

Take $\theta$ such that $\sigma(B)=\theta\,\sigma(Q)$, and $\theta_0=(M_0+b)/(M_0+2\,b)$. By \eqref{Corona-bad-cubes} and since $a\le M_0$ we obtain that $\theta \le \theta_0$:
$$
\theta\,\sigma(Q)
=
\sigma(B)
\le
\frac{a+b}{a+2\,b}\,\sigma(Q)
\le
\theta_0\,\sigma(Q).
$$
We set $B_1=\cup_{Q_j\in\F_{\rm good}\setminus \F_1} Q_j$ and
observe that $B_1\subset G\subset Q\setminus B$. Hence,
\begin{align*}
\sigma(F\cap B_1)
&=
\sum_{Q_j\in\F_{\rm good}\setminus \F_1} \sigma(F\cap Q_j)
<
(1-\tilde{\eta})\sum_{Q_j\in\F_{\rm good}\setminus \F_1} \sigma(Q_j)
\\
&=(1-\tilde{\eta})\,\sigma(B_1)
\le(1-\tilde{\eta})\,\sigma(Q\setminus B)
=(1-\tilde{\eta})\,(1-\theta)\,\sigma(Q).
\end{align*}
Thus, using that $\theta\le \theta_0$, we have
\begin{align*}
(1-\eta)\,\sigma(Q)
&\le
\sigma(F)
=
\sigma(F\cap E_0)+\sigma(F\cap B)+\sigma(F\cap G_1)+\sigma(F\cap B_1)
\\
&\le
\sigma(E_0)+\sigma(B)+\sigma(G_1)+(1-\tilde{\eta})\,(1-\theta)\,\sigma(Q)
\\
&
=
\sigma(E_0)+\sigma(G_1)+\big[\theta+(1-\tilde{\eta})\,(1-\theta)\big]\,\sigma(Q)
\\
&
\le
\sigma(E_0)+\sigma(G_1)+\big[1-\tilde{\eta}\,(1-\theta_0)\big]\,\sigma(Q)
\end{align*}
and therefore
$$
\sigma(E_0\cup G_1)
=
\sigma(E_0)+\sigma(G_1)
\ge
\big[\tilde{\eta}\,(1-\theta_0)-\eta\big]\,\sigma(Q)
\ge
\frac12\,\tilde{\eta}\,(1-\theta_0)\,\sigma(Q)
=\eta_1\,\sigma(Q),
$$
where we have used that $\eta\le\tilde{\eta}\,(1-\theta_0)/2=\eta_1$.
\end{proof}

We now return to the proof of Step 2.   We apply Lemma \ref{lemma:selection},
with $\tilde{\eta}\in(0,1)$ to be chosen.
Given $Q_j\in\F_1\subset \F_{\rm good}$ we have
that $\mut(\dd^{\rm short}_{Q_j})\le a\,\sigma(Q_j)$. Moreover,
$$
\dd^{\rm short}_{Q_j}
=
\dd_{Q_j}\setminus\{Q_j\}
=
\bigcup_{i} \dd_{Q^j_i},
$$
where $\{Q^j_i\}_i$ is the family of dyadic ``children'' of $Q_j$ (these are the subcubes of
$Q_j$ which lie in the very next dyadic generation $\dd_{k(Q_j)+1}$).
Then by pigeon-holing, there exists at least one $i_0$ such that
$Q^j_{i_0}=:Q_j'$ satisfies
\begin{equation}\label{pigeon}
\mut(\dd_{Q_j'})\le a\,\sigma(Q_j')
\end{equation}
(there could be more than one $i_0$ with this property, but we just pick one).
We define $\widetilde{\F}_1$ to be the collection of those
selected ``children'' $Q'_{j}$, with $Q_j \in \F_1$.
Let $C_0$ be the dyadically doubling constant of $\sigma$, i.e., $\sigma(Q)\le C_0\sigma (Q')$ for every $Q\in \dd_{Q_0}$, and for every ``child''  $Q'$ of $Q$.
Then, for each such $Q'_j$, using the definition of $\F_1$, and taking $0<\tilde{\eta}=\eta_a/C_0$
(where $0<\eta_a<1$ is provided by $H(a)$), we have
$$
\sigma(Q'_j\setminus F)
\le
\sigma(Q_j\setminus F)
\le
\tilde{\eta}\,\sigma(Q_j)
\leq
\tilde{\eta}\,C_0\sigma(Q_j')=
\eta_a\,\sigma(Q_j'),
$$
which yields $\sigma(Q_j'\cap F)\geq(1-\eta_a)\,\sigma(Q'_j)$. With this estimate and \eqref{pigeon} in hand, we can use the induction hypothesis $H(a)$ to deduce:
\begin{equation}\label{w-Q''}
\omega(Q_j'\cap F) \ge \frac1{C_a}\,\omega(Q_j'),\qquad \forall\, Q_j'\in \widetilde{\F}_1.
\end{equation}

On the other hand, if we set $\widetilde{G}_1=\cup_{Q_j'\in \widetilde{\F}_1}Q_j'$, then
$$
\sigma(\widetilde{G}_1)
=
\sum_{Q_j'\in\widetilde{\F}_1}\sigma(Q_j')
\ge
C_0^{-1}\,\sum_{Q_j\in\F_1}\sigma(Q_j)
=
C_0^{-1}\sigma(G_1)
$$
Thus, by  Lemma \ref{lemma:selection}, having now fixed $\tilde{\eta}$ above, we have that
\begin{equation*}
\sigma(E_0\cup \widetilde{G}_1)
=
\sigma(E_0)+\sigma(\widetilde{G}_1)
\geq
C_0^{-1}\,\sigma(E_0\cup G_1)
\geq
C_0^{-1}\, \eta_1\sigma(Q)=:\eta_2\,\sigma(Q),
\end{equation*}
if $\eta \leq \eta_1$, from which it follows that
\begin{equation*}
\sigma(F\cap(E_0\cup \widetilde{G}_1)) \geq\frac12\eta_2 \sigma(Q)=:\eta_{a,b}\,\sigma(Q),
\end{equation*}
if $\eta\leq \eta_2/2,$ since $\sigma(Q\setminus F)\leq \eta\, \sigma(Q)$.

We recall that the family $\F$ was constructed using Lemma \ref{lemma:Corona}
with $C\, b := \gamma$.
Consequently, by \eqref{Corona-sawtooth}, we may deduce that \eqref{extrap:Carleson-delta} holds,
so in turn, by hypothesis, we can apply \eqref{extrap:Ainfty:Pw} to the set
$F\cap (E_0\cup \widetilde{G}_1)$, obtaining
$$
\frac{\P_\F\, \omega(F\cap (E_0\cup \widetilde{G}_1))}{\P_\F\, \omega(Q)}
\ge \frac1{C_{\eta_{a,b}}}.
$$
As observed before, $\P_\F \,\omega(Q)=\omega(Q)$.
Thus, in order to establish the conclusion of $H(a+b)$, and consequently to complete the proof of
Lemma  \ref{lemma:extrapol}, it remains only to show that
\begin{equation*}
\P_\F\, \omega(F\cap (E_0\cup \widetilde{G}_1))\leq C\,\omega(F).
\end{equation*}
To this end, we use first the definition of $\P_\F$, then that $\omega$ is dyadically doubling and finally
\eqref{w-Q''} to obtain
\begin{align*}
\P_\F\, \omega(F\cap (E_0\cup \widetilde{G}_1))
&
=
\P_\F \,\omega(F\cap E_0) + \P_\F \,\omega(F\cap  \widetilde{G}_1)
\\
&=
\omega(F\cap E_0)+
\sum_{Q_j\in \F_1}
\frac{\sigma(Q_j'\cap F)}{\sigma(Q_j)}\,\omega(Q_j)\,\,
\\
&
\le
\omega(F\cap E_0)+C_\omega
\sum_{Q_j'\in \widetilde{\F}_1}
\omega(Q_j')\,\,
\\
&
\le
\omega(F\cap E_0)+C_\omega C_a\sum_{Q_j'\in \widetilde{\F}_1} \omega(Q_j'\cap F)
\\
&
\le C\,\omega(F).
\end{align*}
This concludes the proof of Lemma \ref{lemma:extrapol}.
\end{proof}


\appendix

\section{Inheritance of properties by  Carleson and Sawtooth regions}\label{ainherit}

This section is devoted to the proof of Lemma \ref{lemma2.30}, which states that
Carleson and Sawtooth regions inherit the Corkscrew, Harnack Chain and ADR properties
from the original domain $\Omega$.  Moreover, in the presence of the Corkscrew, Harnack Chain
and ADR properties, the UR property is transmitted
to the Carleson boxes $T_Q$ and $T_\Delta$.
We discuss these properties one at a time.
We shall find it convenient for our purposes in this section to continue to let
$\Delta$ denote a surface ball on $\partial\Omega$, while $\td$ will denote a surface ball
on the boundary of the sub-domain under consideration.  Similarly
$\delta(X)$ will continue to denote the distance from $X$ to $\partial\Omega$, while
$\tdelta(X)$ will denote the distance from $X$ to the boundary of the sub-domain under consideration.

In order to avoid possible confusion, let us emphasize that
the construction of our sawtooth and Carleson sub-domains is always based
on the Whitney decomposition of the domain under consideration at that moment,
even if that domain happens to be, say,
an approximating domain
$\Omega_N$ which had been constructed in the first place from Whitney cubes of
the original domain $\Omega$.

\subsection{Corkscrew}  For the sake of specificity, we treat only the case of a local
sawtooth region $\Omega_{\F,Q}$.  The proof for the global sawtooth $\Omega_\F$ is almost identical.
Moreover, specializing to the case that $\F =\emptyset$, we see that
the result for a sawtooth $\Omega_{\F,Q}$ applies immediately to the Carleson box $T_Q$,
and therefore also almost immediately
to any box $T_\Delta$, since the latter is a union of a bounded number of $T_Q$'s.

We fix $Q_0\in \dd$, and a pairwise disjoint family $\{Q_j\}=\F\subset \dd_{Q_0}$,
and let $\Omega_{\F,Q_0}$ denote the associated local sawtooth region
(cf. \eqref{eq2.discretecarl}-\eqref{eq2.sawtooth2}).  Set
$$\td:=\td(x,r):=B(x,r)\cap\partial\Omega_{\F,Q_0},$$
with $r\lesssim \ell(Q_0)$ and $x\in \partial\Omega_{\F,Q_0}$.
We suppose first that $x\in \partial\Omega_{\F,Q_0}\cap\partial\Omega$.
Then by construction of $\Omega_{\F,Q_0}$, there is a $Q\in \dd_{\F,Q_0}$, with $x\in \overline{Q},$
and $r\approx 100K_0\,\ell(Q)$ (see Proposition \ref{prop:sawtooth-contain}). Consequently, by \eqref{eq2.whitney3}-\eqref{eq2.whitney2}, we have
$$X_Q\in U_Q\subset B(x,r)\cap \Omega_{\F,Q_0} \,,$$
where $X_Q$ is a Corkscrew point for $\Omega$, relative to $Q$, and which we have assumed
(without loss of generality) to be the center of some $I\in \W_Q^*$. This same $X_Q$ then serves
as a Corkscrew point for $\Omega_{\F,Q_0}$, relative to $\td(x,r)$, with Corkscrew constant $c\approx
1/(100K_0)$.

Next, we suppose that $x\in \partial\Omega_{\F,Q_0}\setminus\partial\Omega,$
where as above $\td:=\td(x,r).$  Then by definition of the sawtooth region,
$x$ lies on a face of a fattened Whitney cube $I^*=(1+\lambda)I$, with
$I\in \W_Q^*$, for some
$Q\in \dd_{\F,Q_0}.$  If $r\lesssim \ell(I)$, then trivially there
is a point $X^{\!\star}\in I^*$ such that $B(X^{\!\star},cr)\subset
B(x,r)\cap\interior(I^*)\subset B(x,r)\cap\Omega_{\F,Q_0}$.  This $X^{\!\star}$ is then a Corkscrew point
for $\td$.  On the other hand, if $\ell(I)<r/(MK_0),$ with $M$ sufficiently large to be chosen momentarily, then there is a $Q'\in \dd_{\F,Q_0}$,
with $\ell(Q')\approx r/(MK_0),$ and $Q\subseteq Q'$.  Now fix $I'\in \W_{Q'}\subset\W^*_{Q'}$, and observe that
$$|x-X(I')|\lesssim \dist(I,Q) + \dist(Q',I')\lesssim K_0\,\ell(I) +K_0\,\ell(I')\lesssim r/M.$$
Note that $B(X(I'),cr)\subset \interior(I')$, for $c\approx (MK_0)^{-1}$.
Moreover, for $M$ large enough we have that $B(X(I'),cr)\subset B(x,r)\cap\interior(I')\subset
B(x,r)\cap\Omega_{\F,Q_0},$
so that $X(I')$ is a Corkscrew point for $\td$.

\subsection{Harnack Chain}  We establish the Harnack Chain condition for a local sawtooth
$\Omega_{\F,Q}$, of which, as noted above, the Carleson box $T_Q$ is a special case
(with $\F=\emptyset$).  The proof for a global sawtooth  is almost the same, and we omit it.
We shall discuss the Carleson boxes $T_\Delta$ at the end of this subsection.

Fix $Q\in\dd$, and a pairwise disjoint family $\F\subset \dd_Q$,
and let $\Omega_{\F,Q}$ be the corresponding local sawtooth region.
Let $X_1,\, X_2 \in \Omega_{\F,Q}$.  By definition of the sawtooth regions,
there exist $Q_1,\, Q_2\in \dd_{\F,Q}$,
with $X_i\in (1+\lambda)I_i=I^*_i$ where $I_i\in \W^*_{Q_i},\, i=1,2$.  Without loss of generality we may suppose that
$\ell(Q_1)\leq\ell(Q_2).$  We first observe that the desired result is clear if $I_1=I_2$, or more
generally, if
$I_1^*$ and $I_2^*$ overlap.
Therefore, we may suppose that
\begin{equation}\label{eqA.0*}
\dist(I_1^*,I_2^*) \gtrsim \ell(I_2) \gtrsim\ell(I_1)
\end{equation}(cf. \eqref{eq2.29*}.)
In order to construct a Harnack Chain under these circumstances, relative to
$\Omega_{\F,Q}$, from $X_1$ to $X_2$, it is convenient to make a few
simple reductions and observations, as follows.

\begin{enumerate}

 \item  It is enough to treat the case that $X_i$ is the center of $I_i$.  If $X_i$ is near the boundary of the sawtooth (and hence also near the boundary of $I_i^*$), then
$\dist(X_i ,\partial I_i^*) \approx \dist(X_i, \partial\Omega_{\mathcal{F},Q})$, so that the Harnack Chain within $I_i^*$, that connects $X_i$ to the center $X(I_i)$, is also a Harnack chain for the sawtooth. On the other hand,  if $X_i$ is not near the boundary of the sawtooth, then we can easily join $X_i$ with $X(I_i)$  by a bounded number of balls of radius $\approx \ell(I_i)$ with distance to the boundary of $\partial\Omega_{\mathcal{F},Q}$ comparable to $\tdelta(X_i)$.

\item By construction (cf. \eqref{eq2.whitney3}-\eqref{eq2.whitney2}), we may then further suppose that $X_i=X_{Q_i},$ the designated Corkscrew point (for the ambient domain $\Omega$), relative to $Q_i$.

\item Recall that by construction,
if $Q'\subset Q''$ belong to consecutive generations
in $\dd$ (i.e., $k(Q'') = k(Q')-1$), then $U_{Q'}\cap U_{Q''}$
contains the Corkscrew point $X_{Q'}$ (cf. \eqref{eq2.whitney2*})
and is therefore non-empty.
Thus, by
\eqref{eq2.whitney3}-\eqref{eq2.whitney2} there is a Harnack Chain joining the respective Corkscrew points $X_{Q'}$ and $X_{Q''}$.
\item We note that by definition, if $Q_i\in \dd_{\F,Q},$
then also $Q'\in\dd_{\F,Q}$ for every $Q'$ such that $Q_i\subseteq Q'\subseteq Q.$
\item If $X(I)$ denotes the center of a Whitney cube $I$, then $\delta(X(I)) \approx
\tdelta(X(I)) \approx \ell(I).$
\end{enumerate}

With these observations in mind, we consider three cases.  Set
$R:=|X_1-X_2|$.

\noindent {\bf Case 1}:  $Q_1\subseteq Q_2$.  In this case, $R\lesssim \ell(Q_2)$
(with $R\approx \ell(Q_2)$ if $\ell(Q_2)\gg\ell(Q_1)$), and
$\min(\tdelta(X_1),\tdelta(X_2))\gtrsim \ell(Q_1)$.  Consequently,
we may form a Harnack Chain of cardinality $\approx k(Q_1)-k(Q_2) +1$
that connects the Corkscrew points of every $Q'$, with $Q_1\subseteq Q'\subseteq Q_2$.

Before proceeding to the remaining cases, we observe that if Case 1 does not hold, then
$Q_1$ and $Q_2$ are disjoint, whence it follows from \eqref{eqA.0*}
and observations (1) and (5) above that
\begin{equation}\label{eqA.one}R\gtrsim \tdelta(X_2)\approx \ell(Q_2)\geq\ell(Q_1)\approx \tdelta(X_1).
\end{equation}
Of course, we also have $R\lesssim \ell(Q).$

\noindent {\bf Case 2}:  $Q_1\cap Q_2=\emptyset$, but have a common ancestor
$Q^*\subseteq Q$, with $\ell(Q^*)\approx R$.  We may then proceed as in Case 1, to construct
respective Harnack Chains, connecting each of
$X_1$ and $X_2$, to $X_{Q^*}$.   The union of these two chains connects $X_1$ to $X_2$.

\noindent {\bf Case 3}:  $Q_1$ and $Q_2$ have no common ancestor
of length $\approx R$.  In this case, we may suppose that $R<\ell(Q)/(MK_0)$,
where $M$ is a sufficiently large number to be chosen momentarily.
Indeed, if not, then $\ell(Q)/(MK_0)\leq R\lesssim \ell(Q)$, in which case $Q$
would be a common ancestor with $\ell(Q)\approx R$.

Thus, since $R<\ell(Q)/(MK_0)$, there exist $Q_1^*,Q_2^*\in \dd_Q$
such that, for $i=1,2$,
$Q_i^*$ is an ancestor of $Q_i$, with $\ell(Q^*_1)=\ell(Q^*_2)\approx MK_0R$.
Since $\dist(X_i,Q_i)\lesssim K_0\, \ell(Q_i)$ by construction (cf. \eqref{eq2.whitney2}),
we then have that $\dist(Q_1,Q_2)\lesssim K_0 R$ (by \eqref{eqA.one} and the triangle inequality),
and therefore also that
$$\dist(Q^*_1,Q^*_2)\leq C \ell(Q^*_1)/M\leq \ell(Q^*_1)= \ell(Q^*_2)\,,$$
by choice of $M$ large enough.  Consequently, by \eqref{eq2.whitney2**},
$\W^*_{Q_1^*}\cap \W^*_{Q_2^*}$ is non-empty,
whence there is a Harnack Chain connecting the respective Corkscrew points
$X_{Q_1^*}$ and $X_{Q_2^*}$.  We may then proceed as above to construct a Harnack
Chain from $X_i$ to $X_{Q_i^*},\, i=1,2$, and the proof of the Harnack Chain condition
for the sawtooth $\Omega_{\F,Q}$ is now complete.

We finish this subsection by verifying the Harnack Chain property
for a Carleson box $T_\Delta$.   Let $X_1,X_2\in T_\Delta$, with
$a:=\tdelta(X_1)\leq\tdelta(X_2)=:b$.
As above, we may suppose that $I_1^*$ and $I_2^*$ are separated, and thus as in observation (1),
that each $X_i,\, i=1,2$ is the respective center of the Whitney cube $I_i$ whose dilate contains it.

By definition of $T_\Delta$ (cf.  \eqref{eq2.box3}-\eqref{eq2.box2}),
and since $X_i$ is the center of $I_i$,
we have $X_i\in T_{Q^i},$
where $Q^i\in \dd^\Delta, \,i=1,2$.
By \eqref{eq2.whitney2**}, $\W_{Q^1}\cap\W_{Q^2}$ is non-empty.
Consequently, there is a
Harnack Chain connecting the respective Corkscrew points $X_{Q^1}$ and
$X_{Q^2}$, so in the case that  $|X_1-X_2|=:\Lambda a \approx r_\Delta$,
we may connect $X_1$ to $X_{Q^1}$ to $X_{Q^2}$ to $X_2$.

Therefore, we may now suppose that $|X_1-X_2|=\Lambda a\leq r_\Delta/(MK_0)$,
for some sufficiently large $M$ to be chosen momentarily.
We note that there is a uniform constant $c>0$ such that
$\Lambda\geq c$, since $X_1$ and $X_2$ are the respective centers
of non-overlapping Whitney cubes (cf. observation (5) above).
We now claim that we also
have $b\lesssim \Lambda a.$  Indeed, if $b\gg \Lambda a$, then by the triangle inequality,
$a\geq b-\Lambda a \gg \Lambda a,$ which contradicts the uniform lower bound for $\Lambda$.
Therefore, by observation (5) above, and by construction of each $T_{Q^i}$,
there exist
$\widetilde{Q}^1\subset Q^1,$
$\widetilde{Q}^2\subset Q^2$ such that $X_i\in T_{\widetilde{Q}^i}\,$,
$\ell(\widetilde{Q}^1)=\ell(\widetilde{Q}^2)
\approx MK_0\, \Lambda a$,  and
$$
\dist(X_1,\widetilde{Q}^1)\lesssim K_0\, a,
\qquad \dist(X_2,\widetilde{Q}^2)\lesssim K_0\, b\lesssim K_0\,\Lambda a.
$$
By the triangle inequality, we then have
$$\dist(\widetilde{Q}^1,\widetilde{Q}^2)\leq C\ell(\widetilde{Q}^1)/M\leq
\ell(\widetilde{Q}^1)=\ell(\widetilde{Q}^2)\,,$$
by choice of $M$ large enough.
By \eqref{eq2.whitney2**}, $\W_{\widetilde{Q}^1}\cap\W_{\widetilde{Q}^2}$
is non-empty, so that we may construct a Harnack Chain from $X_1$ to $X_2$,
by a now familiar argument,
via the Corkscrew points $X_{\widetilde{Q}^1}$ and $X_{\widetilde{Q}^2}$.

\subsection{ADR}  Suppose that $\partial\Omega$ is ADR, and we show first that for each
$Q\in \mathbb{D}(\partial\Omega)$, the boundary of the ``Carleson box'' $T_Q$ is
also ADR.    We begin with the upper bound.  Let $x\in \partial T_Q$, and let
$\td:=\td(x,r) := B(x,r)\cap\partial T_Q$, with $r\lesssim \diam Q$.
If $B(x,r)$ meets $\partial\Omega$, then there is a point
$x'\in \partial\Omega$ such that
$B(x,r)\subset B(x',2r).$  Consequently,
$$H^n\left(B(x,r)\cap \partial\Omega \cap\partial T_Q\right)\, \leq\,
H^n\left(\Delta(x',2r)\right)\,\lesssim \,r^n,$$
since $\partial \Omega$ is ADR.

Now consider $\td \setminus \partial\Omega$.  This portion of $\td$ is a contained in
a union of faces (or partial faces)
of fattened Whitney cubes $I^*=(1+\lambda)I$.    Let $\I_Q$ denote
the collection of Whitney cubes $I$ for which
$\partial I^*$ meets $\partial T_Q$, and $\interior(I^*)\subset T_Q$.
Suppose that $I\in \I_Q$ is a Whitney cube such that $\partial I^*$ meets
$\td$.  Then
\begin{equation}\label{eqA.1}H^n\left(\td\cap\partial I^*\right)\leq
H^n\Big(B(x,r)\cap \partial I^*\Big)\lesssim \min(\ell(I)^n,r^n).
\end{equation}
Therefore,
$$\sum_{I\in\I_Q: \,\ell(I)\geq r/(MK_0)}H^n\left(\td\cap \partial I^*\right)\lesssim r^n ,$$
because only a bounded number of terms can appear in this sum.  Here, $M$
is a sufficiently large number to be chosen, and $K_0$ is the same constant appearing in
\eqref{eq2.whitney2}.
It remains to consider
$$\sum_{I\in\I_Q: \,\ell(I)< r/(MK_0)}H^n\left(\td\cap \partial I^*\right)\,=\,\sum_{k:2^{-k}< r/(MK_0)}\,
\sum_{I\in\I^k_Q}H^n\left(\td\cap \partial I^*\right),$$
where $\I^k_Q := \{I\in \I_Q: \ell(I) = 2^{-k}\}.$    It is then enough to
show that there is an $\epsilon >0$ such that for each $k$ with $2^{-k}< r/(MK_0)$, we have
\begin{equation}\label{eqA.2}
\sum_{I\in\I^k_Q}H^n\left(\td\cap \partial I^*\right) \lesssim 2^{-k\epsilon}r^{n-\epsilon}.
\end{equation}
It follows from \eqref{eqA.1} that the latter bound will hold if the cardinality of the set of $I$
which make a non-trivial contribution to the sum is no larger than
\begin{equation}\label{eqA.3}C (2^kr)^{n-\epsilon}.\end{equation}
We recall that by the definition of $T_Q$ (cf. \eqref{eq2.whitney1}-\eqref{eq2.box}),
for each $I \in \I_Q^k$, there is a $Q_I\in\dd_Q$ such that $\ell(Q_I) \approx \ell(I)= 2^{-k},$
and $\dist(I,Q_I)\lesssim K_0\,\ell(I).$    Since $2^{-k}< r,$
there is a uniform constant $C$ such that
$B(x,Cr)$ contains each such $Q_I$, for every $I$ such that $\partial I^*$
meets $\td$.  We may then cover
$B(x,Cr)\cap Q$ by a bounded number of subcubes $Q'\in\dd_Q$, with $\ell(Q')\approx r$,
so that each relevant $Q_I$ is contained in some $Q'$.   It is enough to
consider those $Q_I$ contained in one such $Q'$.
We therefore now fix $Q'$ and $k$, and distinguish two types of $Q_I\subset Q'$:
\begin{eqnarray*}& {\rm Type\,\, 1}:\qquad \dist(Q_I,(Q')^c)>2^{-\gamma k}\, r^{1-\gamma}\\
&{\rm Type\, \,2}:\qquad \dist(Q_I,(Q')^c)\leq 2^{-\gamma k} \,r^{1-\gamma}
\end{eqnarray*}
where we have fixed $\gamma\in(0,1)$.  We note that there are at most a bounded
number of $I$'s corresponding to each $Q_I$.   Thus, since $2^{-k}< r/(MK_0)\ll r,$
by Lemma \ref{lemmaCh} $(vi)$ we have that the cardinality of the set of $I$'s for which
$Q_I$ is of Type 2 is no larger than $C(2^kr)^{-\gamma\eta} r^n/(2^{-kn}) \approx
(2^kr)^{n-\gamma \eta} $, which is \eqref{eqA.3}, with $\epsilon = \gamma\eta.$

We now claim that for $M$ chosen large enough, depending on $\gamma$ and $K_0$, the collection
of $I$ such that $\partial I^*$ meets $\td$, and for which $Q_I$ is of Type 1, is empty.
Indeed, if $Q_I$ is of Type 1, and if $M$ is sufficiently large, we then have
$$\dist(Q_I,(Q')^c) \,>\, 2^{-\gamma k}\, r^{1-\gamma} \,
>\,(MK_0)^{(1-\gamma)}2^{-k}\,\gg\,K_0\, \ell(I)\,\gtrsim\,\dist(I,Q_I).$$
Consequently, if $y\in \partial\Omega$ satisfies $\dist(I,y)\lesssim K_0\,\ell(I)$, then
$$
\dist(y,Q_I)\lesssim K_0\,\ell(I)\ll\dist(Q_I,(Q')^c),
$$ so that $y\in Q'$, and
$\dist(y,(Q')^c)\gg K_0\,\ell(I).$   Now consider any Whitney cube $J\in \W$ that touches
$I$.  Then $\dist(J,\partial\Omega)\approx \ell(J)
\approx\ell(I)\approx \dist(I,\partial\Omega)$,
so that for some $y_J\in \partial\Omega,$ we have
$\dist(y_J,J) \approx \dist(y_J,I) \ll C_0\,\ell(I)\leq K_0\,\ell(I)$
(cf. \eqref{eq2.whitney1} and \eqref{eq2.whitney2}.)  Thus, $y_J\in Q'$, and
$\dist(y_J,(Q')^c)\gg K_0\,\ell(I) \approx K_0\,\ell(J).$   It follows that there is a $Q_J\in \dd(\partial\Omega)$, with $Q_J\subset Q'\subset Q$,  $y_J\in \overline{Q}_J$,  $\ell(Q_J) = \ell(J)$, and
$\dist(Q_J,J)\leq\dist(y_J,J)\leq C_0\,\ell(J).$ Therefore, $J\in \W_{Q_J}$.  Since this is true for all
Whitney cubes $J$ that touch $I$, we have in particular that every point on $\partial I^*$ is an
interior point of $T_Q$, hence $\td \cap\partial I^* =\emptyset.$
We have now established the upper bound $H^n(\td(x,r))\lesssim r^n.$

The lower bound is easy.
Consider $B:=B(x,r)$, $r\lesssim \diam Q,$ with $x\in \partial T_Q.$  If $B\cap Q$ contains a surface ball
$\Delta \subset \partial\Omega$, with radius $r_\Delta \gtrsim r$, then we are done, by the ADR property of $\partial\Omega$.  Otherwise, if $B\cap Q$ contains no such surface ball, then $\dist(x,Q) \gtrsim r$,
whence it follows that $x\in \partial I^*$, where $I$ is a Whitney cube with $\ell(I)\gtrsim r/K_0$
(cf. \eqref{eq2.whitney2}), and where $x$ lies in a subset $F$ of a (closed)
face of $I^*$,  with $F\subset B\cap \partial T_Q$,
and $H^n(F\cap B)=H^n(F)\gtrsim (r/K_0)^n$,
as desired.

Next, we discuss the ADR property of a Carleson region $T_\Delta$.
By definition (cf. \eqref{eq2.box2}), $T_\Delta$ is a union of a bounded number
of regions $T_Q$.  The upper bound in the ADR condition is then an immediate consequence of
the corresponding bound for $T_Q$.  The lower bound is proved in the same way as it was
for $T_Q$ depending on whether or not the ball $B$ has an ample intersection with
some $Q\in \dd^\Delta$.  We omit the routine details.

Finally, we establish the ADR property for the global \eqref{eq2.sawtooth1}
and local \eqref{eq2.sawtooth2} sawtooth regions.  The proofs are similar,
so for the sake of specificity, we treat the global sawtooth $\Omega_{\F}.$  We first prove the
upper bound in the ADR condition.
Fix $B:= B(x,r)$.  The desired
bound for $H^n(B\cap\partial \Omega\cap\partial\Omega_{\F})$ is an immediate consequence
of the fact that $\partial \Omega$ is ADR.

Now consider
$\Sigma:=\partial\Omega_{\F}\setminus\partial\Omega.$  We observe that this portion of the boundary consists of (portions of) faces of certain fattened Whitney cubes $J^* =(1+\lambda)J$,
with $\interior(J^*)\subset\Omega_\F$,
which meet some $I\in\W$ for which $I\notin \W_Q^*$, for any $Q\in \dd_{\F}$
(so that $\tau I\subset \Omega\setminus\Omega_{\F}$
for some $\tau\in(1/2,1)$; cf. \eqref{eq2.30*}.)
Necessarily, $I\in \W_{Q'}^*$, where $Q'\in \dd_{Q_j}$
for some $Q_j \in \F$.
For each $Q_j\in \F$, we set
$$\R_{Q_j}:=\cup_{Q'\in \dd_{Q_j}}\W_{Q'}^*,$$
and denote by $\F_B$
the sub-collection of those $Q_j\in \F$ such that
there is an $I\in \R_{Q_j}$ for which $B\cap\Sigma$ meets
$I $.
We then split
the latter collection into $\F_B = \F_1\cup\F_2$, where $Q_j \in \F_1$ if $\ell(Q_j) < r$,
and $Q_j \in \F_2$ if $\ell (Q_j)\geq r.$  We consider the contribution of the latter first.
Suppose that $Q_j$ and $Q_k$ are both in $\F_2$, and without loss of generality
that $r\leq \ell(Q_j)\leq \ell(Q_k).$  Since $B$ meets some $I\in \R_{Q_j}$, we obtain
in particular that
$\dist(y,\partial\Omega)\lesssim \ell(Q_j),\,\forall y \in B.$   Thus,
$B\cap \Sigma$
lies within $C \ell(Q_j)$ of  $\partial\Omega$, and therefore meets no
Whitney cubes of side length greater than $C\ell(Q_j)$.  Consequently, any such
Whitney cube $I'\in \R_{Q_k}$, which meets $B$, must lie within
$CK_0\,\ell(Q_j)$ of $Q_k$.  Therefore, for any pair $Q_j,Q_k \in \F_2$, we have that
$\dist(Q_j,Q_k)\lesssim \min(\ell(Q_j),\ell(Q_k))$
(with implicit constants depending on $K_0$.)
Since the cubes in $\F$ are pairwise disjoint, it follows that
the cardinality of $\F_2$ is uniformly bounded, hence
$$ H^n\left(B\cap\Sigma\cap\left(\cup_{Q_j\in\F_2}\cup_{I\in\R_{Q_j}} I\right)\right)
\lesssim \sup_{Q_j\in \F}H^n\left(B\cap\Sigma_j\right)\,,$$
where $\Sigma_j:=\Sigma\cap(\cup_{I\in\R_{Q_j}} I).$
The desired bound for the contribution of $\F_2$ is an immediate consequence of
following estimate, which holds for every $Q_j\in\F$:
\begin{equation}\label{eqA.4}
H^n\left(B\cap\Sigma_j\right) \lesssim \left(\min\left(r,\ell(Q_j)\right)\right)^n.
\end{equation}

Let us take the latter bound for granted momentarily,
and consider the contribution of $\F_1$.  If $Q_j\in \F_1$, then
$Q_j\subset B^*:=CK_0\,B$ for some uniform constant $C$.  By the case $r> \ell(Q_j)$
of \eqref{eqA.4},
we have that $H^n(\Sigma_j\cap B)
\lesssim H^n(Q_j)$.  Therefore,
$$H^n\left(B\cap\left(\cup_{Q_j\in \F_1}\Sigma_j\right)\right)\,\lesssim\,
\sum_{\F_1}H^n(Q_j)\,\leq H^n\left(B^*\cap\partial \Omega\right)\approx(K_0\,r)^n,$$
since the $Q_j$'s are pairwise disjoint.

Thus, to finish proving the upper ADR bound for the sawtooth regions, it remains only to establish
\eqref{eqA.4}.    Suppose first that $\ell(Q_j)\lesssim r.$  We write
$$\Sigma_j := \bigcup_{k:2^{-k}\lesssim \ell(Q_j)}\, \Sigma_j^k,$$
where $\Sigma_j^k=\Sigma\cap(\cup_{\{I\in \R_{Q_j}:\,\ell(I)=2^{-k}\}}\, I)= \Sigma_j\cap(\cup_{\{I\in \R_{Q_j}:\,\ell(I)=2^{-k}\}}\, I).$
We observe that for any $I\in\W$,
\begin{equation}\label{eqA.6***}
H^n(\Sigma\cap I)\lesssim \ell(I)^n\,.
\end{equation}
Moreover,
there are at most a bounded number of $I\in \R_{Q_j}$ for which $\ell(I) \approx \ell(Q_j)$, so that
$$H^n\left(\sum_{k:2^{-k}\approx \ell(Q_j)}\Sigma_j^k\right) \lesssim \ell(Q_j)^n\,,$$
as desired.    On the other hand, suppose $I\in \R_{Q_j}$, with
$\ell(I)=2^{-k}\ll\ell(Q_j)$.  Then there is a $Q_I\in\dd_{Q_j}$, with $I\in \W^*_{Q_I}$.
In addition, if $I$ meets $\Sigma_j$, then $I$ meets $J^*$,
for some $J\in \W^*_{Q'}$, with $Q' \in \dd_\F$,
and $\ell(Q')\approx\ell(J)\approx\ell(I)\ll\ell(Q_j).$  We note that
$Q'\cap Q_j =\emptyset,$ by definition
of $\dd_\F$, and the fact that $\ell(Q')<\ell(Q_j)$.  Consequently,
$$\dist(Q_I,(Q_j)^c) \leq \dist(Q_I,Q')\lesssim K_0 \, 2^{-k}.$$
Notice that
for each such $Q_I$, there are at most a bounded number of $I'\in \W^*_{Q_I}$
(indeed, by definition  of $\W^*_{Q_I}$, all such $I'$ satisfy
$\ell(I') \approx \ell(Q_I) \approx \dist(I',Q_I)$).
By Lemma \ref{lemmaCh} $(vi)$ we therefore have that
$$\#\left\{I\in \R_{Q_j}:  \ell(I)=2^{-k},\ I\cap\Sigma_j^k\neq\emptyset\right\}\lesssim
\left(2^k\ell(Q_j)\right)^{n-\eta},$$
(where the implicit constant depends upon $K_0$), whence it follows that
$$H^n\left(\sum_{k:2^{-k}\ll \ell(Q_j)}\Sigma_j^k\right) \lesssim \ell(Q_j)^n\,.$$
Thus \eqref{eqA.4} holds in the case $r\gtrsim \ell(Q_j).$


Now suppose that $r\ll \ell(Q_j)$.  If $x\notin\pom$, then $B=B(x,r)$ is centered on a face
of some $J^*_x$, with $\interior(J^*_x)\subset \Omega_{\F}$.
If $\ell(J_x)\gg r$, we are done, by the nature of Whitney cubes.
On the other hand, if
$\ell(J_x) \lesssim r$, or if $x\in \pom$, then for each $I\in \R_{Q_j}$ which
meets $B$, we have that $\ell(I)\lesssim r$, and also that
$B(x,Cr)$ meets $Q_I$, for some uniform constant $C$,
where $Q_I\in\dd_{Q_j}$ is defined as in the previous paragraph.
We may then cover
$B(x,Cr)\cap Q_j$ by a bounded number of subcubes $Q^i\subset Q_j$,
with $\ell(Q^i)\approx Mr$,
so that each relevant $Q_I$ is contained in some $Q^i$.  Here, $M$ is a sufficiently large
number, to be fixed momentarily.
Now suppose that $I$ meets
$\Sigma_j$.  We may then proceed as in the previous paragraph, except that in this case
we consider $\dist(Q_I, (Q^i)^c),$ and $\ell(Q_j)$ is replaced by $\ell(Q^i)\approx M r.$
As above, we find that $I$ meets some $J^*$, with $J\in\W_{Q'}^*$
and $Q'\in\dd_\F$, so that $\ell(Q')\approx\ell(J)\approx\ell(I)$.  In the present scenario,
we have $\ell(I)\lesssim r$, therefore $\ell(Q')<\ell(Q^i)$, for $M$ chosen large enough
and consequently $Q'\cap Q^i=\emptyset$.
The rest of the argument follows as before.
We omit the details.

Finally, to complete our discussion of the ADR property, it remains only to
prove the lower ADR bound for the sawtooth regions.  For the sake of specificity,
we treat only the case of a local sawtooth, as the proof in the global case is similar.

Fix now $Q_0\in\dd$, $r\lesssim\diam Q_0$ and  $x\in \pomfqo$, where $\F\subset\dd$ is a disjoint family, and set
$B:=B(x,r)$ and $\td=\td(x,r):=B\cap\pomfqo$.  We consider two main cases.  As usual,
$M$ denotes a sufficiently large number to be chosen.

\noindent{\bf Case 1}:  $\delta(x)\geq r/(MK_0)$.
In this case, for some $J$ with
$\interior(J^*)\subset \Omega_{\F,Q_0}$, we have that
$x$ lies on a subset $F$ of a (closed)
face of $J^*$, satisfying $H^n(F)\gtrsim (r/(MK_0))^n$,  and $F\subset \pomfqo$.
Thus, $H^n(B\cap \pomfqo)\geq H^n(B\cap F)\gtrsim (r/(MK_0))^n,$ as desired.

\noindent{\bf Case 2}:  $\delta(x)< r/(MK_0)$.   In this case, we have that
$\dist(x,Q_0) \lesssim r/M.$
Indeed, if $x\in\pom\cap\pomfqo$, then by Proposition
\ref{prop:sawtooth-contain}, $x\in \overline{Q_0}$, so that $\dist(x,Q_0) =0$.
Otherwise, there is some cube
$Q\in \dd_{\F,Q_0}$ such that $x$ lies on the face
of a fattened Whitney cube $I^*$, with $I\in \W_{Q}^*$, and
$\ell(Q)\approx\ell(I)\approx \delta(x)<r/(MK_0)$.   Thus,
$$\dist(x,Q_0)\lesssim\dist(I,Q)\lesssim K_0\,\ell(Q)\lesssim r/M.$$
Consequently,  we may choose $\hat{x}\in Q_0$
such that $|x-\hat{x}|\lesssim r/M$.
Fix now $\widehat{Q}\in \dd_{Q_0}$
with $\hat{x}\in \widehat{Q}$ and $\ell(\widehat{Q})\approx r/M$.  Then
for $M$ chosen large enough we have that
$\widehat{Q}\subset B(\hat{x},r/\sqrt{M})
\subset B(x,r)$.  We now consider two sub-cases.

\noindent{\it Sub-case 2a}: $B(\hat{x},r/\sqrt{M})$  meets a $Q_j\in\F$ with
$\ell(Q_j)\geq r/M$.  Then in particular, there is a $Q\subseteq Q_j$,
with $\ell(Q)\approx r/M$, and $Q\subset B(\hat{x},2r/\sqrt{M})$.  By Lemma
\ref{lemma4.9}, there is a ball $B'\subset \ree\setminus \overline{\Omega_{\F,Q_0}}$,
with radius $r' \approx \ell(Q)/K_0\approx r/(K_0M)$, such that $B'\cap\pom\subset Q$, and thus also
$B'\subset B$ (for $M$ large enough).
On the other hand, we have already established above that $\Omega_{\F,Q_0}$
satisfies the (interior) Corkscrew condition, so there is another ball $B''\subset B\cap\Omega_{\F,Q_0}$,
with radius $r''\approx r$.  Therefore, by the isoperimetric inequality and the structure theorem
for sets of locally finite perimeter (cf. \cite{EvG}, pp. 190 and 205, resp.)
we have $H^n(\td)\gtrsim c_{K_0} r^n$.

\noindent{\it Sub-case 2b}:  there is no $Q_j$ as in sub-case 2a.   Thus,
if $Q_j\in\F$ meets $B(\hat{x},r/\sqrt{M})$, then $\ell(Q_j)\leq r/M$.  Since $\hat{x}\in Q_0$,
there is a surface ball
$$\Delta_1:= \Delta(x_1,cr/\sqrt{M})\subset Q_0\cap B(\hat{x},r/\sqrt{M})\subset Q_0\cap B.$$
Let $\F_1$ denote the collection of those
$Q_j\in\F$ which meet $\Delta_1$.  We then have the covering
$$\Delta_1\subset \left(\cup_{\F_1} Q_j \right)\cup \left(\Delta_1\setminus(\cup_{\F_1} Q_j)\right).$$
If \begin{equation}\label{eqA.6*}
\sigma\left(\frac12 \Delta_1\setminus (\cup_{\F_1} Q_j)\right)
\geq\frac12 \sigma\left(\frac12 \Delta_1\right) \approx  r^n,\end{equation}
then we are done, since $\Delta_1\setminus (\cup_{\F_1} Q_j)\subset
(Q_0\setminus(\cup_{\F} Q_j))\cap B\subset \td,$
by Proposition \ref{prop:sawtooth-contain}.

Otherwise, if \eqref{eqA.6*} fails, then
\begin{equation}\label{eqA.7**}
\sum_{Q_j\in\F_1'}\sigma(Q_j) \gtrsim  r^n,\end{equation}
where $\F_1'$ denotes those $Q_j \in\F_1$ which meet
$\frac12 \Delta_1$.
Let us remind the reader that $x^\star_j$ is the center of the $n$-dimensional cube $P_j$ constructed in
Proposition \ref{prop:Pj}, and we recall  \eqref{eqn:T-Pj} and the related discussion.
We claim that there is a uniform constant $C$ such that for each such $Q_j$,
the ball $B^*_{Q_j}:= B(x_j^\star, CK_0\, \ell(Q_j))$
contains both an interior and an exterior
Corkscrew point for $\Omega_{\F,Q_0}$, with respect to the surface ball
$B^*_{Q_j}\cap\pomfqo$ (with Corkscrew constants that may depend upon
$K_0$).

Indeed, the exterior point exists by virtue of Lemma \ref{lemma4.9},
while the interior point may be taken to be the center
of some $I\in \W^*_{\widetilde{Q}_j}$ with $\ell(I)\approx \ell(\widetilde{Q}_j)$, where $\widetilde{Q}_j$ is the dyadic parent of $Q_j,$ so that $\widetilde{Q}_j\in \dd_{\F,Q_0}$
and therefore $I\subset\interior(I^*)\subset \Omega_{\F,Q_0}$.  Consequently,
by the isoperimetric inequality and the structure theorem for sets of locally finite perimeter,
we have
\begin{equation}\label{eqA.7*}H^n(B^*_{Q_j}\cap\pomfqo)\gtrsim \ell(Q_j)^n\approx\sigma(Q_j).
\end{equation}
Now, by the ADR property and a covering lemma argument, and \eqref{eqA.7**},
there is a sub-collection
$\F_1''\subset \F_1'$ such that the balls in $\{B^*_{Q_j}\}_{Q_j\in\F_1''}$ are pairwise disjoint and
\begin{equation}\label{eqA.8*}\sum_{Q_j\in\F_1''}\sigma(Q_j)\gtrsim r^n.
\end{equation}
Combining \eqref{eqA.7*} and \eqref{eqA.8*}, we obtain that $H^n(\td)\gtrsim  r^n$,
since for $M$ large enough, each $B_{Q_j}^* \subset B$, by construction.

\subsection{UR}  In this subsection, we show that the Carleson box $T_Q$ inherits
the UR property from $\Omega$.  This fact extends routinely to any $T_\Delta$,
and we omit the details.

Let us note that, since $\partial\Omega$ is UR, we have the global $L^2$ bound
\begin{equation}\label{eqA.6a}
\dint_{\ree}|\nabla^2\mathcal{S}f (X)|^2\,\delta(X)\, dX
\,\leq\, C\,\|f\|^2_{L^2(\partial\Omega)},
\end{equation}
which is equivalent to the Carleson measure condition \eqref{eq1.sf} by ``$T1$ reasoning''.

Fix now $Q\in \dd(\partial\Omega)$, and as usual let $\tdelta(X):=\dist(X,\partial T_Q)$
(in the present context, $X$ need not belong to $T_Q$, but of course $\tdelta(X)$
is still well-defined).  By ``local $Tb$'' theory (see \cite{GM} in the present context), it is enough to verify that for every
$\td=\td(x,r) :=B(x,r)\cap\partial T_Q,$ with $x\in \partial T_Q$ and $r\lesssim \diam (Q)$,
there is a function $b_{\td}$, supported in $\td$, and satisfying
\begin{equation}
\label{eqA.6}
\left| \fint_{\td} b_{\td} \,dH^n\right| \,\geq \,\frac1C,
\end{equation}
\begin{equation}
\label{eqA.7}
\fint_{\td} |b_{\td}|^2 \,dH^n \,\leq \,C,
\end{equation}
\begin{equation}\label{eqA.8}
\dint_{B(x,2r)}|\nabla^2\mathcal{S} b_{\td} (X)|^2\,\tdelta(X)\, dX \leq C r^n,
\end{equation}
where $C$ is a uniform constant, independent of $Q$.  We fix a large constant $M$ to be chosen.
There are two cases:

\noindent{\bf Case 1}:  $\dist(x,\partial\Omega)\leq r/(MK_0).$

In this case, either $x\in \overline{Q}$, or $x$ lies on a face of some $I^*=(1+\lambda)I$,
with $I\in \W^*_{Q_I}$,
for some $Q_I\in\dd_Q$, where $\ell(Q_I)\approx \ell(I)\lesssim r/(MK_0)$,
and $\dist(Q_I,I)\lesssim K_0\,\ell(I) \lesssim r/M.$

We claim that there is a surface ball
$\Delta'=B'\cap\partial\Omega\subset \td\cap\partial\Omega$, with $r_{\Delta'}\approx r/M$,
and with $B'\cap\Omega\subset T_Q$.  Indeed,
if $x\in \overline{Q}$, then there is a $Q'\in \dd_Q$ such that $x\in \overline{Q'}$
and $\ell(Q')\approx r/M,$
and we may then set $\Delta':= B'\cap\partial\Omega$, where $B':=B'_{Q'}$ is
the ball promised by Lemma \ref{lemma2.31}, applied with $Q'$ in place of $Q$, so that
$B' \cap \Omega \subset T_{Q'}\subset T_Q$.
On the other hand,
if $x\in \partial I^*$, with $I\in \W^*_{Q_I}$ as above, then there is a
$Q'\in \dd_Q$ with $Q_I\subseteq Q'$, and $\ell(Q')\approx r/M.$
Moreover, by the triangle inequality, $|x-y|\lesssim r/M$, for every $y\in Q'$,
so that for $M$ large enough we have
$Q'\subset B(x,r)\cap \partial T_Q = \td$ by Proposition \ref{prop:sawtooth-contain}. Thus, we may again set $B':=B'_{Q'},$
as in Lemma \ref{lemma2.31},
and the claim is established.

We fix $\Delta'=\Delta(x_{Q'},r_{\Delta'})$
as in the previous paragraph, and  then set $b_{\td}:= 1_{\Delta''},$ where
$\Delta'':= \Delta(x_{Q'},r_{\Delta'}/4)$. Then  \eqref{eqA.7} is trivial, and
\eqref{eqA.6} holds  by the ADR properties of $\Omega$ and $T_Q$.
It remains to establish
\eqref{eqA.8}.
To this end, we claim that
$\tdelta(X)=\delta(X)$, for $X\in 2B''=\frac12 B'$.
Momentarily taking this claim for granted, we obtain that
$$
\dint_{2B''}|\nabla^2\mathcal{S} b_{\td} (X)|^2\,\tdelta(X)\, dX \leq C r^n,
$$
by \eqref{eqA.6a}, since $b_{\td}=1_{\Delta''}$ is supported in $\partial\Omega$.
Otherwise, for $X\in B(x,2r)\setminus 2B''$, we have
\begin{equation}\label{eqA.10}
|\nabla^2\mathcal{S} b_{\td} (X)| \lesssim \left|\int_{|X-y|\gtrsim r_{\Delta'}}|X-y|^{-n-1}
\,1_{\Delta''}(y)\,d\sigma(y)\right| \lesssim 1/r_{\Delta'}\approx M/r\,,
\end{equation}
by the ADR property, and \eqref{eqA.8} follows.

Let us now verify the claim.  Fix $X\in \frac12B'$.
We note that
$$\dist(X,\pom\cap\partial T_Q) \leq \frac12r_{\Delta'}\,,$$
since $B'$ is centered on $\pom \cap \partial T_Q$.
On the other hand, $\partial T_Q\setminus\pom \subset \Omega$, and therefore lies
outside of $B'$, since, by construction, $B'\cap\Omega\subset T_Q$.  Thus,
$$\dist(X, \partial T_Q\setminus\pom) \geq \frac12r_{\Delta'}\,.$$
Consequently, $\tdelta(X)=\dist(X,\pom\cap\partial T_Q)$.
Similarly, we shall have that $\delta(X)=\dist(X,\pom\cap\partial T_Q),$ and thus $\delta(X)=\tdelta(X)$
as claimed,
once we show that
$\pom\setminus \partial T_Q$
lies outside $B'$, or equivalently, that $\pom\cap B'\subset \partial T_Q$.
So, fix $y\in \pom\cap B'$.  Since $B'$ is open, we have that
$B(y,\eps_0)\subset B'$ for $\eps_0$ small enough.  Note that  $B(y,\eps_k)$ meets
$\Omega$ for a sequence $\eps_k\to 0$, with $\eps_k<\eps_0$.  Thus, there exists a sequence
$\{y_k\}_k\subset B'\cap\Omega\subset T_Q$, with $y_k\to y$, whence $y\in\partial T_Q$.

\noindent{\bf Case 2}:  $\dist(x,\partial\Omega)> r/(MK_0).$

 In this case, we can find a Whitney cube $I$ with $x\in \partial I^*$ and
 $\interior(I^*)\subset T_Q$, and a ball $B'=B(x',r')$,
 with $r'\approx r/(MK_0)$, such that some face $F$ of $I^*$
 contains the surface ball $\td':=B'\cap\partial T_Q.$
 We define $b_{\td}:= 1_{\td''},$ where
 $\td''=B''\cap\partial T_Q$ and $B'' :=\frac14 B'$.  We may now proceed as in Case 1, using that
 of course \eqref{eqA.6a} holds when $\partial\Omega$ is replaced by the hyper-plane $\mathcal{H}$
 that contains $F$,
 and $\delta(X)=\dist(X,\mathcal{H})$.
 We omit the routine details.

\section{Dyadically doubling and Muckenhoupt weights}\label{appendixB}

Recall that, for a fixed  cube $Q_0\in \dd$, we say that $\hm$ is dyadically doubling on $Q_0$
if there exists $C_\hm$ such that $\hm(Q)\le C_\hm\, \hm(Q')<\infty$ for every $Q\in \dd_{Q_0}$,  and for every dyadic ``child'' $Q'$ of $Q$. We write $C_\sigma$ for the dyadic doubling constant of $\sigma$ (which depends on the ADR property).  Throughout Appendix \ref{appendixB},
$Q_0$ will denote a fixed cube in $\dd$.  Let us also recall that the projection operators  $\P_\F$ have been introduced in Section \ref{s6*}.

\begin{lemma}\label{lemma:dyad-doubling-proj}
Fix $Q_0$. Let $\hm$  be a dyadically doubling measure on $Q_0$ with constant $C_\hm$. Then for every family $\F\subset \dd_{Q_0}$ of pairwise disjoint dyadic cubes, $\P_\F \hm$ is dyadically doubling on $Q_0$, indeed $\P_\F \hm(Q)\le \max(C_\hm,C_\sigma)\, \P_\F \hm(Q')$ for every $Q\in \dd_{Q_0}$, and for every dyadic ``child'' $Q'$ of $Q$.
\end{lemma}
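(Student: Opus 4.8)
The plan is to verify the dyadic doubling inequality directly from the definition of $\P_\F$, distinguishing cases according to the position of the child $Q'$ relative to the family $\F$. First I would fix $Q\in\dd_{Q_0}$ and a dyadic child $Q'$ of $Q$, and recall that
$$
\P_\F\hm(A)=\hm\big(A\setminus\textstyle\bigcup_\F Q_j\big)+\sum_{Q_j\in\F}\frac{\sigma(A\cap Q_j)}{\sigma(Q_j)}\,\hm(Q_j)\,,
$$
so that $\P_\F\hm(Q)=\hm(Q)$ whenever $Q\in\dd_\F$ (i.e.\ $Q$ is not contained in any $Q_j\in\F$), while $\P_\F\hm$ restricted to the interior of any $Q_j\in\F$ is just $\frac{\hm(Q_j)}{\sigma(Q_j)}\sigma$.

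There are three cases. \textbf{Case 1:} $Q$ is strictly contained in some $Q_j\in\F$, or $Q=Q_j$ for some $Q_j\in\F$; then $Q'\subsetneq Q_j$ as well, and $\P_\F\hm(Q)=\frac{\hm(Q_j)}{\sigma(Q_j)}\sigma(Q)$, $\P_\F\hm(Q')=\frac{\hm(Q_j)}{\sigma(Q_j)}\sigma(Q')$, so the ratio is $\sigma(Q)/\sigma(Q')\le C_\sigma$ by dyadic doubling of $\sigma$. \textbf{Case 2:} $Q\in\dd_\F$ and $Q'\in\dd_\F$; then $\P_\F\hm(Q)=\hm(Q)$ and $\P_\F\hm(Q')=\hm(Q')$, and the ratio is $\hm(Q)/\hm(Q')\le C_\hm$ by dyadic doubling of $\hm$. \textbf{Case 3:} $Q\in\dd_\F$ but $Q'\notin\dd_\F$, i.e.\ $Q'=Q_j$ for some $Q_j\in\F$ (it cannot be strictly contained in a member of $\F$, since then so would $Q$ be). In this case $\P_\F\hm(Q')=\P_\F\hm(Q_j)=\hm(Q_j)=\hm(Q')$, while $\P_\F\hm(Q)=\hm(Q)$, and again the ratio is $\hm(Q)/\hm(Q')\le C_\hm$. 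In all three cases the ratio $\P_\F\hm(Q)/\P_\F\hm(Q')$ is bounded by $\max(C_\hm,C_\sigma)$, which is the claim; finiteness of $\P_\F\hm(Q)$ follows since it is a finite sum of finite quantities (using $\hm(Q)<\infty$ and $\hm(Q_j)<\infty$).

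I do not expect any genuine obstacle here: the only point requiring a little care is the bookkeeping in Case 3, namely observing that a dyadic child of a cube $Q\in\dd_\F$ is either again in $\dd_\F$ or else is itself a maximal cube of $\F$ (it cannot be properly contained in a cube of $\F$, as that would force $Q\subseteq$ that cube), so that $\P_\F\hm$ agrees with $\hm$ on it. Once that observation is in place the three computations above are immediate, and one simply collects the constants. I would also note that exactly the same argument, applied to $\nu$ in place of a general $\hm$, together with the corresponding doubling properties of $\tom$ and $\sigma$, yields the dyadic doubling of $\P_\F\nu$ needed in Lemma~\ref{lemma:dyad-doubling-nu}.
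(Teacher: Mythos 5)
Your proof is correct and coincides with the paper's: the same three-case decomposition (both $Q$ and $Q'$ lie inside some $Q_j\in\F$; both lie outside $\cup_\F Q_j$; or $Q'\in\F$ while $Q$ does not lie inside any $Q_j$), the same identities $\P_\F\hm(Q)=\frac{\sigma(Q)}{\sigma(Q_j)}\hm(Q_j)$ or $\P_\F\hm(Q)=\hm(Q)$ in the respective cases, and the same use of dyadic doubling of $\sigma$ or $\hm$ to bound the ratios. The only difference from the paper is cosmetic ordering of the cases. A small caution on your closing aside: the paper explicitly \emph{avoids} deducing Lemma~\ref{lemma:dyad-doubling-nu} by ``the same argument applied to $\nu$,'' because that route would make the doubling constant of $\P_\F\nu$ depend on $\hm$ (through the doubling constant of $\nu$), whereas $\P_\F\nu$ itself is independent of $\hm$; the paper instead argues directly for $\P_\F\nu$ so the constant depends only on $\tom$ and $\sigma$.
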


\begin{proof}
We follow
the proof of \cite[Lemma B.1]{HM}.
Let us fix $Q\in \dd_{Q_0}$ and one of its dyadic ``children'' $Q'$. We consider several cases.

\noindent {\bf Case 1}: There exists $Q_k\in \F$ with $Q\subset Q_k$. The estimate is trivial in this case:
$$
\P_\F \hm(Q)
=
\frac{\sigma(Q)}{\sigma(Q_k)}\,\hm(Q_k)
\le C_\sigma
\frac{\sigma(Q')}{\sigma(Q_k)}\,\hm(Q_k)
= C_\sigma\,\P_\F \hm(Q')
<\infty.
$$

\noindent {\bf Case 2}: $Q'\in \F$. Notice that $\P_\F \hm(Q')=\hm(Q')$. Let $\F_1$ be the family of cubes $Q_k\in\F$ with $Q_k\cap Q\neq\emptyset$ and observe that if $Q_k\in \F_1$ then $Q_k\subsetneq Q$. Thus,
\begin{align*}
\P_\F \hm(Q)
&=
\hm(Q\setminus (\cup_{Q_k\in\F} Q_k)) + \sum_{Q_k\in \F_1} \frac{\sigma(Q_k\cap Q)}{\sigma(Q_k)} \hm(Q_k)
\\
&=
\hm(Q\setminus (\cup_{Q_k\in\F} Q_k)) + \sum_{Q_k\in \F_1} \hm(Q_k)
\\
&
=
\hm(Q)
\le
C_\hm\,\hm(Q')
=
C_\hm\,\P_\F \hm(Q')<\infty.
\end{align*}

\noindent {\bf Case 3}: None of the conditions in the previous cases occur. We take the same set $\F_1$ and observe that if $Q_k\in \F_1$ then $Q_k\subsetneq Q$ (otherwise we are driven to Case 1). Let $\F_2$ be the family of cubes $Q_k\in\F$ with $Q_k\cap Q' \neq\emptyset$. Notice that if $Q_k\in \F_2$ then $Q_k\subsetneq Q'$: otherwise, either $Q_k=Q'$ which leads us to Case 2, or $Q'\subsetneq Q_k$ which implies $Q\subset Q_k$ and this is Case 1. Then proceeding as in the previous case one obtains that $\P_\F \hm(Q)=\hm(Q)$ and $\P_\F \hm(Q')=\hm(Q')$ which in turn imply
$$
\P_\F \hm(Q)
=
\hm(Q)
\le
C_\hm\,\hm(Q')
=
C_\hm\,\P_\F \hm(Q')<\infty.
$$
\end{proof}

\begin{lemma}\label{lemma:dyad-doubling-nu}
Under the hypotheses of Lemma \ref{lemma:DJK-dyadic-proj}, $\nu$ and $\P_\F\nu$ are dyadically doubling on $Q_0$.
\end{lemma}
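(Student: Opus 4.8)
The statement asserts that both $\nu$ and $\P_\F\nu$ are dyadically doubling on $Q_0$, where $\nu$ is the measure defined in \eqref{defi:nu-bar} and $\P_\F\nu$ is given by the explicit formula \eqref{defi-proj-nubar}. My plan is to handle $\P_\F\nu$ first, and then deduce the doubling property for $\nu$, or alternatively to treat the two on an equal footing via the same case analysis. The key point is that the measure $\tom$ (harmonic measure for the sawtooth domain $\Omega_{\F,Q_0}$) is doubling: by Lemma \ref{lemma2.30} and Lemma \ref{lemma:inher-quali}, $\Omega_{\F,Q_0}$ is a $1$-sided NTA domain with ADR boundary that also satisfies the qualitative exterior corkscrew condition, so Corollary \ref{cor2.double} applies and gives the concentric doubling property for $\tom$ on surface balls of $\partial\Omega_{\F,Q_0}$; moreover $\hm$ is doubling on $\partial\Omega$ for the same reasons. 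Likewise $\sigma$ is dyadically doubling by ADR. So all three ingredient measures entering the formulas for $\nu$ and $\P_\F\nu$ are doubling.

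First I would establish that $\P_\F\nu$ is dyadically doubling, following the template of Lemma \ref{lemma:dyad-doubling-proj}. Fix $Q\in\dd_{Q_0}$ and a dyadic child $Q'$ of $Q$, and split into the same three cases as in that lemma. In Case 1 ($Q\subset Q_k$ for some $Q_k\in\F$), formula \eqref{defi-proj-nubar} gives $\P_\F\nu(Q)=\frac{\sigma(Q\cap Q_k)}{\sigma(Q_k)}\tom(P_j)=\frac{\sigma(Q)}{\sigma(Q_k)}\tom(P_j)$ and similarly for $Q'$, so the ratio is controlled by $C_\sigma$. In Case 2 ($Q'\in\F$) and Case 3 (neither), one checks from \eqref{defi-proj-nubar} that $\P_\F\nu(Q)=\tom(Q\cap E_0)+\sum_{Q_j\in\F,\,Q_j\subsetneq Q}\tom(P_j)$ and the analogous identity for $Q'$; I then need to compare these two expressions. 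The comparison reduces, after using that $Q\cap\partial\Omega_{\F,Q_0}$ and $Q\setminus(\cup_\F Q_j)$ differ by a set of $\tom$-measure zero (Propositions \ref{prop:sawtooth-contain}, \ref{prop5.0a} applied with $\mu=\tom$), to showing $\tom(\td^Q)\approx\tom(\td^{Q'})$ for the surface balls $\td^Q,\td^{Q'}$ of Proposition \ref{prop:surface-ball-sawtooth}; this follows from the doubling property of $\tom$ together with the facts that $t_{Q}\approx t_{Q'}\approx\ell(Q)$ and $\dist(\td^Q,\td^{Q'})\lesssim\ell(Q)$. For the lower containment one uses \eqref{proj-nu-bar:Q_1} (which says $\P_\F\nu(Q)\gtrsim\tom(\td^Q)$) in the reverse direction combined with a crude upper bound $\P_\F\nu(Q)\lesssim\tom$ of a slightly enlarged surface ball, again via doubling and a Harnack chain argument when $\ell(Q_j)\approx\ell(Q_0)$, exactly as in the proof of the right-hand inequality in \eqref{dyadic-DJK:proj}.

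Having $\P_\F\nu$ dyadically doubling, I would obtain the doubling of $\nu$ itself by the same case analysis: $\nu$ and $\P_\F\nu$ agree on every $Q\in\dd_\F$ (since $\P_\F$ fixes the mass of such cubes) and on every $Q_j\in\F$, and inside a cube $Q_j\in\F$ the measure $\nu$ restricted there is a constant multiple of $\hm$ restricted to $Q_j$ (namely $\nu(F)=\frac{\hm(F\cap Q_j)}{\hm(Q_j)}\tom(P_j)$ for $F\subset Q_j$), so dyadic doubling of $\nu$ on the sub-tree $\dd_{Q_j}$ is inherited verbatim from that of $\hm$ on $\partial\Omega$; at the ``interface'' (a child $Q'$ of $Q\in\dd_\F$ with $Q'\in\F$) one compares $\nu(Q')=\hm(Q')$ with $\nu(Q)=\P_\F\nu(Q)$, already controlled. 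I expect the main obstacle to be bookkeeping rather than anything deep: one must be careful about the several sub-cases in which the child $Q'$ either lies in the same ``regime'' as $Q$ (both in $\dd_\F$, or both strictly inside some $Q_j$) or straddles the family $\F$, and in the straddling cases one must invoke the geometric comparability of the auxiliary surface balls $\td^Q$ and of the cubes $P_j$ (Propositions \ref{prop:Pj}, \ref{prop:cork-both}, \ref{prop:surface-ball-sawtooth}) together with the doubling of $\tom$, $\hm$ and $\sigma$. No new estimate beyond those already proved in Sections \ref{section-fund-est} and \ref{s6*} is needed, so the proof is essentially a careful assembly of existing ingredients, and I would simply say ``we omit the routine details'' after indicating the case split.
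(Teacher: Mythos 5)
Your proposal follows essentially the same case analysis as the paper's proof, using the same geometric ingredients (Propositions \ref{prop:Pj}, \ref{prop:cork-both}, \ref{prop:surface-ball-sawtooth}) and the doubling of $\tom$, $\hm$, and $\sigma$; the only structural difference is that you treat $\P_\F\nu$ first and then deduce the statement for $\nu$, whereas the paper does $\nu$ first and observes that $\P_\F\nu=\nu$ in Cases~2 and~3 (this order makes it transparent that the doubling constant for $\P_\F\nu$ is independent of $\hm$, a point the paper is careful to emphasize since $\P_\F\nu$ does not depend on $\hm$; your order achieves the same uniformity because your Cases~2--3 bound for $\P_\F\nu$ only invokes $\tom$ and $\sigma$).

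Two small slips are worth correcting. First, in your Case~2 for $\P_\F\nu$, the surface ball $\td^{Q'}$ of Proposition \ref{prop:surface-ball-sawtooth} is defined only for $Q'\in\dd_{\F,Q_0}$, i.e.\ for cubes \emph{not} contained in any $Q_j\in\F$; when $Q'=Q_1\in\F$ this object is not available, so one should argue directly with the $n$-cube $P_1$, as the paper does via the containments \eqref{eqn:dyad-doubling-nu:Claim1}--\eqref{eqn:dyad-doubling-nu:Claim2} and doubling of $\tom$. Second, in your interface comparison for $\nu$ you assert $\nu(Q')=\hm(Q')$ when $Q'\in\F$; unwinding \eqref{defi:nu-bar} actually gives $\nu(Q')=\tom(P_1)$ (equivalently $\nu(Q')=\P_\F\nu(Q')$), which is the identity you need to invoke the already-established doubling of $\P_\F\nu$. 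Neither slip affects the viability of the argument, but both must be fixed for the written proof to be correct.
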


\begin{proof}
We proceed as in \cite[Lemma B.2]{HM}. Let us first consider $\nu$. Fix $Q\in \dd_{Q_0}$, and one of its dyadic ``children'' $Q'$.  We recall Proposition \ref{prop:Pj}, which for each
$Q_k\in\F$ promises the existence of an $n$-dimensional cube $P_k\subset \pom_{\F,Q_0}$,
with $\ell(P_k) \approx \ell(Q_k) \approx \dist(P_k,Q_k)\approx \dist(P_k,\pom)$.

\noindent {\bf Case 1}: There exists $Q_k\in \F$ with $Q\subset Q_k$. The estimate is trivial in this case since $\hm$ is dyadically doubling:
\begin{align*}
\nu(Q)
=
\frac{\hm(Q)}{\hm(Q_k)}\,\tom(P_k)
\le
C_\hm\,
\frac{\hm(Q')}{\hm(Q_k)}\,\tom(P_k)
=
C_\hm\, \nu(Q')<\infty.
\end{align*}

\noindent {\bf Case 2}: $Q'\in \F$. Write $Q'=Q_1\in\F$. Notice that $\nu(Q')=\tom(P_1) $. Let $\F_1$ be the family of cubes $Q_k\in\F$ with $Q_k\cap Q\neq\emptyset$ and observe that if $Q_k\in \F_1$ then $Q_k\subsetneq Q$. Thus, Remark \ref{remark:Pj} implies
\begin{align*}
\nu(Q)
&=
\tom\left(Q\setminus(\cup_{Q_k\in\F} Q_k)\right) + \sum_{Q_k\in \F_1} \frac{\hm(Q_k\cap Q)}{\hm(Q_k)} \tom(P_k)
\\
&=
\tom\left(Q\setminus (\cup_{Q_k\in\F} Q_k)\right) + \sum_{Q_k\in \F_1} \tom(P_k)
\\
&
\lesssim
\tom\left(
\big(Q\setminus (\cup_{Q_k\in\F} Q_k)\big)\cup \big(\cup_{Q_k\in\F_1} P_k\big)
\right).
\end{align*}
We note that there are uniform positive constants $c$ and $C$ such that
\begin{equation}
\label{eqn:dyad-doubling-nu:Claim1}
\big(Q\setminus (\cup_{Q_k\in\F} Q_k)\big)\cup \big(\cup_{Q_k\in\F_1} P_k\big)\subset \td(x^\star_1, C\ell(P_1))
\end{equation}
and
\begin{equation}
\label{eqn:dyad-doubling-nu:Claim2}
\td(x^\star_1, c\ell(P_1))
\subset P_1\,,
\end{equation}
where as usual $x_1^\star$ denotes the center of the $n$-dimensional cube $P_1$.  Indeed,
\eqref{eqn:dyad-doubling-nu:Claim2} is trivial, since by construction
(cf. Proposition \ref{prop:Pj}), $P_1\subset \pom_{\F.Q_0}$.
To verify \eqref{eqn:dyad-doubling-nu:Claim1},
it is enough to observe that, by Proposition
\ref{prop:Pj}, and the fact that $Q$ is the dyadic parent of $Q_1$, for $Q_k\in\F_1$ we have
$$\ell(P_k)\approx \dist(P_k,Q_k) \approx \dist(P_k,Q)\lesssim
\ell(P_1) \approx \ell(Q) \approx \dist(Q,P_1)\,.$$

Consequently, since $\tom$ is doubling,
we have
$$
\nu(Q)
\lesssim
\tom(\td(x^\star_1, C\ell(P_1)))
\lesssim
\tom(\td(x^\star_1, c\ell(P_1)))
\le
\tom(P_1)
=\nu(Q')
$$

\noindent {\bf Case 3}: None of the conditions in the previous cases occur. We take the same set $\F_1$ and observe that if $Q_k\in \F_1$ then $Q_k\subsetneq Q$ (otherwise we are driven to Case 1). Let $\F_2$ be the family of cubes $Q_k\in\F$ with $Q_k\cap Q' \neq\emptyset$. Notice that if $Q_k\in \F_2$ then $Q_k\subsetneq Q'$: otherwise, either $Q_k=Q'$ which leads us to Case 2, or $Q'\subsetneq Q_k$ which implies $Q\subset Q_k$ and this is Case 1. We claim that for some uniform constant $C$, we have
\begin{equation}
\label{eqn:dyad-doubling-nu:Claim3}
\big(Q\setminus (\cup_{Q_k\in\F} Q_k)\big)\cup \big(\cup_{Q_k\in\F_1} P_k\big)\subset \td(x^\star_{Q'}, Ct_{Q'})
\end{equation}
(see Proposition \ref{prop:surface-ball-sawtooth} for the notation).  Indeed,
since $Q$ is the dyadic parent of $Q'$, by the construction in
Proposition \ref{prop:surface-ball-sawtooth} (applied to $Q'$), we have that
$$\dist(x^\star_{Q'},Q)\leq\dist(x^\star_{Q'},Q') \lesssim \ell(Q') \approx \ell(Q) \approx t_{Q'}\,,$$
whence \eqref{eqn:dyad-doubling-nu:Claim3} follows immediately.

Then we proceed as in the previous case and  obtain that
\begin{multline*}
\nu(Q)
\lesssim
\tom\left(
\big(Q\setminus (\cup_{Q_k\in\F} Q_k)\big)\cup \big(\cup_{Q_k\in\F_1} P_k\big)
\right)
\\
\le
\tom(\td(x^\star_{Q'}, Ct_{Q'}))
\lesssim
\tom(\td^{Q'})
\lesssim
\nu(Q')
\end{multline*}
where we have used that $\tom$ is doubling and where the last inequality follows as in \eqref{proj-nu-bar:Q_1}:
$$
\nu(Q')
=
\tom(Q'\cap E_0)+ \sum_{Q_k\in \F_2} \tom(P_k)
\gtrsim
\tom(\td^{Q'}).
$$

One might show that $\P_\F\nu$ is dyadically doubling
by invoking Lemma \ref{lemma:dyad-doubling-proj},
but then the doubling constant would depend on $\hm$ and $\tom$. This is not the right approach as we have already observed that $\P_\F\nu$ does not depend on $\hm$. On the other hand,
following the previous argument for $\nu$
we can see that the doubling constant does not depend on $\hm$.
In Cases 2 and 3, we have that $\P_\F \nu(Q)=\nu(Q)$ and $\P_\F \nu(Q')=\nu(Q')$ so the
doubling condition follows at once from the previous computations, and  depends
quantitatively only upon
the doubling constant for $\tom$, but not on $\hm$.
In Case 1 we obtain
$$
\P_\F \nu(Q)
=
\frac{\sigma(Q)}{\sigma(Q_k)}\,\tom(P_k)
\le
C_\sigma
\frac{\sigma(Q')}{\sigma(Q_k)}\,\tom(P_k)
=
C_\sigma
\P_\F \nu(Q')
$$
\end{proof}

Let us remind the reader that, as explained above, we may view $\partial\Omega_{\F,Q_0}$ itself as a surface ball $\td^{Q_0}$ of radius
$r(\td^{Q_0})\approx K_0\,\ell(Q_0)$, and then $A_\infty(\partial\Omega_{\F,Q_0})$ is identified with $A_\infty(\td^{Q_0})$.

\begin{lemma}\label{lemmaB.4}
Under the hypotheses of Lemma \ref{lemma:DJK-dyadic-proj}, if $\tom
\in A_\infty(\partial\Omega_{\F,Q_0})$, then $\P_\F\nu\in A_\infty^{\rm dyadic}(Q_0)$.
\end{lemma}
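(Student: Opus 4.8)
The plan is to produce a density and verify a dyadic reverse H\"older inequality for it. I will show that $\P_\F\nu$ is absolutely continuous with respect to $\sigma$ on $Q_0$ and that its density satisfies a dyadic reverse H\"older inequality on $Q_0$ with some exponent $q>1$; since reverse H\"older for the density implies, by H\"older's inequality, the estimate \eqref{eq1.ainftydyadic} (with $\theta=1-1/q$), this gives $\P_\F\nu\in A_\infty^{\rm dyadic}(Q_0)$. Set $\sigma':=H^n|_{\partial\Omega_{\F,Q_0}}$ and $E_0:=Q_0\setminus(\cup_\F Q_j)$; recall from Proposition \ref{prop:sawtooth-contain} that $E_0\subset\partial\Omega\cap\partial\Omega_{\F,Q_0}$, so that $\sigma=\sigma'$ there. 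By hypothesis and the discussion after Definition \ref{def1.ainfty}, $\tom\in A_\infty(\partial\Omega_{\F,Q_0})$ means $\tom=h\,d\sigma'$ with $h$ satisfying the reverse H\"older estimate \eqref{eq1.RH} (exponent $q>1$) on every surface ball of $\partial\Omega_{\F,Q_0}$; in particular $\tom\ll\sigma'$. From \eqref{defi-proj-nubar} one then reads off that $\P_\F\nu=k\,d\sigma$ on $Q_0$, with $k:=h\,1_{E_0}+\sum_{Q_j\in\F}\frac{\tom(P_j)}{\sigma(Q_j)}\,1_{Q_j}$, so the task reduces to proving $\big(\fint_{Q'}k^q\,d\sigma\big)^{1/q}\lesssim\fint_{Q'}k\,d\sigma$ for every $Q'\in\dd_{Q_0}$.

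If $Q'\subset Q_j$ for some $Q_j\in\F$, then $k$ is constant on $Q'$ and the estimate is trivial, so assume $Q'\in\dd_{\F,Q_0}$. The geometric input I will use is: (i) by Proposition \ref{prop:cork-both} and \eqref{eqn:Pj-cork} there is a surface ball $\td(y_{Q'},\widehat r_{Q'})\subset\partial\Omega_{\F,Q_0}$ of radius $\approx K_0\ell(Q')$ containing $Q'$ together with all the balls $B(x_j^\star,r_j)$ (hence, by \eqref{eqn:T-Pj}, all the $P_j$) with $Q_j\subset Q'$; (ii) by the Case~2 estimate \eqref{proj-nu-bar:Q_1} in the proof of Lemma \ref{lemma:DJK-dyadic-proj}, together with the doubling property of $\tom$ (Corollary \ref{cor2.double}, via Lemma \ref{lemma2.30} and Lemma \ref{lemma:inher-quali}), one has $\P_\F\nu(Q')\gtrsim\tom(\td^{Q'})\gtrsim\tom(\td(y_{Q'},\widehat r_{Q'}))$, where $\td^{Q'}$ is the surface ball of Proposition \ref{prop:surface-ball-sawtooth}; and (iii) $\sigma(Q')\approx\sigma'(\td(y_{Q'},\widehat r_{Q'}))\approx\ell(Q')^n$ and $\sigma(Q_j)\approx\sigma'(P_j)\approx\ell(Q_j)^n$ by ADR and Proposition \ref{prop:Pj}, while the $P_j$ have bounded overlap by Remark \ref{remark:Pj}.

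Then I would split $\int_{Q'}k^q\,d\sigma=\int_{Q'\cap E_0}h^q\,d\sigma'+\sum_{Q_j\subset Q'}\tom(P_j)^q\,\sigma(Q_j)^{1-q}$. For the first term, $Q'\cap E_0\subset\td(y_{Q'},\widehat r_{Q'})$, so reverse H\"older for $h$ on that surface ball gives $\int_{Q'\cap E_0}h^q\,d\sigma'\lesssim\sigma'(\td(y_{Q'},\widehat r_{Q'}))^{1-q}\,\tom(\td(y_{Q'},\widehat r_{Q'}))^q\lesssim\sigma(Q')^{1-q}\,\P_\F\nu(Q')^q$ by (ii)--(iii). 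For the second term, Jensen's inequality applied on $P_j$ shows $\tom(P_j)^q\,\sigma(Q_j)^{1-q}\approx\tom(P_j)^q\,\sigma'(P_j)^{1-q}\lesssim\int_{P_j}h^q\,d\sigma'$; summing in $j$, using the bounded overlap of the $P_j$ and then reverse H\"older for $h$ on $\td(y_{Q'},\widehat r_{Q'})$ one more time, $\sum_{Q_j\subset Q'}\tom(P_j)^q\,\sigma(Q_j)^{1-q}\lesssim\int_{\td(y_{Q'},\widehat r_{Q'})}h^q\,d\sigma'\lesssim\sigma(Q')^{1-q}\,\P_\F\nu(Q')^q$. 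Adding the two bounds and dividing by $\sigma(Q')$ yields $\fint_{Q'}k^q\,d\sigma\lesssim\big(\fint_{Q'}k\,d\sigma\big)^q$, which is the required reverse H\"older bound.

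The step that needs care is the second term above: the raw bound $\sum_{Q_j\subset Q'}\tom(P_j)^q\,\sigma(Q_j)^{1-q}$ cannot be handled by crude inequalities, since $\sigma(Q_j)\le\sigma(Q')$ makes the factor $\sigma(Q_j)^{1-q}$ work against us. The resolution is exactly the combination just described --- Jensen's inequality to recast each term as an honest $L^q$ integral of the $A_\infty$ density $h$ over $P_j$, the bounded overlap of the $\{P_j\}$ to merge these into a single integral over $\td(y_{Q'},\widehat r_{Q'})$, and one application of reverse H\"older for $h$ on that enclosing surface ball. I expect that, once the geometric facts (i)--(iii) are assembled (all of which have already been established in the preceding sections), the argument goes through cleanly; in particular reverse H\"older is needed only on the single ball $\td(y_{Q'},\widehat r_{Q'})$, not on the small cubes $P_j$.
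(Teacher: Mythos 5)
Your argument is correct, and it genuinely departs from the route the paper takes. The paper works directly with the ``set-size'' formulation of $A_\infty^{\rm dyadic}$: fixing $F\subset Q$ with $\sigma(F)\ge (1-\eta)\sigma(Q)$, it introduces the family $\widetilde\F$ of ``good'' $Q_k$'s and the sets $E_0,G,B$, proves $\sigma\big((F\cap E_0)\cup G\big)\gtrsim\eta\,\sigma(Q)$, transfers this to $\sigma_\star$ on $\partial\Omega_{\F,Q_0}$ via Propositions \ref{prop:sawtooth-contain} and \ref{prop:Pj}, applies the $A_\infty$ lower bound for $\tom$ on $\td(y_Q,\widehat r_Q)$, and converts back to $\P_\F\nu$. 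You instead identify the $\sigma$-density $k=h\,1_{E_0}+\sum_j\big(\tom(P_j)/\sigma(Q_j)\big)1_{Q_j}$ of $\P_\F\nu$ and verify a dyadic reverse H\"older inequality for $k$, using Jensen's (H\"older's) inequality on each $P_j$ to recast $\tom(P_j)^q\sigma(Q_j)^{1-q}$ as $\lesssim\int_{P_j}h^q\,d\sigma'$, then summing via the bounded overlap of $\{P_j\}$ (Remark \ref{remark:Pj}) and applying reverse H\"older for $h$ exactly once, on $\td(y_{Q'},\widehat r_{Q'})$; the geometric containments \eqref{eqn:T-Pj} and \eqref{eqn:Pj-cork} and the lower bound $\P_\F\nu(Q')\gtrsim\tom(\td^{Q'})$ from \eqref{proj-nu-bar:Q_1} (with $\tom$ doubling) close the estimate. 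The two proofs draw on the same geometric input (Propositions \ref{prop:Pj}, \ref{prop:cork-both}, \ref{prop:surface-ball-sawtooth}, \ref{prop:sawtooth-contain}), but yours bypasses the good/bad decomposition entirely and gives the reverse H\"older form of the conclusion directly, with the Jensen $+$ bounded-overlap step doing the work that the paper's pigeonhole decomposition does; the paper's form is arguably the more natural one to feed into Lemma \ref{lemma:extrapol}, whereas yours makes the $L^q$ self-improvement explicit from the start. Both are correct; I only note that, strictly speaking, one should check that $\td(y_{Q'},\widehat r_{Q'})$ is an admissible sub-surface-ball of $\td^{Q_0}$ in the definition of $A_\infty(\partial\Omega_{\F,Q_0})$, but since $\widehat r_{Q'}\lesssim K_0\ell(Q_0)\approx\diam\partial\Omega_{\F,Q_0}$ this is only a matter of enlarging the ambient ball by a uniform factor.
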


\begin{proof}
Fix $0<\eta<1/2$ and $F\subset Q\in\dd_{Q_0}$ with $\sigma(F)\ge (1-\eta)\,\sigma(Q)$.

\noindent {\bf Case 1}: There exists $Q_k\in \F$ with $Q\subset Q_k$. The estimate is trivial in this case:
$$
\frac{\P_\F \nu(F)}{\P_\F \nu(Q)}
=
\frac{\frac{\sigma(F)}{\sigma(Q_k)}\tom(P_k)}{\frac{\sigma(Q)}{\sigma(Q_k)}\tom(P_k)}
=
\frac{\sigma(F)}{\sigma(Q)}
\ge 1-\eta.
$$

\noindent {\bf Case 2}:  $Q$ is not contained in any  $Q_k\in \F$ (i.e., $Q\in \dd_{\F,Q_0}$). Let $\F_1$ be the family of cubes $Q_k\in\F$ with $Q_k\cap Q\neq\emptyset$ and observe that if $Q_k\in \F_1$ then $Q_k\subsetneq Q$. We set
$$
\widetilde{\F}=\{Q_k\in\F_{1}: \sigma(F\cap Q_k)\ge (1-2\eta)\,\sigma(Q_k)\},
$$
and
$$
E_0=Q\setminus \bigcup_{Q_k\in\F} Q_k,
\qquad\quad
G
=
\bigcup_{Q_k\in\widetilde{\F}} Q_k,
\qquad\quad
B
=
\bigcup_{Q_k\in\F_1\setminus \widetilde{\F}} Q_k,
$$
Note that
$$
\sigma(F\cap B)
=
\sum_{Q_k\in \F_1\setminus \widetilde{\F}} \sigma(F\cap Q_k)
\le
(1-2\eta)\,\sum_{Q_k\in \F_1\setminus \widetilde{\F}} \sigma(Q_k)
\le
(1-2\eta)\,\sigma(Q).
$$
Thus,
\begin{multline*}
(1-\eta)\,\sigma(Q)
\le
\sigma(F)
\le
\sigma(F\cap E_0)+\sigma(F\cap B)+\sigma(F\cap G)
\\
\le
\sigma\big((F\cap E_0)\cup G\big)+(1-2\eta)\,\sigma(Q),
\end{multline*}
and therefore $\sigma\big((F\cap E_0)\cup G\big)\ge \eta\,\sigma(Q)$.

Note that the ADR property of $\pom$ and $\pomfqo$ imply
$$
\sigma(Q)\approx \ell(Q)^n\approx (\widehat{r}_Q)^n\approx \sigma_\star(\Delta_\star(y_Q,\widehat{r}_Q))
$$
with $\Delta_\star(y_Q,\widehat{r}_Q)$ given in \eqref{eqn:Pj-cork} (see also Proposition \ref{prop:cork-both}),  where as usual
we write $\sigma_\star$ to denote the ``surface measure'' on $\pomfqo$, i.e., $\sigma_\star=H^n\big|_{\pomfqo}$. If we set $G_\star=\cup_{Q_k\in\widetilde{\F}} P_k$ we have that $\sigma(G)\approx \sigma_\star(G_\star)$.  Indeed, since $\Omega_{\F_,Q_0}$ is ADR we have that
$$\sigma_\star(P_k)\approx\ell(P_k)^n\approx\ell(Q_k)^n\approx \sigma(Q_k)\,,$$
by Proposition \ref{prop:Pj}; thus Remark \ref{remark:Pj} yields
$$
\sigma(G)
=
\sum_{Q_k\in\widetilde{\F}} \sigma(Q_k)
\approx
\sum_{Q_k\in\widetilde{\F}} \sigma_\star(P_k)
\approx
\sigma_\star(G_\star).
$$
On the other hand, Proposition \ref{prop:sawtooth-contain} gives $\sigma(F\cap E_0)=\sigma_\star(F\cap E_0)$ and therefore
$$
\sigma_\star\big((F\cap E_0)\cup G_\star\big)\approx\sigma\big((F\cap E_0)\cup G\big)\ge \eta\, \sigma(Q)
\approx
\eta\, \sigma_\star(\Delta_\star(y_Q,\widehat{r}_Q)).
$$
Next we use that $\omega_\star\in A_\infty(\pomfqo)$
to obtain
$$
\frac{\omega_\star \big((F\cap E_0)\cup G_\star\big)}{\omega_\star (\Delta_\star(y_Q,\widehat{r}_Q))}
\gtrsim
\left(
\frac{\sigma_\star \big((F\cap E_0)\cup G_\star\big)}{\sigma_\star (\Delta_\star(y_Q,\widehat{r}_Q))}
\right)^\theta
\gtrsim \eta^{\theta},
$$
where we have used that $(F\cap E_0)\cup G_\star\subset \Delta_\star(y_Q,\widehat{r}_Q)$ by \eqref{eqn:Pj-cork}. Then,
\begin{align*}
\P_\F \nu(F)
&\ge
\omega_\star(F\cap E_0)+\sum_{Q_k\in\widetilde\F} \frac{\sigma(F\cap Q_k)}{\sigma(Q_k)}\omega_\star(P_k)
\\
&\ge
\omega_\star(F\cap E_0)+(1-2\,\eta)\sum_{Q_k\in\widetilde\F} \omega_\star(P_k)
\\
&\ge
(1-2\,\eta)\omega_\star \big((F\cap E_0)\cup G_\star\big)
\\
&\gtrsim
(1-2\,\eta) \eta^{\theta}\omega_\star (\Delta_\star(y_Q,\widehat{r}_Q))
\\
&\gtrsim
(1-2\,\eta) \eta^{\theta}\omega_\star \left(\left(Q \cup \left(\cup_{Q_k\in\F:\,Q_k\subset Q}B(x^\star_k,r_k)\right)\right)\cap \pomfqo\right)
\end{align*}
where the last inequality follows from \eqref{eqn:Pj-cork}. Next we observe that, by Proposition \ref{prop:sawtooth-contain},
\begin{multline*}
(Q\cap E_0)\cup \left(\cup_{Q_k\in \F_1}P_k\right)
\subset
(Q\cap E_0)\cup \left(\cup_{Q_k\in \F_1} \Delta_\star(x_k^\star,r_k)\right)
\\
\subset
\left(Q\cup\left(\cup_{Q_k\in\F:\,Q_k\subset Q} B(x_k^\star,r_j)\right)\right)\cap \pomfqo.
\end{multline*}
Consequently,
\begin{multline*}
\P_\F \nu(F)
\gtrsim
(1-2\,\eta) \eta^{\theta}\omega_\star \left((Q\cap E_0)\cup \left(\cup_{Q_k\in \F_1}P_k\right)\right)
\\
\gtrsim
(1-2\,\eta) \eta^{\theta}
\bigg(\omega_\star (Q\cap E_0) +\sum_{Q_k\in \F_1} \omega_\star (P_k)\bigg)
=
(1-2\,\eta) \eta^{\theta}\P_\F\nu(Q).
\end{multline*}

Thus, in both cases we have shown as desired that $\P_\F \nu(F)/\P_\F\nu(Q)\ge C_\eta$.
\end{proof}

Next we give a version of the classical result in \cite{CF} valid in our situation. The proof of this result follows the standard arguments in \cite{GR} although one has to adapt the ideas to the dyadic and local setting considered here. We give the proof for completeness.

\begin{lemma}\label{lemma:CF-dyadic}
Let $Q_0$ be a fixed cube and  let $\hm_1$, $\hm_2$ be two dyadically doubling measures  on $Q_0$ . Assume that there exist positive constants $C_0$, $\theta_0$ such that for all $Q\in \dd_{Q_0}$ and $F\subset Q$,
\begin{equation}\label{CF-dyadic:hyp}
\frac{\hm_2(F)}{\hm_2(Q)}
\le
C_0\,\bigg(\frac{\hm_1(F)}{\hm_1(Q)}\bigg)^{\theta_0}.
\end{equation}
Then, there exist positive constants $C_1$, $\theta_1$ such that for all $Q\in \dd_{Q_0}$ and $F\subset Q$,
\begin{equation}\label{CF-dyadic:conc}
\frac{\hm_1(F)}{\hm_1(Q)}
\le
C_1\,\bigg(\frac{\hm_2(F)}{\hm_2(Q)}\bigg)^{\theta_1}.
\end{equation}
\end{lemma}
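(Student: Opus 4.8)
The statement is the classical equivalence, in the theory of $A_\infty$ weights, between the two forms of the $A_\infty$ condition relating a pair of measures: the ``$\epsilon$-$\delta$'' form \eqref{CF-dyadic:conc} follows from the ``power bound'' form \eqref{CF-dyadic:hyp}. The strategy is the standard Coifman--Fefferman argument (see \cite{CF}, \cite{GR}), carried out entirely within the dyadic lattice $\dd_{Q_0}$, using only the hypothesis that $\hm_1$ and $\hm_2$ are dyadically doubling on $Q_0$ so that, in particular, all relevant stopping-time families behave as they would in the Euclidean setting with Lebesgue measure replaced by $\hm_1$.

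\textbf{Step 1: a single-scale ``$\hm_1$-large $\Rightarrow$ $\hm_2$-large'' estimate.} First I would record the contrapositive form of \eqref{CF-dyadic:hyp}: there exist $\alpha,\beta\in(0,1)$ such that for every $Q\in\dd_{Q_0}$ and every $F\subset Q$,
\begin{equation*}
\frac{\hm_1(F)}{\hm_1(Q)}\geq 1-\beta\quad\Longrightarrow\quad \frac{\hm_2(F)}{\hm_2(Q)}\geq \alpha.
\end{equation*}
Indeed, applying \eqref{CF-dyadic:hyp} to the complement $Q\setminus F$ gives $\hm_2(Q\setminus F)\leq C_0\beta^{\theta_0}\hm_2(Q)$, so choosing $\beta$ small enough that $C_0\beta^{\theta_0}\leq 1/2$ yields the claim with $\alpha=1/2$.

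\textbf{Step 2: Calder\'on--Zygmund stopping time with respect to $\hm_1$.} Fix $Q\in\dd_{Q_0}$ and $F\subset Q$; abbreviate $t:=\hm_1(F)/\hm_1(Q)$, and assume $t$ is small (otherwise \eqref{CF-dyadic:conc} is trivial after adjusting constants, using that $\hm_2$ is dyadically doubling). For a threshold $\lambda\in(t,1)$ to be iterated, I would let $\{Q_j^{(1)}\}$ be the maximal dyadic subcubes of $Q$ for which $\hm_1(F\cap Q_j^{(1)})>\lambda\,\hm_1(Q_j^{(1)})$. Maximality and dyadic doubling of $\hm_1$ give the pointwise-type bound $\lambda<\hm_1(F\cap Q_j^{(1)})/\hm_1(Q_j^{(1)})\leq C_{\hm_1}\lambda$ on each selected cube, and (by summing and using that the parents are not selected) the packing estimate $\sum_j\hm_1(Q_j^{(1)})\leq \lambda^{-1}\hm_1(F\cap Q)=\lambda^{-1}t\,\hm_1(Q)$. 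Moreover, on the complement $Q\setminus\bigcup_j Q_j^{(1)}$ the density of $F$ is $\leq\lambda$ a.e.-$\hm_1$, by the Lebesgue differentiation theorem for the dyadic lattice (valid since $\hm_1$ is dyadically doubling). Iterating this selection inside each $Q_j^{(1)}$ produces generations $\{Q_j^{(k)}\}$ with $\sum_j\hm_1(Q_j^{(k)})\leq (\lambda^{-1}t)^{?}$-type decay; the precise bookkeeping is the routine part.

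\textbf{Step 3: transfer to $\hm_2$ and iterate.} On each selected cube of the first generation, $F$ has $\hm_1$-density between $\lambda$ and $C_{\hm_1}\lambda$, so its complement $Q_j^{(1)}\setminus F$ has $\hm_1$-density $\geq 1-C_{\hm_1}\lambda\geq 1-\beta$ once $\lambda\leq\beta/C_{\hm_1}$; by Step 1, $\hm_2(F\cap Q_j^{(1)})\leq (1-\alpha)\hm_2(Q_j^{(1)})$. Combining with the fact that $F\subset\bigcup_j Q_j^{(1)}$ modulo an $\hm_1$-null set (hence, since $\hm_2$ need not be absolutely continuous with respect to $\hm_1$, one must instead run the argument so that the ``good part'' of $F$ outside the selected cubes is controlled directly — here one uses that $\hm_2(F)\leq\hm_2(\bigcup_j Q_j^{(1)})$ is \emph{not} automatic, so in fact the clean route is to choose $\lambda$ and argue that $\hm_2(F)\le \sum_j \hm_2(F\cap Q_j^{(1)})$, which does hold since the $Q_j^{(1)}$ cover $F$ up to an $\hm_1$-null set and $\hm_2$ restricted to $F$ sees that covering because $F\subset\bigcup_j\overline{Q_j^{(1)}}$ and boundaries are $\hm_2$-null when $\hm_2$ is doubling, cf. Proposition \ref{prop5.0a}). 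This gives
\begin{equation*}
\hm_2(F)\leq(1-\alpha)\sum_j\hm_2(Q_j^{(1)}).
\end{equation*}
Now I would bound $\sum_j\hm_2(Q_j^{(1)})$: apply \eqref{CF-dyadic:hyp} on $Q$ to the union $\bigcup_j Q_j^{(1)}$, whose $\hm_1$-density is $\leq\lambda^{-1}t$, obtaining $\sum_j\hm_2(Q_j^{(1)})=\hm_2(\bigcup_j Q_j^{(1)})\leq C_0(\lambda^{-1}t)^{\theta_0}\hm_2(Q)$. Choosing $\lambda$ a fixed small constant, this already yields $\hm_2(F)\leq C t^{\theta_0}\hm_2(Q)$ — but that is just \eqref{CF-dyadic:hyp} again. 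To get \eqref{CF-dyadic:conc} one must instead iterate the selection through $N\approx\log(1/t)$ generations, at each stage gaining a factor $(1-\alpha)$ on the $\hm_2$-side while the $\hm_1$-density of $F$ in the surviving cubes stays comparable to $\lambda$; after $N$ steps, $\hm_1(\bigcup_j Q_j^{(N)})\lesssim\lambda^N\hm_1(Q)$ and $\hm_2(F)\leq(1-\alpha)^N\hm_2(Q)$. Setting $\theta_1$ so that $(1-\alpha)^N\approx(\lambda^N)^{\theta_1}$ and noting $\lambda^N\gtrsim t$ (by stopping the iteration at the first $N$ with $\lambda^N\le t$) yields $\hm_2(F)\lesssim t^{\theta_1}\hm_2(Q)$, i.e.\ the \emph{reverse} of what is asked. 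So the roles must be swapped: it is \eqref{CF-dyadic:hyp} with $\hm_1,\hm_2$ interchanged that is being proved. Concretely, I would run Steps 1--3 verbatim but selecting stopping cubes with respect to $\hm_2$ (legitimate since $\hm_2$ is dyadically doubling), using Step 1 in the form ``$\hm_2$-density near $1$ $\Rightarrow$ $\hm_1$-density $\geq$ some $\alpha'$'' — which is \emph{not} a priori available and must be derived first.

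\textbf{The main obstacle.} The genuine difficulty, and the point requiring care, is precisely that the hypothesis \eqref{CF-dyadic:hyp} is one-directional: it controls $\hm_2$ by $\hm_1$, and to run a Coifman--Fefferman iteration producing \eqref{CF-dyadic:conc} one needs the \emph{reverse} one-scale implication (``$\hm_2$ nearly fills $Q$ forces $\hm_1$ to be non-negligible on the same set''). This is obtained by a short but essential self-improvement: from \eqref{CF-dyadic:hyp} one shows that $\hm_1(F)/\hm_1(Q)<\delta$ forces $\hm_2(F)/\hm_2(Q)<C_0\delta^{\theta_0}$, hence $\hm_2(Q\setminus F)>(1-C_0\delta^{\theta_0})\hm_2(Q)$; the contrapositive reads: if $\hm_2(G)/\hm_2(Q)$ is close to $1$ then $\hm_1(G)/\hm_1(Q)\geq\delta>0$ for a fixed $\delta$. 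This is the correct analogue of Step~1 on the $\hm_2$-side, and with it in hand the stopping-time/iteration scheme of Steps 2--3, performed relative to $\hm_2$, goes through by the standard geometric-series estimate, the only inputs being the dyadic doubling of $\hm_1$ and $\hm_2$ and the dyadic Lebesgue differentiation theorem. All constants $C_1,\theta_1$ depend only on $C_0,\theta_0$ and the dyadic doubling constants $C_{\hm_1},C_{\hm_2}$. The remaining work — bookkeeping of the packing estimates across generations and the choice of the integer $N\approx\log(1/t)$ — is routine and I would omit the details, as is done elsewhere in the paper.
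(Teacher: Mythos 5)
Your Step~1 is correct and is precisely the key one-scale ``weak $A_\infty$'' implication that the paper isolates as \eqref{CF-Ainfty:w} (in contrapositive form): applying the hypothesis to $Q\setminus F$ gives that $\hm_2$-density $<\alpha$ forces $\hm_1$-density $<\beta$ for fixed $\alpha,\beta\in(0,1)$. However, your ``main obstacle'' paragraph then re-derives a genuinely different (and weaker) implication --- the direct contrapositive of the hypothesis, ``$\hm_1$-density $<\delta\Rightarrow\hm_2$-density $<C_0\delta^{\theta_0}$'' --- and misstates its contrapositive as ``$\hm_2$-density close to $1\Rightarrow\hm_1$-density $\geq\delta$''. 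That statement does not follow from the chain you wrote (what follows is ``$\hm_2(G)/\hm_2(Q)\leq 1-C_0\delta^{\theta_0}\Rightarrow\hm_1(Q\setminus G)/\hm_1(Q)\geq\delta$''), and more importantly it is not what the Calder\'on--Zygmund argument actually uses. The tool you need is the one you already had in Step~1; the extra ``self-improvement'' is a red herring.

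The substantive gap is in Steps~2--3 and the claim that the iteration ``goes through''. The paper's proof does not iterate a stopping-time decomposition of the fixed set $F$. Instead it sets $h:=d\hm_1/d\hm_2$ (assuming first $\hm_1\ll\hm_2$), runs a Calder\'on--Zygmund decomposition of $h$ with respect to the local dyadic $\hm_2$-maximal function at exponential levels $\lambda_k=\tau^k\lambda_0$ with $\tau=C_{\hm_2}/\alpha$, proves the geometric decay $\hm_1(E_{k+1})<\beta\hm_1(E_k)$ and $\hm_2(E_{k+1})<\alpha\hm_2(E_k)$ via \eqref{CF-Ainfty:w}, sums to obtain the dyadic reverse H\"older inequality $\bigl(\fint_Q h^{1+\epsilon}d\hm_2\bigr)^{1/(1+\epsilon)}\lesssim\fint_Q h\,d\hm_2$, and then deduces \eqref{CF-dyadic:conc} from H\"older's inequality; the general case is obtained by replacing $\hm_2$ with $\tilde\hm_2=\hm_2+\delta\hm_1$ and letting $\delta\to0^+$. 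Your proposed direct iteration on $F$ never specifies how to gain the geometric factor at each generation while keeping the $\hm_1$-density comparable to $\lambda$, and the absolute-continuity issue you flag (``$\hm_2$ need not be a.c.\ w.r.t.\ $\hm_1$'') is real but your patch via Proposition~\ref{prop5.0a} (boundaries are null) does not resolve it: the worry is not boundaries but that the $\hm_1$-null set $F\setminus\bigcup_j Q_j$ may carry positive $\hm_2$-mass (or, in the swapped version, the $\hm_2$-null uncovered part may carry $\hm_1$-mass). The paper's change of viewpoint --- from sets to the density $h$, plus the perturbation $\tilde\hm_2$ --- is exactly what sidesteps this, and it is the ingredient missing from your sketch.
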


\begin{remark}\label{remark:Ainfty}
The proof shows that the desired estimate can be obtained from the following (apparently) weaker condition: there exist $0<\alpha,\beta<1$ such that for every cube $Q\in\dd_{Q_0}$,
\begin{equation}\label{CF-Ainfty:w}
F\subset Q,\quad \frac{\hm_2(F)}{\hm_2(Q)}<\alpha\quad \Longrightarrow \quad \frac{\hm_1(F)}{\hm_1(Q)}<\beta.
\end{equation}
\end{remark}

To prove this result we need a local Calder\'on-Zygmund decomposition for dyadically doubling weights. The proof is standard and we leave it to the  interested reader.

\begin{lemma}\label{dyadic-HL:w}
Given $Q_0$ and $\hm$ a dyadically doubling measure on $Q_0$ with constant $C_\hm$, we consider the local dyadic Hardy-Littlewood maximal function with respect to $\hm$:
$$
\M_\hm f(x)
=
\sup_{x\in Q\in \dd_{Q_0}} \frac1{\hm(Q)}\,\int_{Q} |f(y)|\,d\hm(y).
$$
For any $0\le f\in L^1(Q_0,\hm)$ and  $\lambda\ge \frac1{\hm(Q_0)}\,\int_{Q_0} |f(y)|\,d\hm(y)$,  there exists a collection of maximal and therefore disjoint dyadic cubes $\{Q_j\}\subset\dd_{Q_0}$ such that
\begin{equation}
E_\lambda=\{x\in Q_0: \M_\hm f(x)>\lambda\}
=
\bigcup_j Q_j, \label{CZD-Omega}
\end{equation}
\begin{equation}
f(x)\le \lambda, \quad\mbox{for $\hm$-a.e.~}x\notin E_\lambda, \label{CZD-out-Omega}
\end{equation}
\begin{equation}
\lambda
<
\frac1{\hm(Q_j)}\,\int_{Q_j} f(y)\,d\hm(y)
\le
C_\hm\,\lambda. \label{CZD-aver}
\end{equation}
\end{lemma}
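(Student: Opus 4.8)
The plan is to run the standard dyadic stopping-time (maximal cube) construction; essentially every step is routine, and the only point where one must pause is the almost-everywhere statement \eqref{CZD-out-Omega}, which rests on a martingale differentiation theorem for the measure $\hm$ relative to the dyadic grid $\dd_{Q_0}$.

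First I would fix $0\le f\in L^1(Q_0,\hm)$ and $\lambda\ge \hm(Q_0)^{-1}\int_{Q_0} f\,d\hm$, and let $\mathcal{G}$ be the collection of all $Q\in\dd_{Q_0}$ for which $\hm(Q)^{-1}\int_Q f\,d\hm>\lambda$. Since $\lambda$ already dominates the average over $Q_0$ and the inequality defining $\mathcal{G}$ is strict, $Q_0\notin\mathcal{G}$; moreover, by property (iii) of Lemma \ref{lemmaCh} the ancestors of any $Q\in\dd_{Q_0}$ form a finite chain terminating at $Q_0$, so every member of $\mathcal{G}$ is contained in a unique maximal member of $\mathcal{G}$. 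Let $\{Q_j\}$ be the family of maximal cubes of $\mathcal{G}$; the nesting property (ii) of dyadic cubes forces distinct maximal cubes to be disjoint, so $\{Q_j\}$ is pairwise disjoint, and since $Q_j\neq Q_0$ each $Q_j$ has a dyadic parent $\widetilde{Q}_j\in\dd_{Q_0}$ which by maximality is not in $\mathcal{G}$, i.e.\ $\hm(\widetilde{Q}_j)^{-1}\int_{\widetilde{Q}_j} f\,d\hm\le\lambda$. This yields \eqref{CZD-aver} at once: the left inequality is the definition of $\mathcal{G}$, while
$$\frac1{\hm(Q_j)}\int_{Q_j} f\,d\hm \le \frac1{\hm(Q_j)}\int_{\widetilde{Q}_j} f\,d\hm = \frac{\hm(\widetilde{Q}_j)}{\hm(Q_j)}\cdot\frac1{\hm(\widetilde{Q}_j)}\int_{\widetilde{Q}_j} f\,d\hm \le C_\hm\,\lambda,$$
using the dyadic doubling of $\hm$ on $Q_0$.

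Next I would prove \eqref{CZD-Omega}. The inclusion $\bigcup_j Q_j\subset E_\lambda$ is immediate, since for $x\in Q_j$ the cube $Q_j$ is itself admissible in the supremum defining $\M_\hm f(x)$, giving $\M_\hm f(x)>\lambda$. Conversely, if $x\in E_\lambda$ there is some $Q\in\dd_{Q_0}$ with $x\in Q$ and $\hm(Q)^{-1}\int_Q f\,d\hm>\lambda$, so $Q\in\mathcal{G}$ and hence $Q$ is contained in one of the maximal cubes $Q_j$, whence $x\in Q_j$. Finally, for \eqref{CZD-out-Omega} I would invoke the martingale (Lebesgue) differentiation theorem on the finite measure space $(Q_0,\hm)$, for the filtration generated by $\{\dd_k\cap\dd_{Q_0}\}_k$: for $f\in L^1(Q_0,\hm)$ the dyadic averages $\hm(Q(x,k))^{-1}\int_{Q(x,k)} f\,d\hm$, with $Q(x,k)$ the generation-$k$ cube of $\dd_{Q_0}$ containing $x$, converge to $f(x)$ for $\hm$-a.e.\ $x\in Q_0$; the only hypothesis needed is that these dyadic $\sigma$-algebras generate the Borel $\sigma$-algebra of $Q_0$ modulo $\hm$-null sets, which holds because $\diam(Q)\le C_1\,2^{-k(Q)}\to 0$ by property (iv) of Lemma \ref{lemmaCh}. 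If $x\notin E_\lambda$, then $\hm(Q)^{-1}\int_Q f\,d\hm\le\lambda$ for every $Q\in\dd_{Q_0}$ containing $x$, in particular for each $Q(x,k)$, so letting $k\to\infty$ gives $f(x)\le\lambda$ for $\hm$-a.e.\ such $x$. The stopping-time and covering arguments are entirely routine; the only step meriting care is this differentiation statement, where one must simply be mindful that $\hm$ is merely a finite Borel measure on $Q_0$ and that the ``cubes'' of Lemma \ref{lemmaCh}, though not Euclidean cubes, still form a generating filtration, after which the classical martingale convergence theorem applies verbatim.
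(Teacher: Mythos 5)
Your argument is correct and is precisely the standard stopping-time construction the paper has in mind (the authors explicitly declare the proof ``standard'' and leave it to the reader). The maximal-cube selection, the use of the parent $\widetilde{Q}_j$ together with dyadic doubling for \eqref{CZD-aver}, the two inclusions for \eqref{CZD-Omega}, and the appeal to martingale differentiation on the filtration generated by the Christ cubes (valid since $\diam(Q)\le C_1 2^{-k(Q)}\to 0$ makes the filtration generating) for \eqref{CZD-out-Omega} are all exactly the steps one expects; there is nothing to add or correct.
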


\begin{proof}[Proof of Lemma \ref{lemma:CF-dyadic}]
We proceed as in \cite[Lemma B.4]{HM}. Pick $0<\alpha<1$ and $\beta= 1-\big(\frac{1-\alpha}{C_0}\big)^{1/\theta_0}$, and notice that $0<\beta<1$ since $C_0\ge 1$. Then for any $F\subset Q$, $Q\in\dd_{Q_0}$ we apply \eqref{CF-dyadic:hyp} to $Q\setminus F$ and we conclude \eqref{CF-Ainfty:w}. Next we see that this (apparently) weaker condition implies the desired conclusion. Assume momentarily that $\hm_1\ll\hm_2$. Then the Radon-Nikodym derivative $h=d\hm_1/d\hm_2$ satisfies that $h\in L^1(Q_0,\hm_2)$ and $0\le h(x)<\infty$ for $\hm_2$-a.e.~$x\in Q_0$.

Fixed $Q\in\dd_{Q_0}$ we write $\tau=C_{\hm_2}/\alpha$,
$$
\lambda_0
=
\frac1{\hm_2(Q)}\,\int_{Q} h(x)\,d\hm_2(x)
=
\frac{\hm_1(Q)}{\hm_2(Q)}
$$
and $\lambda_k=\tau^k\,\lambda_0$. Notice that $\lambda_0<\lambda_1<\lambda_2<\cdots$ since $\tau>C_{\hm_2}\ge 1$. For every $k\ge 0$ we apply Lemma \ref{dyadic-HL:w} in $Q$ to $h$ with dyadically doubling measure $\hm_2$: let $\{Q_j^k\}_j\subset \dd_Q\subset \dd_{Q_0}$ be the corresponding collection of cubes such that $E_k=E_{\lambda_k}=\cup_j Q_j^k$. Fix $Q_{j_0}^{k}$ and observe that if $Q_{j_0}^k\cap Q_j^{k+1}\neq\emptyset$ then $Q_{j}^{k+1}\subset Q_{j_0}^k$: otherwise we would have $Q_{j_0}^k\subsetneq Q_{j}^{k+1}$, by \eqref{CZD-aver} we observe that $\frac1{\hm_2(Q_j^{k+1})}\,\int_{Q_j^{k+1}} h\,d\hm_2>\lambda_{k+1}>\lambda_k$ and then $Q_{j_0}^k$ would not be maximal. Then using \eqref{CZD-Omega} and \eqref{CZD-aver} we obtain
\begin{align*}
\hm_2(Q_{j_0}^k\cap E_{k+1})
&=
\sum_{j: Q_{j}^{k+1}\subset Q_{j_0}^k} \hm_2(Q_j^{k+1})
<
\frac1{\lambda_{k+1}} \sum_{j: Q_{j}^{k+1}\subset Q_{j_0}^k} \int_{Q_j^{k+1}} h\,d\hm_2
\\
&
\le
\frac1{\lambda_{k+1}} \int_{Q_{j_0}^k}h\,d\hm_2
\le
\frac{C_{\hm_2} \lambda_k}{\lambda_{k+1}}\,\hm_2(Q_{j_0}^k)
=
\alpha\,\hm_2(Q_{j_0}^k).
\end{align*}
This estimate allows us to use \eqref{CF-Ainfty:w} which in turn gives that $\hm_1(Q_{j_0}^k\cap E_{k+1})<\beta\,\hm_1(Q_{j_0}^k)$. Next we sum on $j_0$ and conclude that
$\hm_1(E_{k+1})<\beta \,\hm_1(E_{k})$ since $E_{k+1}\subset E_k$. By iterating this expression we obtain $\hm_1(E_{k})<\beta^k \,\hm_1(E_{0})$. Similarly, $\hm_2(E_{k})<\alpha^k \,\hm_1(E_{0})$, which implies
$$
\hm_2(\cap_k E_k)=\lim_{k\to\infty} \hm_2(E_k)=0.
$$
Let $0<\epsilon<-\log \beta/\log \tau$. Then $0<\tau^\epsilon\,\beta<1$ and by \eqref{CZD-out-Omega}
\begin{align}
&\frac1{\hm_2(Q)} \int_{Q}h(x)^{1+\epsilon}\,d\hm_2(x)
\label{RH:w1-w2}
\\&
=
\frac1{\hm_2(Q)}\, \int_{Q\setminus E_0} h(x)^{1+\epsilon}\,d\hm_2(x)
+
\frac1{\hm_2(Q)}\, \sum_{k=0}^\infty \int_{E_k\setminus E_{k+1}} h(x)^{1+\epsilon}\,d\hm_2(x)
\nonumber
\\
&\le
\lambda_0^\epsilon\, \frac1{\hm_2(Q)}\,\int_{Q} h(x)\,d\hm_2(x)
+
\frac1{\hm_2(Q)}\, \sum_{k=0}^\infty \lambda_{k+1}^\epsilon\, \int_{E_k} h(x)\,d\hm_2(x)
\nonumber
\\
&=
\lambda_0^\epsilon\, \frac{\hm_1(Q)}{\hm_2(Q)}
+
\frac1{\hm_2(Q)}\, \sum_{k=0}^\infty \lambda_{k+1}^\epsilon\, \hm_1(E_k)
\nonumber
\\
&
\le
\lambda_0^\epsilon\, \frac{\hm_1(Q)}{\hm_2(Q)}
+
\lambda_0^\epsilon\, \frac{\hm_1(E_0)}{\hm_2(Q)}\, \sum_{k=0}^\infty \tau^{(k+1)\,\epsilon}\,\beta^k
\nonumber
\\
&
\le
\lambda_0^\epsilon\, \frac{\hm_1(Q)}{\hm_2(Q)}\,(1+\tau^{\epsilon}\,(1-\tau^\epsilon\,\beta)^{-1})
\nonumber
\\
&
=
\bigg(\frac{\hm_1(Q)}{\hm_2(Q)}\bigg)^{1+\epsilon}\,C_1^{1+\epsilon}.\nonumber
\end{align}
This estimate implies that for all $F\subset Q$,
\begin{align*}
\frac{\hm_1(F)}{\hm_2(Q)}
&
=
\frac{1}{\hm_2(Q)}\,\int_{Q}\,\chi_F\,h\,d\hm_2
\le
\bigg(\frac{1}{\hm_2(Q)}\,\int_{Q}\,h^{1+\epsilon}\,d\hm_2\bigg)^{\frac1{1+\epsilon}}\,
\bigg(\frac{\hm_2(F)}{\hm_2(Q)}\bigg)^{\frac1{(1+\epsilon)'}}
\\
&\le
\frac{\hm_1(Q)}{\hm_2(Q)}\,C_1\,
\bigg(\frac{\hm_2(F)}{\hm_2(Q)}\bigg)^{\frac1{(1+\epsilon)'}},
\end{align*}
which is \eqref{CF-dyadic:conc} with $\theta_1=1/(1+\epsilon)'$. Notice that $\epsilon$ and $C_1$ depend only on $\alpha$, $\beta$ and $C_{\hm_2}$.

Next we see how to proceed in the general case starting from \eqref{CF-Ainfty:w}. We define a new measure $\tilde{\hm}_2=\hm_2+\delta\,\hm_1$ with $\delta>0$. It is clear that $\hm_1\ll\tilde{\hm}_2$ and also that $\tilde{\hm}_2$ is dyadically doubling  on $Q_0$ with constant $C_{\tilde{\hm}_2}=C_{\hm_1}+C_{\hm_2}$. We claim that setting $\tilde{\beta}=1-\min\{1-\beta,\alpha/2\}$, $\tilde{\alpha}=\alpha/2$ we have  for every $Q\in \dd_{Q_0}$,
\begin{equation}\label{claim-Ainfty}
F\subset Q,\quad \frac{\tilde{\hm}_2(F)}{\tilde{\hm}_2(Q)}<\tilde{\alpha}\quad \Longrightarrow \quad \frac{\hm_1(F)}{\hm_1(Q)}<\tilde{\beta}.
\end{equation}
Assuming this, \eqref{CF-Ainfty:w} holds for $\hm_1$, $\tilde{\hm}_2$. By the previous case, since $\hm_1\ll\tilde{\hm}_2$, there exist $\tilde{\epsilon}$, $\tilde{C}_1$
such that for every $Q\in\dd_{Q_0}$, $F\subset Q$ we have
$$
\frac{\hm_1(F)}{\hm_1(Q)}
\le
\tilde{C}_1\,
\bigg(\frac{\tilde{\hm}_2(F)}{\tilde{\hm}_2(Q)}\bigg)^{\frac1{(1+\tilde{\epsilon})'}}.
$$
As mentioned above  $\tilde{\epsilon}$, $\tilde{C}_1$ depend only on $\tilde{\alpha}$, $\tilde{\beta}$, $C_{\tilde{\hm}_2}$ and these are ultimately  given in terms of $\alpha$, $\beta$, $C_{\hm_1}$, $C_{\hm_2}$. Next we see that $\hm_1\ll \hm_2$: given $F\subset Q_0$ with $\hm_2(F)=0$, the previous inequality applied to $Q=Q_0$ gives as desired
$$
0\le \frac{\hm_1(F)}{\hm_1(Q)}
\le
\tilde{C}_1\,\bigg(\frac{\delta\,\hm_1(F)}{\tilde{\hm}_2(Q_0)}\bigg)^{\frac1{(1+\tilde{\epsilon})'}}
\le
\tilde{C}_1\,\bigg(\delta\,\frac{\hm_1(F)}{\hm_2(Q_0)}\bigg)^{\frac1{(1+\tilde{\epsilon})'}}\longrightarrow 0, \quad \mbox{ as }\delta\to 0^+.
$$
Thus, we get back to the first case and obtain \eqref{RH:w1-w2} which eventually leads to \eqref{CF-dyadic:conc} with $C_1$ and $\theta_1$ as stated above.

To complete the proof we obtain \eqref{claim-Ainfty}. Given $F$ as there, it follows that $\tilde{\hm}_2(Q\setminus F)/\tilde{\hm}_2(Q)>1-\alpha/2$. We see that $\hm_1(Q\setminus F)/\hm_1(Q)>\min\{1-\beta,\alpha/2\}$, which yields as desired $\hm_1(F)/\hm_1(Q)<\tilde{\beta}$. If this were not the case then we would have $\hm_1(Q\setminus F)/\hm_1(Q)\le \alpha/2$ and also that $\hm_1(F)/\hm_1(Q)\ge \beta$. By \eqref{CF-Ainfty:w}, the latter gives $\hm_2(F)/\hm_2(Q)\ge \alpha$ and therefore $\hm_2(Q\setminus F)/\hm_2(Q)\le 1-\alpha$. Gathering these estimates we get a contradiction
$$
\frac{\tilde{\hm}_2(Q\setminus F)}{\tilde{\hm}_2(Q)}
=
\frac{\hm_2(Q\setminus F)}{\tilde{\hm}_2(Q)}
+\delta\,
\frac{\hm_1(Q\setminus F)}{\tilde{\hm}_2(Q)}
\le
\frac{\hm_2(Q\setminus F)}{\hm_2(Q)}
+
\frac{\hm_1(Q\setminus F)}{\hm_1(Q)}
\le
1-\alpha/2.
$$
\end{proof}

\begin{remark}\label{remark:CF-dyadic}
Let us observe that \eqref{RH:w1-w2} can be equivalently written as
$$
\left(\frac1{\hm_2(Q)} \int_{Q}h(x)^{1+\epsilon}\,d\hm_2(x)
\right)^{\frac1{1+\epsilon}}
\le
C_1\,\frac1{\hm_2(Q)}\,\int_{Q} h(x)\,d\hm_2(x),
$$
and this shows that $h\in RH_{1+\epsilon}^{\rm dyadic}(Q_0,\hm_2)$.
\end{remark}

\section{The UR property for Approximating domains}\label{appendix:approx}

We establish the UR property (with uniform constants) for
the approximating domains
$\Omega_N$ defined by \eqref {eq7.on}.
Recall that we have already observed that $\Omega_N$ inherits the ADR, Corkscrew
and Harnack Chain conditions from $\Omega$.

The proof is based on ideas of Guy David,
and uses the following singular integral characterization of UR sets, established in \cite{DS1}.
Suppose that
$E\subset\ree$ is $n$-dimensional ADR.
The singular integral operators that we shall consider are those of the form
$$T_{E,\eps}f(x)=T_{\eps} f(x):= \int_{E} K_\eps (x-y)\,f(y)\,dH^n(y)\,,$$
where $K_\eps(x) := K (x)\,\Phi(|x|/\eps)$,  with $0\leq \Phi\leq 1$,
$\Phi(\rho)\equiv 1$ if $ \rho\geq 2,$ $\Phi(\rho) \equiv 0$ if $\rho\leq 1$, and $\Phi \in
C^\infty(\mathbb{R})$, and where
the singular kernel $K $ is an odd function, smooth on $\ree\setminus\{0\}$,
and satisfying
\begin{eqnarray}\label{eqC.1}& |K(x)|\,\leq\, C\,|x|^{-n}\\[4pt]\label{eqC.2}
&|\nabla^m K(x)|\,\leq \,C_m\,|x|^{-n-m}\,,\qquad\forall m=1,2,3,\dots ... \,.
\end{eqnarray}
Then $E$ is UR if and only if for every such kernel $K$, we have that
\begin{equation}\label{eqC.3}
\sup_{\eps>0}\int_E |T_\eps f|^2\,dH^n\leq C_K \int_E|f|^2\,dH^n.
\end{equation}
We refer the reader to \cite{DS1} for the proof.
We shall also require  ``non-tangential''
estimates for an extension of $T_\eps$ defined as follows.  For $K$ as above,
set
\begin{equation}\label{eqC.4}
\T_E f(X):= \int_E K(X-y)\,f(y)\,dH^n(y)\,,\qquad X \in \ree\setminus E.
\end{equation}
We define non-tangential
approach regions $\Gamma_\tau(x)$ as follows.  Let
$\mathcal{W}_E$ denote the collection of cubes in the Whitney decomposition
of $\ree\setminus E$, and set $\mathcal{W}_\tau(x):= \{I\in \mathcal{W}_E: \dist(I,x) <\tau \ell(I)\}$.
Then we define
$$\Gamma_\tau(x):=\bigcup_{I\in \mathcal{W}_\tau(x)} I^*$$
(thus, roughly speaking, $\tau$ is the ``aperture'' of $\Gamma_\tau(x)$).  For
$F\in \mathcal{C}(\ree\setminus E)$ we
may then also define the non-tangential maximal function
$$N_{*,\tau} (F)(x):= \sup_{Y\in \Gamma_\tau(x)}|F(Y)|.$$
We shall sometimes write simply $N_*$ when there is no chance of confusion
in leaving implicit the dependence on the aperture $\tau$.

\begin{lemma}\label{lemmacotlar}
Suppose that $E\subset \ree$ is $n$-dimensional UR, and let
$\T_E$ be defined as in \eqref{eqC.4}.  Then for each $\tau \in (0,\infty)$,
there is a constant $C_{\tau,K}$ depending only on $n,\tau, K$ and the UR constants such that
\begin{equation}\label{eqC.6}
\int_E \left(N_{*,\tau}\left(\T_E f\right)\right)^2\, dH^n \,\leq\, C_{\tau,K} \int_E |f|^2 dH^n.
\end{equation}
\end{lemma}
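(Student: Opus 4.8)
The plan is to deduce the non-tangential maximal estimate \eqref{eqC.6} from the $L^2$-boundedness of the truncated singular integrals $T_\eps$ on $E$ via a Cotlar-type inequality. First I would fix the aperture $\tau$ and a point $X\in\Gamma_\tau(x)$, so that $X\in I^*$ for some Whitney cube $I\in\W_E$ with $\dist(I,x)\lesssim_\tau \ell(I)$; write $\eps\approx\ell(I)\approx\delta_E(X):=\dist(X,E)$, and pick $x_0\in E$ with $|X-x_0|=\delta_E(X)$, noting $|x-x_0|\lesssim_\tau\eps$. The goal is the pointwise bound
\begin{equation}\label{eqC.cotlar}
|\T_E f(X)|\,\lesssim_{\tau,K}\, M_E\big(T_{\eps'} f\big)(x)\,+\,M_E f(x)\,,
\end{equation}
for a suitable comparable truncation parameter $\eps'\approx\eps$, where $M_E$ is the Hardy--Littlewood maximal operator on $E$ with respect to $H^n\big|_E$ (ADR makes $(E,|\cdot|,H^n)$ a space of homogeneous type, so $M_E$ is bounded on $L^2(E)$). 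Given \eqref{eqC.cotlar}, taking the supremum over $X\in\Gamma_\tau(x)$ and then the $L^2(E)$ norm in $x$, and invoking the $L^2$ bound \eqref{eqC.3} for $\sup_\eps|T_\eps f|$ together with the $L^2$ boundedness of $M_E$, yields \eqref{eqC.6}.

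To prove \eqref{eqC.cotlar} I would split the kernel. Write
$$\T_E f(X)=\int_{E}K(X-y)f(y)\,dH^n(y)=\int_{|y-x_0|<\eps}+\int_{|y-x_0|\ge \eps}=:A+B.$$
For $A$, since $|X-y|\ge\delta_E(X)\approx\eps$ on the whole of $E$, the kernel bound \eqref{eqC.1} gives $|K(X-y)|\lesssim \eps^{-n}$, so by ADR $|A|\lesssim \eps^{-n}\int_{|y-x_0|<\eps}|f|\,dH^n\lesssim M_Ef(x)$ (using $|x-x_0|\lesssim_\tau\eps$ to relate the ball about $x_0$ to one about $x$ of comparable radius). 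For $B$, I compare $K(X-y)$ with $K(x_0-y)$ for $|y-x_0|\ge\eps$: by the gradient estimate \eqref{eqC.2} and $|X-x_0|=\delta_E(X)\approx\eps\lesssim|y-x_0|$,
$$|K(X-y)-K(x_0-y)|\lesssim \frac{|X-x_0|}{|x_0-y|^{n+1}}\lesssim \frac{\eps}{|x_0-y|^{n+1}}\,,$$
and the resulting error integrates, via the standard dyadic annular decomposition $\{2^k\eps\le|y-x_0|<2^{k+1}\eps\}_{k\ge0}$ and ADR, to $\lesssim M_Ef(x)$. The remaining main term is $\int_{|y-x_0|\ge\eps}K(x_0-y)f(y)\,dH^n(y)$, which (up to adjusting $\eps$ to a comparable $\eps'$ where the smooth cutoff $\Phi(|\cdot|/\eps')$ equals $1$, with the transition region again absorbed by $M_Ef(x)$ as above) is $T_{\eps'}f(x_0)$. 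Finally I would pass from the value $T_{\eps'}f(x_0)$ at the single point $x_0$ to $M_E(T_{\eps'}f)(x)$: by a similar annular-tail argument, $T_{\eps'}f(x_0)$ and the average $\fint_{\Delta(x_0,\eps')}T_{\eps'}f\,dH^n$ differ by $\lesssim M_Ef(x)$ (the cross terms involving the symmetric difference of truncation regions are controlled by the kernel size and ADR), and that average is $\le M_E(T_{\eps'}f)(x)$, since $|x-x_0|\lesssim_\tau\eps'$. Combining the pieces gives \eqref{eqC.cotlar}.

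The main obstacle is the bookkeeping in the reduction to the single truncation parameter: one must handle the discrepancy between $\delta_E(X)$, $\ell(I)$, $|x-x_0|$ and the truncation scale $\eps'$, each comparable up to $\tau$-dependent constants, and verify that every ``transition-annulus'' error term (arising from replacing the smooth cutoff $\Phi(\cdot/\eps)$ by $\Phi(\cdot/\eps')$, and from the symmetric differences of the relevant balls) is genuinely dominated by $M_Ef(x)$ using only \eqref{eqC.1}, \eqref{eqC.2} and the ADR upper bound. None of this is deep, but the constants depend on $\tau$ precisely through these comparisons, so some care is needed to record that the final constant $C_{\tau,K}$ depends only on $n$, $\tau$, $K$ and the UR (hence ADR) constants of $E$, which is what is claimed. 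Once \eqref{eqC.cotlar} is established, the conclusion is immediate from \eqref{eqC.3} and the $L^2(E)$-boundedness of $M_E$.
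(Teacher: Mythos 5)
The paper does not actually prove this lemma; it merely remarks that, given the uniform $L^2$ bound \eqref{eqC.3}, the statement is ``a variant of the standard Cotlar inequality for maximal singular integrals,'' and omits the details. Your sketch supplies exactly the Cotlar-type argument being alluded to, and the core of it is correct: the split into $A$ and $B$, the comparison of $K(X-\cdot)$ with $K(x_0-\cdot)$ via \eqref{eqC.1}--\eqref{eqC.2} and the ADR upper bound, the control of the transition/annular errors by $M_Ef(x)$, and the passage from $T_{\eps'}f(x_0)$ to $\fint_{\Delta(x_0,\eps')}T_{\eps'}f\le M_E(T_{\eps'}f)(x)$ are all sound, and you are right that the $\tau$-dependence enters only through the mutual comparability of $\delta_E(X)$, $\ell(I)$, $|x-x_0|$ and $\eps'$.

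There is one genuine gap in the final step. Taking the supremum over $X\in\Gamma_\tau(x)$ in \eqref{eqC.cotlar} makes the truncation parameter $\eps'\approx\delta_E(X)$ range over \emph{all} scales, so what you actually obtain is
\begin{equation*}
N_{*,\tau}(\T_Ef)(x)\;\lesssim\;\sup_{\eps>0}M_E(T_\eps f)(x)+M_Ef(x)\;\le\;M_E\big(T_*f\big)(x)+M_Ef(x)\,,
\end{equation*}
where $T_*f:=\sup_{\eps>0}|T_\eps f|$. To conclude you therefore need $\|T_*f\|_{L^2(E)}\lesssim\|f\|_{L^2(E)}$, and this is \emph{not} what \eqref{eqC.3} says: \eqref{eqC.3} is the weaker, uniform-in-$\eps$ statement $\sup_\eps\|T_\eps f\|_{L^2}\le C_K\|f\|_{L^2}$, whereas you need the $L^2$ bound for the maximal truncated operator $\|\sup_\eps|T_\eps f|\,\|_{L^2}$. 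The upgrade from the former to the latter is itself a (second, on-$E$) Cotlar-type result, requiring in addition the uniform weak-$(1,1)$ bound for the $T_\eps$ furnished by Calder\'on--Zygmund theory on the space of homogeneous type $(E,|\cdot|,H^n|_E)$. This is standard and its constants depend only on $n$, $K$ and the ADR constants, so your conclusion is correct; but that step must be cited as a separate fact (e.g.\ from \cite{DS1}) rather than read off directly from \eqref{eqC.3}, since as written your last sentence conflates the uniform truncation bound with the maximal-operator bound.
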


Given \eqref{eqC.3}, Lemma \ref{lemmacotlar} is a
variant of the standard ``Cotlar inequality'' for maximal singular
integrals, and we omit the proof.

We are now ready to prove that $\partial\Omega_N$ is UR, uniformly in $N$. It is enough
to establish the estimate \eqref{eqC.3}, for all $K$ as above, with $E$ replaced by $\partial\Omega_N$.
On the other hand, we are given that $\partial\Omega$ is UR, whence \eqref{eqC.6} holds with
$E=\partial\Omega$.  Since $\partial\Omega_N$ is ADR,
it enjoys the dyadic grid structure promised by Lemma \ref{lemmaCh}.  We then make a partition
$\partial \Omega_N =\cup Q_j(N)$, where $Q_j(N) \in \dd_N(\partial\Omega_N)=:\dd_N(N),$
the dyadic grid on $\partial\Omega_N$ at scale $2^{-N}.$  We observe that, by the construction
of $\Omega_N$,
for each $Q_j(N)\in\dd_N(N)$, we may choose a $Q_j\in \dd_N(\partial\Omega)=:\dd_N$
with $\dist(Q_j(N),Q_j)\approx 2^{-N}.$  By the ADR property of $\partial\Omega$,
a given $Q\in \dd_N$ can serve in this way for at most a bounded number of
$Q_j(N)\in \dd_N(N)$.  Therefore, we have the bounded
overlap condition
\begin{equation}\label{eqC.7}
\sum_{Q_j(N)\in\dd_N(N)} 1_{Q_j}(x)\leq C\,,\qquad\forall x\in\partial\Omega.
\end{equation}
As usual, we set $\sigma:=H^n|_{\pom}$, and we now also let
$\sigma_N:=H^n|_{\pom_N}$. We then have that for $\tau$ large enough,
\begin{multline*}
\int_{\partial\Omega_N} |\T_{\partial\Omega} f|^2\, d\sigma_N
=\sum_{Q_j(N)\in\dd_N(N)}
\int_{Q_j(N)} |\T_{\partial\Omega} f|^2\, d\sigma_N\\[4pt]
=\sum_{Q_j(N)\in\dd_N(N)} \frac1{\sigma(Q_j)}\int_{Q_j}
\int_{Q_j(N)} |\T_{\partial\Omega} f(x)|^2\, d\sigma_N(x)\,d\sigma(x')\\[4pt]
\lesssim \sum_{Q_j(N)\in\dd_N(N)} \int_{Q_j}
\left(N_{*,\tau}\left(\T_{\partial\Omega} f\right)\right)^2\, d\sigma
 \leq C_{\tau,K} \int_{\partial\Omega}
|f|^2 d\sigma\,,
\end{multline*}
where in the last line we have used first the ADR properties of $\partial\Omega$ and
$\partial\Omega_N$, and then \eqref{eqC.7} and \eqref{eqC.6} with $E=\partial\Omega$.

We have thus established that
$\T_{\partial\Omega}:L^2(\partial\Omega)\to L^2(\partial\Omega_N)$.  Since the kernel
$K$ is odd, we therefore obtain by duality that
\begin{equation}\label{eqC.8}
\T_{\partial\Omega_N}:L^2(\partial\Omega_N)\to L^2(\partial\Omega).
\end{equation}

Now fix $\eps>0$, and $N$ large enough that $2^{-N} \ll \diam{\partial\Omega}$.
Set $\eps_N = 2^{-N}$.  We consider two cases.

\noindent{\bf Case 1}: $\eps<\eps_N$.
In this case,
\begin{multline*}\int_{\partial\Omega_N} |T_{\partial\Omega_N,\eps} f|^2\,d\sigma_N
\\[4pt]
\lesssim \,
\int_{\partial\Omega_N}|T_{\partial\Omega_N,\eps} f-T_{\partial\Omega_N,\eps_N} f|^2\,d\sigma_N
\,\,\,+\,\int_{\partial\Omega_N}|T_{\partial\Omega_N,\eps_N} f|^2\,d\sigma_N\,=:\,I+II.
\end{multline*}
Let $Q_j\in\dd_N$ denote the cube chosen relative
to $Q_j(N)\in\dd_N(N)$ as above.
Then for $x\in Q_j(N)$, and $x'\in Q_j$, we have by standard Calder\'on-Zygmund estimates
using  \eqref{eqC.1} and \eqref{eqC.2}, and the ADR property of $\pom_N$, that
\begin{equation*}
|T_{\partial\Omega_N,\eps_N} f(x)-
\T_{\partial\Omega_N} f(x')| \lesssim M^Nf(x)\,,
\end{equation*}
where $M^N$ denotes the Hardy-Littlewood maximal function on $\pom_N$. Consequently,
\begin{multline*}II=\sum_{Q_j(N)\in\dd_N(N)} \frac1{\sigma(Q_j)}\int_{Q_j}
\int_{Q_j(N)}|T_{\partial\Omega_N,\eps_N} f(x)|^2\,d\sigma_N(x)\,d\sigma(x')\\[4pt]
\lesssim \,\,\sum_{Q_j(N)\in\dd_N(N)}
\int_{Q_j(N)}\left(M^Nf(x)\right)^2\,d\sigma_N(x)\\[4pt]
+\,\sum_{Q_j(N)\in\dd_N(N)} \int_{Q_j}
|\T_{\partial\Omega_N} f(x')|^2\,d\sigma(x') \,=:\,II' + II''
\end{multline*}
The desired bound for $II'$ follows immediately, and the bound for $II''$ follows
directly from \eqref{eqC.7} and \eqref{eqC.8}.

We turn now to term $I$.
Let us note that since $\eps_N\approx \diam(Q_j(N))$, for $x\in Q_j(N)$ we have
\begin{multline*}
T_{\partial\Omega_N,\eps} f(x)-T_{\partial\Omega_N,\eps_N} f(x)
\\[4pt]=
\int_{\partial\Omega_N} K(x-y)
\,\left(\Phi\left(\frac{|x-y|}{\eps}\right)-\Phi\left(\frac{|x-y|}{\eps_N}\right)\right)\,
f(y)1_{\Delta_{N,j}}(y)\,
d\sigma_N(y)\\[4pt]
=:\,T_{\pom_N,\eps,\eps_N} \left(f\,1_{\Delta_{N,j}}\right)(x)\,,
\end{multline*}
a doubly truncated singular integral on $\pom_N$,
where $\Delta_{N,j}:=B_{N,j}\cap\pom_N$, and $B_{N,j}$ is a ball
centered at some point in $Q_j(N)$, with radius
$C\diam(Q_j(N))\approx 2^{-N}.$  By choosing $C$ large enough, we may assume that
$Q_j(N)\subset \Delta_{N,j}$.
We recall that by definition, $\pom_N$ is a union of portions of
faces of fattened Whitney cubes
$I^*$, of side length $\approx 2^{-N}$.  Since only a bounded number
of these can meet $B_{N,j}$, we have
$$\Delta_{N,j} \subset \cup_{m=1}^{M_0} F^j_m\,,$$
where $M_0$ is a uniform constant and each $F^j_m$ is either
a portion of a face of some $I^*$, or else $F^j_m=\emptyset$ (since $M_0$ is not necessarily
equal to the number of faces, but is rather an upper bound for the number of faces.)
Thus,
$$I\lesssim\sum_{Q_j(N)\in\dd_N(N)}\sum_{1\leq m,m'\leq M_0}
\int_{F^j_m}|T_{\pom_N,\eps,\eps_N} \left(f\,1_{F^j_{m'}}\right)|^2\,d\sigma_N\,.$$
The faces $F^j_{m'}$ have bounded overlaps as we sum in $j$.  Therefore, the case
$m=m'$ reduces to the classical case that $\pom_N$ is a hyperplane.  For $m\neq m'$,
there are two cases as follows.
If $\dist(F^j_m,F^j_{m'})\approx 2^{-N},$ then using \eqref{eqC.1},
we may crudely dominate
$T_{\pom_N,\eps,\eps_N}$ by the Hardy-Littlewood maximal operator.
Otherwise,  $\dist(F^j_m,F^j_{m'})\ll 2^{-N},$ in which case
$F^j_{m}$ and $F^j_{m'}$ are contained in respective faces which either lie in the same hyperplane,
or else meet at an angle
of $\pi/2$.  In the latter scenario, after a possible rotation of co-ordinates, we may view
$F^j_{m}\cup F^j_{m'}$ as lying in a Lipschitz graph with Lipschitz constant 1, so that we may
estimate $T_{\pom_N,\eps,\eps_N}$ using an extension of the Coifman-McIntosh-Meyer theorem.

\noindent{\bf Case 2}: $\eps\geq\eps_N$.  We observe that \eqref{eqC.8} also
applies to the modified operator
$\T_{\pom_N}^\eps$, obtained by replacing the kernel $K$ by the kernel
$K_\eps$, since the latter is still odd and still satisfies the Calder\'on-Zygmund
estimates \eqref{eqC.1} and \eqref{eqC.2} (uniformly in $\eps$).
The present case may then be handled
just like term $II$ above, by writing
$$T_{\pom_N,\eps}f(x)=\left(T_{\pom_N,\eps}f(x)-\T_{\pom_N}^\eps f(x')\right)
+\T_{\pom_N}^\eps f(x')\,.$$
There is no term $I$.  We leave the details to the reader.

\end{document}